\newcommand*\circled[1]{\tikz[baseline=(char.base)]{
            \node[shape=circle,draw,inner sep=2pt] (char) {#1};}}
\newcommand{\inner}[2]{{\left\langle #1, #2 \right\rangle}}            
\newcommand{\norm}[1]{\left\|#1\right\|}
\newcommand{\X}{\mathcal X}
\newcommand{\Y}{\mathcal Y}
\newcommand{\R}{\mathbb R}
\newcommand{\N}{\mathbb N}
\newcommand{\E}{\mathbb E}
\newcommand{\h}{\mathscr H}
\newcommand{\hh}{\boldsymbol{H}}
\newcommand{\Id}{\boldsymbol{I}}
\newcommand{\one}{\boldsymbol{1}}
\newcommand{\hgh}{\Tilde{\hh}_{s}^{1/2} G \Tilde{\hh}_{s}^{1/2}}
\newcommand{\B}{\mathcal B}
\newcommand{\M}{\mathcal M}
\newcommand{\id}{\mathfrak I}
\newcommand{\T}{\mathcal T}
\newcommand{\A}{\mathcal A}
\newcommand{\PP}{\mathcal P}
\newcommand{\Ntl}{\mathcal{N}_{2}(\lambda)}
\newcommand{\Ntlsq}{\mathcal{N}^2_{2}(\lambda)}
\newcommand{\Nol}{\mathcal{N}_{1}(\lambda)}
\newcommand{\PQ}{R}
\newcommand{\Cs}{(C_1+C_2)}
\newcommand{\Cl}{C_{\lambda}}
\newcommand{\II}{\mathds{1}}
\newcommand{\K}{\kappa}
\newcommand{\kk}{K}
\newcommand{\op}{\EuScript{L}^\infty(\h)}
\newcommand{\opl}{\EuScript{L}^\infty(\Lp)}
\newcommand{\opH}{\EuScript{L}^\infty(H)}
\newcommand{\hs}{\EuScript{L}^2(\h)}
\newcommand{\hsH}{\EuScript{L}^2(H)}
\newcommand{\Lp}{L^{2}(R)}
\newcommand{\range}{\text{Ran}} 
\newcommand{\SgL}{\Sigma_{PQ,\lambda}^{-1/2}}
\newcommand{\SgLh}{\hat{\Sigma}_{PQ,\lambda}^{-1/2}}
\newcommand{\SL}{\Sigma_{PQ,\lambda}}
\newcommand{\SLh}{\hat{\Sigma}_{PQ,\lambda}}
\newcommand{\gS}{g_{\lambda}(\Sigma_{PQ})}
\newcommand{\gShalf}{g^{-1/2}_{\lambda}(\Sigma_{PQ})}
\newcommand{\gSh}{g_{\lambda}(\hat{\Sigma}_{PQ})}
\newcommand{\gShh}{g^{1/2}_{\lambda}(\hat{\Sigma}_{PQ})}
\newcommand{\gT}{g_{\lambda}(\T)}
\newcommand{\gl}{g_{\lambda}}
\newcommand{\U}{u}
\newcommand{\gSL}{g^{1/2}_{\lambda}(\Sigma_{PQ})}
\newcommand{\gSLh}{g^{1/2}_{\lambda}(\hat{\Sigma}_{PQ})}
\newcommand{\igSLh}{g^{-1/2}_{\lambda}(\hat{\Sigma}_{PQ})}
\newcommand{\stat}{\hat{\eta}_{\lambda}}
\newcommand{\htens}{\otimes_{\h}}
\newcommand{\Htens}{\otimes_{H}}
\newcommand{\ltens}{\otimes_{\Lp}}
\newcommand{\qqh}{\hat{q}_{1-\alpha}^{B,\lambda}}
\newcommand{\qqe}{q_{1-\alpha}^{\lambda}}
\newcommand{\cd} {|\Lambda|}
\renewcommand{\epsilon}{\varepsilon}
\DeclarePairedDelimiter{\floor}{\lfloor}{\rfloor}
\numberwithin{equation}{section}
\newtheorem{theorem}{Theorem}
\numberwithin{theorem}{section}
\newtheorem{corollary}[theorem]{Corollary}
\theoremstyle{rem}
\newtheorem{remark}{Remark}
\numberwithin{remark}{section}
\newtheorem{appxthm}{Theorem}[section]
\newtheorem{appxlem}[appxthm]{Lemma}
\theoremstyle{definition}
\theoremstyle{remark}
\begin{document}

\title{Spectral Regularized Kernel Two-Sample Tests}
\author{Omar Hagrass}
\author{Bharath K. Sriperumbudur}
\author{Bing Li}
\affil{Department of Statistics,  
Pennsylvania State University\\
University Park, PA 16802, USA.\\
\texttt{\{oih3,bks18,bxl9\}}@psu.edu}
\date{}
\maketitle

\begin{abstract}
 Over the last decade, an approach that has gained a lot of popularity to tackle nonparametric testing problems on general (i.e., non-Euclidean) domains is based on the notion of reproducing kernel Hilbert space (RKHS) embedding of probability distributions.  The main goal of our work is to understand the optimality of two-sample tests constructed based on this approach. First, we show the popular MMD (maximum mean discrepancy)  two-sample test to be not optimal in terms of the separation boundary measured in Hellinger distance. Second, we propose a modification to the MMD test based on spectral regularization by taking into account the covariance information (which is not captured by the MMD test) and prove the proposed test to be minimax optimal with a smaller separation boundary than that achieved by the MMD test. Third, we propose an adaptive version of the above test which involves a data-driven strategy to choose the regularization parameter and show the adaptive test to be almost minimax optimal up to a logarithmic factor.  Moreover, our results hold for the permutation variant of the test where the test threshold is chosen elegantly through the permutation of the samples. Through numerical experiments on synthetic and real data, we demonstrate the superior performance of the proposed test in comparison to the MMD test and other popular tests in the literature.
\end{abstract}
\textbf{MSC 2010 subject classification:} Primary: 62G10; Secondary: 65J20, 65J22, 46E22, 47A52.\\
\textbf{Keywords and phrases:} Two-sample test, maximum mean discrepancy, reproducing kernel Hilbert space, covariance operator, U-statistics, Bernstein's inequality, minimax separation, adaptivity, permutation test, spectral regularization
\setlength{\parskip}{4pt}
\section{Introduction}\label{sec:intro}
Given $\mathbb{X}_N:=(X_i)_{i=1}^N \stackrel{i.i.d.}{\sim} P$, and $\mathbb{Y}_M:=(Y_j)_{j=1}^M \stackrel{i.i.d.}{\sim} Q$, where $P$ and $Q$ are defined on a measurable space $\X$, the problem of two-sample testing is to test $H_0: P=Q$ against $H_1: P\ne Q$. This is a classical problem in statistics that has attracted a lot of attention both in the parametric (e.g., $t$-test, $\chi^2$-test) and nonparametric (e.g., Kolmogorov-Smirnoff test, Wilcoxon signed-rank test) settings \citep{lehmann}. However, many of these tests either rely on strong distributional assumptions or cannot handle non-Euclidean data that naturally arise in many modern applications.

Over the last decade, an approach that has gained a lot of popularity to tackle nonparametric testing problems on general domains is based on the notion of reproducing kernel Hilbert space (RKHS) \citep{Aronszajn} embedding of probability distributions (\citealp{Smola}, \citealp{classifiability}, \citealp{rkhs}).
Formally, the RKHS embedding of a probability measure $P$ is defined as
$$\mu_P=\int_{\X}K(\cdot,x)\,dP(x)\in\mathscr{H},$$
where $K:\X\times\X\rightarrow\mathbb{R}$ is the unique reproducing kernel (r.k.) associated with the RKHS $\mathscr{H}$ with $P$ satisfying  $\int_\X \sqrt{K(x,x)}\,dP(x)<\infty$. If $K$ is characteristic (\citealp{JMLR_metrics}, \citealp{JMLR_universal}), this embedding induces a metric on the space of probability measures, called the \emph{maximum mean discrepancy} ($\mathrm{MMD}$) or the \emph{kernel distance} (\citealp{gretton12a}, \citealp{NipsGretton}), defined as \begin{equation}D_{\mathrm{MMD}}(P,Q) = \norm{\mu_P-\mu_Q}_{\h}.\label{Eq:MMD}\end{equation} MMD has the following variational representation (\citealp{gretton12a}, \citealp{JMLR_metrics}) given by
\begin{equation}
D_{\mathrm{MMD}}(P,Q) := \sup_{f \in \h : \norm{f}_{\h}\leq 1} \int_{\X} f(x)\,d(P-Q)(x),\label{eq:variational}
\end{equation}
which clearly reduces to \eqref{Eq:MMD} by using the reproducing property: $f(x)=\inner{f}{K(\cdot,x)}_{\h}$ for all $f \in \h$, $x\in\mathcal{X}$. We refer the interested reader to \citet{JMLR_metrics}, \citet{bernouli2016}, and \citet{Carl} for more details about $D_{\mathrm{MMD}}$. 
\citet{gretton12a} proposed a test based on the asymptotic null distribution of the $U$-statistic estimator of $D^2_{\mathrm{MMD}}(P,Q)$, defined as
\begin{align}\hat{D}_\mathrm{MMD}^2(X,Y)&=\frac{1}{N(N-1)}\sum_{i \neq j}K(X_i,X_j)+\frac{1}{M(M-1)}\sum_{i \neq j}K(Y_i,Y_j) \nonumber\\
&\qquad\qquad-\frac{2}{NM}\sum_{i,j}K(X_i,Y_j) \notag,  \end{align}
and showed it to be consistent. Since the asymptotic null distribution does not have a simple closed form---the distribution is that of an infinite sum of weighted chi-squared random variables with the weights being the eigenvalues of an integral operator associated with the kernel $K$ w.r.t.~the distribution $P$---, several approximate versions of this test have been investigated and are shown to be asymptotically consistent (e.g., see \citealp{gretton12a} and \citealp{FastKernel}). Recently, \citet{MingYuan} and \citet{MMDagg} showed these tests based on $\hat{D}^2_{\text{MMD}}$ to be not optimal in the minimax sense but modified them to achieve a minimax optimal test by using translation-invariant kernels on $\mathcal{X}=\mathbb{R}^d$. However, 
since the power of these kernel methods lies in handling more general spaces and not just $\mathbb{R}^d$, the main goal of this paper is to construct minimax optimal kernel two-sample tests on general domains.

Before introducing our contributions, first, we will introduce the minimax framework pioneered by \citet{Burnashev} and \citet{Ingester1, Ingester2}  to study the optimality of tests, which is essential to understanding our contributions and their connection to the results of \citet{MingYuan} and \citet{MMDagg}. Let $\phi(\mathbb{X}_N,\mathbb{Y}_M)$ be any test that rejects $H_0$ when $\phi=1$ and fails to reject $H_0$ when $\phi=0$. Denote the class of all such asymptotic (\emph{resp.} exact) $\alpha$-level tests to be $\Phi_\alpha$ (\emph{resp.} $\Phi_{N,M,\alpha}$). Let $\mathcal{C}$ be a set of probability measures on $\mathcal{X}$. The Type-II error of a test $\phi\in \Phi_\alpha$ (\emph{resp.} $\in\Phi_{N,M,\alpha}$) w.r.t.~$\mathcal{P}_\Delta$ is defined as
$R_\Delta(\phi)=\sup_{(P,Q)\in\mathcal{P}_\Delta}\mathbb{E}_{P^N\times Q^M}(1-\phi),$ 
where $$\mathcal{P}_\Delta:=\left\{(P,Q)\in \mathcal{C}^2:\rho^2(P,Q)\ge \Delta\right\},$$ is the class of $\Delta$-separated alternatives in probability metric $\rho$, with $\Delta$ being referred to as the \emph{separation boundary} or \emph{contiguity radius}. Of course, the interest is in letting $\Delta\rightarrow 0$ as $M,N\rightarrow \infty$ (i.e., shrinking alternatives) and analyzing $R_\Delta$ for a given test, $\phi$, i.e., whether $R_\Delta(\phi)\rightarrow 0$. In the asymptotic setting, the \emph{minimax separation} or \emph{critical radius} $\Delta^*$ is the fastest possible order at which $\Delta\rightarrow 0$ such that $\lim\inf_{N,M\rightarrow\infty}\inf_{\phi\in\Phi_\alpha}R_{\Delta^*}(\phi)\rightarrow 0$, i.e., for any $\Delta$ such that $\Delta/\Delta^*\rightarrow\infty$, there is no test $\phi\in\Phi_\alpha$ that is consistent over $\mathcal{P}_\Delta$. A test is \emph{asymptotically minimax optimal} if it is consistent over $\mathcal{P}_\Delta$ with $\Delta \asymp\Delta^*$. On the other hand, in the non-asymptotic setting, the minimax separation $\Delta^*$ is defined as the minimum possible separation, $\Delta$ such that $\inf_{\phi\in\Phi_{N,M,\alpha}}R_{\Delta}(\phi)\le\delta$, for $0<\delta<1-\alpha$. A test $\phi\in\Phi_{N,M,\alpha}$ is called \emph{minimax optimal} if $R_\Delta(\phi)\le \delta$ for some $\Delta\asymp\Delta^*$. In other words, there is no other $\alpha$-level test that can achieve the same power with a better separation boundary. 

In the context of the above notation and terminology, \citet{MingYuan} considered distributions with densities (w.r.t.~the Lebesgue measure) belonging to \begin{equation}\mathcal{C}=\left\{f:\mathbb{R}^d\rightarrow\mathbb{R}\,:\,f\,\,\text{a.s. continuous and}\,\,\Vert f\Vert_{W^{s,2}_d}\le M\right\},\label{Eq:C}\end{equation} where $\Vert f\Vert^2_{W^{s,2}_d}=\int (1+\Vert x\Vert^2_2)^s|\hat{f}(x)|^2\,dx$ with $\hat{f}$ being the Fourier transform of $f$, $\rho(P,Q)=\Vert p-q\Vert_{L^2}$ with $p$ and $q$ being the densities of $P$ and $Q$, respectively, and showed the minimax separation to be $\Delta^*\asymp (N+M)^{-4s/(4s+d)}$. Furthermore, they chose $K$ to be a Gaussian kernel on $\mathbb{R}^d$, i.e., $K(x,y)=\exp(-\Vert x-y\Vert^2_2/h)$ in $\hat{D}^2_{\text{MMD}}$ with $h\rightarrow 0$ at an appropriate rate as $N,M\rightarrow\infty$  (reminiscent of kernel density estimators) in contrast to fixed $h$ in \citet{gretton12a}, and showed the resultant test to be asymptotically minimax optimal w.r.t.~$\mathcal{P}_\Delta$ based on \eqref{Eq:C} and $\Vert\cdot\Vert_{L^2}$. \citet{MMDagg} extended this result to translation-invariant kernels (particularly, as the product of one-dimensional translation-invariant kernels) on $\mathcal{X}=\mathbb{R}^d$ with a shrinking bandwidth and showed the resulting test to be minimax optimal even in the non-asymptotic setting. While these results are interesting, the analysis holds only for $\mathbb{R}^d$ as the kernels are chosen to be translation invariant on $\mathbb{R}^d$, thereby limiting the power of the kernel approach.

In this paper, we employ an operator theoretic perspective to understand the limitation of $D^2_{\text{MMD}}$ and propose a regularized statistic that mitigates these issues without requiring $\mathcal{X}=\mathbb{R}^d$. In fact, the construction of the regularized statistic naturally gives rise to a certain $\mathcal{P}_\Delta$ which is briefly described below. To this end, define $R:=\frac{P+Q}{2}$ and $u:=\frac{dP}{dR}-1$ which is well defined as $P\ll R$. It can be shown that $D^2_{\text{MMD}}(P,Q)=4\langle \mathcal{T}u,u\rangle_{L^2(R)}$ where $\mathcal{T}:L^2(R)\rightarrow L^2(R)$ is an integral operator defined by $K$ (see Section~\ref{Sec:non-optimal} for details), which is in fact a self-adjoint positive trace-class operator if $K$ is bounded. Therefore, $D^2_{\text{MMD}}(P,Q)=\sum_i \lambda_i \langle u,\tilde{\phi}_i\rangle^2_{L^2(R)}$ where $(\lambda_i,\tilde{\phi}_i)_i$ are the eigenvalues and eigenfunctions of $\mathcal{T}$. Since $\mathcal{T}$ is trace-class, we have $\lambda_i\rightarrow 0$ as $i\rightarrow\infty$, which implies that the Fourier coefficients of $u$, i.e., $\langle u,\tilde{\phi}_i\rangle_{L^2(R)}$, for large $i$, are down-weighted by $\lambda_i$. In other words, $D^2_{\text{MMD}}$ is not powerful enough to distinguish between $P$ and $Q$ if they differ in the high-frequency components of $u$, i.e., $\langle u,\tilde{\phi}_i\rangle_{L^2(R)}$ for large $i$. On the other hand, $$\Vert u\Vert^2_{L^2(R)}=\sum_i \langle u,\tilde{\phi}_i\rangle^2_{L^2(R)}=\chi^2\left(P\left\Vert\right.\frac{P+Q}{2}\right)=\frac{1}{2}\int_\mathcal{X}\frac{(dP-dQ)^2}{d(P+Q)}
=:\underline{\rho}^2(P,Q)$$ does not suffer from any such issue, with $\underline{\rho}$ being a probability metric that is topologically equivalent to the Hellinger distance (see Lemma~\ref{Lem: distance} 
and \citealp[p. 47]{LeCam}). With this motivation, we consider the following modification to $D^2_{\text{MMD}}$: $$\eta_\lambda(P,Q)=4\left\langle \mathcal{T}g_\lambda(\mathcal{T})u,u\right\rangle_{L^2(R)},$$
where $g_\lambda:(0,\infty)\rightarrow (0,\infty)$, called the \emph{spectral regularizer} \citep{Engl.et.al} is such that $\lim_{\lambda\rightarrow 0}xg_\lambda(x)\asymp 1$ as $\lambda\rightarrow 0$ (a popular example is the Tikhonov regularizer, $g_\lambda(x)=\frac{1}{x+\lambda}$), i.e., $\mathcal{T}g_\lambda(\mathcal{T})\approx \Id$, the identity operator---refer to \eqref{Eq:glambda} for the definition of $g_\lambda(\mathcal{T})$. In fact, in Section~\ref{Sec:spec}, we show  $\eta_\lambda(P,Q)$ to be equivalent to $\Vert u\Vert^2_{L^2(R)}$, i.e., $\Vert u\Vert^2_{L^2(R)}\lesssim\eta_\lambda(P,Q)\lesssim \Vert u\Vert^2_{L^2(R)}$ if $u\in \text{Ran}(\mathcal{T}^\theta),\,\theta>0$ and $\Vert u\Vert^2_{L^2(R)}\gtrsim \lambda^{2\theta}$, where $\text{Ran}(A)$ denotes the range space of an operator $A$, $\theta$ is the smoothness index (large $\theta$ corresponds to ``smooth" $u$), and $\mathcal{T}^\theta$ is defined by choosing $g_\lambda(x)=x^\theta,\,x\ge 0$ in \eqref{Eq:glambda}. This naturally leads to the class of $\Delta$-separated alternatives,
\begin{equation}\PP:=\PP_{\theta,\Delta} := \left\{(P,Q): \frac{dP}{d\PQ}-1 \in \range (\T^{\theta}),\,\,\underline{\rho}^2(P,Q) \geq \Delta\right\},
\label{Eq:alternative-theta}
\end{equation}
for $\theta>0$, where $\text{Ran}(\T^\theta),\,\theta\in(0,\frac{1}{2}]$ can be interpreted as an interpolation space obtained by the real interpolation of $\mathscr{H}$ and $L^2(R)$ at scale $\theta$ \citep[Theorem 4.6]{Steinwart2012MercersTO}---note that the real interpolation of Sobolev spaces and $L^2(\mathbb{R}^d)$ yields Besov spaces \citep[p. 230]{Adams}. To compare the class in \eqref{Eq:alternative-theta} to that obtained using \eqref{Eq:C} with $\rho(\cdot,\cdot)=\Vert \cdot-\cdot\Vert_{L^2(\R^d)}$, note that the smoothness in \eqref{Eq:alternative-theta} is determined through $\text{Ran}(\mathcal{T}^\theta)$ instead of the Sobolev smoothness where the latter is tied to translation-invariant kernels on $\R^d$. Since we work with general domains, the smoothness is defined through the interaction between $K$ and the probability measures in terms of the behavior of the integral operator, $\mathcal{T}$. In addition, as $\lambda\rightarrow 0$, $\eta_\lambda(P,Q)\rightarrow \underline{\rho}^2(P,Q)$, while $D^2_{\text{MMD}}(P,Q)\rightarrow \Vert p-q\Vert^2_2$ as $h\rightarrow 0$, where $D^2_{\text{MMD}}$ is defined through a translation invariant kernel on $\mathbb{R}^d$ with bandwidth $h>0$. Hence, we argue that \eqref{Eq:alternative-theta} is a natural class of alternatives to investigate the performance of $D^2_{\text{MMD}}$ and $\eta_\lambda$. In fact, recently, \citet{Krishna} considered an alternative class similar to \eqref{Eq:alternative-theta} to study goodness-of-fit tests using $D^2_{\text{MMD}}$.
\vspace{-3mm}
\subsection*{Contributions}
The main contributions of the paper are as follows:\vspace{1.5mm}\\
\emph{(i)}  First, in Theorem~\ref{thm: MMD}, we show that the test based on $\hat{D}^2_{\text{MMD}}$ cannot achieve a separation boundary better than $(N+M)^{\frac{-2\theta}{2\theta+1}}$ w.r.t.~$\mathcal{P}$ in \eqref{Eq:alternative-theta}. However, this separation boundary depends only on the smoothness of $u$, which is determined by $\theta$ but is completely oblivious to the \emph{intrinsic dimensionality} of the RKHS, $\mathscr{H}$, which is controlled by the decay rate of the eigenvalues of $\mathcal{T}$. To this end, by taking into account the intrinsic dimensionality of $\mathscr{H}$, we show in Corollaries~\ref{coro:poly-minimax} and \ref{coro:exp-minimax} (also see Theorem~\ref{thm:minimax}) that the minimax separation w.r.t.~$\mathcal{P}$ is $(N+M)^{-\frac{4\theta\beta}{4\theta\beta+1}}$ for $\theta>\frac{1}{2}$ if $\lambda_i\asymp i^{-\beta}$, $\beta>1$, i.e., the eigenvalues of $\mathcal{T}$ decay at a polynomial rate $\beta$, and is $\sqrt{\log(N+M)}/(N+M)$  if $\lambda_i\asymp e^{-i}$, i.e., exponential decay. These results clearly establish the non-optimality of the MMD-based test.\vspace{1.5mm}\\
\emph{(ii)} To resolve this issue with MMD, in Section~\ref{subsec:oracle}, we propose a spectral regularized test based on $\eta_\lambda$ and show it to be minimax optimal w.r.t.~$\mathcal{P}$ (see Theorems~\ref{thm: Type1 error}, \ref{thm:Type II} and Corollaries~\ref{coro:poly}, \ref{coro:exp}). Before we do that, we first provide an alternate representation for $\eta_\lambda(P,Q)$ as $\eta_\lambda(P,Q)=\Vert g^{1/2}_\lambda(\Sigma_R)(\mu_P-\mu_Q)\Vert^2_\mathscr{H}$, which takes into account the information about the covariance operator, $\Sigma_R$ along with the mean elements, $\mu_P$, and $\mu_Q$, thereby showing resemblance to Hotelling's $T^2$-statistic~\citep{lehmann} and its kernelized version \citep{Harchaoui}. This alternate representation is particularly helpful to construct a two-sample $U$-statistic \citep{Hoeffding} as a test statistic (see Section~\ref{subsec:test-statistic}), which has a worst-case computational complexity of $O((N+M)^3)$ in contrast to $O((N+M)^2)$ of the MMD test (see Theorem~\ref{thm: computation}). However, the drawback of the test is that it is not usable in practice since the critical level depends on $(\Sigma_R+\lambda I)^{-1/2}$, which is unknown since $R$ is unknown. Therefore, we refer to this test as the \emph{Oracle test.}\vspace{1.5mm}\\
\emph{(iii)} In order to make the Oracle test usable in practice, in Section~\ref{sec:perm}, we propose a permutation test (e.g., see \citealp{lehmann}, \citealp{Pesarin}, and \citealp{permutations}) leading to a critical level that is easy to compute (see Theorem~\ref{thm: permutations typeI}), while still being minimax optimal w.r.t.~$\mathcal{P}$ (see Theorem~\ref{thm: permutations typeII} and Corollaries~\ref{coro:poly:perm},~\ref{coro:exp:perm}).
However, the minimax optimal separation rate is tightly controlled by the choice of the regularization parameter, $\lambda$, which in turn depends on the unknown parameters, $\theta$ and $\beta$ (in the case of the polynomial decay of the eigenvalues of $\mathcal{T}$). This means the performance of the permutation test depends on the choice of $\lambda$. To make the test completely data-driven, in Section~\ref{subsec:adaptation}, we present an aggregate version of the permutation test by aggregating over different $\lambda$ and show the resulting test to be minimax optimal up to a $\log\log$ factor (see Theorems~\ref{thm:perm adp typeI} and \ref{thm: perm adp typeII}). In Section~\ref{subsec:kernel-choice}, we discuss the problem of kernel choice and present an adaptive test by jointly aggregating over $\lambda$ and kernel $K$, which we show to be minimax optimal up to a $\log\log$ factor (see Theorem~\ref{thm: perm adp kernel typeII}). 
\vspace{1.5mm}\\
\emph{(iv)} Through numerical simulations on benchmark data, we demonstrate the superior performance of the spectral regularized test in comparison to the adaptive MMD test \citep{MMDagg}, Energy test \citep{Energy} and Kolmogorov-Smirnov (KS) test \citep{KS,Fasano}, in Section~\ref{sec:experiments}.\vspace{.25mm}\\

\vspace{-3mm}
All these results hinge on Bernstein-type inequalities for the operator norm of a self-adjoint Hilbert-Schmidt operator-valued U-statistics \citep{kpca}. A closely related work to ours is by \citet{Harchaoui} who consider a regularized MMD test with $g_\lambda(x)=\frac{1}{x+\lambda}$ (see Remark~\ref{rem:zaid} for a comparison of our regularized statistic to that of \citealp{Harchaoui}). However, our work deals with general $g_\lambda$, and our test statistic is different from that of \citet{Harchaoui}. In addition, our tests are non-asymptotic and minimax optimal in contrast to that of \citet{Harchaoui}, which only shows asymptotic consistency against fixed alternatives and provides some asymptotic results against local alternatives.

\section{Definitions \& Notation}
For a topological space $\X$, $L^r(\X,\mu)$ denotes the Banach space of $r$-power $(r\geq 1)$ $\mu$-integrable function, where $\mu$ is a finite non-negative Borel measure on $\X$. For $f \in L^r(\X,\mu)=:L^r(\mu)$, $\norm{f}_{L^r(\mu)}:=(\int_{\X}|f|^r\,d\mu)^{1/r}$ denotes the $L^r$-norm of $f$. $\mu^n := \mu \times \stackrel{n}{...} \times \mu$ is the $n$-fold product measure. $\h$ denotes a reproducing kernel Hilbert space with a reproducing kernel $K: \X \times \X \to \R$. $[f]_{\sim}$ denotes the equivalence class of the function $f$, that is the collection of functions $g \in L^r(\X,\mu)$ such that $\norm{f-g}_{L^r(\mu)}=0$. For two measures $P$ and $Q$, $P \ll Q$ denotes that $P$ is dominated by $Q$ which means, if $Q(A)=0$ for some measurable set $A$, then $P(A)=0$.

Let $H_1$ and $H_2$ be abstract Hilbert spaces. $\EuScript{L}(H_1,H_2)$ denotes the space of bounded linear operators from $H_1$ to $H_2$. For $S \in \EuScript{L}(H_1,H_2)$, $S^*$ denotes the adjoint of $S$. $S \in \EuScript{L}(H) := \EuScript{L}(H,H)$ is called self-adjoint if $S^*=S$. For $S \in \EuScript{L}(H)$, $\text{Tr}(S)$, $\norm{S}_{\EuScript{L}^2(H)}$, and $\norm{S}_{\EuScript{L}^{\infty}(H)}$ denote the trace, Hilbert-Schmidt and operator norms of $S$, respectively. For $x,y \in H$, $x \otimes_{H} y$ is an element of the tensor product space of $H \otimes H$ which can also be seen as an operator from $H \to H$ as $(x \otimes_{H}y)z=x\inner{y}{z}_{H}$ for any $z \in H$.

For constants $a$ and $b$, $a \lesssim b$ (resp. $a \gtrsim b$) denotes that there exists a positive constant $c$ (\emph{resp.} $c'$) such that $a\leq cb$ (\emph{resp.} $a \geq c' b)$. $a \asymp b$ denotes that there exists positive constants $c$ and $c'$ such that  $cb \leq a \leq c' b$. We denote $[\ell]$ for $\{1,\ldots,\ell\}$.

\section{Non-optimality of $D^2_{\text{MMD}}$ test}\label{Sec:non-optimal}
In this section, we establish the non-optimality of the test based on $D^2_{\text{MMD}}$. First, we make the following assumption throughout the paper.\vspace{1.5mm}\\
$(A_0)$  $(\mathcal{X} ,\mathcal{B})$ is a 
second countable (i.e., completely separable) space endowed with Borel $\sigma$-algebra $\mathcal{B}$. $(\h,K)$ is an RKHS of real-valued functions on $\X$ with a continuous reproducing kernel $K$ satisfying
$\sup_{x} K(x,x) \leq \K.$ \vspace{1.5mm}\\
The continuity of $K$ ensures that $K(\cdot,x):\X \to \h$ is Bochner-measurable for all $x \in \X$, which along with the boundedness of $K$ ensures that $\mu_P$ and $\mu_Q$ are well-defined \citep{Dincu}. Also the separability of $\mathcal{X}$ along with the continuity of $K$ ensures that $\h$ is separable \citep[Lemma 4.33]{svm}. Therefore, 
\begin{align}
D^2_{\mathrm{MMD}}(P,Q) 
&= \inner{\int_{\X}K(\cdot,x)\, d(P-Q)(x)}{\int_{\X}K(\cdot,x)\, d(P-Q)(x)}_{\h}\nonumber\\
&=4 \inner{\int_{\X}K(\cdot,x)u(x)\, d\PQ(x)}{\int_{\X}K(\cdot,x)u(x)\, d\PQ(x)}_{\h},\label{eq:mmd}
\end{align}
where $R=\frac{P+Q}{2}$ and $u=\frac{dP}{dR}-1$. Define $\id : \h \to \Lp$, $f \mapsto [f - \E_{\PQ}f]_{\sim}$, which is usually referred in the literature as the \emph{inclusion operator} (e.g., see \citealp[Theorem 4.26]{svm}), where $\E_{\PQ}f=\int_{\X}f(x)\,d\PQ(x)$. It can be shown  \citep[Proposition C.2]{kpca} that $\id^* : \Lp \to \h$, $f \mapsto \int K(\cdot,x)f(x)\,d\PQ(x)-\mu_{\PQ} \E_{\PQ}f$. 
Define $\T:=\id \id^* : \Lp \to \Lp$. It can be shown \citep[Proposition C.2]{kpca} that $\T= \Upsilon- (1\ltens1)\Upsilon-\Upsilon(1\ltens1)+(1\ltens1)\Upsilon(1\ltens1)$, where $\Upsilon: \Lp \to \Lp$, $f \mapsto \int K(\cdot,x)f(x)\,d\PQ(x)$. Since $K$ is bounded, it is easy to verify that $\T$ is a trace class operator, and thus compact. Also, it is self-adjoint and positive, thus spectral theorem \citep[Theorems VI.16, VI.17]{Reed} yields that
$$\T = \sum_{i \in I} \lambda_i \Tilde{\phi_i} \ltens \Tilde{\phi_i},$$
where $(\lambda_i)_i \subset \R^+ $ are the eigenvalues and $(\Tilde{\phi}_i)_i$ are the orthonormal system of eigenfunctions (strictly speaking classes of eigenfunctions) of $\T$ that span $\overline{\range(\T)}$ with the index set $I$ being either countable in which case $\lambda_i \to 0$ or finite. In this paper, we assume that the set $I$ is countable, i.e., infinitely many eigenvalues.
Note that $\Tilde{\phi_i}$ represents an equivalence class in $\Lp$. By defining $\phi_i:= \frac{\id^* \Tilde{\phi_i}}{\lambda_i}$, it is clear that $\id\phi_i=[\phi_i-\E_\PQ\phi_i]_{\sim}=\Tilde{\phi_i}$ and $\phi_i \in \h$. Throughout the paper, $\phi_i$ refers to this definition. Using these definitions, we can see that 
\begin{align}
D^2_{\mathrm{MMD}}(P,Q)&= 4 \inner{\id^*u}{\id^*u}_{\h}
=4\inner{\T u}{u}_{\Lp} = 4\sum_{i\geq 1} \lambda_i \langle u,\Tilde{\phi_i}\rangle^2_{\Lp}.\label{Eq:equiv}
\end{align}
\begin{remark}\label{rem:mmd11}
From the form of $D^2_{\mathrm{MMD}}$ in \eqref{eq:mmd}, it  seems more natural to define  $\id : \h \to \Lp$, $f \mapsto [f]_{\sim}$, so that $\id^* : \Lp \to \h$, $f \mapsto \int K(\cdot,x)f(x)\,d\PQ(x)$, leading to $D^2_{\mathrm{MMD}}(P,Q)=4\langle \id^*u,\id^* u\rangle_{\h}$---an expression similar to \eqref{Eq:equiv}. However, since $u \in \emph{\range}(\T^{\theta})$, $\theta>0$ as specified by $\mathcal{P}$, it is clear that $u$ lies in the span of the eigenfunctions of $\T$, while being orthogonal to constant functions in $\Lp$ since $\langle u,1\rangle_{\Lp}=0$. Defining the inclusion operator with centering as proposed under \eqref{eq:mmd} guarantees that the eigenfunctions of $\T$ are orthogonal to constant functions since $\lambda_i\langle 1,\tilde{\phi}_i\rangle_{L^2(R)}=\langle 1,\id\id^*\tilde{\phi}_i\rangle_{L^2(R)}=\langle \id^*1,\id^*\tilde{\phi}_i\rangle_{L^2(R)}=0$, which implies that constant functions are also orthogonal to the space spanned by the eigenfunctions, without assuming that the kernel $K$ is degenerate with respect to $R$, i.e., $\int K(\cdot,x)\,dR(x)=0$. The orthogonality of eigenfunctions to constant functions is crucial in establishing the minimax separation boundary, which relies on constructing a specific example of $u$ from the span of eigenfunctions that is orthogonal to constant functions (see the proof of Theorem \ref{thm:minimax}). On the other hand, the eigenfunctions of $\id\id^*$ with $\id$ as considered in this remark are not guaranteed to be orthogonal to constant functions in $L^2(R)$.
\end{remark}
\vspace{-4mm}
Suppose $u\in \text{span}\{\tilde{\phi}_i:i\in I\}$. Then $\sum_{i\geq1}\langle u,\Tilde{\phi_i}\rangle^2_{\Lp}=\norm{u}_{\Lp}^2\stackrel{(*)}{=}\underline{\rho}^2(P,Q),$ where $\underline{\rho}^2(P,Q):=\frac{1}{2}\int \frac{(dP-dQ)^2}{dP+dQ}$ and $(*)$ follows from Lemma~\ref{Lem: distance} 
by noting that $\Vert u\Vert^2_{L^2(R)}=\chi^2(P||R)$. As mentioned in Section~\ref{sec:intro}, $D_\mathrm{MMD}$ might not capture the difference between between $P$ and $Q$ if they differ in the higher Fourier coefficients of $u$, i.e., $\langle u,\tilde{\phi}_i\rangle_{L^2(R)}$ for large $i$. 

The following result shows that the test based on $\hat{D}_{\mathrm{MMD}}^2$ cannot achieve a separation boundary of order better than $(N+M)^{\frac{-2\theta}{2\theta+1}}$.

\begin{theorem}[Separation boundary of MMD test] \label{thm: MMD}
Suppose $(A_0)$ holds. Let $N\geq 2$, $M\geq 2$, $M \leq N \leq DM$, for some constant $D>1$, $k \in \{1,2\},$ and \begin{equation}\sup_{(P,Q)\in \PP}\norm{\T^{-\theta}u}_{\Lp}<\infty.\label{Eq:rangecond}\end{equation}
Then for any $\alpha>0$, $\delta>0,$  $P_{H_0}\{\hat{D}_{\mathrm{MMD}}^2 \geq \gamma_k\} \leq \alpha,$
$$\inf_{(P,Q)\in \PP}P_{H_1}\{\hat{D}_{\mathrm{MMD}}^2  \geq \gamma_k\} \geq 1-k\delta,\,\,k=1,2,$$
where $\gamma_1 = \frac{2\sqrt{6}\kappa}{\sqrt{\alpha}}\left(\frac{1}{N}+\frac{1}{M}\right)$, $\gamma_2 = q_{1-\alpha},$
$$\Delta_{N,M}:=\Delta=c_k(\alpha,\delta)(N+M)^{\frac{-2\theta}{2\theta+1}},$$ 
$c_1(\alpha,\delta)\asymp\max\{\alpha^{-1/2},\delta^{-1}\}$ and $c_2(\alpha,\delta)\asymp \delta^{-1}\log \frac{1}{\alpha}$, with $q_{1-\alpha}$ being the $(1-\alpha)-$quantile of the permutation function of $\hat{D}_{\mathrm{MMD}}^2$ based on $(N+M)!$ permutations of the samples $\left(\mathbb{X}_N, \mathbb{Y}_M\right)$.   \\
Furthermore, suppose $\Delta_{N,M} (N+M)^{\frac{2\theta}{2\theta+1}} \to 0$ as $N,M \to \infty$ and one of the following holds: (i) $\theta \geq \frac{1}{2}$, (ii) $\sup_{i}\norm{\phi_i}_{\infty} < \infty$, $\theta > 0.$ Then for any decay rate of $(\lambda_i)_i$,  
$$\liminf_{N,M \to \infty} \inf_{(P,Q)\in \PP}P_{H_1}\{\hat{D}_{\mathrm{MMD}}^2  \geq \gamma_k\} < 1.$$\vspace{-5mm}
\end{theorem}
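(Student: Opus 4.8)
\emph{Proof plan.} I would prove this by a two–point (single–alternative) argument together with Chebyshev's inequality. Since $\inf_{(P,Q)\in\PP}P_{H_1}\{\hat D^2_{\mathrm{MMD}}\ge\gamma_k\}\le P_{H_1}\{\hat D^2_{\mathrm{MMD}}\ge\gamma_k\}$ for every fixed pair in $\PP=\PP_{\theta,\Delta_{N,M}}$, it suffices to construct, for all large $(N,M)$ (a subsequence is enough), a pair $(P,Q)\in\PP$ for which $\limsup_{N,M\to\infty}P_{H_1}\{\hat D^2_{\mathrm{MMD}}\ge\gamma_k\}<1$. The construction: fix a reference measure $R$ whose integral operator $\T$ has the prescribed eigensystem $(\lambda_i,\Tilde{\phi}_i)_i$, choose $i=i_{N,M}$ to be (essentially) the largest index with $\lambda_i\asymp\Delta_{N,M}^{1/(2\theta)}$ — the smallest eigenvalue scale still compatible with the boundedness condition \eqref{Eq:rangecond} — and set $u:=\sqrt{\Delta_{N,M}}\,\Tilde{\phi}_i$, $dP:=(1+u)\,dR$, $dQ:=(1-u)\,dR$. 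Then $R=\tfrac{P+Q}{2}$, $\tfrac{dP}{dR}-1=u$, and $\langle u,1\rangle_{\Lp}=\sqrt{\Delta_{N,M}}\,\langle\Tilde{\phi}_i,1\rangle_{\Lp}=0$ because the eigenfunctions of $\T$ are orthogonal to constants (Remark~\ref{rem:mmd11}), so $P,Q$ are genuine probability measures; moreover $u\in\range(\T^{\theta})$ since $\Tilde{\phi}_i$ is an eigenfunction, and $\underline{\rho}^2(P,Q)=\norm{u}^2_{\Lp}=\Delta_{N,M}$.

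The one nontrivial feasibility check is $|u|\le1$ (i.e.\ $0\le\tfrac{dP}{dR},\tfrac{dQ}{dR}\le2$), and this is exactly where the dichotomy in the hypotheses enters. Under (ii) it is immediate: $\norm{u}_\infty\le 2\sqrt{\Delta_{N,M}}\,\sup_i\norm{\phi_i}_\infty\to0$. Under (i), one uses that $\range(\T^\theta)$ embeds continuously into $L^\infty(R)$ when $\theta\ge\tfrac12$, with $\norm{u}_\infty\le 2\K^\theta\norm{\T^{-\theta}u}_{\Lp}$ — a consequence of $\norm{\phi_i}_\h=\lambda_i^{-1/2}$, $\norm{\T}_{\opl}\le\K$, and the pointwise bound $|f(x)|\le\sqrt{\K}\,\norm{f}_\h$. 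In either case the choice $\lambda_{i_{N,M}}\asymp\Delta_{N,M}^{1/(2\theta)}$ forces $\norm{\T^{-\theta}u}_{\Lp}=\sqrt{\Delta_{N,M}}\,\lambda_{i_{N,M}}^{-\theta}\asymp1$ and $\norm{u}_\infty\le1$ for all large $(N,M)$, so $(P,Q)\in\PP$.

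Now I would bound the signal and the fluctuations. By \eqref{Eq:equiv}, $D^2_{\mathrm{MMD}}(P,Q)=4\lambda_{i_{N,M}}\norm{u}^2_{\Lp}=4\lambda_{i_{N,M}}\Delta_{N,M}\asymp\Delta_{N,M}^{(2\theta+1)/(2\theta)}$. Since $\gamma_1=\tfrac{2\sqrt6\K}{\sqrt\alpha}(\tfrac1N+\tfrac1M)\asymp(N+M)^{-1}$ and the hypothesis $\Delta_{N,M}(N+M)^{2\theta/(2\theta+1)}\to0$ is equivalent to $\Delta_{N,M}^{(2\theta+1)/(2\theta)}(N+M)\to0$, this gives $D^2_{\mathrm{MMD}}(P,Q)=o(\gamma_1)$. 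For $k=2$ the threshold $\gamma_2=q_{1-\alpha}$ is random, but its permutation distribution is driven by the pooled empirical measure $\tfrac{N\hat P+M\hat Q}{N+M}\to R$, so the permutation distribution of $\hat D^2_{\mathrm{MMD}}$ converges to the same weighted‑$\chi^2$ limit as under the null, whence $\gamma_2\asymp(N+M)^{-1}$ and $\gamma_2\gg D^2_{\mathrm{MMD}}(P,Q)$ with probability tending to one. A Hoeffding decomposition of the two–sample $U$‑statistic $\hat D^2_{\mathrm{MMD}}$ gives, under this local alternative, $\mathrm{Var}_{H_1}(\hat D^2_{\mathrm{MMD}})\le\tfrac{C}{(N+M)^2}\big(\mathrm{Tr}(\T^2)+\K\,D^2_{\mathrm{MMD}}(P,Q)\big)=O((N+M)^{-2})$, the degenerate component dominating because $\mathrm{Tr}(\T^2)=\sum_i\lambda_i^2\le\K^2$ is a fixed constant. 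Since $\E_{H_1}\hat D^2_{\mathrm{MMD}}=D^2_{\mathrm{MMD}}(P,Q)$ and $\gamma_k-D^2_{\mathrm{MMD}}(P,Q)\ge\tfrac12\gamma_k\gtrsim(N+M)^{-1}$ eventually, the one–sided Chebyshev inequality yields
\begin{equation*}
P_{H_1}\{\hat D^2_{\mathrm{MMD}}\ge\gamma_k\}\le\frac{\mathrm{Var}_{H_1}(\hat D^2_{\mathrm{MMD}})}{\mathrm{Var}_{H_1}(\hat D^2_{\mathrm{MMD}})+(\gamma_k-D^2_{\mathrm{MMD}}(P,Q))^2}\le\frac{C}{C+c}<1
\end{equation*}
for fixed constants $C,c>0$ and all large $(N,M)$; taking $\liminf$ over the chosen subsequence finishes the proof.

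\emph{Where the difficulty lies.} The crux is the construction, not the probabilistic estimate: one must keep $(P,Q)$ a valid element of $\PP$ — above all the constraint $|u|\le1$, which is precisely why the two regimes $\theta\ge\tfrac12$ and $\sup_i\norm{\phi_i}_\infty<\infty$ are needed, and which forces the entire signal onto a single eigenfunction at the critical scale $\lambda_i\asymp\Delta_{N,M}^{1/(2\theta)}$ — while at the same time pushing $D^2_{\mathrm{MMD}}(P,Q)$ strictly below the test threshold; the balance between these two requirements is exactly what makes the exponent $\tfrac{2\theta}{2\theta+1}$ sharp. A secondary technical point is lower‑bounding the random permutation cutoff $q_{1-\alpha}$ for $k=2$, and, for eigenvalue sequences with large multiplicative gaps, choosing $i_{N,M}$ and the subsequence so that $\lambda_{i_{N,M}}\asymp\Delta_{N,M}^{1/(2\theta)}$ can actually be arranged (this is automatic for polynomially or exponentially decaying $(\lambda_i)_i$, which are the cases of interest in the corollaries).
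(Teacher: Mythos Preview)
Your proposal treats only the ``furthermore'' (non-optimality) assertion and says nothing about the first half of the theorem: the level control $P_{H_0}\{\hat D^2_{\mathrm{MMD}}\ge\gamma_k\}\le\alpha$ and the power guarantee $\inf_{(P,Q)\in\PP}P_{H_1}\{\hat D^2_{\mathrm{MMD}}\ge\gamma_k\}\ge1-k\delta$ at separation $\Delta\asymp(N+M)^{-2\theta/(2\theta+1)}$. The paper obtains both from a variance bound $\E[(\hat D^2_{\mathrm{MMD}}-D^2_{\mathrm{MMD}})^2]\lesssim(N+M)^{-2}+D^2_{\mathrm{MMD}}(N+M)^{-1}$ (plus a permutation-quantile upper bound for $\gamma_2$), together with the key inequality $D^2_{\mathrm{MMD}}\ge 4\norm{u}_{\Lp}^{(2\theta+1)/\theta}\norm{\T^{-\theta}u}_{\Lp}^{-1/\theta}$ (Lemma~\ref{lem:MMD bounds}), which is what converts the condition $D^2_{\mathrm{MMD}}\gtrsim(N+M)^{-1}$ into $\norm{u}_{\Lp}^2\gtrsim(N+M)^{-2\theta/(2\theta+1)}$. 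Your plan should at least sketch this direction.

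For the ``furthermore'' part, your construction---put all of $u$ on a single eigenfunction at the critical scale $\lambda_{i_{N,M}}\asymp\Delta_{N,M}^{1/(2\theta)}$ and verify $|u|\le1$ via $\theta\ge\tfrac12$ (using $\norm{\phi_i}_\h=\lambda_i^{-1/2}$) or via $\sup_i\norm{\phi_i}_\infty<\infty$---is exactly the paper's. The probabilistic step differs. The paper invokes the known asymptotic distribution of $(N+M)\hat D^2_{\mathrm{MMD}}$ under local alternatives (Gretton et al.): when $D^2_{\mathrm{MMD}}=o((N+M)^{-1})$ the limit coincides with the null limit $S$, which places positive mass on the negatives, so $P\{S\ge d_k\}<1$ for any $d_k\ge0$. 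Your Cantelli route is more elementary and is valid for the deterministic threshold $\gamma_1$. For $\gamma_2=q_{1-\alpha}$, however, you need a \emph{lower} bound $\gamma_2\ge c(N+M)^{-1}$ with probability tending to one, and your justification (``the permutation distribution converges to the same weighted-$\chi^2$ limit'') is precisely the asymptotic-distribution machinery you were trying to avoid; so the gain in elementarity is really only for $k=1$. The paper's route handles both thresholds in one stroke by working with the limiting law directly.
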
 
\vspace{-4mm}
\begin{remark}
(i) The MMD test with threshold $\gamma_1$ is simply based on Chebyshev's inequality, while the data-dependent threshold $\gamma_2$ is based on a permutation test. Theorem~\ref{thm: MMD} shows that both these tests yield a separation radius of $(N+M)^{\frac{-2\theta}{2\theta+1}}$, which in fact also holds if 
$\gamma_2:=q_{1-\alpha}$ is replaced by its Monte-Carlo approximation using only $B$ random permutations instead of all $(M+N)!$ as long as $B$ is large enough. This can be shown using the same approach as in Lemma~\ref{lemma:DKW for quantile}. \vspace{1mm}\\
(ii) Theorem \ref{thm: MMD} shows that the power of the test based on $\hat{D}_{\mathrm{MMD}}^2$ does not go to one, even when $N,M \to \infty$, which implies that asymptotically the separation boundary of such test is of order  $(N+M)^{\frac{-2\theta}{2\theta+1}}$. For the threshold $\gamma_1$, we can also show a non asymptotic result that if $\Delta_{N,M} < d_{\alpha}(N+M)^{\frac{-2\theta}{2\theta+1}}$ for some  $d_{\alpha}>0$, then $\inf_{(P,Q)\in \PP}P_{H_1}\{\hat{D}_{\mathrm{MMD}}^2  \geq \gamma\} < \delta.$  However, for the threshold $\gamma_2$, since our proof technique depends on the asymptotic distribution of $\hat{D}_{\mathrm{MMD}}^2$, the result is presented in the asymptotic setting of $N, M \to \infty$. \vspace{1mm}\\
(iii) The condition in \eqref{Eq:rangecond} implies that $u\in\emph{Ran}(\T^\theta)$ for all $P,Q\in\mathcal{P}$. Note that $\emph{Ran}(\T^{1/2})=\h$, i.e., $u\in \h$ if $\theta=\frac{1}{2}$ and for $\theta>\frac{1}{2}$, $\emph{Ran}(\T^{\theta})\subset\h$. When $\theta<\frac{1}{2}$, $u\in L^2(R)\backslash\h$ with the property that: for all $G>0$, $\exists f \in \h$ such that $\norm{f}_{\h} \leq G$ and $\norm{u-f}_{\Lp}^2\lesssim G^{\frac{-4\theta}{1-2\theta}}$. In other words, $u$ can be approximated by some function in an RKHS ball of radius $G$ with the approximation error decaying polynomially in $G$ \citep[Theorem 4.1]{Cucker}.\vspace{1mm}\\
(iv) The uniform boundedness condition $\sup_i\norm{\phi_i}_{\infty} < \infty$ does not hold in general, for example, see \citet[Theorem 5]{unibound}, which shows that for $\X=S^{d-1}$, where $S^{d-1}$ denotes the $d$-dimensional unit sphere,  $\sup_i\norm{\phi_i}_{\infty}= \infty$, for all $d\geq3$ for any kernel of the form $K(x,y)=f(\langle x,y\rangle_2),\,x,y\in \X$ with $f$ being continuous. An example of such a kernel is the Gaussian kernel on $S^{d-1}$. On the other hand, when $d=2$, the Gaussian kernel satisfies the uniform boundedness condition. Also, when $\X \subset R^d$, the uniform boundedness condition is satisfied by the Gaussian kernel \citep{Steinwart2006AnED}. 
In this paper, we provide results both with and without the assumption of $\sup_i\norm{\phi_i}_{\infty} < \infty$ to understand the impact of the assumption on the behavior of the test. We would like to mention that this uniform boundedness condition has been used in the analysis of the impact of regularization in kernel learning (see \citealp[p. 531]{Mendelson-10}).\vspace{1mm}\\
(v) Theorem~\ref{thm: MMD} can be viewed as a generalization and extension of Theorem 1 in \citet{Krishna}, which shows the separation boundary of the goodness-of-fit test, $H_0:P=P_0$ vs. $H_1:P\ne P_0$ based on a V-statistic estimator of $D_{\mathrm{MMD}}^2$ to be of the order $N^{-1/2}$ when $\theta = \frac{1}{2}$. In their work, the critical level is chosen from the asymptotic distribution of the test statistic under the null with $P_0$ being known, assuming the uniform boundedness of the eigenfunctions of $\T$ and $\int K(\cdot,x)\,dP_0(x)=0$. Note that the zero mean condition is not satisfied by many popular kernels including the Gaussian and Mat\'{e}rn kernels. In contrast, Theorem 3.1 deals with a two-sample setting based on a $U$-statistic estimator of $D_{\mathrm{MMD}}^2$, with no requirement of the uniform boundedness assumption and $\int K(\cdot,x)\,dR(x)=0$, while allowing arbitrary $\theta>0$, and the critical levels being non-asymptotic (permutation and concentration-based).    
\end{remark}

\vspace{-3mm}
The following result provides general conditions on the minimax separation rate w.r.t.~$\mathcal{P}$, which together with Corollaries \ref{coro:poly-minimax} and  \ref{coro:exp-minimax} demonstrates the non-optimality of the MMD tests presented in Theorem~\ref{thm: MMD}.

\begin{theorem}[Minimax separation boundary] \label{thm:minimax}
Suppose $\lambda_i \asymp L(i)$, where $L(\cdot)$ is a strictly decreasing function on $(0,\infty)$, and $M \leq N \leq DM$. Then, for any $0\leq \delta \leq 1-\alpha,$ there exists $c(\alpha,\delta)$  such that if 
$$(N+M)\Delta_{N,M}  \leq  c(\alpha,\delta) \sqrt{\min\left\{L^{-1}\left(\Delta_{N,M}^{1/2\theta}\right),L^{-1}\left(\Delta_{N,M}\right)\right\}}$$ then $$R^*_{\Delta_{N,M}}:= \inf_{\phi \in \Phi_{N,M,\alpha}} R_{\Delta_{N,M}}(\phi)> \delta,$$  where $R_{\Delta_{N,M}}(\phi):=\sup_{(P,Q)\in\PP}\E_{P^N \times Q^M}[1-\phi].$\\
Furthermore if $\sup_{k}\norm{\phi_k}_{\infty} < \infty$, then the above condition on $\Delta_{N,M}$ can be replaced by
$$(N+M)\Delta_{N,M}  \leq  c(\alpha,\delta) \sqrt{\min\left\{L^{-1}\left(\Delta_{N,M}^{1/2\theta}\right),\Delta_{N,M}^{-2}\right\}}.$$\vspace{-4mm}
\end{theorem}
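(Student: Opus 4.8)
The plan is to use Ingster's $\chi^2$-divergence method for lower-bounding the minimax Type-II error. First I would fix the null $P_0 = Q_0 = R$ with a common reference measure, and construct a finite family of alternatives $(P_\omega, Q_\omega)_{\omega \in \{\pm 1\}^m}$ indexed by sign vectors, by perturbing along the eigenfunctions $\tilde\phi_i$ of $\T$. Concretely, I would set $u_\omega := \frac{dP_\omega}{dR} - 1 = \sum_{i \in S} \omega_i a_i \tilde\phi_i$ for a suitable band of indices $S$ (say $S = \{m+1, \dots, 2m\}$ or a similar dyadic block chosen so $L(i)$ is roughly constant on $S$) and amplitudes $a_i$, and correspondingly $\frac{dQ_\omega}{dR} - 1 = -u_\omega$, so that $R = \frac{P_\omega + Q_\omega}{2}$ is preserved. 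This is exactly the construction alluded to in Remark~\ref{rem:mmd11}: the centering built into $\id$ guarantees $\langle \tilde\phi_i, 1\rangle_{\Lp} = 0$, so $u_\omega \perp 1$ in $\Lp$ and $P_\omega, Q_\omega$ are genuine probability measures (need $\|u_\omega\|_\infty$ small so densities stay nonnegative — this is where $\sup_k\|\phi_k\|_\infty < \infty$ enters to give the improved second bound, and without it one pays an $\|\tilde\phi_i\|_\infty$ or $L^2$-truncation cost via the range condition).

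The three constraints to balance are: (a) membership in $\PP$, i.e., $u_\omega \in \range(\T^\theta)$ with the norm bound and $\underline\rho^2(P_\omega, Q_\omega) = \|u_\omega\|_{\Lp}^2 = \sum_{i\in S} a_i^2 \ge \Delta_{N,M}$; (b) the range/smoothness budget $\|\T^{-\theta} u_\omega\|_{\Lp}^2 = \sum_{i \in S} a_i^2 \lambda_i^{-2\theta} = \sum_{i \in S} a_i^2 L(i)^{-2\theta} = O(1)$; and (c) small $\chi^2$-affinity between the mixture $\bar P := 2^{-m}\sum_\omega P_\omega^N \otimes Q_\omega^M$ and the null $R^N \otimes R^M$. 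The standard computation gives $\chi^2(\bar P \,\|\, R^{N+M}) + 1 = \E_{\omega,\omega'}\prod_{i \in S}(1 + a_i^2 \omega_i\omega_i')^{N}(1+a_i^2\omega_i\omega_i')^{M} \le \exp\!\big(c(N+M)^2 \sum_{i\in S} a_i^4\big)$ after the usual symmetrization over independent sign flips and $\log(1+x)\le x$; so we need $(N+M)^2 \sum_{i\in S} a_i^4 \lesssim 1$ to force $R^*_{\Delta_{N,M}} > \delta$ via Le Cam / Ingster.

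Choosing $a_i^2 \equiv a^2$ constant on $|S| = m$ indices, constraint (a) reads $m a^2 \ge \Delta$; (b) reads $m a^2 L(m)^{-2\theta} \lesssim 1$, i.e. effectively $\Delta \lesssim L(m)^{2\theta}$, equivalently $m \lesssim L^{-1}(\Delta^{1/2\theta})$; and (c) reads $(N+M)^2 m a^4 \lesssim 1$, i.e. $(N+M)^2 \Delta^2 / m \lesssim 1$, i.e. $(N+M)\Delta \lesssim \sqrt{m}$. Optimizing $m$ as large as allowed gives $m \asymp L^{-1}(\Delta^{1/2\theta})$, yielding the term $\sqrt{L^{-1}(\Delta^{1/2\theta})}$. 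The second term $\sqrt{L^{-1}(\Delta)}$ (resp.\ $\sqrt{\Delta^{-2}}$ under uniform boundedness) arises from the complementary regime where the binding constraint is instead the requirement that the perturbation densities be nonnegative / bounded: without the eigenfunction bound one must additionally ensure $a^2 \sum_{i\in S}\tilde\phi_i^2$ is controlled, and a band of width $m$ with $L(m) \asymp \Delta$ (so $a^2 \asymp \Delta/m \asymp$ const) is the natural choice, giving $m \asymp L^{-1}(\Delta)$; with $\sup_k\|\phi_k\|_\infty < \infty$ one can instead spread mass across $m \asymp \Delta^{-2}$ indices with tiny amplitudes and the nonnegativity constraint is automatic, yielding $\Delta^{-2}$. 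Taking the minimum over the two admissible band choices gives the stated condition.

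The main obstacle I anticipate is \emph{handedling the nonnegativity of the densities} cleanly while keeping $u_\omega \in \range(\T^\theta)$: the eigenfunctions $\tilde\phi_i$ need not be bounded (Remark (iv)), so one cannot simply require $\|u_\omega\|_\infty \le 1$. The fix is to either (i) invoke the uniform boundedness assumption for the sharper bound, or (ii) in its absence, only use the band of indices where a Bernstein/truncation argument controls $\|\sum_{i\in S}\tilde\phi_i^2\|_{L^1(R)} = |S|$ in $L^1$ and pass to nonnegative densities by a truncation/modification of $u_\omega$ on a small-probability set, absorbing the error into the $\Delta$-separation constant — this is precisely what forces the extra $L^{-1}(\Delta)$ branch. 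A secondary technical point is the symmetrization step that turns the product over $\omega,\omega'$ into $\exp(c(N+M)^2\sum a_i^4)$: one must be careful that the $N$-sample and $M$-sample contributions both appear, using $M \le N \le DM$ so that $N+M \asymp N \asymp M$, and that cross terms between the $P$-part and $Q$-part factor correctly because $u_\omega$ for $P$ and $-u_\omega$ for $Q$ are perfectly anticorrelated, which only strengthens (does not weaken) the bound.
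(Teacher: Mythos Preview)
Your overall architecture (Ingster's mixture prior built from eigenfunction perturbations with $P_\omega+Q_\omega=2R$) is the same as the paper's, and your handling of the unbounded case essentially works: with a dyadic block $S$ of size $m$, the RKHS bound $|u_\omega(x)|=|\langle K(\cdot,x)-\mu_R,\,a\sum_{i\in S}\omega_i\phi_i\rangle_\h|\le 2\sqrt\kappa\,a\bigl(\sum_{i\in S}\lambda_i^{-1}\bigr)^{1/2}\asymp\sqrt{\Delta/L(m)}$ gives $m\lesssim L^{-1}(\Delta)$ directly, which is cleaner than the truncation you sketch but lands on the same constraint.

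The real gap is in the bounded-eigenfunction branch. With a dense Rademacher prior on all $m$ indices and $a^2=\Delta/m$, the worst-case sup bound is $|u_\omega(x)|\le 2C\,ma=2C\sqrt{m\Delta}$, which forces $m\lesssim\Delta^{-1}$, not $\Delta^{-2}$. Your sentence ``spread mass across $m\asymp\Delta^{-2}$ indices \ldots\ the nonnegativity constraint is automatic'' is simply false: at $m=\Delta^{-2}$ one has $ma=\Delta^{-1/2}\to\infty$. With only $m\lesssim\Delta^{-1}$ available, the resulting lower bound $(N+M)\Delta\lesssim\sqrt m\le\Delta^{-1/2}$ is strictly weaker than the theorem's $(N+M)\Delta\lesssim\Delta^{-1}$, and in the polynomial-decay corollary you would recover the rate only for $\theta\ge 1/(2\beta)$ rather than $\theta\ge 1/(4\beta)$.

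The fix, and the key idea you are missing, is a \emph{sparse} prior: take a pool of $B_N$ indices and let each alternative activate a random subset of size $C_N=\lfloor\sqrt{B_N}\rfloor$, setting $u_k=a_N\sum_{i\in\text{subset}}\bar\phi_i$ with $a_N=\sqrt{\Delta/C_N}$. Now the sup-norm involves only $C_N$ terms, giving $|u_k|\le 2C\,a_NC_N=2C\sqrt{\Delta}\,B_N^{1/4}$, hence $B_N\lesssim\Delta^{-2}$. The $\chi^2$ computation changes: the overlap of two random size-$C_N$ subsets of $[B_N]$ is hypergeometric with mean $C_N^2/B_N\approx 1$, and one needs Ingster's combinatorial bound $\sum_{i=0}^{C_N}\frac{H_i}{i!}e^{Na_N^2 i}\le\exp\bigl((1+r)e^{N\Delta B_N^{-1/2}}-1\bigr)$ (with $H_i$ the hypergeometric weights) rather than the Rademacher $\cosh$ bound. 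This yields the condition $N\Delta\lesssim\sqrt{B_N}$ with the larger admissible $B_N$, which is exactly the stated theorem.

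Two minor points. First, your intermediate factorization $\prod_{i\in S}(1+a_i^2\omega_i\omega_i')^N$ is not correct: the inner product $\langle u_\omega,u_{\omega'}\rangle=\sum_i a_i^2\omega_i\omega_i'$ sits inside a single $(1+\cdot)^{N+M}$, not a product over $i$; your final bound via $(1+x)^{N+M}\le e^{(N+M)x}$ and $\E_\sigma e^{t\sum\sigma_i}=\prod_i\cosh(t)$ is nonetheless right. Second, the paper reduces the two-sample problem to a one-sample affinity via $\Psi_Q(X):=\E_{Q^M}\phi(X,Y)$ and then bounds $\E_{R^N}\bigl[\bigl(J^{-1}\sum_k dP_k^N/dR^N\bigr)^2\bigr]$ separately, rather than working with the full two-sample mixture; either route works, but the one-sample reduction avoids the ``perfect anticorrelation'' bookkeeping you flag.
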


\begin{corollary}[Minimax separation boundary-Polynomial decay] \label{coro:poly-minimax}
Suppose $\lambda_i \asymp i^{-\beta}$, $\beta>1$. Then $$\Delta^{*}_{N,M}  \asymp  (N+M)^{\frac{-4\theta\beta}{4\theta\beta+1}},$$ provided 
one of the following holds: (i) $\theta \geq \frac{1}{2}$, (ii) $\sup_{i}\norm{\phi_i}_{\infty} < \infty$, $\theta \geq \frac{1}{4\beta}.$\vspace{-4mm}
\end{corollary}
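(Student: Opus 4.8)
The plan is to derive Corollary~\ref{coro:poly-minimax} from Theorem~\ref{thm:minimax} by specializing $L(i) = i^{-\beta}$ and solving the resulting implicit inequality for the largest admissible $\Delta_{N,M}$, which then matches (up to constants) the positive-rate result one gets from the Oracle test (Corollaries~\ref{coro:poly}, \ref{coro:poly:perm}), thereby pinning down $\Delta^*_{N,M}$ from both sides. For the lower bound on $\Delta^*$ (the content of this corollary), I would compute $L^{-1}(t) = t^{-1/\beta}$, so that $L^{-1}(\Delta^{1/2\theta}) \asymp \Delta^{-1/(2\theta\beta)}$ and $L^{-1}(\Delta) \asymp \Delta^{-1/\beta}$. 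Since $\theta \geq \frac{1}{2}$ implies $2\theta \geq 1$, we have $\Delta^{-1/(2\theta\beta)} \leq \Delta^{-1/\beta}$ for $\Delta \leq 1$, so the minimum in Theorem~\ref{thm:minimax} is attained by the first term, $\Delta^{-1/(2\theta\beta)}$. Substituting, the condition of Theorem~\ref{thm:minimax} becomes
\begin{equation*}
(N+M)\Delta_{N,M} \leq c(\alpha,\delta)\, \Delta_{N,M}^{-1/(4\theta\beta)},
\end{equation*}
i.e. $\Delta_{N,M}^{1 + 1/(4\theta\beta)} \leq c(\alpha,\delta) (N+M)^{-1}$, which rearranges to $\Delta_{N,M} \lesssim (N+M)^{-\frac{4\theta\beta}{4\theta\beta+1}}$. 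Hence for any $\Delta_{N,M}$ of order strictly smaller than this, $R^*_{\Delta_{N,M}} > \delta$, giving the minimax lower bound $\Delta^*_{N,M} \gtrsim (N+M)^{-\frac{4\theta\beta}{4\theta\beta+1}}$.

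For the case $\theta < \frac{1}{2}$ with $\sup_i \norm{\phi_i}_\infty < \infty$, I would instead use the second form of the condition in Theorem~\ref{thm:minimax}, namely $(N+M)\Delta_{N,M} \leq c(\alpha,\delta)\sqrt{\min\{L^{-1}(\Delta_{N,M}^{1/2\theta}), \Delta_{N,M}^{-2}\}}$. Here $L^{-1}(\Delta^{1/2\theta}) \asymp \Delta^{-1/(2\theta\beta)}$ and we compare the exponents $\frac{1}{2\theta\beta}$ versus $2$: the first is the minimum precisely when $\frac{1}{2\theta\beta} \leq 2$, i.e. $\theta \geq \frac{1}{4\beta}$, which is exactly the hypothesis (ii). Under that hypothesis the minimum is again $\Delta^{-1/(2\theta\beta)}$ and the same algebra yields $\Delta_{N,M} \lesssim (N+M)^{-\frac{4\theta\beta}{4\theta\beta+1}}$, so the lower bound on $\Delta^*$ holds in this regime too. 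The matching upper bound — that the Oracle/permutation test achieves separation of this order — is quoted from the later corollaries (valid since $\theta \geq \frac{1}{4\beta}$ in case (ii) and $\theta > \frac12$ in case (i) both lie in the admissible range of those results), so combining the two directions gives $\Delta^*_{N,M} \asymp (N+M)^{-\frac{4\theta\beta}{4\theta\beta+1}}$.

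The only genuinely delicate point is bookkeeping of the two-sided matching: one must check that the constants $c(\alpha,\delta)$ appearing in the lower bound and the (different) constants in the achievability corollaries are both of the claimed form and, more importantly, that the exponent $\frac{4\theta\beta}{4\theta\beta+1}$ is literally the same on both sides — which it is, since the achievability analysis of $\eta_\lambda$ with the optimal $\lambda \asymp (N+M)^{-\frac{2}{4\theta\beta+1}}$ produces exactly this rate. A secondary subtlety is that Theorem~\ref{thm:minimax} is an implicit statement ("if $\Delta_{N,M}$ satisfies \ldots then $R^* > \delta$"), so strictly speaking the corollary's phrasing $\Delta^*_{N,M} \asymp (N+M)^{-\frac{4\theta\beta}{4\theta\beta+1}}$ is shorthand for: no test is consistent below this order (from Theorem~\ref{thm:minimax}) and some test is consistent at this order (from the achievability results); I would make that reading explicit. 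I do not expect any real obstacle beyond this careful matching of rates and constants, since the heavy lifting — the construction of the hard instance of $u$ from the span of eigenfunctions orthogonal to constants, and the information-theoretic two-point/Ingster-$\chi^2$ argument — is already packaged inside Theorem~\ref{thm:minimax}.
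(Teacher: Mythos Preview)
Your proposal is correct and follows essentially the same route as the paper's proof: specialize Theorem~\ref{thm:minimax} with $L^{-1}(t)\asymp t^{-1/\beta}$, identify which argument of the $\min$ is active under the hypothesis on $\theta$, solve the resulting implicit inequality to obtain the lower bound $\Delta^*_{N,M}\gtrsim (N+M)^{-4\theta\beta/(4\theta\beta+1)}$, and match it with the achievability rate from Corollary~\ref{coro:poly} (with $\xi=\infty$). The only cosmetic difference is that the paper first rewrites the condition as $\Delta_{N,M}\lesssim\min\{(N+M)^{-4\theta\beta/(4\theta\beta+1)},(N+M)^{-2\beta/(2\beta+1)}\}$ (resp.\ $\min\{(N+M)^{-4\theta\beta/(4\theta\beta+1)},(N+M)^{-1/2}\}$ in the bounded-eigenfunction case) and then picks out the binding term, whereas you determine which branch of the $\min$ inside the square root is active first and then solve; the algebra is identical.

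Two small cleanups: in case~(i) the hypothesis is $\theta\geq\tfrac12$, not $\theta>\tfrac12$; and in case~(ii) the bounded-eigenfunction condition from Theorem~\ref{thm:minimax} applies for all $\theta>0$, not only $\theta<\tfrac12$, so you should simply invoke it directly under $\theta\geq\tfrac{1}{4\beta}$ rather than framing it as the complementary range.
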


\begin{corollary}[Minimax separation boundary-Exponential decay] \label{coro:exp-minimax}
Suppose $\lambda_i \asymp e^{-\tau i}$, $\tau>0$, $\theta > 0$. Then for all $(N+M)\geq k_{\alpha,\delta}$ we have $$\Delta^{*}_{N,M}  \asymp  \frac{\sqrt{\log (N+M)}}{N+M},$$ 
provided 
one of the following holds: (i) $\theta \geq \frac{1}{2}$, (ii) $\sup_{i}\norm{\phi_i}_{\infty} < \infty$, $\theta >0.$

\end{corollary}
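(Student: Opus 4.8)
I would prove the two halves of $\Delta^*_{N,M}\asymp\sqrt{\log(N+M)}/(N+M)$ separately: the lower bound $\Delta^*_{N,M}\gtrsim\sqrt{\log(N+M)}/(N+M)$ by specializing Theorem~\ref{thm:minimax} to $L(i)=e^{-\tau i}$, and the matching upper bound by invoking the consistency of the spectral regularized (Oracle, or permutation) test established in Sections~\ref{subsec:oracle} and~\ref{sec:perm}. For the lower bound, note $L(i)=e^{-\tau i}$ gives $L^{-1}(x)=\tfrac1\tau\log(1/x)$ on $(0,1)$. The plan is to exhibit a small constant $C=C(\alpha,\delta,\theta,\tau)>0$ for which $\Delta_{N,M}:=C\sqrt{\log(N+M)}/(N+M)$ satisfies the hypothesis of Theorem~\ref{thm:minimax} once $N+M$ exceeds a threshold $k_{\alpha,\delta}$ (also absorbing dependence on $\theta,\tau$ and the hidden constants in $\lambda_i\asymp e^{-\tau i}$); Theorem~\ref{thm:minimax} then yields $R^*_{\Delta_{N,M}}>\delta$, i.e.\ $\Delta^*_{N,M}\ge\Delta_{N,M}$. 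The first step is the self-consistency check that $\sqrt{\log(N+M)}/(N+M)$ is a fixed point of $(N+M)\Delta\asymp\sqrt{\log(1/\Delta)}$: since
$$\log(1/\Delta_{N,M})=\log(N+M)-\tfrac12\log\log(N+M)-\log C,$$
we get $\tfrac12\log(N+M)\le\log(1/\Delta_{N,M})\le 2\log(N+M)$ for $N+M\ge k_{\alpha,\delta}$. The second step evaluates the right-hand side of the condition in Theorem~\ref{thm:minimax}: because $L^{-1}$ is decreasing and $\Delta_{N,M}<1$,
$$\min\Big\{L^{-1}\big(\Delta_{N,M}^{1/2\theta}\big),\,L^{-1}\big(\Delta_{N,M}\big)\Big\}=\tfrac1\tau\min\big\{\tfrac1{2\theta},1\big\}\,\log(1/\Delta_{N,M})\ \ge\ c''\log(N+M)$$
for a constant $c''=c''(\theta,\tau)>0$, and if moreover $\sup_k\norm{\phi_k}_{\infty}<\infty$ the same lower bound holds with $L^{-1}(\Delta_{N,M})$ replaced by $\Delta_{N,M}^{-2}\asymp(N+M)^2/\log(N+M)$, which is even larger. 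Hence the right-hand side of the hypothesis is at least $c(\alpha,\delta)\sqrt{c''\log(N+M)}$ while its left-hand side is $(N+M)\Delta_{N,M}=C\sqrt{\log(N+M)}$, so choosing $C\le c(\alpha,\delta)\sqrt{c''}$ (and, in the bounded-eigenfunction case, also small enough that $\Delta_{N,M}^{-2}$ indeed dominates) closes the argument. I would also stress that the lower bound needs neither $\theta\ge\tfrac12$ nor uniform boundedness of the $\phi_k$: under exponential decay $L^{-1}(\Delta^{1/2\theta})$ and $L^{-1}(\Delta)$ are of the same logarithmic order for every fixed $\theta>0$, so cases (i) and (ii) yield the same lower bound.

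\textbf{Upper bound and combination.} For $\Delta^*_{N,M}\lesssim\sqrt{\log(N+M)}/(N+M)$ I would invoke the Type~II error bound for the spectral regularized test over $\PP$. Under exponential decay the effective dimension satisfies $\mathcal N_2(\lambda)=\sum_i\lambda_i^2/(\lambda_i+\lambda)^2\asymp\log(1/\lambda)$, so choosing $\lambda$ polynomially small in $N+M$ (so that $\mathcal N_2(\lambda)\asymp\log(N+M)$ while the bias $\lambda^{2\theta}$ stays well below $\sqrt{\log(N+M)}/(N+M)$) and feeding this into Theorem~\ref{thm:Type II} (Corollary~\ref{coro:exp}) gives a consistent $\alpha$-level test over $\PP$ as soon as $\underline{\rho}^2(P,Q)\gtrsim\sqrt{\log(N+M)}/(N+M)$; this is precisely the step where condition (i) ($\theta\ge\tfrac12$) or condition (ii) ($\sup_i\norm{\phi_i}_{\infty}<\infty$, $\theta>0$) enters, being what allows the bias and stochastic terms to be balanced at the stated rate. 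Combining with the lower bound gives $\Delta^*_{N,M}\asymp\sqrt{\log(N+M)}/(N+M)$ for $N+M\ge k_{\alpha,\delta}$, which in particular beats the MMD separation $(N+M)^{-2\theta/(2\theta+1)}$ of Theorem~\ref{thm: MMD}.

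\textbf{Main obstacle.} I do not anticipate a genuine obstacle in deducing the corollary from Theorem~\ref{thm:minimax}; the only delicate point is the logarithmic bookkeeping — verifying the self-consistency $\log(1/\Delta_{N,M})\asymp\log(N+M)$, tracking the constant $C$ against $c(\alpha,\delta)$ and the $\asymp$-constants in $\lambda_i\asymp e^{-\tau i}$, and (in the bounded-eigenfunction case) checking the crossover between $\Delta_{N,M}^{-2}$ and $L^{-1}(\Delta_{N,M}^{1/2\theta})$ — all of which is exactly what the requirement $N+M\ge k_{\alpha,\delta}$ absorbs. The substantive ingredient, namely the matching achievability with its $\theta$-restrictions, lives in the later analysis of the spectral regularized test and is not reproved here.
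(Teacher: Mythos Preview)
Your approach is essentially the same as the paper's: both derive the lower bound by specializing Theorem~\ref{thm:minimax} with $L(i)=e^{-\tau i}$ (so $L^{-1}(x)=\tfrac1\tau\log(1/x)$) and obtain the matching upper bound from Corollary~\ref{coro:exp} with $\xi=\infty$, where the restriction $\theta\ge\tfrac12$ (or $\sup_i\Vert\phi_i\Vert_\infty<\infty$) enters only on the achievability side. The one noteworthy difference is in how the lower bound is extracted from Theorem~\ref{thm:minimax}: the paper first shows $R^*_{\Delta}>\delta$ for $\Delta=(\log(N+M))^b/(N+M)$ with any $b<\tfrac12$ and then ``takes supremum over $b<\tfrac12$'' to reach $\sqrt{\log(N+M)}/(N+M)$, whereas you plug in $\Delta=C\sqrt{\log(N+M)}/(N+M)$ directly, verify the fixed-point relation $\log(1/\Delta)\asymp\log(N+M)$, and choose $C\le c(\alpha,\delta)\sqrt{c''}$ to close the inequality. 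Your route is slightly cleaner---the paper's supremum step is a little loose since the implicit constant could a priori depend on $b$---while the paper's formulation makes it transparent that any sub-$\sqrt{\log}$ exponent already suffices. Either way the substance is identical.
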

\begin{remark} \label{rem:minimax}
(i) Observe that for any bounded kernel satisfying $\sup_{x} K(x,x) \leq \K,$ $\T$ is a trace class operator. This implies $L(\cdot)$ has to satisfy $i L(i)\rightarrow 0$ as $i\rightarrow\infty$. 
Without further assumptions on the decay rate (i.e., if we allow the space $\PP$ to include any decay rate of order $o(i^{-1})$), then we can show that $\Delta^{*}_{N,M}  \asymp  (N+M)^{\frac{-4\theta}{4\theta+1}},$ provided that $\theta \geq \frac{1}{2}$ (or $\theta \geq \frac{1}{4}$ in the case of  $\sup_{i}\norm{\phi_i}_{\infty} < \infty$). However, assuming specific decay rates (i.e., considering a smaller space $\PP$), the separation boundary can be improved, as shown in Corollaries \ref{coro:poly-minimax} and \ref{coro:exp-minimax}. 
Note that $\inf_{\beta>1} \frac{4\theta\beta}{4\theta\beta+1}=\frac{4\theta}{4\theta+1}>\frac{2\theta}{2\theta+1}$ and $1>\frac{2\theta}{2\theta+1}$ for any $\theta>0$, implying that the separation boundary of MMD is larger than the minimax separation boundary w.r.t.~$\mathcal{P}$ irrespective of the decay rate of the eigenvalues of $\mathcal{T}$.\vspace{1mm}\\
(ii) The minimax separation boundary depends only on $\theta$ (the degree of smoothness of $u$), and $\beta$ (the decay rate of the eigenvalues), with $\beta$ controlling the smoothness of the RKHS. While one may think that the minimax rate is independent of $d$, it is actually not the case since $\beta$ depends on $d$. \citet[Section 5.2]{unibound} provides examples of kernels with explicit decay rates of eigenvalues. For example, when $\X=S^{d-1}$, $d\geq 2$, (i) The Spline kernel, defined as $K(x,t)=1+\frac{1}{|S^{d-1}|}\sum_{k=1}^{\infty} \lambda_k c(d,k)P_k(d;\inner{x}{t}_2)$, where $P_k(d;t)$ denotes the Legendre polynomial of degree $k$ in dimension $d,$ and $c(d,k)$ is some normalization constant depending on $d$ and $k$, has $\lambda_k \asymp \left(k(k+d-2)\right)^{-\beta}$, for $\beta > \frac{d-1}{2},$ (ii) The polynomial kernel with degree $h$, defined as $K(x,t)=(1+\inner{x}{t}_2)^h$, has $(k+h+d-2)^{-2h-d+\frac{3}{2}}\lesssim \lambda_k \lesssim (k+h+d-2)^{-h-d+\frac{3}{2}}$, and (iii) The Gaussian kernel with bandwidth $\sigma^2$ satisfies $\lambda_k \asymp \left(\frac{2e}{\sigma^2}\right)^{k}(2k+d-2)^{-k-\frac{d-1}{2}}$, for $\sigma > \sqrt{2/d}.$
\end{remark}

\vspace{-3mm}

\section{Spectral regularized MMD test}\label{Sec:spec}
To address the limitation of the MMD test, in this section, we propose a spectral regularized version of the MMD test and show it to be minimax optimal w.r.t.~$\mathcal{P}$. To this end, we define the \emph{spectral regularized discrepancy} as
$$\eta_{\lambda}(P,Q) : = 4\inner{\T\gl(\T)u}{u}_{\Lp},$$
where the spectral regularizer, $g_\lambda:(0,\infty)\rightarrow (0,\infty)$ satisfies $\lim_{\lambda\rightarrow 0} xg_\lambda(x)\asymp 1$  (more concrete assumptions on $\gl$ will be introduced later). By functional calculus, we define $g_\lambda$ applied to any compact, self-adjoint operator $\mathcal{B}$ defined on a separable Hilbert space, $H$ as 
\begin{equation}g_{\lambda}(\B) : = \sum_{i\geq 1} g_{\lambda}(\tau_i) (\psi_i \otimes_H \psi_i) + g_{\lambda}(0)\left(\Id - \sum_{i\geq 1} \psi_i \otimes_H \psi_i\right),\label{Eq:glambda}\end{equation}
where $\B$ has the spectral representation, $\B= \sum_{i}\tau_i \psi_i \otimes_H \psi_i$ with $(\tau_i,\psi_i)_i$ being the eigenvalues and eigenfunctions of $\B$. A popular example of $g_\lambda$ is $g_\lambda(x)=\frac{1}{x+\lambda}$, yielding $g_\lambda(\mathcal{B})=(\mathcal{B}+\lambda \Id)^{-1}$, which is well known as the \emph{Tikhonov regularizer}. We will later provide more examples of spectral regularizers that satisfy additional assumptions. 
\vspace{-2mm}
\begin{remark}
We would like to highlight that the common definition of $g_\lambda(\mathcal{B})$ in the inverse problem literature (see \citealp[Section 2.3]{Engl.et.al}) does not include the term $g_{\lambda}(0)(\Id - \sum_{i\geq 1} \psi_i \otimes_H \psi_i)$, which represents the projection onto the space orthogonal to $\emph{span}\{\psi_i:i\in I\}$. The reason for adding this term is to ensure that $\gl(\B)$ is invertible whenever $\gl(0) \neq 0$. Moreover, the condition that $\gl(\B)$ is invertible will be essential for the power analysis of our test.
\end{remark}
\vspace{-2mm}
\noindent{Based} on the definition of $g_\lambda(\mathcal{T})$, it is easy to verify that $\T g_\lambda(\T)=\sum_{i\ge 1}\lambda_ig_\lambda(\lambda_i)(\tilde{\phi}_i\otimes_{L^2(R)} \tilde{\phi}_i)$ so that $\langle\T g_\lambda(\T)u,u\rangle_{\Lp}
=\sum_{i\ge 1} \lambda_i g_\lambda(\lambda_i)\langle u,\tilde{\phi}_i\rangle^2_{\Lp}
\rightarrow \sum_{i\ge 1} \langle u,\tilde{\phi}_i\rangle^2_{\Lp}\stackrel{(*)}{=}\Vert u\Vert^2_{\Lp}$ as $\lambda\rightarrow 0$ where $(*)$ holds if $u\in\text{span}\{\tilde{\phi}_i:i\ge 1\}$. In fact, it can be shown that $\Vert u\Vert^2_{L^2(R)}$ and $\langle\T g_\lambda(\T)u,u\rangle_{\Lp}$ are 
equivalent up to constants if $u\in \text{Ran}(\T^\theta)$ and $\lambda$ is large enough compared to $\Vert u\Vert^2_{\Lp}$ (see Lemma~\ref{lemma: bounds for eta}). 
Therefore, the issue with $D_{\text{MMD}}$ can be resolved by using $\eta_\lambda$ as a discrepancy measure to construct a test. In the following, we present details about the construction of the test statistic and the test using $\eta_\lambda$. To this end, we first provide an alternate representation for $\eta_\lambda$ which is very useful to construct the test statistic. Define $\Sigma_R:=\Sigma_{PQ}=\id^*\id:\mathscr{H}\rightarrow\mathscr{H}$, which is referred to as the \emph{covariance operator}. It can be shown \citep[Proposition C.2]{kpca} that $\Sigma_{PQ}$ is a positive, self-adjoint, trace-class operator, and can be written as
\begin{align}
 \Sigma_{PQ}&=\int_{\X} (K(\cdot,x)-\mu_R) \htens (K(\cdot,x)-\mu_R)\, dR(x)\nonumber\\
 &=\frac{1}{2} \int_{\X \times \X}(K(\cdot,x)-K(\cdot,y))\htens(K(\cdot,x)-K(\cdot,y))\,d\PQ(x)\,d\PQ(y),\label{Eq:cov-pop}
\end{align}
where $\mu_R = \int_{\X} K(\cdot,x)\,dR(x)$. Note that
\begin{align}
\eta_\lambda(P,Q)&=4\langle \T g_\lambda(\T)u,u\rangle_{\Lp}\stackrel{(\dagger)}{=}4\langle \id g_\lambda(\Sigma_{PQ})\id^*u,u\rangle_{\Lp}\nonumber\\
&=4\langle g_\lambda(\Sigma_{PQ})\id^*u,\id^*u\rangle_\mathscr{H}=\langle g_\lambda(\Sigma_{PQ}) (\mu_P-\mu_Q),\mu_P-\mu_Q\rangle_\mathscr{H}\nonumber\\
&=\norm{\gSL(\mu_P-\mu_Q)}_{\h}^2,\label{eq:cov-rep}
\end{align}
where $(\dagger)$ follows from Lemma \ref{lemma: bounds for g}(i) 
that states $\T\gl(\T) = \id \gl(\Sigma_{PQ}) \id^*$. Define $\Sigma_{PQ,\lambda} := \Sigma_{PQ}+\lambda \Id$.
\vspace{-3mm}
\begin{remark}\label{rem:zaid}
Suppose $g_\lambda(x) =\frac{1}{x+\lambda}$. Then $\gSL= \SgL$. Note that $\inner{\Sigma_{PQ}f}{f}_{\h}$ $= \norm{f-\E_{\PQ}f}^2_{\Lp}$ for any $f \in \h$, which implies $\inner{\Sigma_{PQ,\lambda}f}{f}_{\h} = \norm{f-\E_{\PQ}f}^2_{\Lp} + \lambda \norm{f}^2_{\h}$. Therefore, $\eta_\lambda(P,Q)$ in \eqref{eq:cov-rep} can be written as
\begin{align}
 \eta_\lambda(P,Q)&=\sup_{f\in\h\,:\,\langle \Sigma_{PQ,\lambda }f,f\rangle_{\h}\leq 1}\langle f,\mu_P-\mu_Q\rangle_{\h} \nonumber\\
 &=\sup_{f \in \h\, :\, \norm{f-\E_{\PQ}f}_{\Lp}^2 + \lambda\norm{f}_{\h}^2\leq 1} \int_{\mathcal{X}} f(x) \,d(P-Q)(x).\label{eq:alter-rep}
\end{align}
This means the regularized discrepancy involves test functions that belong to a growing ball in $\mathscr{H}$ as $\lambda\rightarrow 0$ in contrast to a fixed unit ball as in the case with $D^2_\emph{MMD}$ (see \eqref{eq:variational}). \citet{Krishna} considered a similar discrepancy in a goodness-of-fit test problem, $H_0:P=P_0$ vs. $H_1:P\ne P_0$ where $P_0$ is known, by using $\eta_\lambda(P_0,P)$ in \eqref{eq:alter-rep} but with $R$ being replaced by $P_0$. In the context of two-sample testing, \citet{Harchaoui} considered a discrepancy based on kernel Fisher discriminant analysis whose regularized version is given by 
\begin{align}
&\sup_{0\ne f\in\h}\frac{\langle f,\mu_P-\mu_Q\rangle_{\h}}{\langle f,(\Sigma_P+\Sigma_Q+\lambda \Id)f\rangle_{\h}}=\sup_{f\in\h\,:\,\langle (\Sigma_{P}+\Sigma_Q+\lambda \Id)f,f\rangle_{\h}\leq 1}\langle f,\mu_P-\mu_Q\rangle_{\h}\nonumber\\
&=\sup_{f \in \h\, :\, \Vert f-\E_{P}f\Vert_{L^2(P)}^2 + \Vert f-\E_{Q}f\Vert_{L^2(Q)}^2+\lambda\norm{f}_{\h}^2\leq 1} \int_{\mathcal{X}} f(x) \,d(P-Q)(x),\nonumber
\end{align}
where the constraint set in the above variational form is larger than the one in \eqref{eq:alter-rep} since $\Sigma_{PQ}=\frac{1}{4}\left[2\Sigma_P+2\Sigma_Q+(\mu_P-\mu_Q)\htens(\mu_P-\mu_Q)\right]$.
\end{remark}
\subsection{Test statistic}\label{subsec:test-statistic}
Define $A(x,y):=K(\cdot,x)-K(\cdot,y)$. Using the representation,
\begin{align}
\eta_\lambda(P,Q)=\int_{\X^4} \langle g_\lambda(\Sigma_{PQ}) A(x,y), A(u,w)\rangle_{\h}
\,dP(x)\,dP(u)\,dQ(y)\,dQ(w),\label{Eq:4-int}
\end{align}
obtained by expanding the r.h.s.~of \eqref{eq:alter-rep}, and of $\Sigma_{PQ}$ in \eqref{Eq:cov-pop}, we construct an estimator of $\eta_\lambda(P,Q)$ as follows, based on $\mathbb{X}_N$ and $\mathbb{Y}_M$. We first split the samples $(X_i)_{i=1}^N$ into $(X_i)_{i=1}^{N-s}$ and $(X^1_i)_{i=1}^s:=(X_i)_{i=N-s+1}^N$, and  $(Y_j)_{j=1}^M$ to  $(Y_j)_{j=1}^{M-s}$ and $(Y^1_j)_{j=1}^s:=(Y_j)_{j=M-s+1}^M$. Then, the samples $(X^1_i)_{i=1}^s$ and $(Y^1_j)_{j=1}^s$ are used to estimate the covariance operator $\Sigma_{PQ}$ while $(X_i)_{i=1}^{N-s}$ and $(Y_i)_{i=1}^{M-s}$ are used to estimate the mean elements $\mu_P$ and $\mu_Q$, respectively. Define $n:=N-s$ and $m:=M-s$. Using the form of $\eta_\lambda$ in \eqref{Eq:4-int}, we estimate it using a two-sample $U$-statistic \citep{Hoeffding},
\begin{equation}\label{stat_def}
  \stat:=\frac{1}{n(n-1)}\frac{1}{m(m-1)}\sum_{1\leq i\neq j \leq n}\sum_{1\leq i'\neq j' \leq m} h(X_i,X_j,Y_{i'},Y_{j'}),  
\end{equation}
where $$h(X_i,X_j,Y_{i'},Y_{j'}):= \inner{\gShh A(X_i,Y_{i'})}{\gShh A(X_j,Y_{j'})}_{\h},$$
and 
\begin{equation*}
\hat{\Sigma}_{PQ} :=\frac{1}{2s(s-1)}\sum_{i\neq j}^{s} (K(\cdot,Z_i)-K(\cdot,Z_j)) \htens (K(\cdot,Z_i)-K(\cdot,Z_j)),
\end{equation*}
which is a one-sample $U$-statistic estimator of $\Sigma_{PQ}$ based on $Z_i=\alpha_iX^1_i+(1-\alpha_i)Y^1_i$, for $1\leq i \leq s$, where $(\alpha_i)_{i=1}^s \stackrel{i.i.d.}{\sim} \text{Bernoulli}(\frac{1}{2})$. It is easy to verify that $(Z_i)_{i=1}^s \stackrel{i.i.d.}{\sim} R$.
Note that $\hat{\eta}_\lambda$ is not exactly a $U$-statistic since it involves $\hat{\Sigma}_{PQ}$, but conditioned on $(Z_i)_{i=1}^s$, one can see it is exactly a two-sample $U$-statistic. When $g_\lambda(x)=\frac{1}{x+\lambda}$, in contrast to our estimator which involves sample splitting, \citet{Harchaoui} estimate $\Sigma_P+\Sigma_Q$ using a pooled estimator, and $\mu_P$ and $\mu_Q$ through empirical estimators, using all the samples, thereby resulting in a kernelized version of Hotelling's $T^2$-statistic \citep{lehmann}. However, we consider sample splitting for two reasons: (i) To achieve independence between the covariance operator estimator and the mean element estimators, which leads to a convenient analysis, and (ii) to reduce the computational complexity of $\hat{\eta}_\lambda$ from $(N+M)^3$ to $(N+M)s^2$. By writing \eqref{stat_def} as 
\begin{eqnarray*}
    \stat&{}={}& \frac{1}{n(n-1)}\sum_{i\neq j}\inner{ g_{\lambda}^{1/2}(\hat{\Sigma}_{PQ})\kk(\cdot,X_i)}{g_{\lambda}^{1/2}(\hat{\Sigma}_{PQ})\kk(\cdot,X_j)}_{\h} \\ 
    &&\qquad+ \frac{1}{m(m-1)}\sum_{i\neq j}\inner{ g_{\lambda}^{1/2}(\hat{\Sigma}_{PQ})\kk(\cdot,Y_i)}{g_{\lambda}^{1/2}(\hat{\Sigma}_{PQ})\kk(\cdot,Y_j)}_{\h}\\ 
    &&\qquad\qquad-\frac{2}{nm}\sum_{i,j}\inner{ g_{\lambda}^{1/2}(\hat{\Sigma}_{PQ})\kk(\cdot,X_i)}{g_{\lambda}^{1/2}(\hat{\Sigma}_{PQ})\kk(\cdot,Y_j)}_{\h},
\end{eqnarray*}
the following result shows that $\hat{\eta}_\lambda$ can be computed through matrix operations and by solving a finite-dimensional eigensystem.
\vspace{-3mm}
\begin{theorem} \label{thm: computation}
Let $(\hat{\lambda}_i, \hat{\alpha_i})_i$ be the eigensystem of $\frac{1}{s} \Tilde{\hh}_s^{1/2}K_s\Tilde{\hh}_s^{1/2}$ where $K_s:=[K(Z_i,Z_j)]_{i,j \in [s]}$, $\hh_s= \Id_s -\frac{1}{s}\one_s \one_s^\top$, and $\Tilde{\hh}_s=\frac{s}{s-1}\hh_s$. Define $G := \sum_{i}\left( \frac{g_{\lambda}(\hat{\lambda}_i)-g_{\lambda}(0)}{\hat{\lambda}_i}\right)\hat{\alpha}_i\hat{\alpha}_i^\top.$ Then
\begin{equation*}
\stat=\frac{1}{n(n-1)}\left(\circled{\emph{\small{1}}}-\circled{\emph{\small{2}}}\right)+\frac{1}{m(m-1)}\left(\circled{\emph{\small{3}}}-\circled{\emph{\small{4}}}\right) - \frac{2}{nm}\circled{\emph{\small{5}}}, 
\end{equation*}
where $\circled{\emph{\small{1}}}=\one_n^\top A_1\one_n$, $\circled{\emph{\small{2}}}=\emph{Tr}(A_1)$, $\circled{\emph{\small{3}}}=\one_n^\top A_2\one_n$, $\circled{\emph{\small{4}}}=\emph{Tr}(A_2)$, and $$\circled{\emph{\small{5}}}=\one_m^\top\left(g_{\lambda}(0)K_{mn}+\frac{1}{s}K_{ms}\Tilde{\hh}^{1/2}_{s}G\Tilde{\hh}^{1/2}_{s}K_{ns}^\top\right)\one_n,$$ with $A_1:=g_{\lambda}(0)K_n+\frac{1}{s}K_{ns}\Tilde{\hh}^{1/2}_{s}G\Tilde{\hh}^{1/2}_{s}K_{ns}^\top$ and $A_2:=g_{\lambda}(0)K_m+\frac{1}{s}K_{ms}\Tilde{\hh}^{1/2}_{s}G\Tilde{\hh}^{1/2}_{s}K_{ms}^\top$. Here 
$K_n:=[K(X_i,X_j)]_{i,j \in [n]}$, $K_m:=[K(Y_i,Y_j)]_{i,j \in [m]}$, $[K(X_i,Z_j)]_{i \in [n],j \in [s]}$ $=:K_{ns}$,  \\
$K_{ms}:=[K(Y_i,Z_j)]_{i \in [m],j \in [s]}$, and  $K_{mn}:=[K(Y_i,X_j)]_{i \in [m],j \in [n]}$.
\end{theorem}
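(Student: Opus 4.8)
The plan is to reduce $\stat$ to finite-dimensional linear algebra by exploiting that $\hat\Sigma_{PQ}$ is finite-rank with range contained in $\text{span}\{K(\cdot,Z_i):i\in[s]\}$. Introduce the synthesis operator $\Xi:\R^s\to\h$, $\Xi e_i=K(\cdot,Z_i)$, so that $\Xi^*:\h\to\R^s$, $f\mapsto(f(Z_i))_{i=1}^s$, $\Xi^*\Xi=K_s$, and $(\Xi v)\htens(\Xi v)=\Xi(vv^\top)\Xi^*$ for $v\in\R^s$. First I would rewrite the covariance estimator: expanding the square in the definition of $\hat\Sigma_{PQ}$ via the elementary identity $\sum_{i\neq j}(a_i-a_j)\htens(a_i-a_j)=2s\sum_i a_i\htens a_i-2\big(\sum_i a_i\big)\htens\big(\sum_i a_i\big)$ with $a_i=K(\cdot,Z_i)$ and translating to $\Xi$ gives
\[
\hat\Sigma_{PQ}=\tfrac{1}{s-1}\,\Xi\big(\Id_s-\tfrac1s\one_s\one_s^\top\big)\Xi^*=\tfrac1s\,\Xi\,\Tilde{\hh}_s\,\Xi^*=\tfrac1s\big(\Xi\Tilde{\hh}_s^{1/2}\big)\big(\Xi\Tilde{\hh}_s^{1/2}\big)^*.
\]
Since $AA^*$ and $A^*A$ share the same nonzero spectrum, the nonzero eigenvalues of $\hat\Sigma_{PQ}$ equal those of $\tfrac1s\Tilde{\hh}_s^{1/2}\Xi^*\Xi\Tilde{\hh}_s^{1/2}=\tfrac1s\Tilde{\hh}_s^{1/2}K_s\Tilde{\hh}_s^{1/2}$; writing $(\hat\lambda_i,\hat\alpha_i)$ for the nonzero eigenpairs of the latter and using $\Tilde{\hh}_s^{1/2}\Tilde{\hh}_s^{1/2}=\Tilde{\hh}_s$ and $\Tilde{\hh}_s^{1/2}K_s\Tilde{\hh}_s^{1/2}\hat\alpha_i=s\hat\lambda_i\hat\alpha_i$, a direct check shows that $\psi_i:=(s\hat\lambda_i)^{-1/2}\,\Xi\Tilde{\hh}_s^{1/2}\hat\alpha_i$ is an orthonormal family of eigenfunctions of $\hat\Sigma_{PQ}$ with eigenvalues $\hat\lambda_i$. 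There being at most $s-1$ of these, all sums below are finite.

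Next I would put $g_\lambda(\hat\Sigma_{PQ})$ in matrix form. By \eqref{Eq:glambda}, $\gSh-g_\lambda(0)\Id=\sum_i(g_\lambda(\hat\lambda_i)-g_\lambda(0))\,\psi_i\htens\psi_i$, and substituting $\psi_i$ and using $(\Xi v)\htens(\Xi v)=\Xi(vv^\top)\Xi^*$,
\[
\gSh-g_\lambda(0)\Id=\tfrac1s\,\Xi\,\Tilde{\hh}_s^{1/2}\Big(\sum_i\tfrac{g_\lambda(\hat\lambda_i)-g_\lambda(0)}{\hat\lambda_i}\hat\alpha_i\hat\alpha_i^\top\Big)\Tilde{\hh}_s^{1/2}\Xi^*=\tfrac1s\,\Xi\,(\hgh)\,\Xi^*,
\]
that is, $\gSh=g_\lambda(0)\Id+\tfrac1s\Xi(\hgh)\Xi^*$ (the coefficient $(g_\lambda(\hat\lambda_i)-g_\lambda(0))/\hat\lambda_i$ being read as $0$ when $\hat\lambda_i=0$, consistent with $G$ being summed over nonzero eigenvalues).

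Finally I would substitute this into $\stat$. Since $g_\lambda\geq0$, $\gShh$ is a positive self-adjoint operator and, again by \eqref{Eq:glambda}, $(\gShh)^2=\gSh$ on all of $\h$; hence each summand of $\stat$ equals $\inner{\gSh K(\cdot,a)}{K(\cdot,b)}_\h$ for the relevant sample points $a,b$. Using $\inner{K(\cdot,a)}{K(\cdot,b)}_\h=K(a,b)$, $\Xi^*K(\cdot,a)=(K(Z_k,a))_{k=1}^s$, and the row/column conventions of $K_{ns},K_{ms},K_{mn}$, one obtains $\inner{\gShh K(\cdot,X_i)}{\gShh K(\cdot,X_j)}_\h=[A_1]_{ij}$, $\inner{\gShh K(\cdot,Y_i)}{\gShh K(\cdot,Y_j)}_\h=[A_2]_{ij}$, and $\inner{\gShh K(\cdot,X_i)}{\gShh K(\cdot,Y_j)}_\h=\big[g_\lambda(0)K_{mn}+\tfrac1s K_{ms}(\hgh)K_{ns}^\top\big]_{ji}$, with $A_1,A_2$ exactly as stated. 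Then $\sum_{i\neq j}[A_1]_{ij}=\one_n^\top A_1\one_n-\mathrm{Tr}(A_1)=\circled{1}-\circled{2}$ (likewise $\circled{3}-\circled{4}$ for the $Y$-block), the full cross double sum equals $\one_m^\top\big(g_\lambda(0)K_{mn}+\tfrac1s K_{ms}(\hgh)K_{ns}^\top\big)\one_n=\circled{5}$, and inserting the prefactors $\tfrac1{n(n-1)}$, $\tfrac1{m(m-1)}$, $\tfrac2{nm}$ yields the claimed identity.

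The bulk of the argument is the index bookkeeping in the last step, which is routine. The one genuinely delicate point is carrying the $g_\lambda(0)$-projection term of \eqref{Eq:glambda} through the functional calculus, so that both $\gSh=g_\lambda(0)\Id+\tfrac1s\Xi(\hgh)\Xi^*$ and $(\gShh)^2=\gSh$ hold as operator identities on all of $\h$ and not merely on $\overline{\range(\hat\Sigma_{PQ})}$ — this is precisely what produces the $g_\lambda(0)K_n$, $g_\lambda(0)K_m$, $g_\lambda(0)K_{mn}$ contributions appearing in $A_1$, $A_2$ and $\circled{5}$.
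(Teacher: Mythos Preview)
Your proof is correct and follows essentially the same approach as the paper: your synthesis operator $\Xi$ is just $\sqrt{s}\,S_z^*$ in the paper's notation, so your identities $\hat\Sigma_{PQ}=\tfrac1s\Xi\Tilde{\hh}_s\Xi^*$, $\psi_i=(s\hat\lambda_i)^{-1/2}\Xi\Tilde{\hh}_s^{1/2}\hat\alpha_i$, and $\gSh=g_\lambda(0)\Id+\tfrac1s\Xi(\hgh)\Xi^*$ coincide with the paper's $\hat\Sigma_{PQ}=S_z^*\Tilde{\hh}_sS_z$, $\hat\phi_i=\hat\lambda_i^{-1/2}S_z^*\Tilde{\hh}_s^{1/2}\hat\alpha_i$, and $\gSh=g_\lambda(0)\Id+S_z^*\Tilde{\hh}_s^{1/2}G\Tilde{\hh}_s^{1/2}S_z$. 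Your treatment is in fact slightly more explicit in two places the paper leaves implicit---the elementary expansion giving $\hat\Sigma_{PQ}$ in terms of $\Xi$, and the observation that $(\gShh)^2=\gSh$ on all of $\h$ (including the kernel of $\hat\Sigma_{PQ}$), which is exactly what justifies replacing $\langle\gShh K(\cdot,a),\gShh K(\cdot,b)\rangle_\h$ by $\langle\gSh K(\cdot,a),K(\cdot,b)\rangle_\h$.
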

\vspace{-3mm}
Note that in the case of Tikhonov regularization, $G=\frac{-1}{\lambda}(\frac{1}{s}\Tilde{\hh}^{1/2}_sK_s\Tilde{\hh}^{1/2}_s+\lambda \Id_s)^{-1}$. The complexity of computing $\stat$ is given by $O(s^3+m^2+n^2+ns^2+ms^2)$, which is comparable to that of the MMD test if $s=o(\sqrt{N+M})$, otherwise the proposed test is computationally more complex than the MMD test.
\vspace{-3mm}
\subsection{Oracle test}\label{subsec:oracle}
Before we present the test, we make the following assumptions on $\gl$ which will be used throughout the analysis.
\begin{align*}
    &(A_1) \, \sup_{x \in \Gamma}|x \gl(x)| \leq C_1, &&(A_2) \, \sup_{x \in \Gamma}|\lambda\gl(x)| \leq C_2, \\ 
    &(A_3) \, \sup_{\{x\in \Gamma: x\gl(x)<B_3\}} |B_3-x\gl(x)|x^{2\varphi} \leq C_3 \lambda^{2\varphi},
    & &(A_4) \, \inf_{x \in \Gamma} \gl(x)(x+\lambda) \geq C_4,
\end{align*}
where $\Gamma : = [0,\K]$, $\varphi \in (0,\xi]$ with the constant $\xi$ being called the \emph{qualification} of $\gl$, and $C_1$, $C_2$, $C_3$, $B_3$, $C_4$ are finite positive constants, all independent of $\lambda>0$. Note that ($A_3$) necessarily implies that $x\gl(x) \asymp 1$ as $\lambda \to 0$ (see Lemma~\ref{lem:cnst}), 
and $\xi$ controls the rate of convergence, which combined with $(A_1)$ yields upper and lower bounds on $\eta_{\lambda}$ in terms of $\norm{u}_{\Lp}^2$ (see Lemma~\ref{lemma: bounds for eta}).
\vspace{-2mm}
\begin{remark}
In the inverse problem literature (see \citealp[Theorems 4.1, 4.3 and
Corollary 4.4]{Engl.et.al}; \citealp[Definition 1]{BAUER200752}), $(A_1)$ and $(A_2)$ are common assumptions with $(A_3)$ being replaced by $\sup_{x\in \Gamma} |1-x\gl(x)|x^{2\varphi} \leq C_3 \lambda^{2\varphi}$. These assumptions are also used in the analysis of spectral regularized kernel ridge regression \citep{BAUER200752}. However, $(A_3)$ is less restrictive than $\sup_{x\in \Gamma} |1-x\gl(x)|x^{2\varphi} \leq C_3 \lambda^{2\varphi}$ and allows for higher qualification for $\gl$. For example, when $\gl(x)=\frac{1}{x+\lambda}$, the condition $\sup_{x\in \Gamma} |1-x\gl(x)|x^{2\varphi} \leq C_3 \lambda^{2\varphi}$ holds only for $\varphi\in(0,\frac{1}{2}]$, while $(A_3)$ holds for any $\varphi>0$ (i.e., infinite qualification with no saturation at $\varphi=\frac{1}{2}$) by setting $B_3 = \frac{1}{2}$ and $C_3=1$, i.e., $\sup_{x\leq \lambda}(\frac{1}{2}-\frac{x}{x+\lambda})x^{2\varphi}\leq \lambda^{2\varphi}$ for all $\varphi>0$. Intuitively, the standard assumption from inverse problem literature is concerned about the rate at which $x\gl(x)$ approaches 1 uniformly, however in our case, it turns out that we are interested in the rate at which $\eta_{\lambda}$ becomes greater than $c\norm{u}_{\Lp}^2$ for some constant $c>0$, leading to a weaker condition. $(A_4)$ is not used in the inverse problem literature but is crucial in our analysis (see Remark~\ref{rem-a4}(iii)).
\end{remark}
\vspace{-3mm}
Some examples of $g_\lambda$ that satisfy $(A_1)$--$(A_4)$ include the Tikhonov regularizer,   $\gl(x)=\frac{1}{x+\lambda}$, and Showalter regularizer, $\gl(x)=\frac{1-e^{-x/\lambda}}{x}\mathds{1}_{\{x\ne 0\}}+\frac{1}{\lambda}\mathds{1}_{\{x=0\}}$,
where both have qualification $\xi=\infty$. Note that the spectral cutoff regularizer is defined as
$\gl(x)= \frac{1}{x}\mathds{1}_{\{x\ge \lambda\}}$
satisfies $(A_1)$--$(A_3)$ with $\xi=\infty$ but unfortunately does not satisfy $(A_4)$ since $g_\lambda(0)=0$. Now, we are ready to present a test based on $\hat{\eta}_\lambda$ where $g_\lambda$ satisfies $(A_1)$--$(A_4)$. Define 
$$\Nol := \text{Tr}(\SgL\Sigma_{PQ}\SgL)\,\,\,\text{and}\,\,\,
\Ntl := \norm{\SgL\Sigma_{PQ}\SgL}_{\hs},$$
which capture the intrinsic dimensionality (or degrees of freedom) of $\h$. $\Nol$ appears quite heavily in the analysis of kernel ridge regression (e.g., \citealp{Caponnetto-07}). The following result provides a critical region with level $\alpha$.
\vspace{-2mm}
\begin{theorem}[Critical region--Oracle] \label{thm: Type1 error}
Let $n \geq 2$ and $m\geq 2$. Suppose $(A_0)$--$(A_2)$ hold. Then for any $\alpha >0$ and $\frac{140\K}{s}\log \frac{48\K s}{\alpha} \leq \lambda \leq \norm{\Sigma_{PQ}}_{\op}$, 
$$P_{H_0}\{\stat \geq \gamma\} \leq \alpha,$$
where $\gamma = \frac{6\sqrt{2}\Cs\Ntl}{\sqrt{\alpha}}\left(\frac{1}{n}+\frac{1}{m}\right).$
Furthermore if $C:=\sup_i\norm{\phi_i}_{\infty} < \infty$, the above bound holds for $136C^2\Nol\log\frac{24\Nol}{\delta} \leq s$ and $\lambda \leq \norm{\Sigma_{PQ}}_{\op}$.
\end{theorem}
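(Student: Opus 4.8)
The plan is to control the tail of $\stat$ under $H_0$ via Chebyshev's inequality, which requires an upper bound on $\E_{H_0}[\stat^2]$ (note $\E_{H_0}[\stat]=0$ since $\mu_P=\mu_Q$ under the null). The key structural observation is that, conditioned on the ``covariance sample'' $(Z_i)_{i=1}^s$, the statistic $\stat$ is a genuine two-sample $U$-statistic of degree $(2,2)$ with a degenerate kernel under $H_0$ (because $\E_{X\sim P}[\gShh K(\cdot,X)]=\gShh\mu_P=\gShh\mu_Q$ cancels in the centered sums). So the first step is to compute the conditional variance of a degenerate two-sample $U$-statistic, which by the standard Hoeffding decomposition is of order $\frac{1}{n(n-1)}+\frac{1}{m(m-1)}$ times $\E\|\gShh A(X,Y)\htens \gShh A(X,Y)\|$-type quantities. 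Concretely, I expect $\E_{H_0}[\stat^2 \mid (Z_i)] \lesssim \left(\frac{1}{n}+\frac{1}{m}\right)^2 \norm{\gSLh\Sigma_{PQ}\gSLh}_{\hs}^2$ up to the constants $C_1,C_2$ coming from $(A_1)$--$(A_2)$, since the Hilbert--Schmidt norm squared is exactly $\E\inner{\gShh A(X,Y)}{\gShh A(X',Y')}^2$ after centering.

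The second step is to pass from the empirical quantity $\norm{\gSLh\Sigma_{PQ}\gSLh}_{\hs}$ to the population quantity $\Ntl=\norm{\SgL\Sigma_{PQ}\SgL}_{\hs}$. This is where the lower bound $\lambda \gtrsim \frac{\K}{s}\log\frac{\K s}{\alpha}$ enters: one invokes a Bernstein-type concentration inequality for the operator-norm deviation of $\hat\Sigma_{PQ}$ from $\Sigma_{PQ}$ (the operator-valued $U$-statistic Bernstein inequality from \citealp{kpca} referenced in the paper), which under this sample-size condition yields $\norm{\SgL(\hat\Sigma_{PQ}-\Sigma_{PQ})\SgL}_{\op} \leq \frac{1}{2}$ with probability at least $1-\frac{\alpha}{2}$ (or similar). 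On this event, $\SLh \preceq \frac{3}{2}\SL$ and $\SL\preceq 2\SLh$ in the operator order, so $\gSLh$ and $\gSL$ are comparable — more precisely $\Miop$ and $\Mop$ are bounded by absolute constants — and hence $\norm{\gSLh\Sigma_{PQ}\gSLh}_{\hs} \lesssim \Ntl$. Under the additional assumption $C=\sup_i\norm{\phi_i}_\infty<\infty$, the relevant concentration improves: the variance proxy in Bernstein's inequality becomes $\Nol$ rather than $\K/\lambda$, allowing the weaker sample-size requirement $s \gtrsim C^2\Nol\log\frac{\Nol}{\delta}$; this replacement is the content of the ``furthermore'' clause and should follow by the same argument with the sharper bound on $\norm{(\hat\Sigma_{PQ}-\Sigma_{PQ})\SgL^2}$-type terms.

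The third step simply assembles the pieces: on the good event for $\hat\Sigma_{PQ}$ (probability $\geq 1-\frac{\alpha}{2}$), combine the conditional variance bound with Chebyshev at level $\frac{\alpha}{2}$ to get $P_{H_0}(\stat \geq \gamma \mid (Z_i)) \leq \frac{\alpha}{2}$ with $\gamma$ proportional to $\frac{\Ntl}{\sqrt{\alpha}}\bigl(\frac1n+\frac1m\bigr)$, then integrate out $(Z_i)$ and add the $\frac{\alpha}{2}$ failure probability, tracking constants to land on $\gamma=\frac{6\sqrt2(C_1+C_2)\Ntl}{\sqrt\alpha}\bigl(\frac1n+\frac1m\bigr)$. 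The main obstacle I anticipate is the bookkeeping in the conditional variance computation — correctly identifying the degenerate part of the two-sample $U$-statistic under $H_0$ and bounding each Hoeffding projection by the $\hs$-norm of $\gSLh\Sigma_{PQ}\gSLh$ while keeping the dependence on $C_1,C_2$ explicit — together with verifying that the Bernstein event can be made to have probability $1-\frac\alpha2$ precisely under the stated threshold on $\lambda$ (resp.\ $s$). The comparability of $\gSLh$ and $\gSL$ via functional calculus on the event $\SLh\asymp\SL$ is routine given Lemma~\ref{lemma: bounds for g}, but needs the operator monotonicity of $x\mapsto g_\lambda^{1/2}(x)$-type reasoning, which I would quote rather than reprove.
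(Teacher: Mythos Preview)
Your proposal is correct and follows essentially the same route as the paper: conditional Chebyshev on $\stat$ given $(Z_i)_{i=1}^s$ under $H_0$ (Lemma~\ref{Lemma: bounding expectations} gives $\E[\stat^2\mid(Z_i)]\le 6\Cs^2\norm{\M}_{\op}^4\Ntlsq(\tfrac{1}{n^2}+\tfrac{1}{m^2})$), followed by a Bernstein-type bound on $\norm{\M}_{\op}^2=\norm{\SLh^{-1/2}\SL^{1/2}}_{\op}^2\le 2$ on a high-probability event (Lemma~B.2(ii) of \citealp{kpca}, or Lemma~\ref{lem: bound M operator} for the bounded-eigenfunction case), and a union bound; the paper uses the split $\alpha=3\delta$ rather than your $\alpha/2+\alpha/2$, which is cosmetic.

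One clarification on the obstacle you flag: you do \emph{not} need any operator-monotonicity of $g_\lambda^{1/2}$ to pass from $\gSLh$ to the population scale. The paper's factorization is
\[
\gSLh\SL^{1/2}=\bigl(\gSLh\SLh^{1/2}\bigr)\bigl(\SLh^{-1/2}\SL^{1/2}\bigr),
\]
and Lemma~\ref{lemma: bounds for g}(iii) gives $\norm{\gSLh\SLh^{1/2}}_{\op}\le(C_1+C_2)^{1/2}$ \emph{deterministically} from $(A_1)$--$(A_2)$ alone, so $\norm{\gSLh\SL^{1/2}}_{\op}\le(C_1+C_2)^{1/2}\norm{\M}_{\op}$. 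This immediately yields $\norm{\gSLh\Sigma_{PQ}\gSLh}_{\hs}\le\Cs\norm{\M}_{\op}^2\Ntl$ without any functional-calculus comparison of $\gSLh$ with $\gSL$. Since for general regularizers $g_\lambda^{1/2}$ need not be operator monotone, this factorization is the safe route and is exactly what the paper does.
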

First, note that the above result yields an $\alpha$-level test that rejects $H_0$ when $\hat{\eta}_\lambda\ge \gamma$. But the critical level $\gamma$ depends on $\Ntl$ which in turn depends on the unknown distributions $P$ and $Q$. Therefore, we call the above test the \emph{Oracle test}. Later in Sections~\ref{sec:perm} and \ref{subsec:adaptation}, we present a completely data-driven test based on the permutation approach that matches the performance of the Oracle test. Second, the above theorem imposes a condition on $\lambda$ with respect to $s$ in order to control the Type-I error, where this restriction can be weakened if we further assume the uniform boundedness of the eigenfunctions, i.e., $\sup_i\norm{\phi_i}_{\infty} < \infty$. Moreover, the condition on $\lambda$ implies that $\lambda$ cannot decay to zero faster than $\frac{\log s}{s}$. 
Next, we will analyze the power of the Oracle test. Note that 

$P_{H_1}(\hat{\eta}_\lambda\ge \gamma) = P_{H_1}(\hat{\eta}_\lambda-\eta_\lambda\ge \gamma-\eta_\lambda)\ge 1-\text{Var}_{H_1}(\hat{\eta}_\lambda)/(\eta_\lambda-\gamma)^2$, which implies that the power is controlled by $\text{Var}_{H_1}(\hat{\eta}_\lambda)$ and $\eta_{\lambda}$. Lemma~\ref{lemma: bounds for eta} 
shows that $\eta_{\lambda} \gtrsim \norm{u}_{\Lp}^2$, provided that $\gl$ satisfies $(A_3)$, $u \in \text{\range}(\T^{\theta})$, $\theta>0$ and $\underline{\rho}^2(P,Q)\stackrel{(*)}{=}\norm{\U}_{\Lp}^2 \gtrsim \lambda^{2 \min(\theta,\xi)}$, 
where $(*)$ follows from Lemma~\ref{Lem: distance}. 
Combining this bound with the bound on $\lambda$ in Theorem~\ref{thm: Type1 error} provides a condition on the separation boundary. Additional sufficient conditions on $\Delta_{N,M}$ and $\lambda$ are obtained by controlling $\text{Var}_{H_1}(\hat{\eta}_\lambda)/\Vert u\Vert^4_{\Lp}$ to achieve the desired power, which is captured by the following result. 
\vspace{-3mm}
\begin{remark}\label{rem-a4}
(i) Note that while $\emph{Var}_{H_1}(\hat{D}_{\mathrm{MMD}}^2)$ is lower than $\emph{Var}_{H_1}(\stat)$ (see the proofs of Theorem~\ref{thm: MMD} and Lemma~\ref{Lemma: bounding expectations}), 
the rate at which $D^2_{\mathrm{MMD}}$ approaches $\norm{u}^2_{\Lp}$ is much slower than that of $\stat$ (see Lemmas \ref{lem:MMD bounds} and \ref{lemma: bounds for eta}). 
Thus one can think of this phenomenon as a kind of estimation-approximation error trade-off for the separation boundary rate.\vspace{1mm}\\
(ii) Observe from the condition $\underline{\rho}^2(P,Q)=\norm{\U}_{\Lp}^2 \gtrsim \lambda^{2 \min(\theta,\xi)} \Vert\T^{-\theta}\U\Vert_{\Lp}^2$ that larger $\xi$ corresponds to a smaller separation boundary. Therefore, it is important to work with regularizers with infinite qualification, such as Tikhonov and Showalter.\vspace{1mm}\\
(iii) The assumption $(A_4)$ plays a crucial role in controlling the power of the test and in providing the conditions on the separation boundary in terms of $\underline{\rho}^2(P,Q)$. Note that $P_{H_1}(\stat>\gamma)=\mathbb{E}(P(\stat\geq \gamma|(Z_i)_{i=1}^s)\geq\mathbb{E}\left[\mathds{1}_{A}\left(1-\frac{\emph{Var}(\stat|(Z_i)_{i=1}^s)}{(\zeta-\gamma)^2}\right)\right]$ for any set $A$---we choose $A$ to be such that $\norm{\M}^2_{\mathcal{L}^\infty(\mathscr{H})}$ and $\norm{\M^{-1}}^2_{\mathcal{L}^\infty(\mathscr{H})}$ are bounded in probability with $\M:=\hat{\Sigma}^{-1/2}_{PQ,\lambda}\Sigma^{1/2}_{PQ,\lambda}$---, where $\E(\stat |(Z_i)_{i=1}^s)=:\zeta= \Vert\gShh(\mu_P-\mu_Q)\Vert_{\h}^2$.  

If $g_{\lambda}$ satisfies $(A_4)$, then it implies that $\gl(0) \neq 0$, hence $\gSLh$ is invertible. Thus, $\eta_{\lambda} \leq \Vert\gSL\igSLh\Vert_{\op}^2\zeta$. Moreover, $(A_4)$ yields that 
$\zeta \gtrsim \eta_{\lambda}$ with high probability (see Lemma \ref{lemma: denominator lower bound},
\citealp[Lemma A.1(ii)]{kpca}), which when combined with Lemma~\ref{lemma: bounds for eta}, 
yields sufficient conditions on the separation boundary to control the power.\vspace{1mm}\\
(iv) As aforementioned, the spectral cutoff regularizer does not satisfy $(A_4)$. For $g_\lambda$ that does not satisfy $(A_4)$, an alternative approach can be used to obtain a lower bound on $\zeta$. Observe that $\zeta = \langle\id(\gSh-\gS)\id^*u,u\rangle_{\Lp}+ \eta_{\lambda}$, hence $$\zeta \geq \eta_{\lambda}-\Vert\id(\gSh-\gS)\id^*\Vert_{\op}\norm{u}_{\Lp}^2.$$  However, the upper bound that we can achieve on $\Vert\id(\gSh-\gS)\id^*\Vert_{\op}$ is worse than the bound on $\Vert\gSL\igSLh\Vert_{\op}^2$, eventually leading to a worse separation boundary. Improving such bounds (if possible) and investigating this problem can be of independent interest and left for future analysis.
\end{remark}

\vspace{-2mm}
For the rest of the paper, we make the following assumption.\vspace{1.5mm}\\
$(B)\qquad$ $M<N<DM$ for some constant $D\geq1$.\vspace{1.5mm}\\
This assumption helps to keep the results simple and presents the separation rate in terms of $N+M$. If this assumption is not satisfied, the analysis can still be carried out but leads to messy calculations with the separation rate depending on $\text{min}(N,M)$. 

\vspace{-2mm}
\begin{theorem}[Separation boundary--Oracle] \label{thm:Type II}
Suppose $(A_0)$--$(A_4)$ and $(B)$ hold. Let  $s=d_1N=d_2M$, $\sup_{(P,Q) \in \PP} \norm{\T^{-\theta}u}_{\Lp} < \infty,$ $\norm{\Sigma_{PQ}}_{\op}\geq\lambda=d_{\theta}\Delta_{N,M}^{\frac{1}{2\Tilde{\theta}}}$, for some constants $0<d_1,d_2<1$ and $d_{\theta}>0$, where $d_\theta$ is a constant that depends on $\theta$. For any $0<\delta\le 1$, if  $N+M \geq \frac{32\K d_1}{\delta}$ and $\Delta_{N,M}$ satisfies 
$$\frac{\Delta_{N,M}^{\frac{2\Tilde{\theta}+1}{2\tilde{\theta}}}}{\mathcal{N}_{2}\left(d_{\theta}\Delta_{N,M}^{\frac{1}{2\tilde{\theta}}}\right)} \gtrsim \frac{d_{\theta}^{-1}\delta^{-2}}{ (N+M)^{2}}, \qquad\,
\frac{\Delta_{N,M}}{\mathcal{N}_{2}\left(d_{\theta}\Delta_{N,M}^{\frac{1}{2\tilde{\theta}}}\right)} \gtrsim\frac{(\alpha^{-1/2}+\delta^{-1})}{ N+M},$$ and 
$$\Delta_{N,M} \geq c_{\theta} \left(\frac{N+M}{\log(N+M)}\right)^{-2\tilde{\theta}},$$ 
then 
\begin{equation}\inf_{(P,Q) \in \PP} P_{H_1}\left\{\stat
\geq \gamma \right\} \geq 1-2\delta,\label{Eq:type-2}\end{equation}
where $\gamma = \frac{6\sqrt{2}\Cs\Ntl}{\sqrt{\alpha}}\left(\frac{1}{n}+\frac{1}{m}\right)$, $c_{\theta}>0$ is a constant that depends on $\theta$, and $\Tilde{\theta}=\min(\theta,\xi)$. 

Furthermore, suppose $N+M \geq \max\{32\delta^{-1},e^{d_1/272C^2}\}$ and $C:=\sup_{i}\norm{\phi_i}_{\infty}$ $< \infty$. Then \eqref{Eq:type-2} holds when the above  conditions on $\Delta_{N,M}$ are replaced by
$$\frac{\Delta_{N,M}}{\mathcal{N}_{1}\left(d_{\theta}\Delta_{N,M}^{\frac{1}{2\tilde{\theta}}}\right)} \gtrsim \frac{\delta^{-2}}{(N+M)^{2}}, \qquad
\frac{\Delta_{N,M}}{\mathcal{N}_{2}\left(d_{\theta}\Delta_{N,M}^{\frac{1}{2\tilde{\theta}}}\right)} \gtrsim \frac{(\alpha^{-1/2}+\delta^{-1})}{ N+M},$$ and  
$$\mathcal{N}_{1}\left(d_{\theta}\Delta_{N,M}^{\frac{1}{2\tilde{\theta}}}\right) \lesssim \left(\frac{N+M}{\log(N+M)}\right).$$
\end{theorem}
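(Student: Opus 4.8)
The plan is to prove the power bound for the Oracle test by the usual Chebyshev-type decomposition, conditioning on the covariance sample $(Z_i)_{i=1}^s$ to exploit the $U$-statistic structure of $\stat$. Writing $\zeta:=\E(\stat\mid (Z_i)_{i=1}^s)=\norm{\gShh(\mu_P-\mu_Q)}_{\h}^2$ and the event $A$ on which $\norm{\M}^2_{\op}$ and $\norm{\M^{-1}}^2_{\op}$ are controlled (with $\M=\SLh^{-1/2}\SL^{1/2}$), one has
$$P_{H_1}\{\stat\ge\gamma\}\ge\E\!\left[\II_A\left(1-\frac{\operatorname{Var}(\stat\mid (Z_i)_{i=1}^s)}{(\zeta-\gamma)^2}\right)\right].$$
So I would first collect three ingredients: (a) a lower bound $\zeta\gtrsim\eta_\lambda$ on $A$, which follows from $(A_4)$ via Lemma~\ref{lemma: denominator lower bound} together with the concentration of $\SLh$ around $\SL$ (this is where the condition $\lambda\gtrsim\frac{\log s}{s}$, equivalently $\Delta_{N,M}\gtrsim(\tfrac{N+M}{\log(N+M)})^{-2\tilde\theta}$, enters); (b) a lower bound $\eta_\lambda\gtrsim\norm{u}_{\Lp}^2\ge\Delta_{N,M}$ from Lemma~\ref{lemma: bounds for eta}, valid precisely because $u\in\range(\T^\theta)$, $\sup_\PP\norm{\T^{-\theta}u}_{\Lp}<\infty$, and $\lambda=d_\theta\Delta_{N,M}^{1/2\tilde\theta}$ is chosen so that $\norm{u}^2_{\Lp}\gtrsim\lambda^{2\tilde\theta}\norm{\T^{-\theta}u}^2_{\Lp}$ (the constant $d_\theta$ is calibrated here); and (c) an upper bound on $\operatorname{Var}(\stat\mid(Z_i)_{i=1}^s)$ from Lemma~\ref{Lemma: bounding expectations}, which, for a two-sample degenerate-ish $U$-statistic, splits into a ``diagonal'' piece of order $\frac{\Ntl}{nm}\,\zeta$ (more precisely involving $\norm{\gShh\Sigma_{PQ}\gShh}_{\hs}$, controlled on $A$ by $\Ntl$) and higher-order pieces of order $\frac{\Ntl^2}{n^2 m^2}+\frac1{n^2}+\frac1{m^2}$ type terms.

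Next I would combine these. On $A$ we get $\zeta-\gamma\gtrsim\eta_\lambda-\gamma\gtrsim\norm{u}^2_{\Lp}$ once $\gamma=\frac{6\sqrt2\Cs\Ntl}{\sqrt\alpha}(\frac1n+\frac1m)\le c\,\norm{u}^2_{\Lp}$, i.e. once $\frac{\Delta_{N,M}}{\Ntl(d_\theta\Delta_{N,M}^{1/2\tilde\theta})}\gtrsim\frac{\alpha^{-1/2}+\delta^{-1}}{N+M}$ (recalling $s\asymp n\asymp m\asymp N\asymp M$ under $(B)$ and $s=d_1N=d_2M$); this is exactly the second displayed condition. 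Then $\frac{\operatorname{Var}(\stat\mid(Z_i))}{(\zeta-\gamma)^2}\lesssim\frac{\Ntl\,\zeta}{nm\,\norm{u}^4_{\Lp}}+\frac{\text{lower order}}{\norm{u}^4_{\Lp}}\lesssim\frac{\Ntl}{(N+M)^2\norm{u}^2_{\Lp}}+\cdots$, and requiring this to be $\lesssim\delta^2$ gives the first displayed condition $\frac{\Delta_{N,M}^{(2\tilde\theta+1)/2\tilde\theta}}{\Ntl(d_\theta\Delta_{N,M}^{1/2\tilde\theta})}\gtrsim\frac{d_\theta^{-1}\delta^{-2}}{(N+M)^2}$ (the extra $\Delta_{N,M}^{1/2\tilde\theta}=\lambda/d_\theta$ factor tracks the $\zeta\asymp\eta_\lambda\asymp\norm u^2$ versus the variance normalization). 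Finally $P(A^c)\le\delta$ is handled by the operator Bernstein inequality of \citet{kpca} applied to $\SLh-\SL$, again under the $\lambda\gtrsim\frac{\log s}{s}$ threshold; choosing constants so that $P(A^c)\le\delta$ and the conditional bound is $\le\delta$ yields power $\ge1-2\delta$. The uniformly-bounded-eigenfunctions variant is identical except that the operator-norm concentration of $\SLh$ is replaced by the sharper bound using $\sup_i\norm{\phi_i}_\infty<\infty$, which trades the $\Ntl$-dependence for the potentially smaller $\Nol$-dependence and relaxes the $\lambda$-condition to $s\gtrsim\Nol\log\Nol$, i.e. $\Nol(d_\theta\Delta_{N,M}^{1/2\tilde\theta})\lesssim\frac{N+M}{\log(N+M)}$; all the algebra carries over with $\Ntl$ replaced by $\Nol$ in the appropriate places.

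The main obstacle is the conditional variance bound (c): $\stat$ is a two-sample $U$-statistic whose kernel $h$ involves the \emph{random} operator $\gShh$, so one must first condition on $(Z_i)$ to make it a genuine $U$-statistic, then carefully Hoeffding-decompose it and bound each projection's Hilbert-Schmidt/operator norm by quantities like $\norm{\gShh\Sigma_{PQ}\gShh}_{\hs}$ and $\norm{\gShh\Sigma_{PQ}\gShh}_{\op}$, and only afterwards transfer these (on the event $A$, using $\norm{\M^{\pm1}}_{\op}$ bounds) to the deterministic $\Ntl$, $\Nol$. Getting the powers of $n,m$ and the factors of $\zeta$ exactly right in this step — so that the final two conditions on $\Delta_{N,M}$ come out with the stated exponents — is the delicate part; everything else is bookkeeping with the lemmas already established. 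A secondary subtlety is verifying that the three conditions on $\Delta_{N,M}$ are mutually consistent (non-vacuous) for the claimed regime, and that the $\log$-factor condition $\Delta_{N,M}\ge c_\theta(\frac{N+M}{\log(N+M)})^{-2\tilde\theta}$ is precisely what is needed for both the Type-I $\lambda$-restriction from Theorem~\ref{thm: Type1 error} and the concentration event $A$ to hold simultaneously.
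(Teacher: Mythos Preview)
Your skeleton is the paper's: condition on $(Z_i)_{i=1}^s$, apply the Chebyshev step (Lemma~\ref{thm: general Type2 error}), lower-bound $\zeta$ by $\norm{u}_{\Lp}^2$ on the event $A=\{\norm{\M}_{\op}^2\le 2,\ \norm{\M^{-1}}_{\op}^2\le\tfrac32\}$ via Lemmas~\ref{lemma: denominator lower bound} and~\ref{lemma: bounds for eta}, and control $P(A^c)\le\delta$ by operator Bernstein. Ingredients (a), (b), and the handling of $A$ are all correct and match the paper.

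The genuine gap is in (c), your conditional variance bound. You describe the variance as ``a diagonal piece of order $\tfrac{\Ntl}{nm}\zeta$'' controlled through $\norm{\gShh\Sigma_{PQ}\gShh}_{\hs}$. But under $H_1$ the Hoeffding decomposition of $\stat$ (the five terms $\circled{\footnotesize 1}$--$\circled{\footnotesize 5}$ in the proof of Lemma~\ref{Lemma: bounding expectations}) involves the \emph{individual} covariance operators $\Sigma_P$ and $\Sigma_Q$, not $\Sigma_{PQ}$; see Lemmas~\ref{lemma:bound U-statistic1}--\ref{lemma:bound U-statistic3}. Writing $\Sigma_P=\Sigma_{PQ}+\A-(\mu_R-\mu_P)^{\otimes 2}$ with $\A=\int(K(\cdot,x)-\mu_R)^{\otimes 2}u(x)\,dR(x)$, one is forced to bound $\norm{\SgL\A\SgL}_{\hs}^2$, and this is exactly where Lemma~\ref{lemma: bound hs and op} introduces the quantity
\[
\Cl=\begin{cases}\Nol\sup_i\norm{\phi_i}_\infty^2,& \sup_i\norm{\phi_i}_\infty<\infty,\\ 2\kappa\,\Ntl/\lambda,&\text{otherwise.}\end{cases}
\]
The true variance bound (Lemma~\ref{Lemma: bounding expectations}) therefore contains terms $\Cl\norm{u}_{\Lp}^2/(n+m)^2$ and $\sqrt{\Cl}\,\norm{u}_{\Lp}^3/(n+m)$, and it is the latter that, after Chebyshev, forces $\norm{u}_{\Lp}^2\gtrsim\Cl\,\delta^{-2}(N+M)^{-2}$. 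In the general case $\Cl\asymp\Ntl/\lambda$, and substituting $\lambda=d_\theta\Delta_{N,M}^{1/2\tilde\theta}$ is precisely what yields the exponent $(2\tilde\theta+1)/2\tilde\theta$ in the first displayed condition---not the ``$\zeta\asymp\eta_\lambda$ versus variance normalization'' mechanism of your parenthetical. In the bounded-eigenfunction case $\Cl\asymp\Nol$ carries no $\lambda^{-1}$, which is why the first condition there has plain $\Delta_{N,M}$ on top rather than $\Delta_{N,M}^{(2\tilde\theta+1)/2\tilde\theta}$; your description of that case as ``$\Ntl$ replaced by $\Nol$ in the appropriate places'' misses this qualitative distinction and would not reproduce the stated conditions.
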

The above result is too general to appreciate the performance of the Oracle test. The following corollaries to Theorem~\ref{thm:Type II} investigate the separation boundary of the test under the polynomial and exponential decay condition on the eigenvalues
of $\Sigma_{PQ}$.

\begin{corollary}[Polynomial decay--Oracle] \label{coro:poly}
Suppose $\lambda_i \lesssim i^{-\beta}$, $\beta>1$. Then there exists $k_{\theta,\beta} \in \N$ such that for all $N+M \geq k_{\theta,\beta}$ and for any $\delta>0$, 
$$\inf_{(P,Q) \in \PP} P_{H_1}\left\{\stat
\geq \gamma \right\} \geq 1-2\delta,$$
when
$$\Delta_{N,M} =
\left\{
	\begin{array}{ll}
c(\alpha,\delta,\theta)\left(N+M\right)^{\frac{-4\tilde{\theta}\beta}{4\Tilde{\theta}\beta+1}},  &  \ \ \Tilde{\theta}> \frac{1}{2}-\frac{1}{4\beta} \\		
c(\alpha,\delta,\theta)\left(\frac{\log (N+M)}{N+M}\right)^{2\tilde{\theta}}, & \ \ \Tilde{\theta} \leq \frac{1}{2}-\frac{1}{4\beta}
	\end{array}
\right.,$$
with $c(\alpha,\delta,\theta)\gtrsim(\alpha^{-1/2}+\delta^{-2}+d_3^{2\tilde{\theta}})$ for some constant $d_3>0$. Furthermore, if $\sup_{i}\norm{\phi_i}_{\infty} < \infty$, then 
$$\Delta_{N,M} =
\left\{
	\begin{array}{ll}
c(\alpha,\delta,\theta,\beta)\left(N+M\right)^{\frac{-4\tilde{\theta}\beta}{4\Tilde{\theta}\beta+1}},  &  \ \  \Tilde{\theta}>\frac{1}{4\beta} \\
c(\alpha,\delta,\theta,\beta)\left(\frac{\log(N+M)}{N+M}\right)^{2\tilde{\theta}
		\beta}, & \ \  \Tilde{\theta} \leq \frac{1}{4\beta}
	\end{array}
\right.,$$
where $c(\alpha,\delta,\theta,\beta)\gtrsim(\alpha^{-1/2}+\delta^{-2}+d_4^{2\tilde{\theta}\beta})$ for some constant $d_4>0$.
\end{corollary}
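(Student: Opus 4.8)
The plan is to specialize the general separation boundary conditions of Theorem~\ref{thm:Type II} to the polynomial eigenvalue decay $\lambda_i \lesssim i^{-\beta}$ and optimize the choice of $\lambda$ (equivalently $\Delta_{N,M}$, through the constraint $\lambda = d_\theta \Delta_{N,M}^{1/2\tilde\theta}$). First I would compute the intrinsic dimension quantities $\Nol(\lambda)$ and $\Ntl(\lambda)$ under polynomial decay. Since $\Nol(\lambda) = \mathrm{Tr}(\SgL \Sigma_{PQ} \SgL) = \sum_i \frac{\lambda_i}{\lambda_i + \lambda}$ and $\lambda_i \asymp i^{-\beta}$, a standard integral comparison gives $\Nol(\lambda) \lesssim \lambda^{-1/\beta}$; similarly $\Ntlsq(\lambda) = \sum_i \left(\frac{\lambda_i}{\lambda_i+\lambda}\right)^2 \lesssim \lambda^{-1/\beta}$, so $\Ntl(\lambda) \lesssim \lambda^{-1/(2\beta)}$. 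These are the only kernel-dependent inputs to Theorem~\ref{thm:Type II}.

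Next I would substitute $\lambda = d_\theta \Delta_{N,M}^{1/2\tilde\theta}$ and $\Ntl(\lambda) \lesssim \lambda^{-1/(2\beta)} = d_\theta^{-1/(2\beta)}\Delta_{N,M}^{-1/(4\tilde\theta\beta)}$ into the first two conditions of Theorem~\ref{thm:Type II}. The second condition $\frac{\Delta_{N,M}}{\Ntl(d_\theta\Delta^{1/2\tilde\theta})} \gtrsim \frac{\alpha^{-1/2}+\delta^{-1}}{N+M}$ becomes $\Delta_{N,M}^{1 + 1/(4\tilde\theta\beta)} \gtrsim \frac{\alpha^{-1/2}+\delta^{-1}}{N+M}$, i.e.\ $\Delta_{N,M} \gtrsim (N+M)^{-\frac{4\tilde\theta\beta}{4\tilde\theta\beta+1}}$ up to the constant $(\alpha^{-1/2}+\delta^{-1})^{\frac{4\tilde\theta\beta}{4\tilde\theta\beta+1}}$. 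One checks the first condition $\frac{\Delta^{(2\tilde\theta+1)/2\tilde\theta}}{\Ntl(\cdot)} \gtrsim \frac{d_\theta^{-1}\delta^{-2}}{(N+M)^2}$ is implied by (or comparable to) the same rate after squaring, contributing the $\delta^{-2}$ term to $c(\alpha,\delta,\theta)$; this is where one must be careful that the exponent on $N+M$ coming from the squared condition is never worse than $\frac{4\tilde\theta\beta}{4\tilde\theta\beta+1}$, which holds precisely when $\tilde\theta > \frac12 - \frac1{4\beta}$ — this is the source of the case split. Finally the third condition $\Delta_{N,M} \geq c_\theta\left(\frac{N+M}{\log(N+M)}\right)^{-2\tilde\theta}$ must be cross-checked: when $\tilde\theta > \frac12 - \frac1{4\beta}$ one verifies $2\tilde\theta > \frac{4\tilde\theta\beta}{4\tilde\theta\beta+1}$ so this logarithmic floor is dominated and the rate $(N+M)^{-\frac{4\tilde\theta\beta}{4\tilde\theta\beta+1}}$ survives; when $\tilde\theta \le \frac12 - \frac1{4\beta}$, the logarithmic-floor condition becomes binding and forces $\Delta_{N,M} \asymp \left(\frac{\log(N+M)}{N+M}\right)^{2\tilde\theta}$, which one must then re-verify satisfies the first two conditions. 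The $\sup_i \|\phi_i\|_\infty < \infty$ variant is handled identically but using the sharper set of conditions in the second half of Theorem~\ref{thm:Type II}, where $\Nol$ replaces $\Ntl$ in the dominant condition; since $\Nol(\lambda) \lesssim \lambda^{-1/\beta}$ (vs.\ $\Ntl(\lambda) \lesssim \lambda^{-1/(2\beta)}$), the relevant threshold shifts from $\frac12 - \frac1{4\beta}$ to $\frac1{4\beta}$, and the capped rate becomes $\left(\frac{\log(N+M)}{N+M}\right)^{2\tilde\theta\beta}$ because the condition $\Nol(d_\theta\Delta^{1/2\tilde\theta}) \lesssim \frac{N+M}{\log(N+M)}$ reads $\Delta^{-1/(2\tilde\theta\beta)} \lesssim \frac{N+M}{\log(N+M)}$.

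Throughout, the constant $k_{\theta,\beta}$ absorbs the ``$N+M \geq \cdots$'' lower bounds appearing in Theorem~\ref{thm:Type II} (namely $N+M \geq 32\K d_1/\delta$ and the logarithmic-floor feasibility), together with the requirement $\lambda = d_\theta\Delta_{N,M}^{1/2\tilde\theta} \leq \|\Sigma_{PQ}\|_{\op}$, which holds for all sufficiently large $N+M$ since $\Delta_{N,M} \to 0$. The constant $d_3$ (resp.\ $d_4$) is simply $c_\theta^{1/2\tilde\theta}$ (resp.\ its analogue), coming from the logarithmic-floor condition when that branch is active, or from translating the $\lambda \geq \frac{140\K}{s}\log\frac{48\K s}{\alpha}$ requirement of Theorem~\ref{thm: Type1 error} (via $s = d_1 N$) into a lower bound on $\Delta_{N,M}$.

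\textbf{Main obstacle.} The only genuinely delicate point is verifying that the three conditions of Theorem~\ref{thm:Type II} are \emph{simultaneously} satisfiable at the claimed rate, i.e.\ that the rate forced by the power/variance condition, the rate forced by the squared (first) condition, and the logarithmic floor are mutually compatible — this is exactly what dictates the threshold $\tilde\theta > \frac12 - \frac1{4\beta}$ (resp.\ $\tilde\theta > \frac1{4\beta}$) and the form of the capped rate in the complementary regime. The eigenvalue-sum estimates for $\Nol$ and $\Ntl$ are routine integral comparisons and should not present difficulty.
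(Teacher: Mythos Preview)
Your proposal is correct and follows essentially the same approach as the paper: bound $\Ntl(\lambda)\lesssim\lambda^{-1/(2\beta)}$ and $\Nol(\lambda)\lesssim\lambda^{-1/\beta}$ under polynomial decay, substitute into the three conditions of Theorem~\ref{thm:Type II}, and determine which term in the resulting maximum is binding. The paper phrases this as a single max of three explicit rates and then compares exponents (noting that $\tilde\theta>\tfrac12-\tfrac1{4\beta}$ is equivalent to both $\tfrac{4\tilde\theta\beta}{4\tilde\theta\beta+1}<2\tilde\theta$ and $\tfrac{4\tilde\theta\beta}{4\tilde\theta\beta+1}<\tfrac{8\tilde\theta\beta}{4\tilde\theta\beta+2\beta+1}$), but your sequential checking reaches the same conclusion.
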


\begin{corollary}[Exponential decay--Oracle] \label{coro:exp}
Suppose $\lambda_i \lesssim e^{-\tau i}$, $\tau>0$. Then for any $\delta>0$, there exists $k_{\theta}$ such that for all $N+M\geq k_{\theta}$, 
$$\inf_{(P,Q) \in \PP} P_{H_1}\left\{\stat
\geq \gamma \right\} \geq 1-2\delta,$$
when
$$\Delta_{N,M} =
\left\{
	\begin{array}{ll}
	c(\alpha,\delta,\theta)\frac{\sqrt{\log(N+M)}}{N+M},  &  \ \ \Tilde{\theta}> \frac{1}{2} \\
		c(\alpha,\delta,\theta)\left(\frac{\log(N+M)}{N+M}\right)^{2\tilde{\theta}}, & \ \  \Tilde{\theta} \le \frac{1}{2}
	\end{array}
\right.,$$
where $c(\alpha,\delta,\theta)\gtrsim \max\left\{\sqrt{\frac{1}{2\tilde{\theta}}},1\right\}(\alpha^{-1/2}+\delta^{-2}+d_5^{2\tilde{\theta}})$ for some $d_5>0$. Furthermore, if $\sup_{i}\norm{\phi_i}_{\infty} < \infty$, then 
$$\Delta_{N,M} = c(\alpha,\delta,\theta)\frac{\sqrt{\log(N+M)}}{N+M},$$
where $c(\alpha,\delta,\theta) \gtrsim \max\left\{\sqrt{\frac{1}{2\tilde{\theta}}},\frac{1}{2\tilde{\theta}},1\right\}(\alpha^{-1/2}+\delta^{-2}).$
\end{corollary}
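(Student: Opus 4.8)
The plan is to obtain Corollary~\ref{coro:exp} as a direct specialization of Theorem~\ref{thm:Type II} to the exponential decay $\lambda_i\lesssim e^{-\tau i}$, exactly as Corollary~\ref{coro:poly} specializes it to polynomial decay; the only new input is the estimate of the degrees-of-freedom quantities $\mathcal{N}_1(\lambda)$ and $\mathcal{N}_2(\lambda)$. First I would retain the choices of Theorem~\ref{thm:Type II}, namely $s=d_1N=d_2M$ (so $s\asymp N+M$) and $\lambda=d_\theta\Delta_{N,M}^{1/(2\tilde{\theta})}$, which gives $\log(1/\lambda)=\tfrac{1}{2\tilde{\theta}}\log(1/\Delta_{N,M})+O(1)$, hence $\log(1/\lambda)\asymp\log(1/\Delta_{N,M})$ as $\Delta_{N,M}\to0$.

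The key estimate is that, for $\lambda$ small, $\lambda_i\lesssim e^{-\tau i}$ implies
\begin{equation*}
\mathcal{N}_1(\lambda)=\sum_i\frac{\lambda_i}{\lambda_i+\lambda}\;\le\;\sum_i\min\Big\{1,\frac{\lambda_i}{\lambda}\Big\}\;\lesssim\;\sum_i\min\Big\{1,\frac{e^{-\tau i}}{\lambda}\Big\}\;\lesssim\;\frac1\tau\log\frac1\lambda+1\;\asymp\;\log\frac1\lambda ,
\end{equation*}
obtained by splitting the sum at the crossover index $i^\star\asymp\tfrac1\tau\log(1/\lambda)$ and bounding the geometric tail by a constant; the same splitting yields $\mathcal{N}_2(\lambda)=\big(\sum_i(\lambda_i/(\lambda_i+\lambda))^2\big)^{1/2}\lesssim\sqrt{\log(1/\lambda)}$. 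Combined with $\lambda=d_\theta\Delta_{N,M}^{1/(2\tilde{\theta})}$ this becomes $\mathcal{N}_1(\lambda)\lesssim\tfrac1{2\tilde{\theta}}\log(1/\Delta_{N,M})$ and $\mathcal{N}_2(\lambda)\lesssim\sqrt{\tfrac1{2\tilde{\theta}}\log(1/\Delta_{N,M})}$, the source of the $\tfrac1{2\tilde{\theta}}$ and $\sqrt{1/(2\tilde{\theta})}$ factors in $c(\alpha,\delta,\theta)$.

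Next I would substitute these bounds into the three conditions of Theorem~\ref{thm:Type II} — which together lower-bound $\Delta_{N,M}$ — and take $\Delta_{N,M}$ to be a constant times the largest of the resulting lower bounds, with $\log(1/\Delta_{N,M})\asymp\log(N+M)$ throughout. For $\tilde{\theta}>\tfrac12$ the dominant lower bound is the threshold condition $\Delta_{N,M}\gtrsim\mathcal{N}_2(\lambda)(\alpha^{-1/2}+\delta^{-1})/(N+M)$, which gives $\Delta_{N,M}\asymp\tfrac{\sqrt{\log(N+M)}}{N+M}$; one then checks that the variance condition (reducing to $(\log(N+M))^{1/(4\tilde{\theta})}(N+M)^{1-1/(2\tilde{\theta})}\gtrsim1$) and the minimal-$\lambda$ condition $\Delta_{N,M}\gtrsim(\tfrac{N+M}{\log(N+M)})^{-2\tilde{\theta}}$ hold for $N+M$ large because the exponents $1-1/(2\tilde{\theta})$ and $2\tilde{\theta}-1$ are then positive. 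For $\tilde{\theta}\le\tfrac12$ those exponents flip sign, the minimal-$\lambda$ condition becomes the dominant lower bound, forcing $\Delta_{N,M}\asymp(\tfrac{\log(N+M)}{N+M})^{2\tilde{\theta}}$, and a substitution using $1-2\tilde{\theta}\ge0$ verifies the variance and threshold conditions for $N+M$ large. For the uniformly bounded eigenfunction case I would switch to the alternative conditions of Theorem~\ref{thm:Type II}: with $\mathcal{N}_1(\lambda)\lesssim\log(N+M)$, the first condition reduces to $\Delta_{N,M}/\log(1/\Delta_{N,M})\gtrsim(N+M)^{-2}$ and the ``dimension'' condition to $\log(N+M)\lesssim(N+M)/\log(N+M)$, both of which hold for $N+M$ large with $\Delta_{N,M}\asymp\tfrac{\sqrt{\log(N+M)}}{N+M}$ for every $\tilde{\theta}>0$. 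In all cases I would also verify the Type-I constraints of Theorem~\ref{thm: Type1 error}: $\lambda\gtrsim\tfrac{\log s}{s}$ in general (handled by enlarging the leading constant, equivalently the constant $d_5$) and $\mathcal{N}_1(\lambda)\log\mathcal{N}_1(\lambda)\lesssim s$ in the bounded case (automatic since $\mathcal{N}_1(\lambda)$ is only logarithmic).

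The power-of-$(N+M)$ versus power-of-$\log(N+M)$ arithmetic is routine; the main obstacle is the bookkeeping — exhibiting a \emph{single} $\Delta_{N,M}$, the maximum over the binding lower bounds, that satisfies all three conditions simultaneously, correctly propagating the $\tilde{\theta}$-dependence through $\log(1/\lambda)=\tfrac1{2\tilde{\theta}}\log(1/\Delta_{N,M})+O(1)$ and through the $\lambda$-lower bound, and pinning down the threshold $k_\theta$ beyond which $\Delta_{N,M}\to0$ and every $\gtrsim/\lesssim$ in the conditions becomes a genuine inequality with the stated constants.
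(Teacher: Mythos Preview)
Your proposal is correct and follows essentially the same route as the paper: bound $\mathcal{N}_1(\lambda)\lesssim\log(1/\lambda)$ and $\mathcal{N}_2(\lambda)\lesssim\sqrt{\log(1/\lambda)}$ under exponential decay (the paper cites \citealt[Lemma B.9]{kpca} for this, you give the splitting argument directly), substitute into the three conditions of Theorem~\ref{thm:Type II} with $\lambda=d_\theta\Delta_{N,M}^{1/(2\tilde{\theta})}$, and identify the dominant lower bound in each regime of $\tilde{\theta}$. The only cosmetic difference is that the paper writes the three resulting lower bounds on $\Delta_{N,M}$ as a single $\max$ and then compares exponents, whereas you argue case-by-case which constraint binds; the content is the same.
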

\vspace{-6mm}
\begin{remark}\label{rem:choosing s}
(i) For a bounded kernel $K$, note that $\T$ is trace class operator. With no further assumption on the decay of $(\lambda_i)_i$, it can be shown that the separation boundary has the same expression as in Corollary \ref{coro:poly} for $\beta=1$ (see Remark \ref{rem:minimax}(i) for the minimax separation). Under additional assumptions on the decay rate, the separation boundary improves (as shown in the above corollaries) unlike the separation boundary in Theorem \ref{thm: MMD} which does not capture the decay rate.\vspace{1mm}\\
(ii) Suppose $g_\lambda$ has infinite qualification, $\xi=\infty$, then $\tilde{\theta}=\theta$. Comparison of Corollaries~\ref{coro:poly} and \ref{coro:poly-minimax} shows that the oracle test is minimax optimal w.r.t.~$\mathcal{P}$ in the ranges of $\theta$ as given in Corollary~\ref{coro:poly-minimax} if the eigenvalues of $\T$ decay polynomially. Similarly, if the eigenvalues of $\T$ decay exponentially, it follows from Corollary~\ref{coro:exp} and Corollary~\ref{coro:exp-minimax} that the Oracle test is minimax optimal w.r.t.~$\mathcal{P}$ if $\theta>\frac{1}{2}$ (\emph{resp.} $\theta>0$ if $\sup_{i}\norm{\phi_i}_{\infty} < \infty$). Outside these ranges of $\theta$, the optimality of the oracle test remains an open question since we do not have a minimax separation covering these ranges of $\theta$. 
\vspace{1mm}\\
(iii) On the other hand, if $g_\lambda$ has a finite qualification, $\xi$, then the test does not capture the smoothness of $u$ beyond $\xi$, i.e., the test only captures the smoothness up to $\xi$, which implies the test is minimax optimal only for $\theta\le \xi$. Therefore, it is important to use spectral regularizers with infinite qualification.\vspace{1mm}\\
(iv) Note that the splitting choice $s=d_1N=d_2M$ yields that the complexity of computing the test statistic is of the order $(N+M)^3$. However, it is worth noting that such a choice is just to keep the splitting method independent of $\theta$ and $\beta$. But, in practice, a smaller order of $s$ still performs well (see Section~\ref{sec:experiments}). This can be theoretically justified by following the proof of Theorem \ref{thm:Type II} and its application to Corollaries \ref{coro:poly} and \ref{coro:exp}, that for the polynomial decay of eigenvalues if $\theta > \frac{1}{2}-\frac{1}{4\beta}$, we can choose $s \asymp (N+M)^{\frac{2\beta}{4\tilde{\theta}\beta+1}}$ and still achieve the same separation boundary (up to log factor) and furthermore if $\sup_i\norm{\phi_i}_{\infty} < \infty$ and $\theta > \frac{1}{4\beta}$, then we can choose $s\asymp (N+M)^{\frac{2}{4\tilde{\theta}\beta+1}}$. Thus, as $\theta$ increases $s$ can be of a much lower order than $N+M$. Similarly, for the exponential decay case, when $\theta>\frac{1}{2}$, we can choose $s \asymp (N+M)^{\frac{1}{2\tilde{\theta}}}$ and still achieve the same separation boundary (up to $\sqrt{\log}$ factor), and furthermore if $\sup_i\norm{\phi_i}_{\infty} < \infty$, then for $\theta \geq \frac{\log(N+M) \log(\log(N+M))}{N+M}$, we can choose $s\asymp \frac{1}{\theta} \log(N+M) \log(\log(N+M))$ and achieve the same separation boundary.\vspace{1mm}\\
(v) The key intuition in using the uniform bounded condition, i.e., $\sup_i\Vert \phi_i\Vert_\infty<\infty$ is that it provides a reduction in the variance when applying Chebyshev's  (or Bernstein's) inequality, which in turn provides an improvement in the separation rate, as can be seen in Corollaries~\ref{coro:poly-minimax}, \ref{coro:exp-minimax}, \ref{coro:poly}, and \ref{coro:exp}, wherein the minimax optimal rate is valid for a large range of $\theta$ compared to the case where this assumption is not made.

\end{remark}

\vspace{-7mm}
\subsection{Permutation test}\label{sec:perm}
In the previous section, we established the minimax optimality w.r.t.~$\mathcal{P}$ of the regularized test based on $\hat{\eta}_\lambda$. However, this test is not practical because of the dependence of the threshold $\gamma$ on $\Ntl$ which is unknown in practice since we do not know $P$ and $Q$. 
In order to achieve a more practical threshold, one way is to estimate $\Ntl$ from data and use the resultant critical value to construct a test. However, in this section, we resort to ideas from permutation testing (\citealp{lehmann}, \citealp{Pesarin}, \citealp{permutations}) to construct a data-driven threshold. Below, we first introduce the idea of permutation tests, then present a permutation test based on $\hat{\eta}_\lambda$, and provide theoretical results that such a test can still achieve minimax optimal separation boundary w.r.t.~$\mathcal{P}$, and in fact with a better dependency on $\alpha$ of the order $\log\frac{1}{\alpha}$ compared to $\frac{1}{\sqrt{\alpha}}$ of the Oracle test.

Recall that our test statistic defined in Section~\ref{subsec:test-statistic} involves sample splitting resulting in three sets of independent samples,  $(X_i)_{i=1}^n \stackrel{i.i.d.}{\sim} P$, $(Y_j)_{j=1}^m \stackrel{i.i.d.}{\sim} Q$, $(Z_i)_{i=1}^s \stackrel{i.i.d.}{\sim} \frac{P+Q}{2}$. Define $(U_i)_{i=1}^n:=(X_i)_{i=1}^n$, and $(U_{n+j})_{j=1}^m:=(Y_j)_{j=1}^m$. Let $\Pi_{n+m}$ be the set of all possible permutations of $\{1,\ldots,n+m\}$ with $\pi \in \Pi_{n+m}$ be a randomly selected permutation from the $D$ possible permutations, where $D :=|\Pi_{n+m}|= (n+m)!$. Define $(X^{\pi}_i)_{i=1}^n := (U_{\pi(i)})_{i=1}^n$ and $(Y^{\pi}_j)_{j=1}^m := (U_{\pi(n+j)})_{j=1}^m$. Let $\hat{\eta}^{\pi}_{\lambda}:=\stat(X^{\pi},Y^{\pi},Z)$ be the statistic based on the permuted samples. Let $(\pi^i)_{i=1}^B$ be $B$ randomly selected permutations from $\Pi_{n+m}$. For simplicity, define $\hat{\eta}^i_{\lambda}:= \hat{\eta}^{\pi^i}_{\lambda}$ to represent the statistic based on permuted samples w.r.t.~the random permutation $\pi^i$. Given the samples $(X_i)_{i=1}^n$, $(Y_j)_{j=1}^m$ and $(Z_i)_{i=1}^s$, define $$F_{\lambda}(x):= \frac{1}{D}\sum_{\pi \in \Pi_{n+m}}\II(\hat{\eta}^{\pi}_{\lambda} \leq x)$$ to be the permutation distribution function, and define $$q_{1-\alpha}^{\lambda}:= \inf\{q \in \R: F_{\lambda}(q) \geq 1-\alpha\}.$$ Furthermore, we define the empirical permutation distribution function based on $B$ random permutations as
$$\hat{F}^{B}_{\lambda}(x):= \frac{1}{B}\sum_{i=1}^{B}\II(\hat{\eta}^{i}_{\lambda} \leq x),$$ and define $$\hat{q}_{1-\alpha}^{B,\lambda}:= \inf\{q \in \R: \hat{F}^B_{\lambda}(q) \geq 1-\alpha\}.$$ Based on these notations, the following result presents an $\alpha$-level test with a completely data-driven critical level.

\begin{theorem}[Critical region--permutation]\label{thm: permutations typeI}
For any $0<\alpha\leq 1$ and $0<w+\Tilde{w}<1$, if $B\geq  \frac{1}{2\Tilde{w}^2\alpha^2}\log\frac{2}{\alpha(1-w-\Tilde{w})}$, then
$$P_{H_0}\{\stat \geq \hat{q}_{1-w\alpha}^{B,\lambda} \} \leq \alpha.$$
\end{theorem}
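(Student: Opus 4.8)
The plan is to run the standard two-step argument for Monte Carlo permutation tests: first invoke the exact validity of the full permutation test under exchangeability, and then control the error incurred by using only $B$ random permutations via a concentration bound on the empirical permutation distribution function $\hat F^B_\lambda$.

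First, note that under $H_0$ we have $P=Q$, so that, conditionally on the covariance-estimation sample $(Z_i)_{i=1}^s$ (which is never permuted, and on which $\hat\Sigma_{PQ}$, hence $\gShh$, depends), the pooled sample $(U_i)_{i=1}^{n+m}$ is i.i.d.\ and therefore exchangeable. Since $\hat\eta^\pi_\lambda=\stat(X^\pi,Y^\pi,Z)$ is obtained from $\stat$ by relabeling the coordinates of $(U_i)_{i=1}^{n+m}$ through $\pi$, the family $\{\hat\eta^\pi_\lambda:\pi\in\Pi_{n+m}\}$ is invariant in distribution under the relabeling action $\pi\mapsto\pi\circ\sigma$, $\sigma\in\Pi_{n+m}$. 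By the classical randomization argument (e.g., \citealp[Ch.~15]{lehmann}) this yields the exact level guarantee
$$P_{H_0}\{\stat\ge q^{\lambda}_{1-\beta}\}\le\beta,\qquad\forall\,\beta\in(0,1),$$
where $q^{\lambda}_{1-\beta}:=\inf\{q\in\R:F_\lambda(q)\ge 1-\beta\}$; we will apply this with $\beta=(w+\tilde w)\alpha<1$.

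Second, condition on the full sample $(\mathbb{X}_N,\mathbb{Y}_M)$, equivalently on $F_\lambda$. The permutations $\pi^1,\dots,\pi^B$ are i.i.d.\ uniform on $\Pi_{n+m}$, so $\hat\eta^1_\lambda,\dots,\hat\eta^B_\lambda$ are i.i.d.\ with common distribution function $F_\lambda$, and the Dvoretzky--Kiefer--Wolfowitz inequality gives
$$P\Big(\sup_{x\in\R}\big|\hat F^B_\lambda(x)-F_\lambda(x)\big|>\tilde w\alpha\ \Big|\ \mathbb{X}_N,\mathbb{Y}_M\Big)\le 2e^{-2B\tilde w^2\alpha^2},$$
hence the same bound holds unconditionally. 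Let $\mathcal E$ denote the complementary (good) event $\{\sup_x|\hat F^B_\lambda-F_\lambda|\le\tilde w\alpha\}$. On $\mathcal E$, for every $x<q^{\lambda}_{1-(w+\tilde w)\alpha}$ the definition of the generalized inverse gives $F_\lambda(x)<1-(w+\tilde w)\alpha$, whence $\hat F^B_\lambda(x)\le F_\lambda(x)+\tilde w\alpha<1-w\alpha$, which forces $x<\hat q^{B,\lambda}_{1-w\alpha}$. Therefore $\mathcal E$ implies the quantile comparison $\hat q^{B,\lambda}_{1-w\alpha}\ge q^{\lambda}_{1-(w+\tilde w)\alpha}$, so that $\{\stat\ge\hat q^{B,\lambda}_{1-w\alpha}\}\cap\mathcal E\subseteq\{\stat\ge q^{\lambda}_{1-(w+\tilde w)\alpha}\}$. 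Splitting over $\mathcal E$ and $\mathcal E^c$ and combining with the first step,
$$P_{H_0}\{\stat\ge\hat q^{B,\lambda}_{1-w\alpha}\}\le P_{H_0}\{\stat\ge q^{\lambda}_{1-(w+\tilde w)\alpha}\}+P(\mathcal E^c)\le(w+\tilde w)\alpha+2e^{-2B\tilde w^2\alpha^2},$$
and the assumed lower bound $B\ge\frac{1}{2\tilde w^2\alpha^2}\log\frac{2}{\alpha(1-w-\tilde w)}$ is precisely what makes $2e^{-2B\tilde w^2\alpha^2}\le(1-w-\tilde w)\alpha$, giving a total of at most $\alpha$.

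The conceptual content here is light: it is concentration plus monotonicity of generalized inverses. The only place requiring genuine care is the bookkeeping around the discrete permutation distribution---matching the (right-continuous) definitions of $F_\lambda$, $\hat F^B_\lambda$ and their generalized inverses, handling atoms and ties, and stating the exact-validity bound of the first step with the correct inequality direction (one must either work with the non-randomized test and absorb a negligible $1/(n+m)!$ discretization term, or note that the $\ge$ in the theorem is covered because $w\alpha<\alpha$ leaves the necessary slack). I expect the quantile-comparison inclusion $\mathcal E\subseteq\{\hat q^{B,\lambda}_{1-w\alpha}\ge q^{\lambda}_{1-(w+\tilde w)\alpha}\}$ to be the step where this care is concentrated; everything else is routine.
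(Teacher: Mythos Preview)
Your proposal is correct and follows essentially the same approach as the paper: exact validity of the full permutation test via exchangeability under $H_0$, followed by a DKW-based comparison between $\hat q^{B,\lambda}_{1-w\alpha}$ and $q^{\lambda}_{1-(w+\tilde w)\alpha}$, combined through a union bound. The paper packages the DKW-to-quantile step into a separate lemma (Lemma~\ref{lemma:DKW for quantile}) and sets $\tilde\alpha=\tilde w\alpha$, $\delta=(1-w-\tilde w)\alpha$ there, whereas you unwind that step inline; the arithmetic and the resulting bound on $B$ are identical.
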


Note that the above result holds for any statistic, not necessarily $\hat{\eta}_\lambda$, thus it does not require any assumption on $\gl$ as opposed to Theorem~\ref{thm: Type1 error}. This follows from the exchangeability of the samples under $H_0$ and the way $\qqe$ is defined, thus it is well known that this approach will yield an $\alpha$-level test when using $\qqe$ as the threshold. 

\vspace{-3mm}
\begin{remark}
(i) The requirement on $B$ in Theorem~\ref{thm: permutations typeI} is to ensure the proximity of $\qqh$ to $\qqe$ (refer to Lemma~\ref{lemma:DKW for quantile}), through an application of Dvoretzky-Kiefer-Wolfowitz (DKW) inequality \citep{DKW}. The parameters $w$ and $\tilde{w}$ introduced in the statement of Theorem~\ref{thm: permutations typeI} appear for technical reasons within the proof of Lemma~\ref{lemma:DKW for quantile} since Lemma~\ref{lemma:DKW for quantile} does not directly establish a relationship between the true quantile $\qqe$ and the estimated quantile $\qqh$. Instead, it associates the true quantile with an arbitrary small shift in $\alpha$ to the estimated quantile, hence the inclusion of these parameters. However, in practice, we rely solely on the quantile $\qqh$, which will be very close to the true quantile $\qqe$ for a sufficiently large $B$.\vspace{1mm}

(ii) Another approach to construct $\qqh$ is by using $\hat{F}^B_\lambda(x):=\frac{1}{B+1}\sum^B_{i=0} \mathds{1}(\hat{\eta}^i_\lambda\le x)$ instead of the above definition, where the new definition of $\hat{F}^B_\lambda$ includes the unpermuted statistic $\stat$ 
 (see \citealt[Lemma 1]{Romano} and \citealt[Proposition 1]{Albert}). The advantage of this new construction is that the Type-I error is always smaller than $\alpha$ for all $B$, i.e., no condition is needed on $B$. However, a condition on $B$ similar to that in Theorem~\ref{thm: permutations typeII} is anyway needed to achieve the required power. 
 Therefore, the condition on $B$ in Theorem \ref{thm: permutations typeI} does not impose an additional constraint in the power analysis. In practice, we observed that the new approach requires a significantly large $B$ to achieve similar power, leading to an increased computational requirement. 
 Thus, we use the construction in 
 Theorem \ref{thm: permutations typeI} in our numerical experiments
\end{remark}
\vspace{-3mm}
Next, similar to Theorem \ref{thm:Type II}, in the following result, we  
give the general conditions under which the power level can be controlled.

\begin{theorem}[Separation boundary--permutation] \label{thm: permutations typeII}
Suppose $(A_0)$--$(A_4)$ and $(B)$ hold. Let  $s=d_1N=d_2M$, $\sup_{(P,Q) \in \PP} \norm{\T^{-\theta}u}_{\Lp} < \infty,$ $\norm{\Sigma_{PQ}}_{\op} \geq\lambda=d_{\theta}\Delta_{N,M}^{\frac{1}{2\Tilde{\theta}}}$, for some constants $0<d_1,d_2<1$ and $d_{\theta}>0$, where $d_\theta$ is a constant that depends on $\theta$. For any $0<\delta\le 1$, if  $(N+M) \geq \max\left\{d_3\delta^{-1/2}\log\frac{1}{(w-\tilde{w})\alpha},32\K d_1\delta^{-1}\right\}$ for some $d_3>0$, $B\geq \frac{1}{2\Tilde{w}^2\alpha^2}\log\frac{2}{\delta}$ for any $0<\Tilde{w}<w <1$ and $\Delta_{N,M}$ satisfies 
$$\frac{\Delta_{N,M}^{\frac{2\Tilde{\theta}+1}{2\tilde{\theta}}}}{\mathcal{N}_{2}\left(d_{\theta}\Delta_{N,M}^{\frac{1}{2\tilde{\theta}}}\right)} \gtrsim \frac{d_{\theta}^{-1}(\delta^{-1}\log(1/\tilde{\alpha}))^2}{ (N+M)^{2}},\qquad\,
\frac{\Delta_{N,M}}{\mathcal{N}_{2}\left(d_{\theta}\Delta_{N,M}^{\frac{1}{2\tilde{\theta}}}\right)} \gtrsim\frac{\delta^{-1}\log(1/\tilde{\alpha})}{ N+M},$$
$$\Delta_{N,M} \geq c_{\theta} \left(\frac{N+M}{\log(N+M)}\right)^{-2\tilde{\theta}},$$
then 
\begin{equation}\inf_{(P,Q) \in \PP} P_{H_1}\left\{\stat
\geq \hat{q}_{1-w\alpha}^{B,\lambda} \right\} \geq 1-5\delta,\label{Eq:type-2-perm}\end{equation}
where $c_{\theta}>0$ is a constant that depends on $\theta$, $\tilde{\alpha} = (w-\tilde{w})\alpha$ and $\Tilde{\theta}=\min(\theta,\xi)$.

Furthermore, suppose  $C:=\sup_{i}\norm{\phi_i}_{\infty} < \infty$ and $N+M \geq \max\left\{\frac{d_3}{\sqrt{\delta}}\log\frac{1}{(w-\tilde{w})\alpha}, \frac{32}{\delta},e^{\frac{d_1}{272C}}\right\}$. Then \eqref{Eq:type-2-perm} holds when the above  conditions on $\Delta_{N,M}$ are replaced by
$$\frac{\Delta_{N,M}}{\mathcal{N}_{1}\left(d_{\theta}\Delta_{N,M}^{\frac{1}{2\tilde{\theta}}}\right)} \gtrsim \frac{(\delta^{-1}\log(1/\tilde{\alpha}))^2}{(N+M)^{2}},\qquad\frac{\Delta_{N,M}}{\mathcal{N}_{2}\left(d_{\theta}\Delta_{N,M}^{\frac{1}{2\tilde{\theta}}}\right)} \gtrsim \frac{\delta^{-1}\log(1/\tilde{\alpha})}{ N+M},$$
$$\text{and}\qquad\frac{1}{\mathcal{N}_{1}\left(d_{\theta}\Delta_{N,M}^{\frac{1}{2\tilde{\theta}}}\right)} \gtrsim \left(\frac{N+M}{\log(N+M)}\right)^{-1}.$$
\end{theorem}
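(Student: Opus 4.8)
The plan is to mirror the proof of Theorem~\ref{thm:Type II} (the Oracle power analysis) and add one extra layer that controls the random permutation threshold $\hat{q}_{1-w\alpha}^{B,\lambda}$ from above: I would show that, on a high-probability event under $H_1$, $\hat{q}_{1-w\alpha}^{B,\lambda}$ is no larger than a quantity of the same order as the Oracle threshold $\gamma$ of Theorem~\ref{thm:Type II} (with the $\alpha^{-1/2}$ there replaced by $\log(1/\tilde\alpha)$), and then invoke the already-established fact that, on a further high-probability event, $\stat \ge \eta_\lambda - |\stat-\eta_\lambda| \gtrsim \norm{u}_{\Lp}^2 \ge \Delta_{N,M}$, which dominates any such threshold under the stated conditions on $\Delta_{N,M}$. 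The first step is purely about the quantile estimate: since $B\ge \frac{1}{2\tilde w^2\alpha^2}\log\frac{2}{\delta}$, Lemma~\ref{lemma:DKW for quantile} (a DKW inequality applied to the empirical permutation c.d.f., conditionally on the data, hence unconditionally) gives $\hat{q}_{1-w\alpha}^{B,\lambda}\le q_{1-(w-\tilde w)\alpha}^{\lambda}=q_{1-\tilde\alpha}^{\lambda}$ with probability at least $1-\delta$, reducing the task to upper bounding the population permutation quantile $q_{1-\tilde\alpha}^{\lambda}$.

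To bound $q_{1-\tilde\alpha}^{\lambda}$, condition on the pooled data $W:=(U_1,\dots,U_{n+m},Z_1,\dots,Z_s)$, so that $q_{1-\tilde\alpha}^{\lambda}$ is the $(1-\tilde\alpha)$-quantile of $\hat{\eta}^{\pi}_{\lambda}$ over a uniformly random permutation $\pi$. I would apply a conditional Bernstein-type concentration inequality for the permuted statistic viewed as a function of $\pi$, giving
\[
q_{1-\tilde\alpha}^{\lambda}\;\le\;\E_\pi\!\left[\hat{\eta}^{\pi}_{\lambda}\mid W\right]+c\left(\sqrt{\mathrm{Var}_\pi(\hat{\eta}^{\pi}_{\lambda}\mid W)\,\log\tfrac{1}{\tilde\alpha}}\;+\;R_W\log\tfrac{1}{\tilde\alpha}\right),
\]
where $R_W$ is an almost-sure range bound on the per-permutation fluctuations; this is exactly where the $\log(1/\tilde\alpha)$ dependence enters in place of the $\alpha^{-1/2}$ of the Oracle test. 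The technical heart is then to bound, uniformly over $\PP$, the conditional permutation mean, variance, and range. Using the classical combinatorial formulas for the first two moments of a two-sample $U$-statistic under uniform random permutation together with the representation $h(x,x',y,y')=\inner{\gShh A(x,y)}{\gShh A(x',y')}_{\h}$, one shows that $\E_\pi[\hat{\eta}^{\pi}_{\lambda}\mid W]$ equals, up to factors of order $1/(n+m)$, the pooled (degenerate-in-expectation) version of the statistic, so that $\E_{H_1}\E_\pi[\hat{\eta}^{\pi}_{\lambda}\mid W]\lesssim \Ntl/(n+m)$ and Markov's inequality makes it $\lesssim \delta^{-1}\Ntl/(n+m)$ on an event of probability $\ge 1-\delta$; likewise $\mathrm{Var}_\pi(\hat{\eta}^{\pi}_{\lambda}\mid W)$ and $R_W$ are controlled, after taking expectations over $W$ and applying Markov, by terms of order $\Ntlsq/(n+m)^2$ and $\K\,\Ntl^{1/2}/(n+m)$, respectively. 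The dependence of $\hat{\eta}^{\pi}_{\lambda}$ on the random operator $\gShh$ is handled throughout by intersecting with the event $A$ on which $\norm{\M}_{\op}$ and $\norm{\M^{-1}}_{\op}$ are bounded, $\M:=\hat{\Sigma}_{PQ,\lambda}^{-1/2}\Sigma_{PQ,\lambda}^{1/2}$ (cf.\ Remark~\ref{rem-a4}(iii)), together with the Bernstein-type inequality of \citet{kpca} for Hilbert--Schmidt operator-valued $U$-statistics. Plugging these into the display yields $q_{1-\tilde\alpha}^{\lambda}\lesssim \delta^{-1}\log(1/\tilde\alpha)\,\Ntl/(n+m)$ with probability $\ge 1-3\delta$ under $H_1$.

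The remainder follows the proof of Theorem~\ref{thm:Type II} verbatim. Under $(A_0)$--$(A_4)$, $(B)$, the choice $\lambda=d_\theta\Delta_{N,M}^{1/2\tilde\theta}$ and $\sup_{\PP}\norm{\T^{-\theta}u}_{\Lp}<\infty$, Lemma~\ref{lemma: bounds for eta} gives $\eta_\lambda\gtrsim\norm{u}_{\Lp}^2=\underline\rho^2(P,Q)\ge\Delta_{N,M}$; $(A_4)$ together with Lemma~\ref{lemma: denominator lower bound} gives $\zeta\gtrsim\eta_\lambda$ on $A$; and controlling $\mathrm{Var}(\stat\mid(Z_i))$ followed by a conditional Chebyshev bound gives $\stat\ge\tfrac12\eta_\lambda$ on an event of probability $\ge 1-\delta$ (which is where the first displayed condition on $\Delta_{N,M}$, with its $(\delta^{-1}\log(1/\tilde\alpha))^2$, is used). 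Intersecting the events of the three steps and of this last Chebyshev bound and the event $A$ gives total failure probability $\le 5\delta$, and on the complement the second and third displayed conditions on $\Delta_{N,M}$ force $\tfrac12\eta_\lambda\ge q_{1-\tilde\alpha}^{\lambda}\ge\hat{q}_{1-w\alpha}^{B,\lambda}$, hence $\stat\ge\hat{q}_{1-w\alpha}^{B,\lambda}$ uniformly over $\PP$, which is \eqref{Eq:type-2-perm}. The bounded-eigenfunction variant is identical, replacing $\Ntl$ by $\Nol$ wherever the sharper $\sup_i\norm{\phi_i}_\infty<\infty$ Bernstein bounds of \citet{kpca} apply, and correspondingly relaxing the constraints on $s$ and $\lambda$.

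The step I expect to be the main obstacle is the conditional-on-$W$ moment analysis of the permuted $U$-statistic: one must show that the permutation mean, variance, and range all decay to the pooled/degenerate scale $1/(n+m)$ (rather than $O(1)$), in terms of the intrinsic-dimension quantities $\Ntl$ and $\Nol$, while simultaneously carrying the randomness of $\gShh$ through $\M$ and $\M^{-1}$. The combinatorics of relabelling a two-sample $U$-statistic — tracking the cross terms between the $X$- and $Y$-blocks after permutation — is where the bookkeeping is heaviest, and it is precisely the $1/(n+m)$ scaling obtained there that yields the claimed separation boundary.
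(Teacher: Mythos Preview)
Your overall skeleton is right and matches the paper: reduce $\hat q_{1-w\alpha}^{B,\lambda}$ to the population permutation quantile $q_{1-\tilde\alpha}^{\lambda}$ via the DKW lemma (Lemma~\ref{lemma:DKW for quantile}), bound that quantile, and then rerun the Oracle power argument (Lemma~\ref{thm: general Type2 error}, Lemma~\ref{lemma: denominator lower bound}, Lemma~\ref{lemma: bounds for eta}, event $A$ for $\norm{\M}_{\op},\norm{\M^{-1}}_{\op}$). The five $\delta$'s and the displayed conditions on $\Delta_{N,M}$ arise exactly as you say.

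Where you diverge from the paper is in the quantile step, and your route is noticeably heavier. The paper does \emph{not} use a Bernstein inequality over $\pi$. Instead it invokes a deterministic, almost-sure combinatorial bound from the permutation-testing literature (\citealt[Eq.~59]{permutations}): conditionally on the data there exists $C_6$ with
\[
q_{1-\alpha}^{\lambda}\;\le\;C_6\,I\,\log\tfrac{1}{\alpha},\qquad
I^2:=\tfrac{1}{m^2(m-1)^2}\!\!\left(\sum_{i\ne j}\langle a(X_i),a(X_j)\rangle^2+\sum_{i\ne j}\langle a(Y_i),a(Y_j)\rangle^2+2\sum_{i,j}\langle a(X_i),a(Y_j)\rangle^2\right),
\]
with $a(x)=\gShh(K(\cdot,x)-\mu_P)$. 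One then writes $a(Y)=b(Y)+\B\SgL(\mu_Q-\mu_P)$ and bounds $\E(I^2\mid (Z_i))$ using the same Lemmas~\ref{lemma:bound U-statistic1}--\ref{lemma:bound U-statistic3}, \ref{lemma: bound hs and op} as in Lemma~\ref{Lemma: bounding expectations}; a single Markov inequality on $I^2$ gives the probabilistic bound (Lemma~\ref{lemma:bound quantile}). This avoids entirely the concentration-for-permutation-statistics machinery you propose, and yields a quantile bound of the form
\[
\gamma=\frac{\norm{\M}_{\op}^2\log(1/\alpha)}{\sqrt{\delta}(n+m)}\Big(\sqrt{\Cl}\norm{u}_{\Lp}+\Ntl+\Cl^{1/4}\norm{u}_{\Lp}^{3/2}+\norm{u}_{\Lp}\Big)+\frac{\zeta\log(1/\alpha)}{\sqrt{\delta}(n+m)}.
\]
Note the $\norm{u}_{\Lp}$- and $\zeta$-dependent terms: your claim that ``$\E_{H_1}\E_\pi[\hat\eta^\pi_\lambda\mid W]\lesssim \Ntl/(n+m)$'' and that the quantile is $\lesssim \delta^{-1}\log(1/\tilde\alpha)\,\Ntl/(n+m)$ drops these signal-dependent pieces, which under $H_1$ do appear. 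In the paper they are not negligible but are \emph{absorbed}: the $\zeta$-term is handled by the sample-size assumption $(N+M)\ge d_3\delta^{-1/2}\log\frac{1}{\tilde\alpha}$ (so $\frac{C_5\zeta\log(1/\alpha)}{\sqrt\delta(n+m)}\le \zeta/2$), and the $\norm{u}_{\Lp}$-terms are folded into the separation condition \eqref{Eq:perm-verify-1}, exactly parallel to \eqref{Eq:verify-3} in Theorem~\ref{thm:Type II}. If you pursue your Bernstein route you would need to recover and track these same terms; as written, your moment claims are too optimistic.
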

The following corollaries 
specialize Theorem~\ref{thm: permutations typeII} for the case of polynomial and exponential 
decay of eigenvalues of $\T$.
\begin{corollary}[Polynomial decay--permutation] \label{coro:poly:perm}
Suppose $\lambda_i \lesssim i^{-\beta}$, $\beta>1$. Then there exists $k_{\theta,\beta} \in \N$ such that for all $N+M \geq k_{\theta,\beta}$ and for any $\delta>0$, 
$$\inf_{(P,Q) \in \PP} P_{H_1}\left\{\stat
\geq \hat{q}_{1-w\alpha}^{B,\lambda} \right\} \geq 1-5\delta,$$
when
$$\Delta_{N,M} =
\left\{
	\begin{array}{ll}
c(\alpha,\delta,\theta)\left(N+M\right)^{\frac{-4\tilde{\theta}\beta}{4\Tilde{\theta}\beta+1}},  &  \ \ \Tilde{\theta}> \frac{1}{2}-\frac{1}{4\beta} \\		
c(\alpha,\delta,\theta)\left(\frac{\log (N+M)}{N+M}\right)^{2\tilde{\theta}}, & \ \ \Tilde{\theta} \leq \frac{1}{2}-\frac{1}{4\beta}
	\end{array}
\right.,$$
with $c(\alpha,\delta,\theta)\gtrsim\max\{\delta^{-2}(\log \frac{1}{\alpha})^2,d_4^{2\tilde{\theta}}\}$ for some constant $d_4>0$. Furthermore, if $\sup_{i}\norm{\phi_i}_{\infty}$ $< \infty$, then 
$$\Delta_{N,M} =
\left\{
	\begin{array}{ll}
c(\alpha,\delta,\theta,\beta)\left(N+M\right)^{\frac{-4\tilde{\theta}\beta}{4\Tilde{\theta}\beta+1}},  &  \ \  \Tilde{\theta}>\frac{1}{4\beta} \\
c(\alpha,\delta,\theta,\beta)\left(\frac{\log(N+M)}{N+M}\right)^{2\tilde{\theta}
		\beta}, & \ \  \Tilde{\theta} \leq \frac{1}{4\beta}
	\end{array}
\right.,$$
where $c(\alpha,\delta,\theta,\beta)\gtrsim \max\{\delta^{-2}(\log \frac{1}{\alpha})^2,d_5^{2\tilde{\theta}\beta}\}$ for some constant $d_5>0$.
\end{corollary}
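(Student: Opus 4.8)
The plan is to deduce Corollary~\ref{coro:poly:perm} from Theorem~\ref{thm: permutations typeII} by making the effective-dimension quantities $\mathcal{N}_1(\lambda)$ and $\mathcal{N}_2(\lambda)$ explicit under the polynomial decay $\lambda_i\lesssim i^{-\beta}$, substituting the prescribed $\lambda=d_{\theta}\Delta_{N,M}^{1/(2\tilde{\theta})}$ into the conditions of that theorem, and then identifying, as a function of $\tilde{\theta}$ and $\beta$, which of those conditions is binding.

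First I would note that $\T=\id\id^{*}$ and $\Sigma_{PQ}=\id^{*}\id$ have the same nonzero eigenvalues, so the hypothesis $\lambda_i\lesssim i^{-\beta}$ applies verbatim to $\Sigma_{PQ}$, and $\mathcal{N}_1(\lambda)=\sum_i\frac{\lambda_i}{\lambda_i+\lambda}$ and $\mathcal{N}_2^2(\lambda)=\sum_i\frac{\lambda_i^2}{(\lambda_i+\lambda)^2}$. Splitting each sum at the index $i\asymp\lambda^{-1/\beta}$, on which $\lambda_i\asymp\lambda$, and bounding the tail by the corresponding integral gives, for $\beta>1$ and $\lambda$ small,
$$\mathcal{N}_1(\lambda)\ \lesssim\ \lambda^{-1/\beta},\qquad \mathcal{N}_2(\lambda)\ \lesssim\ \lambda^{-1/(2\beta)}.$$
Since the prescribed $\Delta_{N,M}\to 0$, the choice $\lambda=d_{\theta}\Delta_{N,M}^{1/(2\tilde{\theta})}$ tends to $0$, which both legitimizes these asymptotics and ensures $\lambda\le\norm{\Sigma_{PQ}}_{\op}$ once $N+M\ge k_{\theta,\beta}$, so all the hypotheses of Theorem~\ref{thm: permutations typeII} hold.

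Substituting $\lambda=d_{\theta}\Delta_{N,M}^{1/(2\tilde{\theta})}$ converts the above into $\mathcal{N}_2(\lambda)\lesssim d_{\theta}^{-1/(2\beta)}\Delta_{N,M}^{-1/(4\tilde{\theta}\beta)}$ (and analogously for $\mathcal{N}_1$), so each condition of Theorem~\ref{thm: permutations typeII} becomes a power inequality of the form $\Delta_{N,M}^{p}\gtrsim C(\alpha,\delta)(N+M)^{-q}$, i.e.\ $\Delta_{N,M}\gtrsim C(\alpha,\delta)^{1/p}(N+M)^{-q/p}$; the binding one is that with the smallest exponent $q/p$. In the case without the uniform boundedness assumption the three exponents come out to be $\frac{8\tilde{\theta}\beta}{4\tilde{\theta}\beta+2\beta+1}$ (from the first condition, the one with $(N+M)^{-2}$ on the right), $\frac{4\tilde{\theta}\beta}{4\tilde{\theta}\beta+1}$ (from the second condition), and $2\tilde{\theta}$ (from the ``$\Delta_{N,M}\ge c_{\theta}(\cdot)^{-2\tilde{\theta}}$'' condition, which carries an extra $\log$ factor). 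A short computation shows the first and second exponents are equal exactly at $\tilde{\theta}=\tfrac12-\tfrac1{4\beta}$, and so are the second and third; moreover for $\tilde{\theta}>\tfrac12-\tfrac1{4\beta}$ the second condition dominates and for $\tilde{\theta}\le\tfrac12-\tfrac1{4\beta}$ the third dominates. This yields the two-case rate in the statement; tracking the constants (all of the form $\max\{(\delta^{-1}\log(1/\tilde{\alpha}))^{2},d_4^{2\tilde{\theta}}\}$, with $\log(1/\tilde{\alpha})\asymp\log(1/\alpha)$ for fixed $w,\tilde{w}$) produces the stated $c(\alpha,\delta,\theta)$. The uniformly bounded case is identical after replacing the relevant $\mathcal{N}_2$'s by $\mathcal{N}_1(\lambda)\lesssim\lambda^{-1/\beta}$: the three exponents become $\frac{4\tilde{\theta}\beta}{2\tilde{\theta}\beta+1}$, $\frac{4\tilde{\theta}\beta}{4\tilde{\theta}\beta+1}$, and $2\tilde{\theta}\beta$ (with a $\log$); the first is always dominated by the second, and the second and third are equal at $\tilde{\theta}=\tfrac1{4\beta}$, giving the second pair of cases.

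The only step that needs genuine care rather than mechanical algebra is this last piece of book-keeping: checking that the pairwise exponent comparisons go the stated way over the whole admissible range of $(\tilde{\theta},\beta)$, and confirming that the logarithmic factors end up exactly on the regimes where the statement places them (they are inherited solely from the ``$\Delta_{N,M}\ge c_{\theta}(\cdot)^{-2\tilde{\theta}}$''-type condition, which is why the polynomial branches have no $\log$). The effective-dimension estimates and the substitution itself are routine.
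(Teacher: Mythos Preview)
Your proposal is correct and follows essentially the same route as the paper: bound $\mathcal{N}_2(\lambda)\lesssim\lambda^{-1/(2\beta)}$ (and $\mathcal{N}_1(\lambda)\lesssim\lambda^{-1/\beta}$ in the bounded-eigenfunction case), substitute $\lambda=d_\theta\Delta_{N,M}^{1/(2\tilde\theta)}$ into the three conditions of Theorem~\ref{thm: permutations typeII}, and compare the resulting exponents $\frac{8\tilde\theta\beta}{4\tilde\theta\beta+2\beta+1}$, $\frac{4\tilde\theta\beta}{4\tilde\theta\beta+1}$, $2\tilde\theta$ (respectively $\frac{4\tilde\theta\beta}{2\tilde\theta\beta+1}$, $\frac{4\tilde\theta\beta}{4\tilde\theta\beta+1}$, $2\tilde\theta\beta$) to locate the crossover at $\tilde\theta=\tfrac12-\tfrac1{4\beta}$ (respectively $\tilde\theta=\tfrac1{4\beta}$). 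The paper obtains the $\mathcal{N}_2$ bound by citing \citet[Lemma~B.9]{kpca} rather than splitting the sum, but the argument is otherwise identical.
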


\begin{corollary}[Exponential decay--permutation] \label{coro:exp:perm}
Suppose 
$\lambda_i \lesssim e^{-\tau i}$,
$\tau>0$. Then for any $\delta>0$, there exists $k_{\theta}$ such that for all $N+M\geq k_{\theta}$, 
$\inf_{(P,Q) \in \PP} P_{H_1}\left\{\stat
\geq \hat{q}_{1-w\alpha}^{B,\lambda} \right\} \geq 1-5\delta$ 
when
$$\Delta_{N,M} =
\left\{
	\begin{array}{ll}
	c(\alpha,\delta,\theta)\frac{\sqrt{\log(N+M)}}{N+M},  &  \ \ \Tilde{\theta}> \frac{1}{2} \\
		c(\alpha,\delta,\theta)\left(\frac{\log(N+M)}{N+M}\right)^{2\tilde{\theta}}, & \ \  \Tilde{\theta} \le \frac{1}{2}
	\end{array}
\right.,$$
where $c(\alpha,\delta,\theta) \gtrsim \max\left\{\sqrt{\frac{1}{2\tilde{\theta}}},1\right\}\max\left\{\delta^{-2}(\log \frac{1}{\alpha})^2,d_6^{2\tilde{\theta}}\right\}$ for some constant $d_6>0$. Furthermore, if $\sup_{i}\norm{\phi_i}_{\infty} < \infty$, then 

$$\Delta_{N,M} = c(\alpha,\delta,\theta)\frac{\sqrt{\log(N+M)}}{N+M},$$
where $c(\alpha,\delta,\theta) \gtrsim \max\left\{\sqrt{\frac{1}{2\tilde{\theta}}},\frac{1}{2\tilde{\theta}},1\right\}\delta^{-2}(\log \frac{1}{\alpha})^2.$
\end{corollary}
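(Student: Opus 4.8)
The plan is to obtain Corollary~\ref{coro:exp:perm} as a direct specialization of Theorem~\ref{thm: permutations typeII}: the power bound $1-5\delta$ is exactly what that theorem delivers once its hypotheses on $\Delta_{N,M}$ are verified, together with the inherited requirements $s=d_1N=d_2M$, $\sup_{(P,Q)\in\PP}\norm{\T^{-\theta}u}_{\Lp}<\infty$, $\lambda=d_\theta\Delta_{N,M}^{1/2\tilde\theta}\le\norm{\Sigma_{PQ}}_{\op}$, and $B\ge\frac{1}{2\tilde w^2\alpha^2}\log\frac2\delta$. Since those hypotheses are phrased through the degrees-of-freedom functionals $\mathcal{N}_1(\lambda)=\mathrm{Tr}(\SgL\Sigma_{PQ}\SgL)$ and $\mathcal{N}_2(\lambda)=\norm{\SgL\Sigma_{PQ}\SgL}_{\hs}$, the entire argument reduces to (a) bounding these functionals under exponential eigendecay, and (b) checking a short list of scalar inequalities for $N+M$ large.

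For step (a), assumptions $(A_1)$ and $(A_2)$ give $(\lambda_i+\lambda)\gl(\lambda_i)\le C_1+C_2$, so $\mathcal{N}_1(\lambda)=\sum_i\lambda_i\gl(\lambda_i)\le(C_1+C_2)\sum_i\tfrac{\lambda_i}{\lambda_i+\lambda}$ and, likewise, $\mathcal{N}_2^2(\lambda)=\sum_i(\lambda_i\gl(\lambda_i))^2\le(C_1+C_2)^2\sum_i\tfrac{\lambda_i}{\lambda_i+\lambda}$. With $\lambda_i\lesssim e^{-\tau i}$ I would split $\sum_i\tfrac{\lambda_i}{\lambda_i+\lambda}$ at $i_*\asymp\tau^{-1}\log(1/\lambda)$: for $i\le i_*$ each summand is $\le1$, contributing $O(\tau^{-1}\log(1/\lambda))$, while for $i>i_*$ the summand is $\lesssim\lambda^{-1}e^{-\tau i}$ and the geometric tail contributes $O(1)$. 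Hence $\mathcal{N}_1(\lambda)\lesssim\log(1/\lambda)$ and $\mathcal{N}_2(\lambda)\lesssim\sqrt{\log(1/\lambda)}$, with constants depending only on $\tau,C_1,C_2$. Substituting $\lambda=d_\theta\Delta_{N,M}^{1/2\tilde\theta}$ gives $\log(1/\lambda)\asymp\tfrac{1}{2\tilde\theta}\log(1/\Delta_{N,M})$ for small $\Delta_{N,M}$, so $\mathcal{N}_1\lesssim\tfrac{1}{2\tilde\theta}\log(1/\Delta_{N,M})$ and $\mathcal{N}_2\lesssim\sqrt{\tfrac{1}{2\tilde\theta}\log(1/\Delta_{N,M})}$; these are upper bounds, which is precisely what is needed since $\mathcal{N}_1,\mathcal{N}_2$ sit in the denominators of the lower bounds required by Theorem~\ref{thm: permutations typeII}.

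For step (b), I would plug in the candidate rates. When $\tilde\theta>\tfrac12$, take $\Delta_{N,M}=c(\alpha,\delta,\theta)\tfrac{\sqrt{\log(N+M)}}{N+M}$, so $\log(1/\Delta_{N,M})\asymp\log(N+M)$: the second condition $\tfrac{\Delta_{N,M}}{\mathcal{N}_2}\gtrsim\tfrac{\delta^{-1}\log(1/\tilde\alpha)}{N+M}$ reads $\tfrac{1}{N+M}\gtrsim\tfrac{\delta^{-1}\log(1/\tilde\alpha)}{N+M}$ up to constants and holds once $c(\alpha,\delta,\theta)$ absorbs $\sqrt{1/2\tilde\theta}\,\delta^{-1}\log(1/\tilde\alpha)$; the first condition reduces, after cancellation, to $(\log(N+M))^{1/4\tilde\theta}(N+M)^{1-1/2\tilde\theta}\gtrsim\mathrm{const}$, valid for $N+M\ge k_\theta$ since $\tilde\theta>\tfrac12$; and the third condition $\Delta_{N,M}\ge c_\theta(\tfrac{N+M}{\log(N+M)})^{-2\tilde\theta}$ holds because $(N+M)^{2\tilde\theta-1}$ beats $(\log(N+M))^{2\tilde\theta-1/2}$. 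When $\tilde\theta\le\tfrac12$, the third condition is binding and pins $\Delta_{N,M}=c(\alpha,\delta,\theta)(\tfrac{\log(N+M)}{N+M})^{2\tilde\theta}$; with this choice $\log(1/\Delta_{N,M})\asymp\log(N+M)$ again, and the first two conditions collapse to $(\log(N+M))^{2\tilde\theta\pm1/2}(N+M)^{1-2\tilde\theta}\gtrsim\mathrm{const}$, both valid for $N+M$ large. For the uniformly bounded eigenfunction case one verifies instead the alternative conditions of Theorem~\ref{thm: permutations typeII}, where $\mathcal{N}_1$ replaces $\mathcal{N}_2$ in the first condition and the third becomes $\mathcal{N}_1(d_\theta\Delta_{N,M}^{1/2\tilde\theta})\lesssim\tfrac{N+M}{\log(N+M)}$; since $\mathcal{N}_1\lesssim\tfrac{1}{2\tilde\theta}\log(N+M)$ this holds for every $\tilde\theta>0$ as soon as $(\log(N+M))^2\lesssim N+M$, so the binding floor disappears and $\Delta_{N,M}\asymp\tfrac{\sqrt{\log(N+M)}}{N+M}$ is admissible for all $\tilde\theta>0$ (the presence of $\mathcal{N}_1$ rather than $\mathcal{N}_2$ in the first condition is what upgrades the $\sqrt{1/2\tilde\theta}$ factor to $\max\{\sqrt{1/2\tilde\theta},1/2\tilde\theta,1\}$ in $c(\alpha,\delta,\theta)$).

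The argument is conceptually straightforward; the genuine but routine difficulty is the constant bookkeeping. One must track how $\tilde\theta$ enters simultaneously through $\lambda=d_\theta\Delta_{N,M}^{1/2\tilde\theta}$ (hence through the $\tfrac{1}{2\tilde\theta}$ multiplying $\log(1/\Delta_{N,M})$ inside $\mathcal{N}_1,\mathcal{N}_2$) and through the exponent of $\Delta_{N,M}$ itself; reconcile the $(\delta^{-1}\log(1/\tilde\alpha))^2$ appearing in the theorem's first condition with the stated $\delta^{-2}(\log\tfrac1\alpha)^2$ in $c(\alpha,\delta,\theta)$ using $\tilde\alpha=(w-\tilde w)\alpha$ and $\delta\le1$; identify the $d_6^{2\tilde\theta}$ term with the constant $c_\theta$ from the floor on $\Delta_{N,M}$; and choose $k_\theta$ large enough to swallow all secondary $\log\log$ terms and to guarantee $\lambda\le\norm{\Sigma_{PQ}}_{\op}$. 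No new inequality beyond Theorem~\ref{thm: permutations typeII} and the elementary eigenvalue sum estimate is required.
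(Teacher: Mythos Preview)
Your approach is correct and matches the paper's own proof: bound $\mathcal{N}_2(\lambda)\lesssim\sqrt{\log(1/\lambda)}$ (and $\mathcal{N}_1(\lambda)\lesssim\log(1/\lambda)$ for the bounded-eigenfunction case) under exponential decay and substitute into the three conditions of Theorem~\ref{thm: permutations typeII}, then check each regime of $\tilde\theta$. One minor slip to fix: $\mathcal{N}_1$ and $\mathcal{N}_2$ are defined through $\Sigma_{PQ,\lambda}^{-1}=(\Sigma_{PQ}+\lambda I)^{-1}$, not through $g_\lambda(\Sigma_{PQ})$, so $\mathcal{N}_1(\lambda)=\sum_i\lambda_i/(\lambda_i+\lambda)$ and $\mathcal{N}_2^2(\lambda)=\sum_i(\lambda_i/(\lambda_i+\lambda))^2$ directly without invoking $(A_1)$--$(A_2)$; this does not affect your bounds or the remainder of the argument.
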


These results show that the permutation-based test constructed in Theorem~\ref{thm: permutations typeI} is minimax optimal w.r.t.~$\mathcal{P}$, matching the rates of the Oracle test with a completely data-driven test threshold. The computational complexity of the test increases to $O(Bs^3+Bm^2+Bn^2+Bns^2+Bms^2)$ as the test statistic is computed $B$ times to calculate the threshold $\qqh$. However, since the test can be parallelized over the permutations, the computational complexity in practice is still the complexity of one permutation.

\subsection{Adaptation}\label{subsec:adaptation}
While the permutation test defined in the previous section provides a practical test, the choice of $\lambda$ that yields the minimax separation boundary depends on the prior knowledge of $\theta$ (and $\beta$ in the case of polynomially decaying eigenvalues). In this section, we construct a test based on the union (aggregation) of multiple tests for different values of $\lambda$ taking values in a finite set, $\Lambda$, that guarantees to be minimax optimal (up to $\log$ factors) for a wide range of $\theta$ (and $\beta$ in case of polynomially decaying eigenvalues). 

Define $\Lambda :=\{\lambda_L, 2\lambda_L, ... \,, \lambda_U\},$ where $\lambda_U=2^b\lambda_L$, for $b \in \N$. Clearly $|\Lambda|=b+1=1+\log_2\frac{\lambda_U}{\lambda_L}$ is the cardinality of $\Lambda$. Let $\lambda^*$ be the optimal $\lambda$ that yields minimax optimality. The main idea is to choose $\lambda_L$ and $\lambda_U$ to ensure that there is an element in $\Lambda$ that is close to $\lambda^*$ for any $\theta$ (and $\beta$ in case of polynomially decaying eigenvalues). Define $v^* := \sup\{x \in \Lambda: x \leq \lambda^*\}$. Then it is easy to see that for $\lambda_L\leq \lambda^* \leq \lambda_U$, we have $\frac{\lambda^*}{2}\leq v^* \leq \lambda^*$, in other words $v^* \asymp \lambda^*$. Thus, $v^*$ is also an optimal choice for $\lambda$ that belongs to $\Lambda$. Motivated by this, in the following, we construct an $\alpha$-level test based on the union of the tests over $\lambda \in \Lambda$ that rejects $H_0$ if one of the tests rejects $H_0$, which is captured by Theorem~\ref{thm:perm adp typeI}. The separation boundary of this test is analyzed in Theorem~\ref{thm: perm adp typeII} under the polynomial and the exponential decay rates
of the eigenvalues of $\T$, showing that the adaptive test achieves the same performance (up to a $\log\log$ factor) as that of the Oracle test, i.e., minimax optimal w.r.t.~$\mathcal{P}$ over the range of $\theta$ mentioned in Corollaries~\ref{coro:poly-minimax}, \ref{coro:exp-minimax} without requiring the knowledge of $\lambda^*$.

\begin{theorem}[Critical region--adaptation] \label{thm:perm adp typeI}
For any $0<\alpha\leq 1$ and $0<w+\Tilde{w}<1$, if $B\geq \frac{\cd^2}{2\Tilde{w}^2\alpha^2}\log\frac{2\cd}{\alpha(1-w-\Tilde{w})}$, 
then 
$$P_{H_0}\left\{\bigcup_{\lambda \in \Lambda}\stat \geq \hat{q}_{1-\frac{w\alpha}{\cd}}^{B,\lambda} \right\} \leq \alpha.$$
\end{theorem}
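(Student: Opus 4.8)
The plan is to control the Type-I error of the aggregated test by a union bound over $\lambda \in \Lambda$, reducing the problem to the single-$\lambda$ guarantee established in Theorem~\ref{thm: permutations typeI}. Concretely, under $H_0$ the samples $(U_i)_{i=1}^{n+m}$ are exchangeable, so for each fixed $\lambda$ the permutation construction yields, as in Theorem~\ref{thm: permutations typeI}, that $P_{H_0}\{\stat \geq \hat{q}_{1-w'\alpha'}^{B,\lambda}\} \leq \alpha'$ whenever the number of permutations $B$ satisfies the corresponding DKW-based lower bound with level $\alpha'$ and split parameters $w',\tilde w'$. The idea is to apply this with $\alpha' = \alpha/\cd$ and the same $w,\tilde w$, so that each individual test in the union has level $\alpha/\cd$, and then sum over the $\cd$ values of $\lambda$.

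First, I would fix $\lambda \in \Lambda$ and invoke (the proof of) Theorem~\ref{thm: permutations typeI} with $\alpha$ replaced by $\alpha/\cd$: this requires $B \geq \frac{1}{2\tilde w^2 (\alpha/\cd)^2}\log\frac{2}{(\alpha/\cd)(1-w-\tilde w)} = \frac{\cd^2}{2\tilde w^2 \alpha^2}\log\frac{2\cd}{\alpha(1-w-\tilde w)}$, which is exactly the hypothesis imposed in the statement. Hence for every $\lambda \in \Lambda$,
$$P_{H_0}\left\{\stat \geq \hat{q}_{1-\frac{w\alpha}{\cd}}^{B,\lambda}\right\} \leq \frac{\alpha}{\cd}.$$
Second, I would apply the union bound:
$$P_{H_0}\left\{\bigcup_{\lambda \in \Lambda}\left\{\stat \geq \hat{q}_{1-\frac{w\alpha}{\cd}}^{B,\lambda}\right\}\right\} \leq \sum_{\lambda \in \Lambda} P_{H_0}\left\{\stat \geq \hat{q}_{1-\frac{w\alpha}{\cd}}^{B,\lambda}\right\} \leq \cd \cdot \frac{\alpha}{\cd} = \alpha,$$
which is the claim. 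One subtlety worth spelling out is that the same $B$ permutations (or at least permutations drawn independently of the data in the same way) are used across all $\lambda \in \Lambda$; since the DKW step that controls $\hat q^{B,\lambda}_{1-\cdot}$ versus $q^{\lambda}_{1-\cdot}$ is carried out per $\lambda$ and the union bound does not require independence across $\lambda$, this causes no difficulty — we only need each event to have probability at most $\alpha/\cd$.

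The only place requiring genuine care is verifying that the single-$\lambda$ argument of Theorem~\ref{thm: permutations typeI} goes through verbatim with level $\alpha/\cd$ in place of $\alpha$; this is immediate because that proof never uses that $\alpha$ is bounded away from $0$, only that $0 < \alpha \leq 1$ and $0 < w+\tilde w < 1$, both of which persist when $\alpha$ is replaced by $\alpha/\cd \leq \alpha \leq 1$. Thus there is essentially no obstacle beyond bookkeeping: the exchangeability under $H_0$ furnishes exact level $\alpha/\cd$ for the exact permutation quantile, the DKW inequality transfers this (up to the $w,\tilde w$ slack) to the Monte-Carlo quantile $\hat q^{B,\lambda}$ under the stated lower bound on $B$, and the union bound over the $\cd$ thresholds absorbs the factor $\cd$.
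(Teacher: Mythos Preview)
Your proposal is correct and matches the paper's approach exactly: the paper's proof simply invokes Theorem~\ref{thm: permutations typeI} with $\alpha$ replaced by $\alpha/\cd$ and then applies the union bound (packaged as Lemma~\ref{lemma:adaptation}). Your verification that the stated condition on $B$ is precisely what Theorem~\ref{thm: permutations typeI} requires at level $\alpha/\cd$ is the only bookkeeping needed.
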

\begin{theorem}[Separation boundary--adaptation] \label{thm: perm adp typeII}
Suppose 
$(A_0)$--$(A_4)$ and $(B)$ hold. Let 
$\tilde{\theta}=\min(\theta,\xi),$ $s=e_1N=e_2M$ for $0<e_1,e_2<1$, and $\sup_{\theta>0}\sup_{(P,Q) \in \PP} \norm{\T^{-\theta}u}_{\Lp}$ $< \infty$. Then for any $\delta>0$, $B\geq \frac{\cd^2}{2\Tilde{w}^2\alpha^2}\log\frac{2}{\delta}$, $0<\Tilde{w}<w <1$, $0<\alpha\leq e^{-1}$, $\theta_l>0$, there exists $k$ such for all $N+M \geq k$, we have 
$$\inf_{\theta>\theta_l}\inf_{(P,Q) \in \PP} P_{H_1}\left\{\bigcup_{\lambda \in \Lambda}\stat \geq \hat{q}_{1-\frac{w\alpha}{\cd}}^{B,\lambda} \right\} \geq 1-5\delta,$$
provided one of the following cases hold:

(i) $\lambda_i \lesssim i^{-\beta}$, $1<\beta<\beta_u < \infty$, $\lambda_L = r_1\frac{\log(N+M)}{N+M}$, $\lambda_U=r_2\left(\frac{\log(N+M)}{N+M} \right)^\frac{2}{4\Tilde{\xi}+1}$, for some constants $r_1,r_2>0$, where $\Tilde{\xi}=\max(\xi,\frac{1}{4})$,  $\norm{\Sigma_{PQ}}_{\op}\geq \lambda_U$, and $$\Delta_{N,M} = c(\alpha,\delta,\theta)\max\left\{\left(\frac{\log\log(N+M)}{N+M}\right)^{\frac{4\Tilde{\theta}\beta}{4\Tilde{\theta}\beta+1}}, \left(\frac{\log(N+M)}{N+M}\right)^{2\tilde{\theta}} \right\},$$ with $c(\alpha,\delta,\theta)\gtrsim\max\left\{\delta^{-2}(\log 1/\alpha)^2,d_1^{2\tilde{\theta}}\right\}$, for some constant $d_1>0.$ Furthermore, if $\sup_{i}\norm{\phi_i}_{\infty} < \infty$, then the above conditions on $\lambda_L$ and $\lambda_U$ can be replaced by $\lambda_L = r_3\left(\frac{\log(N+M)}{N+M} \right)^{\beta_u}$, $\lambda_U=r_4\left(\frac{\log(N+M)}{N+M} \right)^\frac{2}{4\Tilde{\xi}+1}$, for some constants $r_3,r_4>0$ and $$\Delta_{N,M} = c(\alpha,\delta,\theta,\beta)\max\left\{\left(\frac{\log\log(N+M)}{N+M}\right)^{\frac{4\Tilde{\theta}\beta}{4\Tilde{\theta}\beta+1}}, \left(\frac{\log(N+M)}{N+M}\right)^{2\tilde{\theta}\beta} \right\},$$ where $c(\alpha,\delta,\theta,\beta)\gtrsim \max\left\{\delta^{-2}(\log 1/\alpha)^2,d_2^{2\tilde{\theta}\beta}\right\}$ for some constant $d_2>0.$\vspace{1mm}\\

(ii) $\lambda_i \lesssim e^{-\tau i}$,
$\tau>0$, $\lambda_L = r_5\frac{\log(N+M)}{N+M}$, $\lambda_U=r_6\left(\frac{\log(N+M)}{N+M} \right)^{1/2\xi}$ for some $r_5,r_6>0$, $\lambda_U\le \norm{\Sigma_{PQ}}_{\op}$, and $$\Delta_{N,M} = c(\alpha,\delta,\theta)\max\left\{ \frac{\sqrt{\log(N+M)}\log\log(N+M)}{N+M}, \left(\frac{\log(N+M)}{N+M}\right)^{2\tilde{\theta}} \right\},$$ where $c(\alpha,\delta,\theta) \gtrsim \max\left\{\sqrt{\frac{1}{2\tilde{\theta}}},1\right\}\max\left\{\delta^{-2}(\log 1/\alpha)^2,d_4^{2\tilde{\theta}}\right\},$ for some constant $d_4>0.$ Furthermore if  $\sup_{i}\norm{\phi_i}_{\infty} < \infty$, then the above conditions on $\lambda_L$ and $\lambda_U$ can be replaced by $\lambda_L = r_7\left(\frac{\log(N+M)}{N+M} \right)^{\frac{1}{2\theta_l}}$, $\lambda_U=r_8\left(\frac{\log(N+M)}{N+M} \right)^{\frac{1}{2\xi}}$, for some $r_7,r_8>0$ and $$\Delta_{N,M} = c(\alpha,\delta,\theta)\frac{\sqrt{\log(N+M)}\log\log(N+M)}{N+M},$$
where $c(\alpha,\delta,\theta) \gtrsim \max\left\{\sqrt{\frac{1}{2\tilde{\theta}}},\frac{1}{2\tilde{\theta}},1\right\}\delta^{-2}(\log 1/\alpha)^2.$
\end{theorem}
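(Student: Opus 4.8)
The plan is to deduce this from the non-adaptive permutation analysis (Theorem~\ref{thm: permutations typeII} and Corollaries~\ref{coro:poly:perm},~\ref{coro:exp:perm}), paying only a union-bound price of $\cd$ over the grid $\Lambda$. Type-I control is inherited directly: since the aggregated test rejects as soon as one of the individual tests does, Theorem~\ref{thm:perm adp typeI} already makes it an $\alpha$-level test. For the power, the key observation is that the aggregated rejection region contains each single-test rejection region, so
\[
\inf_{\theta>\theta_l}\inf_{(P,Q)\in\PP}P_{H_1}\Big\{\textstyle\bigcup_{\lambda\in\Lambda}\stat\ge\hat q_{1-w\alpha/\cd}^{B,\lambda}\Big\}\ \ge\ \inf_{\theta>\theta_l}\inf_{(P,Q)\in\PP}P_{H_1}\big\{\hat\eta_{v^*}\ge\hat q_{1-w\alpha/\cd}^{B,v^*}\big\},
\]
where $v^*:=\sup\{x\in\Lambda:x\le\lambda^*\}$ and $\lambda^*\asymp d_\theta(\Delta_{N,M})^{1/2\tilde\theta}$ is the minimax-optimal regularization parameter identified in Theorem~\ref{thm: permutations typeII}. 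The right-hand side is exactly the power of a \emph{non-adaptive} permutation test with regularization $v^*$ and nominal level $\tfrac{w\alpha}{\cd}$, so everything reduces to re-running Theorem~\ref{thm: permutations typeII} at the shrunken level $\tfrac{w\alpha}{\cd}$ with $\lambda=v^*$.

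The first step is to check that $\lambda^*\in[\lambda_L,\lambda_U]$ for every $\theta>\theta_l$ (and, in case~(i), every $\beta\in(1,\beta_u)$), which forces $\tfrac{\lambda^*}{2}\le v^*\le\lambda^*$, i.e.\ $v^*\asymp\lambda^*$, so $v^*$ is itself a minimax-optimal parameter lying in $\Lambda$; together with $\norm{\Sigma_{PQ}}_{\op}\ge\lambda_U\ge v^*$ this supplies the range condition on $\lambda$ demanded by Theorem~\ref{thm: permutations typeII}. This is an exponent comparison: on the smoothness branch $\lambda^*\asymp\big(\tfrac{\log\log(N+M)}{N+M}\big)^{2\beta/(4\tilde\theta\beta+1)}$ with exponent strictly below $1$, hence $\lambda^*\gg\lambda_L\asymp\tfrac{\log(N+M)}{N+M}$, while $\tfrac{2\beta}{4\tilde\theta\beta+1}\ge\tfrac{2}{4\tilde\xi+1}$ (using $\beta>1$ and $\tilde\theta\le\tilde\xi$) gives $\lambda^*\le\lambda_U$; on the floor branch $\lambda^*\asymp\lambda_L$, so it again lies in the grid. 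The exponential case and the $\sup_i\norm{\phi_i}_\infty<\infty$ subcases are identical with the correspondingly modified endpoint exponents.

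The second step substitutes $\alpha\mapsto\tfrac{w\alpha}{\cd}$ and $\lambda\mapsto v^*$ into (a mildly robust, $v^*\asymp\lambda^*$ rather than exact, version of) Theorem~\ref{thm: permutations typeII}. The lower bound $B\ge\tfrac{\cd^2}{2\tilde w^2\alpha^2}\log\tfrac2\delta$ is exactly what the DKW step (Lemma~\ref{lemma:DKW for quantile}) needs to keep $\hat q^{B,v^*}_{1-w\alpha/\cd}$ within a constant factor of the true permutation quantile with probability $\ge1-\delta$, and that quantile is in turn controlled by the Oracle threshold via the conditional concentration bound for the permuted statistic already used in Theorem~\ref{thm: permutations typeII}. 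The only substantive change is that the effective level is now $\tilde\alpha=\tfrac{(w-\tilde w)\alpha}{\cd}$, so every $\log\tfrac1{\tilde\alpha}$ in the separation conditions becomes $\log\tfrac1\alpha+\log\cd$. Since the endpoint exponents $1$ and $\tfrac{2}{4\tilde\xi+1}<1$ differ by a fixed positive amount, $\lambda_U/\lambda_L$ is polynomial in $N+M$ and hence $\cd=1+\log_2(\lambda_U/\lambda_L)\asymp\log(N+M)$, so $\log\cd\asymp\log\log(N+M)$. Solving the binding condition $\Delta_{N,M}/\mathcal{N}_2(v^*)\gtrsim\delta^{-1}\log(1/\tilde\alpha)/(N+M)$ (with $\mathcal{N}_2(v^*)\asymp(v^*)^{-1/2\beta}\asymp\Delta_{N,M}^{-1/(4\tilde\theta\beta)}$ in the polynomial case, analogously in the exponential case), together with the $\lambda$-floor condition $\Delta_{N,M}\ge c_\theta\big(\tfrac{N+M}{\log(N+M)}\big)^{-2\tilde\theta}$, reproduces exactly the stated $\Delta_{N,M}$: the $\max$ records which of the two constraints binds, the extra $\log\log(N+M)$ entering only the smoothness branch (it enters through the test level), while the plain $\log(N+M)$ in the floor branch is untouched.

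I expect the main obstacle to be the first step: designing $\lambda_L,\lambda_U$ so that $\lambda^*$ is trapped between them \emph{uniformly} over the prescribed ranges of $\theta$ (and $\beta$) — this is the real reason for the hypotheses $\theta>\theta_l$, $\beta<\beta_u$ and for the precise endpoint exponents — which needs a careful case split over which branch of the minimax rate is active, together with the observation that the constants $d_\theta,c_\theta$ stay bounded as $\theta\downarrow\theta_l$ or $\theta\uparrow\infty$ (because $\tilde\theta=\min(\theta,\xi)$ saturates). A secondary point is bookkeeping the union-bound cost: verifying $\cd\asymp\log(N+M)$ so that inflating the level by $\cd$ costs only the $\log\log(N+M)$ factor, and that the inflated $B$ makes the quantile approximation hold simultaneously over all $\lambda\in\Lambda$. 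The remainder is a re-run of the proof of Theorem~\ref{thm: permutations typeII}.
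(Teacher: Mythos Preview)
Your proposal is correct and follows essentially the same route as the paper's own proof: reduce to a single non-adaptive test at $v^*=\sup\{\lambda\in\Lambda:\lambda\le\lambda^*\}$ via Lemma~\ref{lemma:adaptation}, re-run Theorem~\ref{thm: permutations typeII} at level $\tfrac{w\alpha}{\cd}$ so that the $\log(1/\alpha)$ terms become $\log(\cd/\alpha)$, use $\cd\lesssim\log(N+M)$ to get the extra $\log\log(N+M)$ factor, and verify $\lambda_L\le\lambda^*\le\lambda_U$ uniformly in $\theta>\theta_l$ (and $\beta<\beta_u$) by exponent comparison. One small caution: your claim that the smoothness-branch exponent $\tfrac{2\beta}{4\tilde\theta\beta+1}$ is ``strictly below $1$'' is not true for small $\tilde\theta$; the correct argument (as in the paper) is that when that exponent exceeds $1$ the floor branch dominates in the $\max$ defining $\lambda^*$, so $\lambda^*\asymp\lambda_L$ anyway, while the upper-bound comparison $\tfrac{2\beta}{4\tilde\theta\beta+1}\ge\tfrac{2}{4\tilde\xi+1}$ holds for all $\beta>1$ since $\tilde\theta\le\tilde\xi$.
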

It follows from the above result that the set $\Lambda$ which is defined by $\lambda_L$ and $\lambda_U$ does not depend on any unknown parameters.

\begin{remark}
Theorem~\ref{thm: perm adp typeII} shows that the adaptive test achieves the same performance (up to $\log$ $\log$ factor) as that of the Oracle test but without the prior knowledge of unknown parameters, $\theta$ and $\beta$. 
In fact, following the ideas we used in the proof of Theorem \ref{thm:minimax} combined with the ideas in \citet{Ingster2000} and \citet[Theorem 6]{Krishna}, it can be shown that an extra factor of $\sqrt{\log \log(N+M)}$ is unavoidable in the expression of the adaptive minimax separation boundary compared to the non-adaptive case. Thus, our adaptive test is actually minimax optimal up to a factor $\sqrt{\log \log(N+M)}$. This gap occurs, since the approach we are using to bound the threshold $\hat{q}_{1-\frac{\alpha}{\cd}}^{B,\lambda}$ uses Bernstein-type inequality (see Lemma~\ref{lemma:bound quantile}), 
which involves a factor $\log(\cd/\alpha),$ with $\cd \lesssim \log(N+M)$, hence yielding the extra $\log\log$ factor. We expect that this gap can be filled by using a threshold that depends on the asymptotic distribution of $\hat{\eta}_{\lambda}$ (as was done in \citealp{Krishna}), yielding an asymptotic $\alpha$-level test in contrast to the exact $\alpha$-level test achieved by the permutation approach.
\end{remark} 

\subsection{Choice of kernel}\label{subsec:kernel-choice}
In the discussion so far, a kernel is first chosen which determines the test statistic, the test, and the set of local alternatives, $\mathcal{P}$. But the key question is what is the right kernel. In fact, this question is the holy grail of all kernel methods. 

To this end, we propose to start with a family of kernels, $\mathcal{K}$ and construct an adaptive test by taking the union of tests jointly over $\lambda\in \Lambda$ and $K\in\mathcal{K}$ to test $H_0:P=Q$ vs.~$H_1:\cup_{K\in\mathcal{K}}\cup_{\theta>0} \tilde{\mathcal{P}},$
where \begin{equation*}\tilde{\PP}:=\tilde{\PP}_{\theta,\Delta,K} := \left\{(P,Q): \frac{dP}{d\PQ}-1 \in \range (\T_K^{\theta}),\,\,\underline{\rho}^2(P,Q) \geq \Delta\right\},
\end{equation*}
with $\T_K$ being defined similar to $\T$ for $K\in \mathcal{K}.$ Some examples of $\mathcal{K}$ include the family of Gaussian kernels indexed by the bandwidth, $\{e^{-\Vert x-y\Vert^2_2/h},\,x,y\in\mathbb{R}^d:h\in (0,\infty)\}$; the family of Laplacian kernels indexed by the bandwidth, $\{e^{-\Vert x-y\Vert_1/h},\,x,y\in\mathbb{R}^d:h\in (0,\infty)\}$; family of radial-basis functions, $\{\int^\infty_0  e^{-\sigma\Vert x-y\Vert^2_2}\,d\mu(\sigma):\mu\in M^+((0,\infty))\}$, where $M^+((0,\infty))$ is the family of finite signed measures on $(0,\infty)$; a convex combination of base kernels, $\{\sum^\ell_{i=1}\lambda_i K_i:\sum^\ell_{i=1}\lambda_i=1,\,\lambda_i\ge 0,\forall\, i\in[\ell]\}$, where $(K_i)^\ell_{i=1}$ are base kernels. In fact, any of the kernels in the first three examples can be used as base kernels. The idea of adapting to a family of kernels has been explored in regression and classification settings under the name multiple-kernel learning and we refer the reader to \citet{gonen11a} and references therein for details.

Let $\hat{\eta}_{\lambda,K}$ be the test statistic based on kernel $K$ and regularization parameter $\lambda$. We reject $H_0$ if $\hat{\eta}_{\lambda,K} \geq \hat{q}_{1-\frac{w\alpha}{\cd|\mathcal{K}|}}^{B,\lambda,K}$ for any $(\lambda,K) \in \Lambda \times \mathcal{K}$. Similar to Theorem~\ref{thm:perm adp typeI}, it can be shown that this test has level $\alpha$ if $|\mathcal{K}|<\infty$. The requirement of $|\mathcal{K}|<\infty$ holds if we consider the above-mentioned families with a finite collection of bandwidths in the case of Gaussian and Laplacian kernels, and a finite collection of measures from $M^+((0,\infty))$ in the case of radial basis functions. 

Similar to Theorem~\ref{thm: perm adp typeII}, it can be shown  that the kernel adaptive test is minimax optimal w.r.t.~$\tilde{\mathcal{P}}$ up to a $\log\log$ factor, with the main difference being an additional factor of $\log|\mathcal{K}|$ as illustrated in the next Theorem.  We do not provide a proof since it is very similar to that of Theorem~\ref{thm: perm adp typeII} with $|\Lambda|$ replaced by $|\Lambda||\mathcal{K}|.$

\begin{theorem}[Separation boundary--adaptation over kernel]\label{thm: perm adp kernel typeII}
Suppose 
$(A_0)$--$(A_4)$ and $(B)$ hold. Let $\mathcal{A}:=\log|\mathcal{K}|$,  
$\tilde{\theta}=\min(\theta,\xi),$ $s=e_1N=e_2M$ for $0<e_1,e_2<1$, and $$\sup_{K\, \in \mathcal{K}}\sup_{\theta>0}\sup_{(P,Q) \in \tilde{\PP}} \norm{\T^{-\theta}u}_{\Lp} < \infty.$$ Then for any $\delta>0$, $0<\alpha\leq e^{-1}$,
$B\geq \frac{\cd^2 |\mathcal{K}|^2}{2\Tilde{w}^2\alpha^2}\log\frac{2}{\delta}$, $0<\Tilde{w}<w <1$, $0<\alpha\leq e^{-1}$, $\theta_l>0$, there exists $k$ such for all $N+M \geq k$, we have
$$ \inf_{K\in\mathcal{K}}\inf_{\theta>\theta_l}\inf_{(P,Q)\in\tilde{\mathcal{P}}}P_{H_1}\left\{\bigcup_{(\lambda,K) \in \Lambda\times \mathcal{K}}\hat{\eta}_{\lambda,K} \geq \hat{q}_{1-\frac{w\alpha}{\cd|\mathcal{K}|}}^{B,\lambda,K} \right\} \geq 1-5\delta,$$
provided one of the following cases hold: For any $K\in\mathcal{K}$ and $(P,Q)\in\tilde{\mathcal{P}}$,\vspace{1mm}\\
(i) 
$\lambda_i \lesssim i^{-\beta}$,
$1<\beta<\beta_u < \infty$, $\lambda_L = r_1\frac{\log(N+M)}{N+M}$, $\lambda_U=r_2\left(\frac{\log(N+M)}{N+M} \right)^\frac{2}{4\Tilde{\xi}+1}$, for some constants $r_1,r_2>0$, where $\Tilde{\xi}=\max(\xi,\frac{1}{4})$,  $\norm{\Sigma_{PQ}}_{\op}\geq \lambda_U$, and $$\Delta_{N,M} = c(\alpha,\delta,\theta)\max\left\{\left(\frac{\mathcal{A}\log\log(N+M)}{N+M}\right)^{\frac{4\Tilde{\theta}\beta}{4\Tilde{\theta}\beta+1}}, \left(\frac{\log(N+M)}{N+M}\right)^{2\tilde{\theta}} \right\},$$ with $c(\alpha,\delta,\theta)\gtrsim\max\left\{\delta^{-2}(\log 1/\alpha)^2,d_1^{2\tilde{\theta}}\right\}$, for some constant $d_1>0.$ Furthermore, if $\sup_{i}\norm{\phi_i}_{\infty} < \infty$, then the above conditions on $\lambda_L$ and $\lambda_U$ can be replaced by $\lambda_L = r_3\left(\frac{\log(N+M)}{N+M} \right)^{\beta_u}$, $\lambda_U=r_4\left(\frac{\log(N+M)}{N+M} \right)^\frac{2}{4\Tilde{\xi}+1}$, for some constants $r_3,r_4>0$ and $$\Delta_{N,M} = c(\alpha,\delta,\theta,\beta)\max\left\{\left(\frac{\mathcal{A}\log\log(N+M)}{N+M}\right)^{\frac{4\Tilde{\theta}\beta}{4\Tilde{\theta}\beta+1}}, \left(\frac{\log(N+M)}{N+M}\right)^{2\tilde{\theta}\beta} \right\},$$ where $c(\alpha,\delta,\theta,\beta)\gtrsim \max\left\{\delta^{-2}(\log 1/\alpha)^2,d_2^{2\tilde{\theta}\beta}\right\}$ for some constant $d_2>0.$\vspace{1mm}\\
(ii) 
$\lambda_i \lesssim e^{-\tau i}$,
$\tau>0$, $\lambda_L = r_5\frac{\log(N+M)}{N+M}$, $\lambda_U=r_6\left(\frac{\log(N+M)}{N+M} \right)^{1/2\xi}$ for some $r_5,r_6>0$, $\lambda_U\le \norm{\Sigma_{PQ}}_{\op}$, and $$\Delta_{N,M} = c(\alpha,\delta,\theta)\max\left\{ \frac{\mathcal{A}\sqrt{\log(N+M)}\log\log(N+M)}{N+M}, \left(\frac{\log(N+M)}{N+M}\right)^{2\tilde{\theta}} \right\},$$ where $c(\alpha,\delta,\theta) \gtrsim \max\left\{\sqrt{\frac{1}{2\tilde{\theta}}},1\right\}\max\left\{\delta^{-2}(\log 1/\alpha)^2,d_4^{2\tilde{\theta}}\right\},$ for some constant $d_4>0.$ Furthermore if  $\sup_{i}\norm{\phi_i}_{\infty} < \infty$, then the above conditions on $\lambda_L$ and $\lambda_U$ can be replaced by $\lambda_L = r_7\left(\frac{\log(N+M)}{N+M} \right)^{\frac{1}{2\theta_l}}$, $\lambda_U=r_8\left(\frac{\log(N+M)}{N+M} \right)^{\frac{1}{2\xi}}$, for some $r_7,r_8>0$ and $$\Delta_{N,M} = c(\alpha,\delta,\theta)\frac{\mathcal{A}\sqrt{\log(N+M)}\log\log(N+M)}{N+M},$$
where $c(\alpha,\delta,\theta) \gtrsim \max\left\{\sqrt{\frac{1}{2\tilde{\theta}}},\frac{1}{2\tilde{\theta}},1\right\}\delta^{-2}(\log 1/\alpha)^2.$
\end{theorem}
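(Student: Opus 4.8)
The plan is to mirror the proof of Theorem~\ref{thm: perm adp typeII}, treating the pair $(\lambda,K)$ as the index of aggregation in place of $\lambda$ alone, so that every occurrence of $\cd$ is replaced by $\cd|\mathcal{K}|$. For the \emph{level} statement, the exchangeability argument underlying Theorem~\ref{thm: permutations typeI} and the union bound of Theorem~\ref{thm:perm adp typeI} carry over once we note that $\Lambda\times\mathcal{K}$ is finite: applying the DKW estimate of Lemma~\ref{lemma:DKW for quantile} simultaneously to each of the $\cd|\mathcal{K}|$ permutation c.d.f.'s and taking a union bound over $(\lambda,K)$ yields $P_{H_0}\{\bigcup_{(\lambda,K)}\hat{\eta}_{\lambda,K}\ge \hat{q}^{B,\lambda,K}_{1-w\alpha/(\cd|\mathcal{K}|)}\}\le\alpha$ as soon as $B\ge \frac{\cd^2|\mathcal{K}|^2}{2\tilde w^2\alpha^2}\log\frac{2\cd|\mathcal{K}|}{\alpha(1-w-\tilde w)}$, which is weaker than the hypothesis on $B$ here.

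For the \emph{power}, fix $K\in\mathcal{K}$, $\theta>\theta_l$, and $(P,Q)\in\tilde{\PP}_{\theta,\Delta,K}$. Since the aggregated test rejects whenever any one of its constituent tests does, it suffices to exhibit a single pair $(\lambda^\dagger,K)\in\Lambda\times\mathcal{K}$ with $P_{H_1}\{\hat{\eta}_{\lambda^\dagger,K}\ge \hat{q}^{B,\lambda^\dagger,K}_{1-w\alpha/(\cd|\mathcal{K}|)}\}\ge 1-5\delta$. We take $\lambda^\dagger:=v^*=\sup\{x\in\Lambda:x\le\lambda^*\}$, where $\lambda^*=\lambda^*(\theta,\beta,K)$ is the choice of regularization parameter optimal for kernel $K$, i.e.\ exactly the choice of Corollaries~\ref{coro:poly:perm} and \ref{coro:exp:perm} applied with $\T$ replaced by $\T_K$. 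The prescribed $\lambda_L,\lambda_U$ depend only on $N+M$, $\xi$, $\beta_u$, $\theta_l$ and not on the particular $K,\theta,\beta$, so they sandwich $\lambda^*$ uniformly, giving $\tfrac{\lambda^*}{2}\le v^*\le\lambda^*$, i.e.\ $v^*\asymp\lambda^*$; this is precisely where the uniform range hypothesis $\sup_{K}\sup_{\theta>0}\sup_{(P,Q)\in\tilde{\PP}}\norm{\T^{-\theta}u}_{\Lp}<\infty$ is used, since it makes one fixed set $\Lambda\times\mathcal{K}$ serve all $(\theta,\beta,K)$. One then controls the power of this single $(v^*,K)$ test exactly as in Theorem~\ref{thm: permutations typeII} (and its Corollaries~\ref{coro:poly:perm},\ref{coro:exp:perm}), with two bookkeeping changes: (a) $\Sigma_{PQ}$, $\mu_P-\mu_Q$, $\T_K$, the degrees-of-freedom quantities $\Nol,\Ntl$, and the regularity of $u$ are all w.r.t.\ $K$; and (b) the confidence level in the permutation quantile is $w\alpha/(\cd|\mathcal{K}|)$ rather than $w\alpha$. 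For (b), the bound on $\hat{q}^{B,v^*,K}_{1-w\alpha/(\cd|\mathcal{K}|)}$ obtained from the Bernstein-type control of the permuted statistic (Lemma~\ref{lemma:bound quantile}) carries a $\log(\cd|\mathcal{K}|/\alpha)$ factor in place of $\log(1/\alpha)$; using $\cd\lesssim\log(N+M)$ this is, up to constants, $\log\log(N+M)+\mathcal{A}+\log(1/\alpha)$, and feeding it through the two ``variance'' conditions of Theorem~\ref{thm: permutations typeII} reproduces the stated $\Delta_{N,M}$ in each of the polynomial and exponential cases, with $\mathcal{A}\log\log(N+M)$ (resp.\ $\mathcal{A}\sqrt{\log(N+M)}\log\log(N+M)$) replacing the $\log\log(N+M)$ (resp.\ $\sqrt{\log(N+M)}\log\log(N+M)$) terms of Theorem~\ref{thm: perm adp typeII}. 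The matching lower bounds on $\eta_{v^*}$ and on $\zeta$ via Lemmas~\ref{lemma: bounds for eta}, \ref{lemma: denominator lower bound} and assumption $(A_4)$, together with the variance bound of Lemma~\ref{Lemma: bounding expectations}, then close the Chebyshev argument conditionally on $(Z_i)_{i=1}^s$ exactly as before.

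The step requiring genuine care is (b): obtaining a bound on the aggregated permutation threshold that is uniform over $K\in\mathcal{K}$ and whose dependence on $|\mathcal{K}|$ is only logarithmic. One must re-derive the Hoeffding decomposition of $\hat{\eta}^\pi_{\lambda,K}$ and its Bernstein control as in the proof of Lemma~\ref{lemma:bound quantile}, keeping track that the constants $\kappa$ and $C_1,\dots,C_4$ entering $(A_0)$--$(A_4)$ can be taken uniform over the family $\mathcal{K}$ — which holds for the displayed families (e.g.\ Gaussian or Laplacian kernels normalized so that $\sup_x K(x,x)=1$, with a single fixed $g_\lambda$). Given that uniform control, showing $\hat{q}^{B,v^*,K}_{1-w\alpha/(\cd|\mathcal{K}|)}\lesssim \frac{\log(\cd|\mathcal{K}|/\alpha)}{N+M}\,\Ntl$ (up to the usual $s$-dependent factors) with probability at least $1-\delta$ for the specific $K$ at hand is the crux; everything else — the level statement, the reduction ``the union rejects if one test rejects'', and the specialization to polynomial/exponential eigen-decay of $\T_K$ — is a routine transcription of the proofs of Theorems~\ref{thm:perm adp typeI} and \ref{thm: perm adp typeII} with $\cd$ replaced by $\cd|\mathcal{K}|$.
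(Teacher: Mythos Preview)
Your proposal is correct and takes essentially the same approach as the paper: the paper explicitly states that it does not provide a proof ``since it is very similar to that of Theorem~\ref{thm: perm adp typeII} with $|\Lambda|$ replaced by $|\Lambda||\mathcal{K}|$,'' which is precisely the substitution you carry out. Your elaboration on the need for the constants in $(A_0)$--$(A_4)$ to hold uniformly over $\mathcal{K}$ is a valid point the paper leaves implicit, but otherwise your outline faithfully reconstructs the intended argument.
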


\section{Experiments}\label{sec:experiments}
In this section, we study the empirical performance of the proposed two-sample test by comparing it to the performance of the adaptive MMD test \citep{MMDagg}, Energy test \citep{Energy} and Kolmogorov-Smirnov (KS) test \citep{KS,Fasano}. The adaptive MMD test in \citet{MMDagg} involves using a translation invariant kernel on $\R^d$ in $D^2_{\text{MMD}}$ with bandwidth $h$ where the critical level is obtained by a permutation/wild bootstrap. Multiple such tests are constructed over $h$, which are aggregated to achieve adaptivity and the resultant test is referred to as MMDAgg. All tests are repeated 500 times and the average power is reported.

To compare the performance, in the following, we consider different experiments on Euclidean and directional data using the Gaussian kernel,  $K(x,y)=\exp\left(-\frac{\norm{x-y}_{2}^2}{2h}\right)$ and by setting $\alpha=0.05$, with $h$ being the bandwidth. For our test, as discussed in Section~\ref{subsec:kernel-choice}, we construct an adaptive test by taking the union of tests jointly over $\lambda \in \Lambda$ and $h \in W$. Let $\hat{\eta}_{\lambda,h}$ be the test statistic based on $\lambda$ and bandwidth $h$. We reject $H_0$ if $\hat{\eta}_{\lambda,h} \geq \hat{q}_{1-\frac{\alpha}{\cd|W|}}^{B,\lambda,h}$ for any $(\lambda,h) \in \Lambda \times W$. We performed such a test for $\Lambda :=\{\lambda_L, 2\lambda_L, ... \,, \lambda_U\},$ and $W:=\{w_L h_m, 2w_L h_m, ... \,, w_Uh_m\}$, where $h_m:= \text{median}\{\norm{q-q'}_2^2: q,q' \in X \cup Y\}$. In our experiments we set $\lambda_L=10^{-6}$, $\lambda_U=5,$ $w_L=0.01$, $w_U=100$, $B = 250$ for all the experiments. We chose the number of permutations $B$ to be large enough to ensure the control of Type-I error (see Figure~\ref{fig:type-I}). We show the results for both Tikhonov and Showalter regularization and for different choices of the parameter $s$, which is the number of samples used to estimate the covariance operator after sample splitting. All codes used for our spectral regularized test are available at \emph{https://github.com/OmarHagrass/Spectral-regularized-two-sample-test}.

\begin{remark} (i) For our experimental evaluations of the other tests, we used the following: \\For the Energy test, we used the "eqdist.etest" function from the R Package "energy" (for code see  https://github.com/mariarizzo/energy) with the parameter $R=199$ indicating the number of bootstrap replicates. For the KS test, we used the R package "fasano.franceschini.test" (for code see https://github.com/braunlab-nu/fasano.franceschini.test). For the MMDAgg test, we employed the code provided in \citet{MMDagg}. Since \citet{MMDagg} presents various versions of the MMDAgg test, we compared our results to the version of MMDAgg that yielded the best performance on the datasets considered in \cite{MMDagg}, which include the MNIST data and the perturbed uniform. For the rest of the experiments, we compared our test to "MMDAgg uniform" with $\Lambda[-6,1]$---see \cite{MMDagg} for details.

(ii) As mentioned above, in all the experiments, $B$ is chosen to be $250$. It has to be noted that this choice of $B$ is much smaller than that suggested by Theorems~\ref{thm:perm adp typeI} and \ref{thm: perm adp typeII} to maintain the Type-I error and so one could expect the resulting test to be anti-conservative. However, in this section's experimental settings, we found this choice of $B$ to yield a test that is neither anti-conservative nor overly conservative. Of course, we would like to acknowledge that too small $B$ would make the test anti-conservative while too large $B$ would make it conservative, i.e., loss of power (see Figure~\ref{fig:type-I} where increasing $B$ leads to a drop in the Type-I error below $0.05$ and therefore a potential drop in the power). Hence, the choice of $B$ is critical in any permutation-based test. This phenomenon is attributed to the conservative nature of the union bound utilized in computing the threshold of the adapted test. Thus, an intriguing future direction would be to explore methods superior to the union bound to accurately control the Type-I error at the desired level and further enhance the power.

(iii) In an ideal scenario, the choice of $B$ should be contingent upon $\alpha$, as evidenced in the statements of Theorems~\ref{thm:perm adp typeI} and \ref{thm: perm adp typeII}. However, utilizing this theoretical bound for the number of permutations would be computationally prohibitive, given the expensive nature of computing each permuted statistic. Exploring various approximation schemes such as random Fourier features \citep{Rahimi-08a}, Nystr\"{o}m subsampling (e.g., \citealp{Williams-01,Drineas-05}), or sketching~\citep{Yang-17}, which are known for expediting kernel methods, could offer more computationally efficient testing approaches, and therefore could allow to choose $B$ as suggested in Theorems~\ref{thm:perm adp typeI} and \ref{thm: perm adp typeII}.
\end{remark}

\begin{figure}[t]
\centering
\includegraphics[scale=0.7]{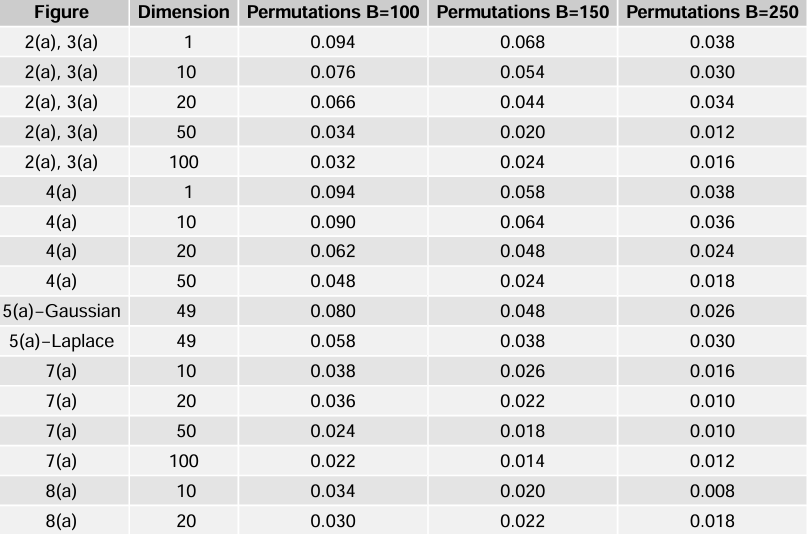}
\vspace{-2mm}
\caption{Type-I error for different number of permutations.}
\label{fig:type-I}
\end{figure}

\subsection{Bechmark datasets}
In this section, we investigate the empirical performance of the spectral regularized test and compare it to the other methods.
\subsubsection{Gaussian distribution}\label{subsec:Gaussian_experiments}
Let $P=N(0, I_d)$ and $Q=N(\mu,\sigma^2 I_d)$, where $\mu\ne 0, \sigma^2=1$ or $\mu=0$, $\sigma^2\ne 1$, i.e., we test for Gaussian mean shift or Gaussian covariance scaling, where $I_d$ is the $d$-dimensional identity matrix. Figures~\ref{fig:Gaussian}(a) and \ref{fig:Gauss-cov}(a)
show the results for the Gaussian mean shift and covariance scale experiments, where we used $s=20$ for our test. 
It can be seen from Figure~\ref{fig:Gaussian}(a) that the best power is obtained by the Energy test, followed closely by the proposed test, with the other tests showing poor performance, particularly in high-dimensional settings. On the other hand, the proposed test performs the best in Figure~\ref{fig:Gauss-cov}(a), closely followed by the Energy test. We also investigated the effect of $s$ on the test power for the Showalter method (Tikhonov method also enjoys very similar results), whose results are reported in Figures~\ref{fig:Gaussian}(b) and \ref{fig:Gauss-cov}(b). We note from these figures that lower values of $s$ perform slightly better, though overall, the performance is less sensitive to the choices of $s$. Based on these results and those presented below, as a practical suggestion, the choice $s=(N+M)/20$ is probably fine. 

\begin{figure}[t]
\begin{minipage}[b]{0.03\linewidth}
\subcaption{\hfill}
\end{minipage}
\begin{minipage}[b]{0.96\linewidth}
\centering
\includegraphics[width=\textwidth]{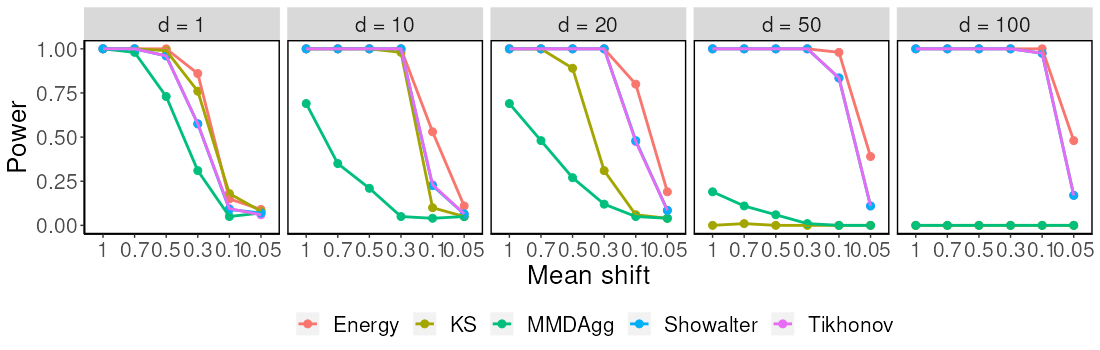}
\label{fig:Gaussain_shift}
\end{minipage}
\begin{minipage}[b]{0.03\linewidth}
\subcaption{\hfill}
\end{minipage}
\begin{minipage}[b]{0.96\linewidth}
\centering
\includegraphics[width=\textwidth]{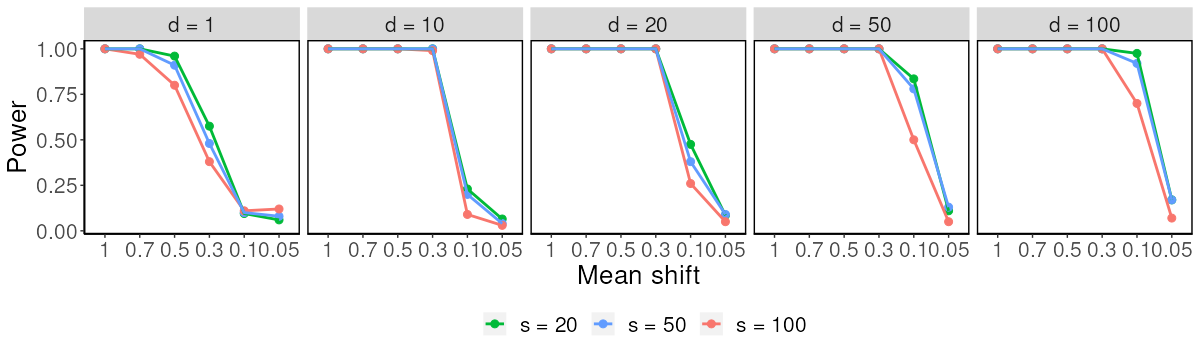}
\label{fig:Gaussain_shift_diff_s}
\end{minipage}
\vspace{-4mm}
\caption{Power for Gaussian shift experiments with different $d$ and $s$ using $N=M=200,$ where the Showalter method is used in (b).} \label{fig:Gaussian}
\vspace{-4mm}
\end{figure}

\begin{figure}[h]
\begin{minipage}[b]{0.03\linewidth}
\subcaption{\hfill}
\end{minipage}
\begin{minipage}[b]{0.96\linewidth}
\centering
\includegraphics[width=\textwidth]{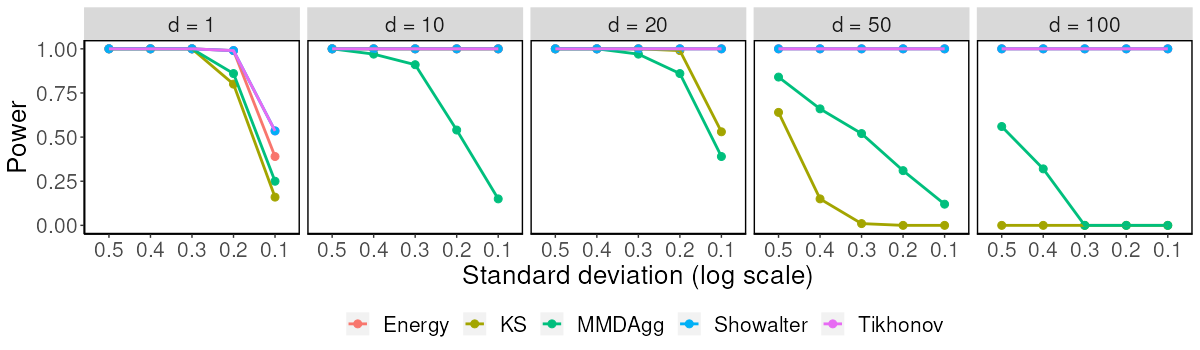}
\label{fig:Gaussain_scale}
\end{minipage}
\begin{minipage}[b]{0.03\linewidth}
\subcaption{\hfill}
\end{minipage}
\begin{minipage}[b]{0.96\linewidth}
\centering
\includegraphics[width=\textwidth]{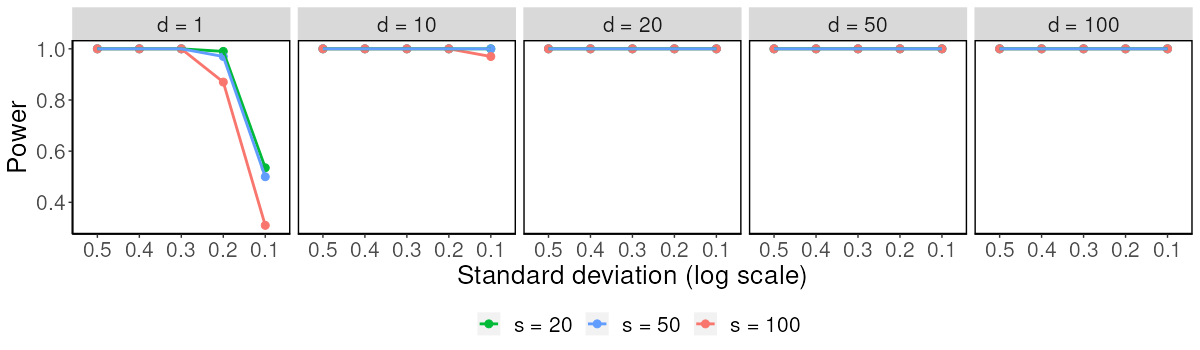}
\label{fig:Gaussain_scale_diff_s}
\end{minipage}
\caption{Power for Gaussian covariance scale experiments with different $d$ and $s$ using $N=M=200$, where the Showalter method is used in (b).}
\label{fig:Gauss-cov}
\vspace{-4mm}
\end{figure}

\subsubsection{Cauchy distribution}\label{subsec:cauchy}
In this section, we investigate the performance of the proposed test for heavy-tailed distribution, specifically the Cauchy distribution with median shift alternatives. Particularly, we test samples from a Cauchy distribution with zero median and unit scale against another set of Cauchy samples with different median shifts and unit scale. 
Figure~\ref{fig:Cau}(a)  
shows that for 
$d\in\{1,10\}$, the KS test achieves the highest power for the majority of considered median shifts, followed closely by our regularized test which achieves better power for the harder problem when the shift is small. Moreover, for $d\in\{20,50\}$, our proposed test achieves the highest power. The effect of $s$ is captured in Figure~\ref{fig:Cau}(b).

\begin{figure}[t]
\begin{minipage}[b]{0.03\linewidth}
\subcaption{\hfill}
\end{minipage}
\begin{minipage}[b]{0.96\linewidth}
\centering
\includegraphics[width=\textwidth]{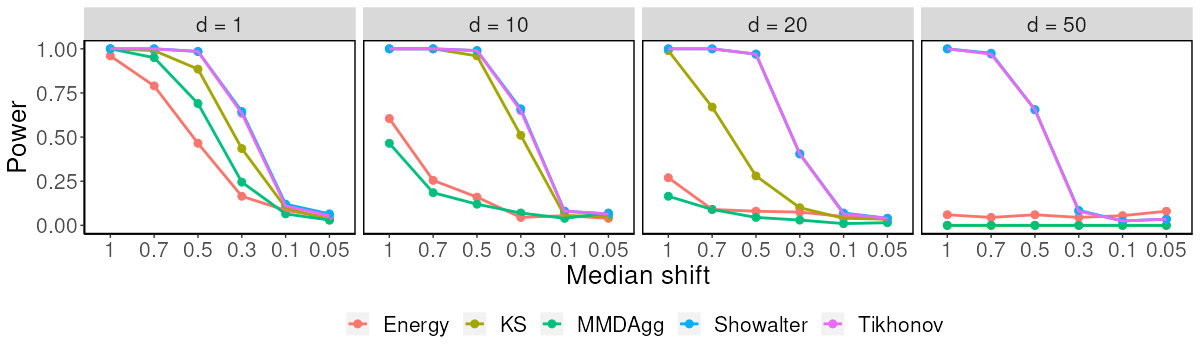}
\label{fig:cauchy}
\end{minipage}
\begin{minipage}[b]{0.03\linewidth}
\subcaption{\hfill}
\end{minipage}
\begin{minipage}[b]{0.96\linewidth}
\centering
\includegraphics[width=\textwidth]{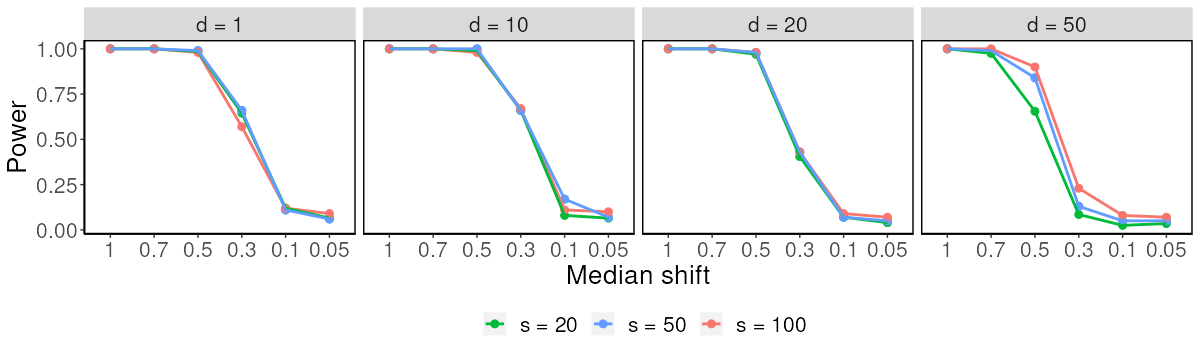}
\label{fig:cauchy_diff_s}
\end{minipage}
\vspace{-8mm}
\caption{Power for Cauchy with median shifts for different $s$ and $d$ with $N=M=500$, where the Showalter method is used in (b).} 
\label{fig:Cau}
\end{figure}

\subsubsection{MNIST dataset}\label{subsec:MNIST}
Next, we investigate the performance of the regularized test on the MNIST dataset \citep{Mnist}, which is a collection of images of digits from $0$--$9$. In our experiments, as in \citet{MMDagg}, the images were downsampled to $7 \times 7$ (i.e. $d=49$) and consider 500 samples drawn with replacement from set $P$ while testing against the set $Q_i$ for $i=1,2,3,4,5$, where $P$ consists of images of the digits\\\vspace{-3mm} 
$$P : 0,1,2,3,4,5,6,7,8,9, $$ and 
$$Q_1 : 1,3,5,7,9,\qquad
Q_2 :0, 1,3,5,7,9, \qquad
Q_3 :0, 1,2,3,5,7,9, $$
$$Q_4 :0, 1,2,3,4,5,7,9, \qquad
Q_5 :0,1,2,3,4,5,6,7,9.$$
Figure~\ref{fig:mnst}(a) 
shows the power of our test for both Gaussian and Laplace kernels in comparison to that of MMDAgg and the other tests which shows the superior performance of the regularized test, particularly in the difficult cases, i.e., distinguishing between $P$ and $Q_i$ for larger $i$. Figure~\ref{fig:mnst}(b) 
shows the effect of $s$ on the test power, from which we can see that the best performance in this case is achieved for $s=50$, however, the overall results are not very sensitive to the choice of $s$.

\begin{figure}[t]
\begin{minipage}[b]{0.495\linewidth}
\centering
\includegraphics[width=\textwidth]{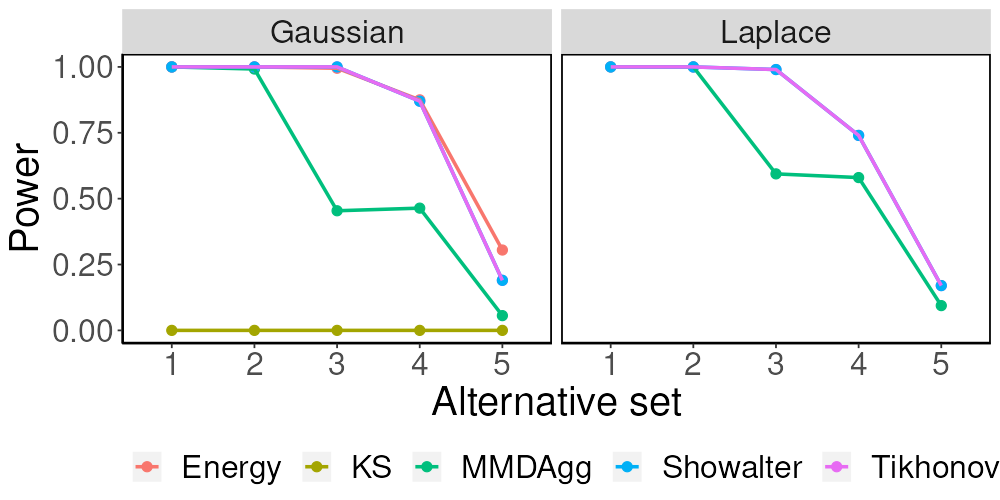}
\subcaption{}
\label{fig:MNIST}
\end{minipage}
\begin{minipage}[b]{0.495\linewidth}
\centering
\includegraphics[width=\textwidth]{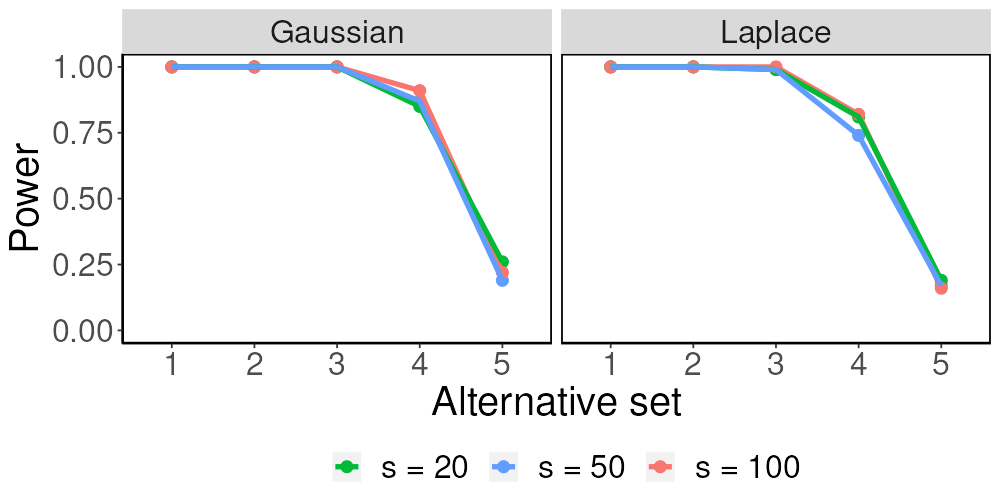}
\subcaption{}
\label{fig:MNIST_diff_s}
\end{minipage}
\vspace{1mm}
\caption{Power for MNIST dataset using $N=M=500$, where the Showalter method is used in (b).}\vspace{-6mm}
\label{fig:mnst}
\end{figure}

\subsubsection{Directional data}
In this section, we consider two experiments with directional domains. First, we consider a multivariate von Mises-Fisher distribution (which is the Gaussian analogue on 
unit-sphere) given by $f(x,\mu,k)=\frac{k^{d/2-1}}{2\pi^{d/2}I_{d/2-1}(k)}\exp(k\mu^Tx),\,x\in S^{d-1},$ where $k\geq 0$ is the concentration parameter, $\mu$ is the mean parameter and $I$ is the modified Bessel function (see Figure~\ref{fig:von}(a)). 
Figure~\ref{fig:mises}(a) 
shows the results for testing von Mises-Fisher distribution against spherical uniform distribution ($k=0$) for different concentration parameters using a Gaussian kernel.  Note that the theoretical properties of MMDAgg do not hold in this case, unlike the proposed test. We can see from Figure~\ref{fig:mises}(a) that the best power is achieved by the Energy test followed closely by our regularized test. Figure~\ref{fig:mises}(b)
shows effect of $s$ on the test power of the regularized test.

Second, we consider a mixture of two multivariate Watson distribution (which is an axially symmetric distribution on a sphere) given by $f(x,\mu,k)=\frac{\Gamma(d/2}{2\pi^{d/2}M(1/2,d/2.k)}\exp(k(\mu^Tx)^2)$, $x\in S^{d-1},$ where $k\geq 0$ is the concentration parameter, $\mu$ is the mean parameter and $M$ is Kummer's confluent hypergeometric function. Using equal weights we drew 500 samples from a mixture of two Watson distributions with similar concentration parameter $k$ and mean parameter $\mu_1 ,\mu_2$ respectively, where $\mu_1=(1, \dots ,1) \in R^d$ and $\mu_2=(-1,1 \dots ,1) \in R^d$ with the first coordinate changed to $-1$ (see Figure~\ref{fig:von}(b)). 
Figure~\ref{Fig:Watson}(a)
shows the results for testing  against spherical uniform distribution for different concentration parameters using a Gaussian kernel and we can see that our regularized test outperforms the other methods. Figure~\ref{Fig:Watson}(b) 
shows the effect of different choices of the parameter $s$ on the test power, which like in previous scenarios, is not very sensitive to the choice of $s$. Moreover, in Figure~\ref{Fig:Watson}(c) 
we illustrate how the power changes with increasing the sample size for the case $d=20$ and $k=6$, which shows that the regularized test power converges to one more quickly than the other methods.

\begin{figure}[t]
\begin{minipage}[b]{0.48\linewidth}
\centering
\includegraphics[scale=0.4]{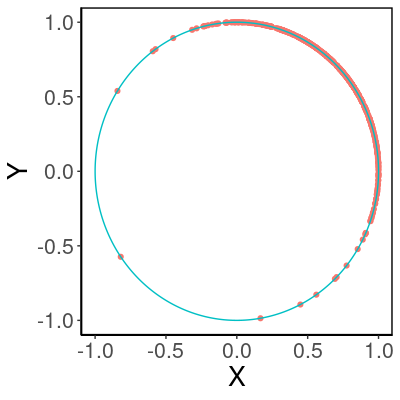}
\subcaption{}
\label{fig:von_mises_shape}
\end{minipage}
\hspace{0.2cm}
\begin{minipage}[b]{0.48\linewidth}
\centering
\includegraphics[scale=0.4]{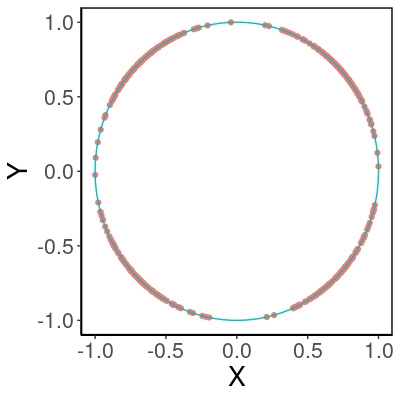}
\subcaption{}
\label{fig:watson_shape}
\end{minipage}
\vspace{-2mm}
\caption{Illustration for 500 samples drawn from directional distributions when $d=2$. (a) von Mises-Fisher distribution with $k=4$, $\mu=(1,1)$, (b) Mixture of two Watson distributions with $k=10$ and $\mu_1=(1,1)$, $\mu_2=(-1,1)$.}
\vspace{-2mm}
\label{fig:von}
\end{figure}
\begin{figure}[t]
\begin{minipage}[b]{0.03\linewidth}
\subcaption{\hfill}
\end{minipage}
\begin{minipage}[b]{0.96\linewidth}
\centering
\includegraphics[width=\textwidth]{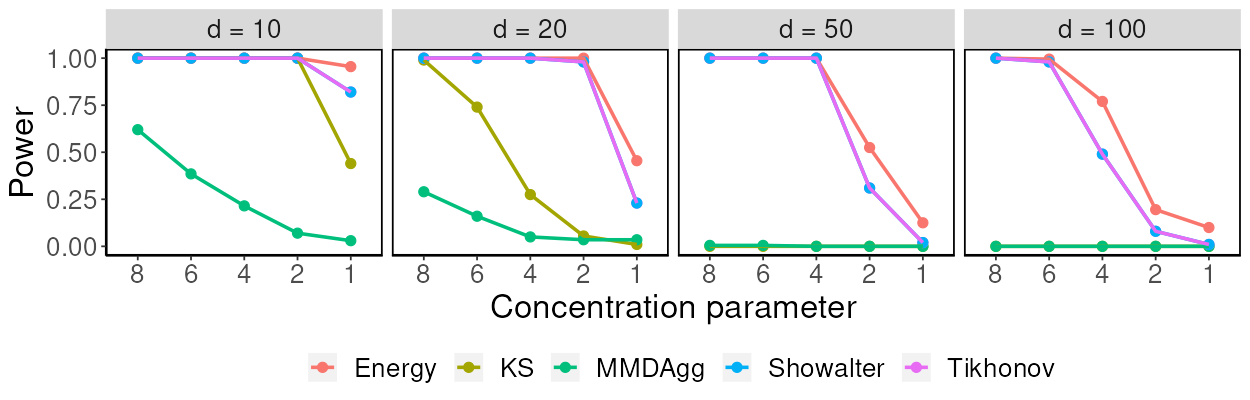}
\label{fig:von-mises}
\end{minipage}
\begin{minipage}[b]{0.03\linewidth}
\subcaption{\hfill}
\end{minipage}
\begin{minipage}[b]{0.96\linewidth}
\centering
\includegraphics[width=\textwidth]{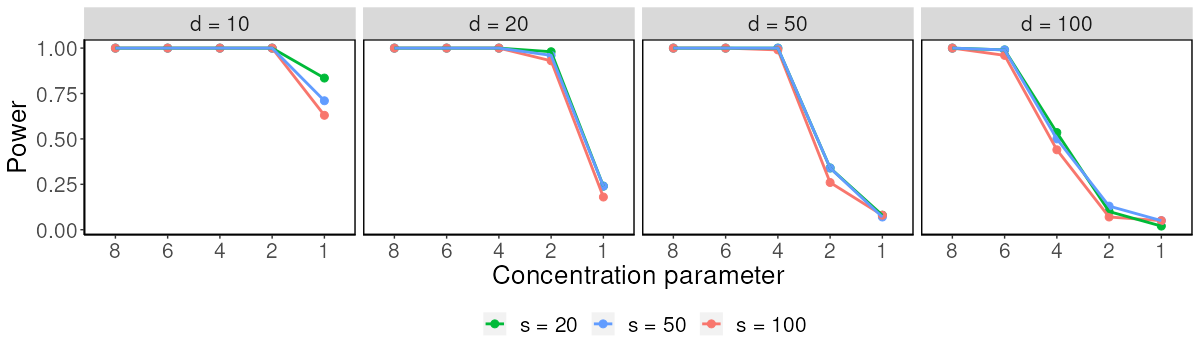}
\label{fig:von-mises_diff_s}
\end{minipage}
\vspace{-4mm}
\caption{Power for von Mises-Fisher distribution with different concentration parameter $k$ and $s$ using $N=M=500$, where the Showalter method is used in (b).}
\vspace{-4mm}
\label{fig:mises}
\end{figure}
\begin{figure}[t]
\begin{minipage}[b]{0.495\linewidth}
\centering
\includegraphics[scale=0.35]{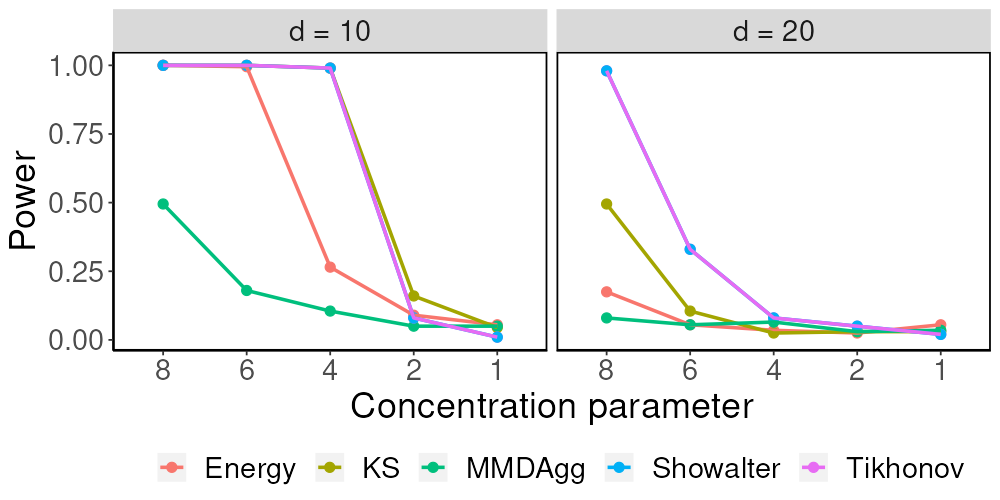}
\subcaption{}
\label{fig:watson}
\end{minipage}
\begin{minipage}[b]{0.495\linewidth}
\centering
\includegraphics[width=\textwidth]{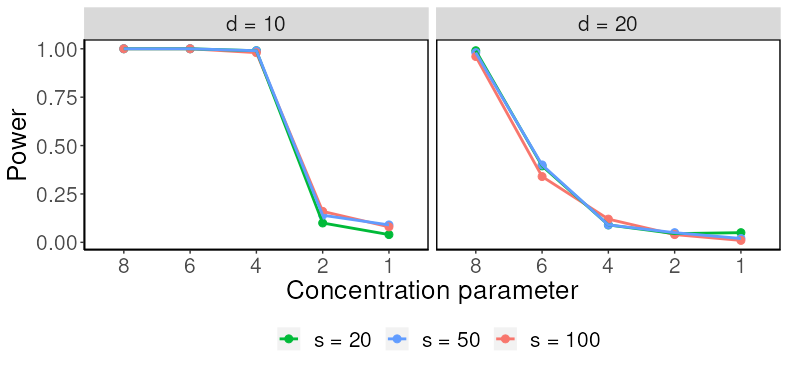}
\subcaption{}
\label{fig:watson_diff_s}
\end{minipage}
\begin{minipage}[b]{\linewidth}
\centering
\includegraphics[scale=0.3]{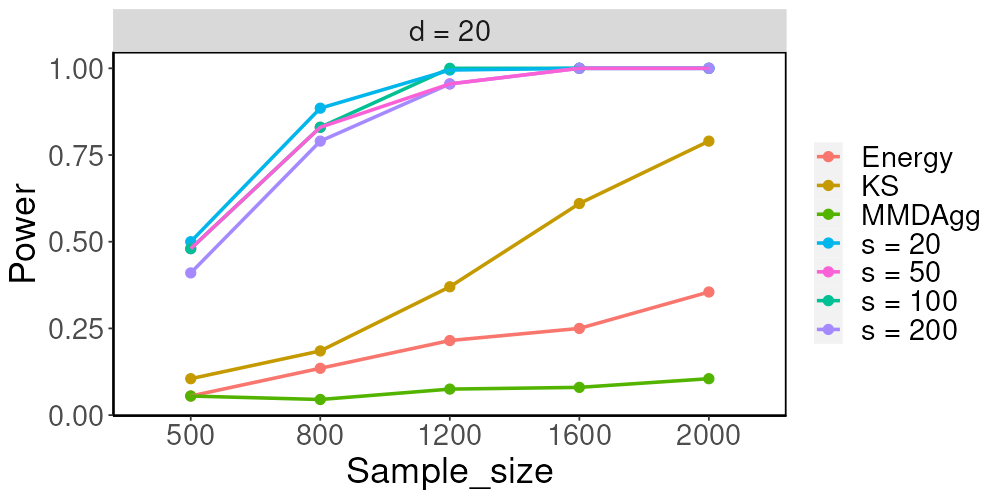}
\subcaption{}
\label{fig:watson_vary_size}
\end{minipage}
\vspace{-1mm}
\caption{Power for mixture of Watson distributions: (a) for different concentration parameter $k$  using $N=M=500$, (b) using different choices of $s$ for Showalter method, and (c) using different samples sizes with $d=20$, $k=6$, and with Showalter regularization.}
\label{Fig:Watson}
\end{figure}

\subsection{Perturbed uniform distribution}\label{subsec:perturbed}
In this section, we consider a simulated data experiment where we are testing a $d$-dimensional uniform distribution against a perturbed version of it. The perturbed density for $x \in \R^d$ is given by 
$$f_w(x) := \II_{[0,1]^d}(x)+c_dP^{-1}\sum_{v \in \{1,\dots,P\}^d} w_v \prod_{i=1}^dG(Px_i-v_i),$$
where $w=(w_v)_{v \in \{1,\dots,P\}^d} \in \{-1,1\}^{P^d}$, $P$ represents the number of perturbations being added and for $x \in \R$, 
$$G(x) := \exp\left(-\frac{1}{1-(4x+3)^2}\right)\II_{(-1,-1/2)}(x)-\exp\left(-\frac{1}{1-(4x+1)^2}\right)\II_{(-1/2,0)}(x).$$

As done in \cite{MMDagg}, we construct two-sample tests for $d=1$ and $d=2$, wherein we set $c_1=2.7$, $c_2 =7.3$. The tests are constructed 500 times with a new value of $w \in \{-1,1\}^{P^d}$ being sampled uniformly each time, and the average power is computed for both our regularized test and MMDAgg. Figure~\ref{fig:pert_uni_results1}(a) shows the power results of our test when $d=1$ for $P=1,2,3,4,5,6$ for both Gaussian and Laplace kernels and also when $d=2$ for $P=1,2,3$ for both Gaussian and Laplace kernels in comparison to that of other methods, where the Laplace kernel is defined as $K(x,y)=\exp\left(\frac{-\norm{x-y}_{1}}{2h}\right)$ with the bandwidth $h$ being chosen as $\text{median}\{\norm{q-q'}_1: q,q' \in X \cup Y\}$. It can be seen in Figure~\ref{fig:pert_uni_results1}(a) that 
our proposed test performs similarly for both Tikhonov and Showalter regularizations, while significantly improving upon other methods, particularly in the difficult case of large perturbations (note that large perturbations make distinguishing the uniform and its perturbed version difficult). Moreover, we can also see from Figure~\ref{fig:pert_uni_results1}(b) that  
the performance of the regularized test is not very sensitive to the choice of $s$. Note that, when $d=2$, it becomes really hard to differentiate the two samples for perturbations $P\geq 3$ when using a sample size smaller than $N=M=2000$, thus we presented the result for this choice of sample size in order to compare with other methods. We also investigated the effect of changing the sample size when $d=2$ for perturbations $P=2,3$ with different choices of $s$ as shown in Figure~\ref{fig:uniform}, which again shows the non-sensitivity of the power to the choice of $s$ while the power improving with increasing sample size. 

\begin{figure}[t]
\begin{minipage}[b]{0.03\linewidth}
\subcaption{\hfill}
\end{minipage}
\begin{minipage}[b]{0.96\linewidth}
\centering
\includegraphics[width=\textwidth]{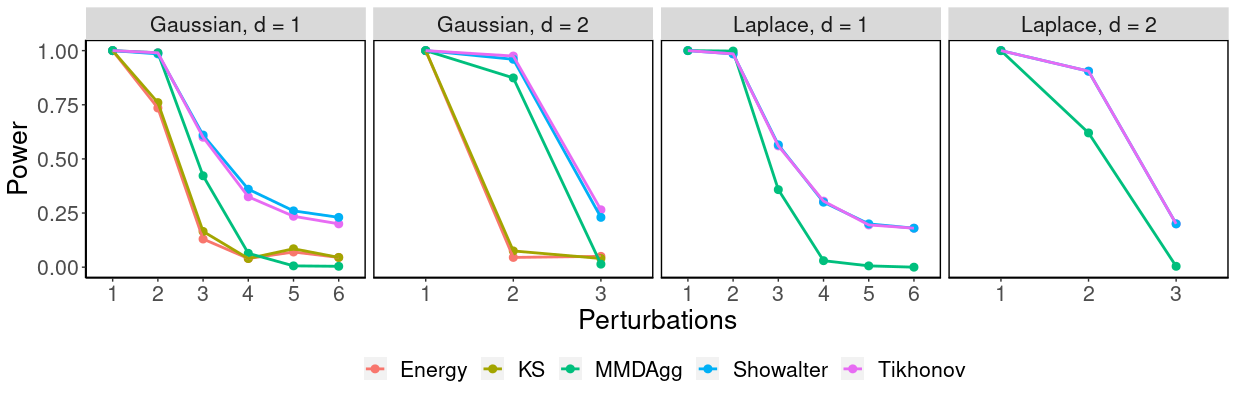}
\end{minipage}
\begin{minipage}[b]{0.03\linewidth}
\subcaption{\hfill}
\end{minipage}
\begin{minipage}[b]{0.96\linewidth}
\centering
\includegraphics[width=\textwidth]{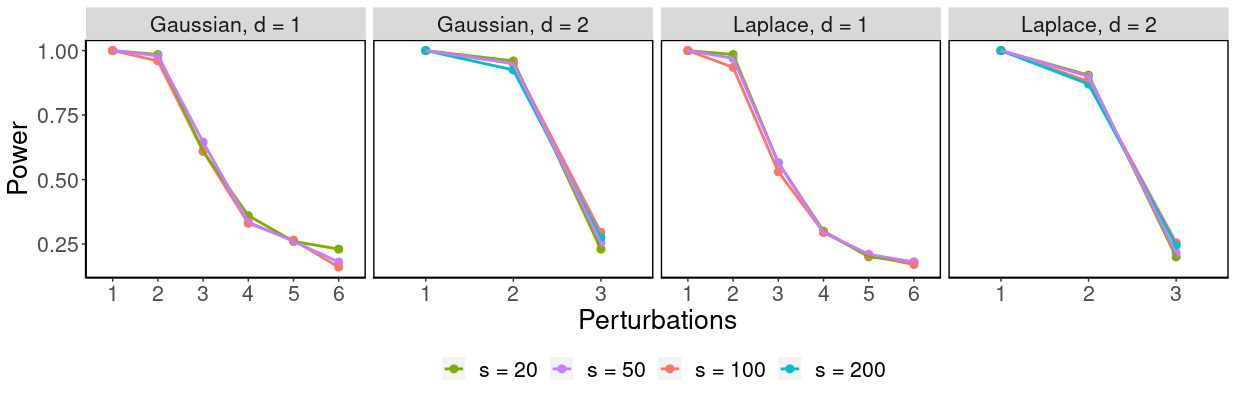}
\end{minipage}
\vspace{-2mm}
\caption{Power for perturbed uniform distributions, for $d=1$, $N=M=500$ and for $d=2$, $N=M=2000$.}
\label{fig:pert_uni_results1}
\end{figure}

\begin{figure}[t]
\centering
\includegraphics[scale=0.5]{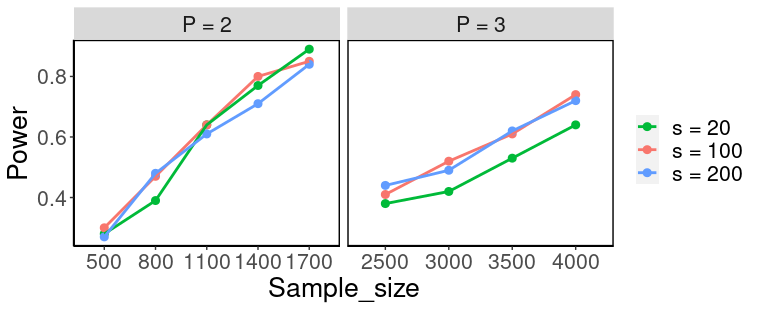}
\label{fig:pert_uni_results_vary_size}
\vspace{-2mm}
\caption{Perturbed uniform experiment with varying sample size for $d=2$ with Showalter regularization and $s=100$.}
\label{fig:uniform}
\end{figure}


\begin{figure}[t]
\begin{minipage}[b]{0.48\linewidth}
\centering
\includegraphics[width=\textwidth]{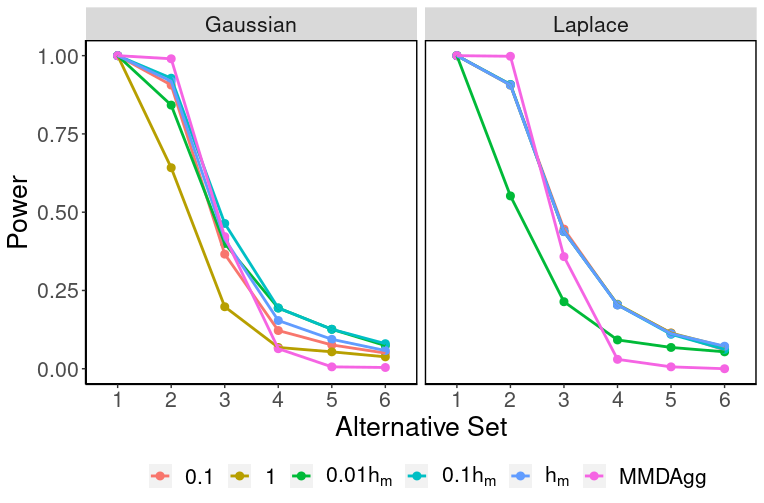}
\subcaption{}
\label{fig:pert_uni1d_results}
\end{minipage}
\hspace{0.2cm}
\begin{minipage}[b]{0.48\linewidth}
\centering
\includegraphics[width=\textwidth]{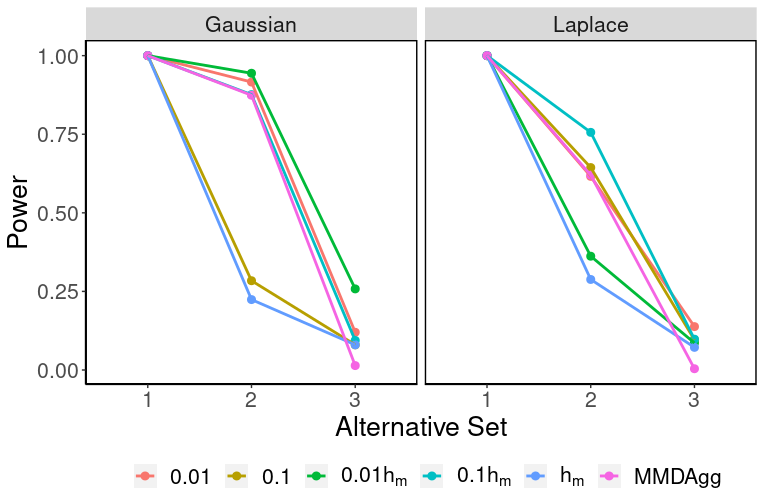}
\subcaption{}
\label{fig:pert_uni2d_results}
\end{minipage}
\begin{center}
\begin{minipage}[c]{0.48\linewidth}
\centering
\includegraphics[width=\textwidth]{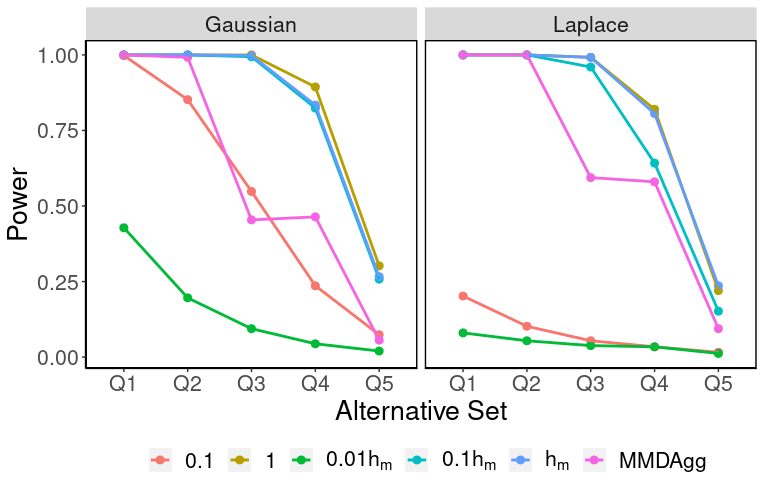}
\subcaption{}
\label{fig:mnist_diff_band_results}
\end{minipage}
\end{center}
\vspace{-2mm}
\caption{Power with different kernel bandwidths. (a) Perturbed uniform distribution with $d=1$, (b) Perturbed uniform distribution with $d=2$, (c) MNIST data using $N=M=500$.}
\label{fig:Pert_diff_band_results}
\vspace{-2mm}
\end{figure}

\begin{figure}[h]

\begin{minipage}[b]{0.48\linewidth}
\centering
\includegraphics[width=\textwidth]{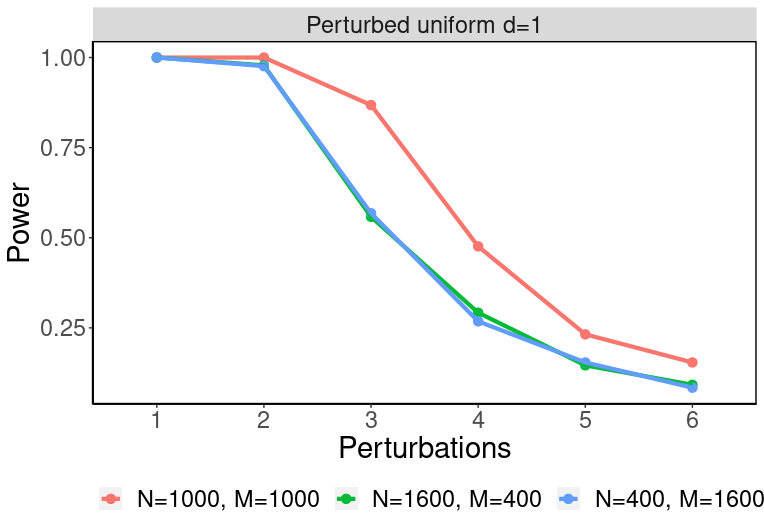}
\end{minipage}
\hspace{0.2cm}
\begin{minipage}[b]{0.48\linewidth}
\centering
\includegraphics[width=\textwidth]{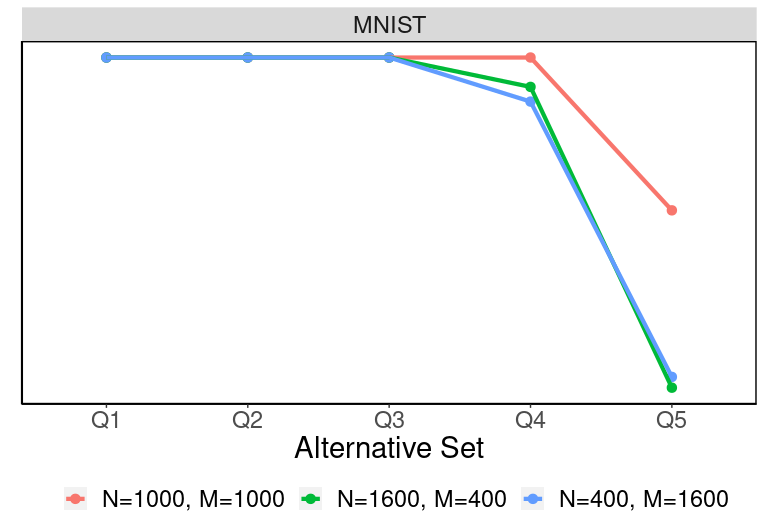}
\end{minipage}

\caption{Power for 1-dimensional perturbed uniform distributions and MNIST data using $N+M=2000$}
\label{fig:unbalanced_results}
\vspace{-1mm}
\end{figure}
\subsection{Effect of kernel bandwidth}
 In this section, we investigate the effect of kernel bandwidth on the performance of the regularized test when no adaptation is done. Figure~\ref{fig:Pert_diff_band_results} shows the performance of the test under different choices of bandwidth, wherein we used both fixed bandwidth choices and bandwidths that are multiples of the median $h_m$. The results in Figures~\ref{fig:Pert_diff_band_results}(a,b) are obtained using the perturbed uniform distribution data with $d=1$ and $d=2$, respectively while Figure~\ref{fig:Pert_diff_band_results}(c) is obtained using the MNIST data with $N=M=500$---basically using the same settings of Sections~\ref{subsec:perturbed} and \ref{subsec:MNIST}. We observe from Figures~\ref{fig:Pert_diff_band_results}(a,b)  that the performance is better at smaller bandwidths for the Gaussian kernel and deteriorates as the bandwidth gets too large, while a too-small or too-large bandwidth affects the performance in the case of a Laplacian kernel.

In Figure \ref{fig:Pert_diff_band_results}(c), we can observe that the performance gets better for large bandwidth and deteriorates when the bandwidth gets too small. Moreover, one can see from the results that for most choices of the bandwidth, the test based on $\stat$ still yields a non-trivial power as the number of perturbations (or the index of $Q_i$ in the case of the MNIST data) increases and eventually outperforms the MMDAgg test.  

\subsection{Unbalanced size for $N$ and $M$}
 We investigated the performance of the regularized test when $N\neq M$ and report the results in Figure~\ref{fig:unbalanced_results} for the 1-dimensional perturbed uniform and MNIST data set using Gaussian kernel and for fixed $N+M=2000$. It can be observed that the best performance is for $N=M=1000$ as we get more representative samples from both of the distributions $P$ and $Q$, which is also expected theoretically, as otherwise the rates are controlled by $\text{min}(M,N)$.

\section{Discussion}
To summarize, we have proposed a two-sample test based on spectral regularization that not only uses the mean element information like the MMD test but also uses the information about the covariance operator and showed it to be minimax optimal w.r.t.~the class of local alternatives $\mathcal{P}$ defined in \eqref{Eq:alternative-theta}. This test improves upon the MMD test in terms of the separation rate as the MMD test is shown to be not optimal w.r.t.~$\mathcal{P}$. We also presented a permutation version of the proposed regularized test along with adaptation over the regularization parameter, $\lambda$, and the kernel $K$ so that the resultant test is completely data-driven. Through numerical experiments, we also established the superiority of the proposed method over the MMD variant.

However, still there are some open questions that may be of interest to address. (i) The proposed test is computationally intensive and scales as $O((N+M)s^2)$ where $s$ is the number of samples used to estimate the covariance operator after sample splitting. Various approximation schemes like random Fourier features \citep{Rahimi-08a}, Nystr\"{o}m method (e.g., \citealp{Williams-01,Drineas-05}) or sketching~\citep{Yang-17} are known to speed up the kernel methods, i.e., computationally efficient tests can be constructed using any of these approximation methods. The question of interest, therefore, is about the computational vs. statistical trade-off of these approximate tests, i.e., are these computationally efficient tests still minimax optimal w.r.t.~$\mathcal{P}$? 
 (ii) The construction of the proposed test statistic requires sample splitting, which helps in a convenient analysis. It is of interest to develop an analysis for the kernel version of Hotelling's $T^2$-statistic (see \citealp{Harchaoui}) which does not require sample splitting. We conjecture that the theoretical results of Hotelling's $T^2$-statistic will be similar to those of this paper, however, it may enjoy a better empirical performance but at the cost of higher computational complexity. (iii) The adaptive test presented in Section~\ref{subsec:kernel-choice} only holds for the class of kernels, $\mathcal{K}$ for which $|\mathcal{K}|<\infty$. It will be interesting to extend the analysis for $\mathcal{K}$ for which $|\mathcal{K}|=\infty$, e.g., the class of Gaussian kernels with bandwidth in $(0,\infty)$, or the class of convex combination of certain base kernels as explained in Section~\ref{subsec:kernel-choice}.

\section{Proofs}
In this section, we present the proofs of all the main results of the paper.

\subsection{Proof of Theorem \ref{thm: MMD}}
Define $a(x)= \kk(\cdot,x)-\mu_P$, and $b(x)=\kk(\cdot,x)-\mu_Q$. Then we have 
\begin{align*}
    \hat{D}_{\mathrm{MMD}}^2& \stackrel{(*)}{=} \frac{1}{N(N-1)} \sum_{i\neq j} \left \langle a(X_i),a(X_j)\right \rangle_{\h}  + \frac{1}{M(M-1)}\sum_{i\neq j} \left \langle a(Y_i),a(Y_j)\right \rangle_{\h} \\ 
   &\qquad\qquad- \frac{2}{NM}\sum_{i,j} \left \langle a(X_i),a(Y_j)\right \rangle_{\h}\\
   & \stackrel{(\dag)}{=} \frac{1}{N(N-1)} \sum_{i\neq j} \left \langle a(X_i),a(X_j)\right \rangle_{\h}+ \frac{1}{M(M-1)}\sum_{i \neq j} \inner{b(Y_i)}{b(Y_j)}_{\h}  \\
   & \qquad \qquad +\frac{2}{M}\sum_{i=1}^{M} \inner{b(Y_i)}{(\mu_Q-\mu_P)}_{\h}+ D_{\mathrm{MMD}}^2 - \frac{2}{NM}\sum_{i,j} \inner{ a(X_i)}{b(Y_j)}_{\h} \\ 
   & \qquad \qquad- \frac{2}{N}\sum_{i=1}^{N} \inner{a(X_i)}{(\mu_Q-\mu_P)}_{\h},
\end{align*}
where $(*)$ follows from Lemma \ref{lemma:format of statistic} by setting $g_{\lambda}(x)=1$ and $(\dag)$ follows by writing  $a(Y)=b(Y) + (\mu_Q-\mu_P)$ in the last two terms. Thus we have 
\begin{align*}
    \hat{D}_{\mathrm{MMD}}^2-D_{\mathrm{MMD}}^2 &= \circled{\footnotesize{1}}+\circled{\footnotesize{2}}+\circled{\footnotesize{3}}-\circled{\footnotesize{4}}-\circled{\footnotesize{5}},
\end{align*}
where $\circled{\footnotesize{1}}:=\frac{1}{N(N-1)} \sum_{i\neq j} \left \langle a(X_i),a(X_j)\right \rangle_{\h}$, $\circled{\footnotesize{2}}:=\frac{1}{M(M-1)} \sum_{i\neq j} \left \langle b(Y_i),b(Y_j)\right \rangle_{\h}$, $$\circled{\footnotesize{3}}:=\frac{2}{M}\sum_{i=1}^{M} \inner{b(Y_i)}{(\mu_Q-\mu_P)}_{\h},\, \circled{\footnotesize{4}}:=\frac{2}{NM}\sum_{i,j} \left \langle a(X_i),b(Y_j)\right \rangle_{\h},$$ and $\circled{\footnotesize{5}}:=\frac{2}{N}\sum_{i=1}^{N} \inner{a(X_i)}{(\mu_Q-\mu_P)}_{\h}$. Next, we bound the terms $\circled{\footnotesize{1}}$--$\circled{\footnotesize{5}}$ as follows (similar to the technique in the proofs of Lemmas \ref{lemma:bound U-statistic1}, \ref{lemma:bound U-statistic2}, and \ref{lemma:bound U-statistic3}):
\begin{align*}
      \E\left(\circled{\footnotesize{1}}^2\right)  \leq \frac{4}{N^2} \norm{\Sigma_P}_{\hs}^2,\qquad
        \E\left(\circled{\footnotesize{2}}^2 \right)
         \leq \frac{4}{M^2} \norm{\Sigma_Q}_{\hs}^2,
\end{align*}
\begin{align*}
        \E\left(\circled{\footnotesize{3}}^2 \right)
        \leq \frac{4}{M} \norm{\Sigma_Q}_{\op}\norm{\mu_P-\mu_Q}_{\h}^2,\quad         \E\left(\circled{\footnotesize{5}}^2 \right)
        \leq \frac{4}{N} \norm{\Sigma_P}_{\op}\norm{\mu_P-\mu_Q}_{\h}^2,
\end{align*}
\begin{align*}
\text{and} \quad       \E\left(\circled{\footnotesize{4}}^2 \right)
        & \leq \frac{4}{NM} \inner{\Sigma_P}{\Sigma_Q}_{\h} \leq \frac{4}{NM}\norm{\Sigma_P}_{\h}\norm{\Sigma_Q}_{\h}.
\end{align*}
Combining these bounds with the fact that $\sqrt{ab} \leq \frac{a}{2}+\frac{b}{2}$, and that $(\sum_{i=1}^k a_k)^2 \leq k \sum_{i=1}^k a_k^2$ for any $a,b,a_k \in \R$, $k \in \N$ yields that 
\begin{equation}
    \E[ (\hat{D}_{\mathrm{MMD}}^2-D_{\mathrm{MMD}}^2)^2] 
    \lesssim \frac{1}{N^2}+\frac{1}{M^2} + \frac{D_{\mathrm{MMD}}^2}{N} + \frac{D_{\mathrm{MMD}}^2}{M} 
    \stackrel{(*)}{\lesssim} \frac{1}{(N+M)^2} + \frac{D_{\mathrm{MMD}}^2}{N+M}, \label{Eq:MMD_var1} 
\end{equation}
where $(*)$ follows from Lemma \ref{lem:mn_bound}.

When $P=Q$, we have $\circled{\footnotesize{3}} = \circled{\footnotesize{5}}= 0$ and $\E\left(\circled{\footnotesize{1}}\cdot\circled{\footnotesize{2}}\right)=\E\left(\circled{\footnotesize{1}}\cdot\circled{\footnotesize{4}}\right)=\E\left(\circled{\footnotesize{2}}\cdot\circled{\footnotesize{4}}\right) = 0$. Therefore under $H_0$,
\begin{equation}
\E[ (\hat{D}_{\mathrm{MMD}}^2)^2] \leq 6\norm{\Sigma_P}_{\hs}^2\left(\frac{1}{N^2}+\frac{1}{M^2}\right) \stackrel{(*)}{\leq} 24\kappa^2 \left(\frac{1}{N^2}+\frac{1}{M^2}\right),\label{Eq:MMD_var2}
\end{equation}
where in $(*)$ we used $\norm{\Sigma_P}_{\hs}^2 \leq 4\kappa^2$.
Thus using \eqref{Eq:MMD_var2} and Chebyshev's inequality yields 
$$P_{H_0}\{\hat{D}_{\mathrm{MMD}}^2 \geq \gamma_1\} \leq \alpha,$$
where $\gamma_1 = \frac{2\sqrt{6}\kappa}{\sqrt{\alpha}}\left(\frac{1}{N}+\frac{1}{M}\right)$.

Moreover using the same exchangeability argument used in the proof of Theorem \ref{thm: permutations typeI}, it is easy to show that 
$$P_{H_0}\{\hat{D}_{\mathrm{MMD}}^2 \geq \gamma_2\} \leq \alpha,$$
where $\gamma_2 = q_{1-\alpha}.$
\vspace{1mm}\\
\underline{\emph{Bounding the power of the tests:}} For the threshold $\gamma_1$, we can use the bound in \eqref{Eq:MMD_var1} to bound the power.
Let $\gamma_3= \frac{1}{\sqrt{\delta}(N+M)}+\frac{\sqrt{D_{\mathrm{MMD}}^2}}{\sqrt{\delta}\sqrt{N+M}}$, then $\tilde{C}\gamma_3 \geq \sqrt{\frac{\text{Var}(\hat{D}_{\mathrm{MMD}}^2)}{\delta}}$, for some constant $\tilde{C}>0$. Thus
\begin{align*}
    P_{H_1}\{\hat{D}_{\mathrm{MMD}}^2  \geq \gamma_1\} & \stackrel{(*)}{\geq} P_{H_1}\{\hat{D}_{\mathrm{MMD}}^2 \geq D_{\mathrm{MMD}}^2-\tilde{C}\gamma_3\} \\ &\geq P_{H_1}\{|\hat{D}_{\mathrm{MMD}}^2 - D_{\mathrm{MMD}}^2|\leq \tilde{C}\gamma_3\} \stackrel{(**)}{\geq} 1- \delta,
\end{align*}
where $(*)$ holds when $D_{\mathrm{MMD}}^2 \geq \gamma_1+\tilde{C}\gamma_3$, which is implied if $D_{\mathrm{MMD}}^2 \gtrsim \frac{1}{N+M}$. $(**)$ follows from \eqref{Eq:MMD_var1} and an application of Chebyshev's inequality. The desired result, therefore, holds by taking infimum over $(P,Q) \in \PP$.

For the threshold $\gamma_2$, using a similar approach to the proof of Lemma \ref{lemma:bound quantile} we can show that  
\begin{equation}
  P_{H_1}\left\{\gamma_2 \leq \gamma_4\right\} \geq 1-\delta,  \label{Eq:bound-MMD_quantile} 
\end{equation}
where $\gamma_4 = \frac{C_1\log\frac{1}{\alpha}}{\sqrt{\delta}(M+N)}(1+D_{\mathrm{MMD}}^2)$, for some constant $C_1>0.$ Thus 
\begin{align*}
    P_{H_1}\{\hat{D}_{\mathrm{MMD}}^2  \geq \gamma_2\} & \stackrel{(*)}{\geq} P_{H_1}\left\{ \{\hat{D}_{\mathrm{MMD}}^2 \geq D_{\mathrm{MMD}}^2-\tilde{C}\gamma_3\} \cap \{\gamma_2 < \gamma_4\}\right\} \\ &\geq 1- P_{H_1}\{|\hat{D}_{\mathrm{MMD}}^2 - D_{\mathrm{MMD}}^2|\geq \tilde{C}\gamma_3\}-P_{H_1}\left\{\gamma_2 \geq \gamma_4\right\} \stackrel{(**)}{\geq} 1- 2\delta,
\end{align*}
where $(*)$ holds when $D_{\mathrm{MMD}}^2 \geq \gamma_4+\tilde{C}\gamma_3$, which is implied if $N+M \geq \frac{\log(1/\alpha)}{\sqrt{\delta}}$ and $D_{\mathrm{MMD}}^2 \gtrsim \frac{1}{N+M}$. $(**)$ follows from \eqref{Eq:MMD_var1} with an application of Chebyshev's inequality and using \eqref{Eq:bound-MMD_quantile}. 

Thus for both thresholds $\gamma_1$ and $\gamma_2$, the condition to control the power is $D_{\mathrm{MMD}}^2 \gtrsim \frac{1}{N+M}$, which in turn is implied if $\norm{u}_{\Lp}^2 \gtrsim (N+M)^{\frac{-2\theta}{2\theta+1}},$ where in the last implication we used Lemma \ref{lem:MMD bounds}. The desired result, therefore, holds by taking infimum over $(P,Q) \in \PP$.

Finally, we will show that we cannot achieve a rate better than $(M+N)^{\frac{-2\theta}{2\theta+1}}$ over $\PP$. To this end, we will first show that if $D_{\mathrm{MMD}}^2=o\left((M+N)^{-1}\right)$, then $$\liminf_{N.M \to \infty} \inf_{(P,Q)\in \PP}P_{H_1}\{\hat{D}_{\mathrm{MMD}}^2  \geq \gamma_k\} < 1$$ for $k \in \{1,2\}$. \citet[Appendix B.2]{gretton12a} show that under $H_1$,
$$(M+N)\hat{D}_{\mathrm{MMD}}^2 \stackrel{D}{\to} S+2c\left(w_x^{-1/2}z_x-w_y^{-1/2}z_y\right)+c^2,$$ 
where $z_x \sim \mathcal{N}(0,1),z_y \sim \mathcal{N}(0,1)$, $S:=\sum_{i=1}^{\infty}\lambda_i\left((w_x^{\frac{-1}{2}}a_i-w_y^{\frac{-1}{2}}b_i)^2-(w_{x}w_{y})^{-1}\right),$ $a_i\sim \mathcal{N}(0,1),$ $b_i\sim \mathcal{N}(0,1)$, $w_x:=\lim_{N,M\to \infty}\frac{N}{N+M},$ $w_y:=\lim_{N,M\to \infty}\frac{M}{N+M}$, $(\lambda_i)_{i}$ are the eigenvalues of the operator $\T$ and $c^2=(M+N)D_{\mathrm{MMD}}^2.$ If $D_{\mathrm{MMD}}^2=o\left((M+N)^{-1}\right)$, then $c \to 0$ as $N,M \to \infty$ and $(M+N)\hat{D}_{\mathrm{MMD}}^2 \stackrel{D}{\to} S$ which is the distribution under $H_0.$ Hence for $k \in \{1,2\}$,
\begin{align*}
P_{H_1}\{\hat{D}_{\mathrm{MMD}}^2  \geq \gamma_k\}=P_{H_1}\left\{(M+N)\hat{D}_{\mathrm{MMD}}^2  \geq (M+N)\gamma_k\right\} \to P\{S \geq d_k\}      
\end{align*}
where $d_k=\lim_{N,M\to \infty} (M+N)\gamma_k$, thus $d_1=\frac{2\sqrt{6}\kappa}{\sqrt{\alpha}}\left(\frac{1}{w_x}+\frac{1}{w_y}\right)>0$, and $d_2\geq 0$ using the symmetry of the permutation distribution  
. In both cases, by the definition of $S$, we can deduce that $P\{S\geq d_k\} <1$, which follows from the fact that the $S$ has a non-zero probability of being negative. Therefore it remains to show that when $\Delta_{N,M}=o\left( (N+M)^{\frac{-2\theta}{2\theta+1}}\right)$, we can construct $(P,Q) \in \PP$ such that $D_{\mathrm{MMD}}^2=o\left((M+N)^{-1}\right).$ To this end, let $R$ be a probability measure. Recall that $\T = \sum_{i \in I} \lambda_i \Tilde{\phi_i} \ltens \Tilde{\phi_i}$. Let $\bar{\phi_i} =\phi_i-\E_R\phi_i$, where $\phi_i=\frac{\id^*\Tilde{\phi_i}}{\lambda_i}$. Then $\id\bar{\phi_i}=\id\phi_i=\frac{\T\tilde{\phi}_i}{\lambda_i}=\Tilde{\phi_i}.$ Suppose $\lambda_i=h(i)$, where $h$ is a strictly decreasing continuous function on $\mathbb{N}$ (for example $h=i^{-\beta}$, and $h=e^{-\tau i}$ respectively correspond to polynomial and exponential decay). Let $L(N+M)=(\Delta_{N,M})^{1/2\theta}=o\left((N+M)^{\frac{-1}{2\theta+1}}\right)$, $k=\lfloor h^{-1}\left(L(N+M)\right)\rfloor$, hence $\lambda_k=L(N+M)$. Define $$f:=\bar{\phi}_k.$$ Then $\norm{f}_{\Lp}^2 =1,$ and thus $f \in \Lp$. Define 
$$\tilde{u}:=\T^{\theta}f=\lambda_k^{\theta}\tilde{\phi}_k,\quad\text{and}\quad u:=\lambda_k^{\theta}\bar{\phi}_k.$$ 
Note that $\E_{R} u = \lambda^\theta_k\E_{R}\bar{\phi_k} =0$. Since $\id u=\Tilde{u}$, we have $u \in [\Tilde{u}]_\sim \in \range(\T^{\theta}),$ $\norm{u}_{\Lp}^2=\lambda_k^{2\theta} = \Delta_{N,M}.$ Next we bound $|u(x)|$ in the following two cases.\\

\underline{\emph{Case I:}}  $\theta\ge\frac{1}{2}$ and $\sup_{i} \norm{\phi_i}_{\infty}$ \emph{is not finite.}\vspace{1mm}\\
Note that
$$|u(x)| =\lambda_k^{\theta}\left|\inner{k(\cdot, x)-\mu_R}{ \phi_k}_{\h}\right|  
    \leq \lambda_k^{\theta}\norm{k(\cdot, x)-\mu_R}_{\h}\norm{ \phi_k}_{\h}  
    \stackrel{(*)}{\leq} 2\sqrt{\kappa} \lambda_k^{\theta-\frac{1}{2}} \stackrel{(\dag)}{\leq} 1,$$
where in $(*)$ we used $\norm{\phi_k}^2_{\h} =\lambda_k^{-2}\inner{\id^*\tilde{\phi}_k}{\id^*\tilde{\phi}_k}=\lambda_k^{-1}.$ In $(\dag)$ we used $\theta>\frac{1}{2}.$\\

\underline{\emph{Case II:}} $\sup_{i}\norm{\phi_i}_{\infty} < \infty$.\vspace{1mm}\\
In this case,
$$|u(x)| \leq 2\sup_k\norm{\phi_k}_{\infty} \lambda_k^{\theta}\leq 1,$$ for $N+M$ large enough. This implies that in both cases we can find $(P, Q) \in \PP$ such that $\frac{dP}{dR}=u+1$ and $ \frac{dQ}{dR}=2-\frac{dP}{dR}.$ Then for such $(P,Q)$, we have $D^2_{\mathrm{MMD}}=4\norm{\T^{1/2}u}_{\Lp}^2=4\lambda_k^{2\theta+1}=o\left((M+N)^{-1}\right)$. 

\newtheorem{lem}{Lemma}[section]
\subsection{Proof of Theorem \ref{thm:minimax}} 
Let $\phi(X_1,\dots,X_N,Y_1,\dots,Y_M)$ be any test that rejects $H_0$ when $\phi=1$ and fail to reject when $\phi=0$. Fix some probability probability measure $R$, and let $\{(P_k,Q_k)\}^J_{k=1} \subset \PP$ such that $P_k+Q_k=2R$. First, we prove the following lemma.
\begin{lem} \label{lemma:conditions for lower bound}
For $0<\delta<1-\alpha$, if the following hold,
\begin{equation}\label{Eq:E.1}
    \E_{R^N}\left[\left(\frac{1}{J}\sum_{k=1}^J\frac{dP_k^N}{dR^N}\right)^2\right] \leq 1+(1-\alpha-\delta)^2,
\end{equation}
\begin{equation}\label{Eq:E.2}
    \E_{R^N}\left[\left(\frac{1}{J}\sum_{k=1}^J\frac{dQ_k^N}{dR^N}\right)^2\right] \leq 1+(1-\alpha-\delta)^2,
\end{equation}
then $$R^*_{\Delta_{N,M}} \geq \delta.$$
\end{lem}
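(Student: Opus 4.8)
The plan is to run the classical mixture (fuzzy‑hypotheses) reduction for minimax testing lower bounds, in the spirit of \citet{Ingester1,Ingester2}. First I would pass from the composite alternative to a single Bayesian mixture: since $\{(P_k,Q_k)\}_{k=1}^J\subset\PP$, every test $\phi$ satisfies $R_{\Delta_{N,M}}(\phi)=\sup_{(P,Q)\in\PP}\E_{P^N\times Q^M}[1-\phi]\ge\frac1J\sum_{k=1}^J\E_{P_k^N\times Q_k^M}[1-\phi]$. Because $P_k+Q_k=2R$ forces $\frac{dP_k}{dR}=1+u_k$, $\frac{dQ_k}{dR}=1-u_k$ with $\int u_k\,dR=0$, the data law under $H_0$ is $R^N\times R^M$, so any $\phi\in\Phi_{N,M,\alpha}$ obeys $\E_{R^N\times R^M}[\phi]\le\alpha$. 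Writing $\bar L:=\frac1J\sum_{k=1}^J\frac{dP_k^N}{dR^N}\frac{dQ_k^M}{dR^M}$ for the mixture likelihood ratio w.r.t.\ $R^N\times R^M$, a change of measure gives $\frac1J\sum_k\E_{P_k^N\times Q_k^M}[1-\phi]=1-\E_{R^N\times R^M}[\phi\,\bar L]$.

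Next I would bound $\E_{R^N\times R^M}[\phi\,\bar L]$ uniformly in level‑$\alpha$ tests. Using $\E_{R^N\times R^M}[\bar L]=1$, the level constraint, and Cauchy--Schwarz,
\[
\E_{R^N\times R^M}[\phi\,\bar L]=\E_{R^N\times R^M}[\phi]+\E_{R^N\times R^M}[\phi(\bar L-1)]\le\alpha+\E_{R^N\times R^M}\big[|\bar L-1|\big]\le\alpha+\sqrt{\E_{R^N\times R^M}[\bar L^2]-1}.
\]
Hence $R^*_{\Delta_{N,M}}=\inf_\phi R_{\Delta_{N,M}}(\phi)\ge1-\alpha-\sqrt{\E_{R^N\times R^M}[\bar L^2]-1}$, so the whole proof reduces to showing the second moment of the mixture likelihood ratio is at most $1+(1-\alpha-\delta)^2$, which then delivers $R^*_{\Delta_{N,M}}\ge1-\alpha-(1-\alpha-\delta)=\delta$.

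The third step — and the step I expect to be the main obstacle — is controlling $\E_{R^N\times R^M}[\bar L^2]$. Exploiting independence of the $X$‑ and $Y$‑blocks and the i.i.d.\ product form, together with $\frac{dQ_k}{dR}=2-\frac{dP_k}{dR}$ and the centering $\int u_k\,dR=0$ (which kills the linear cross terms in the product expansion over samples), one gets $\E_R[(1+u_k)(1+u_l)]=\E_R[(1-u_k)(1-u_l)]=1+\inner{u_k}{u_l}_{\Lp}=\int\frac{dP_k}{dR}\frac{dP_l}{dR}\,dR\ge0$, and therefore
\[
\E_{R^N\times R^M}[\bar L^2]=\frac1{J^2}\sum_{k,l}\big(1+\inner{u_k}{u_l}_{\Lp}\big)^{N}\big(1+\inner{u_k}{u_l}_{\Lp}\big)^{M}=\frac1{J^2}\sum_{k,l}\big(1+\inner{u_k}{u_l}_{\Lp}\big)^{N+M}.
\]
The delicate point is to bound this joint moment by the two single‑block hypotheses: $(E.1)$ is exactly $\frac1{J^2}\sum_{k,l}(1+\inner{u_k}{u_l}_{\Lp})^{N}=\E_{R^N}[(\frac1J\sum_k\frac{dP_k^N}{dR^N})^2]\le1+(1-\alpha-\delta)^2$, and $(E.2)$ is its counterpart for $Q_k$. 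Using $M\le N\le DM$ so that $N+M\asymp N$, the nonnegativity of the factors, and splitting the diagonal $k=l$ terms (which are governed by $\sup_{(P,Q)\in\PP}\norm{u}_{\Lp}^2=\sup\underline\rho^2(P,Q)\asymp\Delta_{N,M}$, and hence are $o(1)$) from the off‑diagonal cross terms, one matches $\E_{R^N\times R^M}[\bar L^2]$ against $(E.1)$--$(E.2)$ and obtains the required bound $1+(1-\alpha-\delta)^2$; combining the three steps yields $R^*_{\Delta_{N,M}}\ge\delta$.

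I would close by noting that this lemma is only the abstract engine behind Theorem~\ref{thm:minimax}: the substance there will be to produce, for each admissible $\Delta_{N,M}$, a concrete family $\{u_k\}_{k=1}^J$ — built from Rademacher sign combinations of the eigenfunctions $\tilde\phi_i$ of $\T$ over a block whose size is dictated by $\min\{L^{-1}(\Delta^{1/2\theta}),L^{-1}(\Delta)\}$, scaled so that $u_k\in\range(\T^{\theta})$, $\norm{u_k}_{\Lp}^2\ge\Delta_{N,M}$, $\int u_k\,dR=0$, and $|u_k|\le1$ — and to verify $(E.1)$ and $(E.2)$ for it via the standard $\cosh$‑type estimate; the stated condition on $\Delta_{N,M}$ is precisely what makes that verification succeed.
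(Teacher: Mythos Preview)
Your approach differs structurally from the paper's, and the difference is exactly where your argument breaks. You keep both sample blocks and work with the joint mixture likelihood ratio $\bar L=\frac1J\sum_k\frac{dP_k^N}{dR^N}\frac{dQ_k^M}{dR^M}$, which forces you to need
\[
\E_{R^N\times R^M}[\bar L^2]=\frac1{J^2}\sum_{k,l}\bigl(1+\langle u_k,u_l\rangle_{\Lp}\bigr)^{N+M}\le1+(1-\alpha-\delta)^2.
\]
But the hypotheses \eqref{Eq:E.1}--\eqref{Eq:E.2} only control the sum of $N$-th powers; indeed, under $P_k+Q_k=2R$ the two conditions are \emph{identical}, since $\E_R[(1-u_k)(1-u_l)]=1+\langle u_k,u_l\rangle$. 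Because $1+\langle u_k,u_k\rangle>1$ on the diagonal, the $(N+M)$-power sum can be strictly larger than the $N$-power sum, and the remark ``$N+M\asymp N$'' does not close the gap: $(1+c)^{N+M}$ is of order $[(1+c)^N]^{1+M/N}$, not $(1+c)^N$, and nothing in \eqref{Eq:E.1}--\eqref{Eq:E.2} absorbs the extra factor. Your diagonal-splitting remark does not help either, since $\norm{u_k}^2_{\Lp}\asymp\Delta_{N,M}$ being small does not make $(1+\Delta_{N,M})^{N+M}$ small. Your argument would prove a variant of the lemma with $N$ replaced by $N+M$ in the hypotheses (and that would suffice for the downstream construction in Theorem~\ref{thm:minimax}), but it does not prove the lemma as stated.

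The paper avoids this by a one-sample reduction: it integrates out the $Y$-block first, setting $\Psi_{Q_k}(X_1,\ldots,X_N):=\E_{Q_k^M}[\phi]$, and---crucially---uses the level constraint at the null pair $P=Q=Q_k$ (not at $P=Q=R$, which is the only null you invoke) to obtain $\E_{Q_k^N}[\Psi_{Q_k}]\le\alpha$. Writing $\E_{P_k^N}[1-\Psi_{Q_k}]-\E_{Q_k^N}[1-\Psi_{Q_k}]$ and triangulating through $R^N$ splits the deficit into two total-variation-type terms, each bounded by $\tfrac12$ times the square root of the $\chi^2$ quantities in \eqref{Eq:E.1} and \eqref{Eq:E.2}. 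The $M$-dependence disappears because $Y$ has already been averaged out, and the two $\tfrac12$-factors combine to give exactly $(1-\alpha-\delta)$, yielding $R^*_{\Delta_{N,M}}\ge\delta$. The one-sample collapse together with exploiting the level at $Q_k$ (rather than only at $R$) is the idea your proposal is missing.
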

\begin{proof}
For any two-sample test $\phi(X_1,\dots,X_N,Y_1,\dots,Y_M) \in \Phi_{N,M,\alpha}$, define a corresponding one sample test as 
$\Psi_{Q}(X_1,\dots,X_N) := \E_{Q^M}\phi(X_1,\dots,X_N,Y_1,\dots,Y_M),$ and note that it is still an $\alpha$-level test since 
$Q^N(\{\Psi_{Q}=1\})=\E_{Q^N}[\Psi_{Q}]=\E_{Q^N \times Q^M}[\phi] \leq \alpha$. For any $\phi \in \Phi_{N,M,\alpha}$, we have
\begin{align*}
&R_{\Delta_{N,M}}(\phi) =\sup_{(P,Q)\in\PP}\E_{P^N \times Q^M}[1-\phi] \\
& \geq \sup_{(P,Q) \in \PP \ : \ P+Q=2R}\E_{P^N \times Q^M}[1-\phi] 
 \geq \frac{1}{J}\sum_{k=1}^J \E_{P_k^N}[1-\Psi_{Q_k}]\\
& = \frac{1}{J}\sum_{k=1}^{J}\left[P_k^N(\{\Psi_{Q_k}=0\})-Q_k^N(\{\Psi_{Q_k}=0\})+Q_k^N(\{\Psi_{Q_k}=0\})\right]\\
& \geq 1- \alpha +  \frac{1}{J}\sum_{k=1}^{J}\left[P_k^N(\{\Psi_{Q_k}=0\})-Q_k^N(\{\Psi_{Q_k}=0\})\right] \\
& =  \frac{1}{J}\sum_{k=1}^{J}\left[P_k^N(\{\Psi_{Q_k}=0\})-R^N(\{\Psi_{Q_k}=0\})\right]\\ 
&\qquad\qquad\qquad
+\frac{1}{J}\sum_{k=1}^{J}\left[R^N(\{\Psi_{Q_k}=0\})-Q_k^N(\{\Psi_{Q_k}=0\})\right] 
 +1-\alpha\\
& \geq 1-\alpha -\sup_{\A}\left|\frac{1}{J}\sum_{k=1}^{J}\left(P_k^N(\A)-R^N(\A)\right)\right|-\sup_{\A}\left|\frac{1}{J}\sum_{k=1}^{J}\left(R^N(\A)-Q_k^N(\A)\right)\right| 
\end{align*}
\begin{align*}
& \geq 1-\alpha-\frac{1}{2}\sqrt{\E_{R^N}\left[\left (\frac{1}{J}\sum_{k=1}^J\frac{dP_k^N}{dR^N}\right)^2\right]-1}-\frac{1}{2}\sqrt{\E_{R^N}\left[\left (\frac{1}{J}\sum_{k=1}^J\frac{dQ_k^N}{dR^N}\right)^2\right]-1}\ ,
\end{align*}
where the last inequality follows by noting that
\begin{align*}
& 
\sup_{\A}\left|\frac{1}{J}\sum_{k=1}^J\left(P_k^N(\A)-R^N(\A)\right)\right|+\sup_{\A}\left|\frac{1}{J}\sum_{k=1}^J\left(R^N(\A)-Q_k^N(\A)\right)\right| \\
& \stackrel{(\dag)}{=} 
 \frac{1}{2}\norm{\frac{1}{J}\sum_{k=1}^{J}dP_k^N-dR^N}_{1}+\frac{1}{2}\norm{dR^N-\frac{1}{J}\sum_{k=1}^{J}dQ_k^N}_{1} \\
& = 
 \frac{1}{2}\E_{R^N}\left |\frac{1}{J}\sum_{k=1}^{J}\frac{dP_k^N}{dR^N}-1\right|+\frac{1}{2}\E_{R^N}\left |1-\frac{1}{J}\sum_{k=1}^{J}\frac{dQ_k^N}{dR^N}\right|\\
&\le \frac{1}{2}\sqrt{\E_{R^N}\left[\left (\frac{1}{J}\sum_{k=1}^{J}\frac{dP_k^N}{dR^N}\right)^2\right]-1}+\frac{1}{2}\sqrt{\E_{R^N}\left[\left (\frac{1}{J}\sum_{k=1}^{J}\frac{dQ_k^N}{dR^N}\right)^2\right]-1},
\end{align*}
where $(\dag)$ uses the alternative definition of total variation distance using the $L_1$-distance.
Thus taking the infimum over $\phi \in \Phi_{N,M,\alpha}$ yields
\begin{align*}
  R^*_{\Delta_{N,M}} \geq 1-\alpha-\frac{1}{2}\sqrt{\E_{R^N}\left[\left (\frac{1}{J}\sum_{k=1}^{J}\frac{dP_k^N}{dR^N}\right)^2\right]-1}-\frac{1}{2}\sqrt{\E_{R^N}\left[\left (\frac{1}{J}\sum_{k=1}^{J}\frac{dQ_k^N}{dR^N}\right)^2\right]-1}  
\end{align*}
and the result follows.
\end{proof}
Thus, in order to show that a separation boundary $\Delta_{N,M}$ will imply that $R^*_{\Delta_{N,M}} \geq \delta$, it is sufficient to find a set of distributions $\{(P_k,Q_k)\}_k$ such that \eqref{Eq:E.1} and \eqref{Eq:E.2} hold. Note that since $P_k+Q_k=2R$, it is clear that the operator $\T$ is fixed for all $k \in \{1,\dots,J\}$. Recall $\T = \sum_{i \in I} \lambda_i \Tilde{\phi_i} \ltens \Tilde{\phi_i}$. Let $\bar{\phi_i} =\phi_i-\E_R\phi_i$, where $\phi_i=\frac{\id^*\Tilde{\phi_i}}{\lambda_i}$. Then $\id\bar{\phi_i}=\id\phi_i=\frac{\T\tilde{\phi}_i}{\lambda_i}=\Tilde{\phi_i}$. Furthermore, since the lower bound on $R^*_{\Delta_{N,M}}$ is only in terms of $N$, for simplicity, we will write $\Delta_{N,M}$ as $\Delta_{N}$. However, since we assume $M \leq N \leq DM$, we can write the resulting bounds in terms of $(N+M)$ using Lemma~\ref{lem:mn_bound}.

Following the ideas from \cite{Ingester1} and \cite{Ingester2}, we now provide the proof of Theorem~\ref{thm:minimax}. \vspace{.5mm}\\

\underline{\emph{Case I:}}  $\sup_{i}\norm{\phi_i}_{\infty} < D<\infty$.\vspace{.5mm}\\

Let $$B_N=\min\left\{L^{-1}\left(\Delta_{N}^{1/2\theta}\right),\frac{1}{(2D)^4\Delta_N^2}\right\},$$ $C_N=\floor{\sqrt{B_N}}$ and $a_N=\sqrt{\frac{\Delta_N}{C_N}}$. For $k \in \{1,\dots,J\}$,
define $$f_{N,k}:= a_N \sum_{i=1}^{B_N}  \epsilon_{ki}\lambda_i^{-\theta} \bar{\phi_i},$$ where $\epsilon_k := \{\epsilon_{k1}, \epsilon_{k2},\ldots, \epsilon_{kB_N}\}\in \{0,1\}^{B_N}$ such that $\sum_{i=1}^{B_N}\epsilon_{ki}=C_N$, thus $J={B_N \choose C_N}$. Then we have
$\norm{f_{N,k}}^2_{\Lp} = a_N^2 \sum_{i=1}^{B_N} \lambda_i^{-2\theta}\epsilon_{ki}^2 \lesssim a_N^{2} (L(B_N))^{-2\theta}C_N \stackrel{(*)}{\lesssim} (\Delta_N^{1/2\theta})^{-2\theta}\Delta_N   \lesssim 1$, where in $(*)$ we used that $L(\cdot)$ is a decreasing function. Thus,  $f_{N,k} \in \Lp$. Define $$\Tilde{u}_{N,k} := \T^{\theta} f_{N,k} = a_N \sum_{i=1}^{B_N} \lambda_i^{\theta} \inner{\epsilon_{ki}\lambda_i^{-\theta}\bar{\phi_i}}{\Tilde{\phi_i}}_{\Lp}\tilde{\phi_i}=a_N \sum_{i=1}^{B_N}\epsilon_{ki} \Tilde{\phi_i},$$ and $$u_{N,k}:=a_N\sum_{i=1}^{B_N}\epsilon_{ki} \bar{\phi_i}.$$ Note that $\E_{R} u_{N,k} = a_N \sum_{i=1}^{B_N} \epsilon_{ki} \E_{R}\bar{\phi_i} =0$. Since $\id u_{N,k}=\Tilde{u}_{N,k}$,
we have $u_{N,k}\in [\Tilde{u}_{N,k}]_\sim \in \range(\T^{\theta})$, 
$\norm{u_{N,k}}_{\Lp}^2 = a_N^2 C_N = \Delta_N,$ and
\begin{align*}
|u_{N,k}(x)| &\leq 2a_N \sum_{i=1}^{B_N}\epsilon_{ki} \norm{\phi_i}_{\infty} \leq 2a_N C_N D = 2 \sqrt{\Delta_N} B_N^{1/4} D \leq  1.
\end{align*}
This implies that we can find $(P_{k}, Q_{k}) \in \PP$ such that $\frac{dP_{k}}{dR}=u_{N,k}+1$ and $ \frac{dQ_{k}}{dR}=2-\frac{dP_{k}}{dR}$. Thus it remains to verify conditions \eqref{Eq:E.1} and \eqref{Eq:E.2} from Lemma \ref{lemma:conditions for lower bound}. 

For condition \eqref{Eq:E.1}, we have 
\begin{align*}
  &\E_{R^N}\left[\left(\frac{1}{J}\sum_{k=1}^J\frac{dP_k^N}{dR^N}\right)^2\right] = \E_{R^N}\left[\left(\frac{1}{J}\sum_{k=1}^J\prod_{j=1}^N(u_{N,k}(X_j)+1)\right)^2\right]  \\
  &= \E_{R^N} \left[\frac{1}{J} \sum_{k=1}^{J} \prod_{j=1}^{N} \left(1+ a_N \sum_{i=1}^{B_N} \epsilon_{ki} \bar{\phi_i}(X_j)\right)   \right]^2 \\
&=\frac{1}{J^2}\sum_{k,k'=1}^{J}  \prod_{j=1}^{N} \left(1+ a_N \sum_{i=1}^{B_N} \epsilon_{ki} \E_R\bar{\phi_i}(X_j)\right. \\
     & \qquad \qquad \qquad\left.+a_N \sum_{i=1}^{B_N} \epsilon_{k'i} \E_R\bar{\phi_i}(X_j) + a_N^2 \sum_{i,l=1}^{B_N} \epsilon_{ki}\epsilon_{k'l}\E_R[\bar{\phi_i}(X_j)\bar{\phi_l}(X_j)] \right)\\
    & \stackrel{(\dag)}{\leq} \frac{1}{J^2} \sum_{k ,k'=1}^{J} \left(1+a_N^2 \sum_{i=1}^{B_N} \epsilon_{ki} \epsilon_{k'i}\right)^N,
\end{align*}
where $(\dag)$ follows from the fact that $\E_R[\bar{\phi_i}(X)]=0$ and $\E_R[\bar{\phi_i}(X)\bar{\phi_l}(X)]=\langle \bar{\phi}_i,\bar{\phi}_l\rangle_{L^2(R)}=\delta_{il}$.
Similarly for \eqref{Eq:E.2}, we have
\begin{align*}
    \E_{R^N}\left[\left(\frac{1}{J}\sum_{i=1}^J\frac{dQ_k^N}{dR^N}\right)^2\right] \leq \frac{1}{J^2} \sum_{k ,k'=1}^{J} \left(1+a_N^2 \sum_{i=1}^{B_N} \epsilon_{ki} \epsilon_{k'i}\right)^N,
\end{align*}
where $J= {B_N \choose C_N}$.
Thus it is sufficient to show that $\exists c(\alpha,\delta)$ such that if $\Delta_{N}  \leq  c(\alpha,\delta) N^{\frac{-4\theta\beta}{4\theta\beta+1}}$, then $\frac{1}{J^2} \sum_{k ,k'=1}^{J} (1+a_N^2 \sum_{i=1}^{B_N} \epsilon_{ki} \epsilon_{k'i})^N \leq 1+(1-\alpha-\delta)^2$. To this end, consider
\begin{align*}
  &\frac{1}{J^2} \sum_{k ,k'=1}^{J} \left(1+a_N^2 \sum_{i=1}^{B_N} \epsilon_{ki} \epsilon_{k'i}\right)^N =  \frac{1}{J^2}{B_N \choose C_N} \sum_{i=0}^{C_N} {B_N-C_N \choose C_N-i}{C_N \choose i} (1+a_N^2i)^N \\
  &\leq \sum_{i=0}^{C_N} \frac{{B_N-C_N \choose C_N-i}{C_N \choose i}}{{B_N \choose C_N}} \exp(Na_N^{2}i) 
  = \sum_{i=0}^{C_N}\frac{H_i}{i!}\exp(Na_N^{2}i),
\end{align*}
where $H_i:=\frac{((B_N-C_N)!C_N!)^2}{((C_N-i)!)^2B_N!(B_N-2C_N+i)!}$. Then as argued in \cite{Ingester1}, it can be shown for any $r>0$, we have $\frac{H_i}{H_{i-1}}\leq 1+r$ and $H_0\leq \exp(r-1)$, thus $H_i \leq \frac{(1+r)^i}{e}$, which yields that for any $r >0$, we have
\begin{align*}
    \sum_{i=0}^{C_N}\frac{H_i}{i!}\exp(Na_N^{2}i) &\leq \exp\left((1+r)\exp(N\Delta_N B_N^{-1/2})-1\right).
\end{align*}
Since $N\Delta_N \leq c(\alpha,\delta)B_N^{1/2},$ we can choose $c(\alpha,\delta)$ and $r$ such that $$\exp\left((1+r)\exp(N\Delta_N B_N^{-1/2})-1\right) \leq 1+(1-\alpha-\delta)^2$$ and therefore both \eqref{Eq:E.1} and \eqref{Eq:E.2} hold.\vspace{.5mm}\\

\underline{\emph{Case II:}} $\sup_{i} \norm{\phi_i}_{\infty}$ \emph{is not finite.}\vspace{.5mm}\\

Since $\lambda_i \asymp L(i)$, there exists constants $\underbar{A}>0$ and $\bar{A}>0$ such that $\underbar{A}L(i) \leq \lambda_i \leq \bar{A} L(i)$. Let 

$$B_N=\min\left\{L^{-1}\left(\Delta_{N}^{1/2\theta}\right),L^{-1}\left(4\kappa\underline{A}^{-1}\Delta_{N}\right)\right\},$$ $C_N=\floor{\sqrt{B_N}}$ and $a_N=\sqrt{\frac{\Delta_N}{C_N}}$. The proof proceeds similarly to that of \emph{Case I} by noting that  
\begin{align*}
    |u_{N,k}(x)| &=  \left|\inner{k(\cdot, x)-\mu_R}{a_N\sum_{i=1}^{B_N} \epsilon_{ki} \phi_i}_{\h}\right|  
    \leq \norm{k(\cdot, x)-\mu_R}_{\h}\norm{a_N\sum_{i=1}^{B_N} \epsilon_{ki} \phi_i}_{\h}  \\
    & \leq 2\sqrt{\kappa} \norm{a_N\sum_{i=1}^{B_N} \epsilon_{ki} \phi_i}_{\h} \stackrel{(*)}{\leq} 1,
\end{align*}
where $(*)$ follows from
\begin{align*}
    \norm{a_N\sum_{i=1}^{B_N} \epsilon_{ki} \phi_i}_{\h}^2 &= a_N^2\sum_{i,j=1}^{B_N}\epsilon_{ki}\epsilon_{kj}\inner{\phi_i}{\phi_j}_{\h}  
    = a_N^2\sum_{i,j=1}^{B_N}\lambda_i^{-1}\epsilon_{ki}\epsilon_{kj}\inner{\Tilde{\phi_i}}{\Tilde{\phi_j}}_{\Lp} \\ 
    & = a_N^2\sum_{i=1}^{B_N}\lambda_i^{-1}\epsilon_{ki}^2 \leq a_N^2 \underbar{A}^{-1} (L(B_N))^{-1}C_N \stackrel{(*)}{\leq} \Delta_N \underbar{A}^{-1} \frac{1}{4\kappa\underline{A}^{-1}\Delta_{N}}
    \leq  \frac{1}{4\kappa},  
\end{align*}
where $(*)$ follows since $L(\cdot)$ is a decreasing function. The rest of the proof proceeds exactly similar to that of \emph{Case I}.

\subsection{Proof of Corollary~\ref{coro:poly-minimax}}
If $\lambda_i \asymp i^{-\beta},$ $\beta>1$, then Theorem \ref{thm:minimax} yields that $R^*_{\Delta_{N,M}} > \delta$, if $$(N+M)\Delta_{N,M}  \lesssim  \sqrt{\min\left\{\left(\Delta_{N,M}^{1/2\theta}\right)^{-1/\beta},\left(\Delta_{N,M}\right)^{-1/\beta}\right\}},$$
which is equivalent to    
$\Delta_{N,M}\lesssim \min\left\{\left(N+M\right)^{\frac{-4\theta\beta}{1+4\theta\beta}},\left(N+M\right)^{\frac{-2\beta}{1+2\beta}}\right\}.$  This implies the following lower bound on the minimax separation 
 \begin{equation}
\Delta^*_{N,M}\gtrsim \min\left\{\left(N+M\right)^{\frac{-4\theta\beta}{1+4\theta\beta}},\left(N+M\right)^{\frac{-2\beta}{1+2\beta}}\right\}. \label{Eq:minimax-lowerbound-polyI}    
\end{equation} Observe that $\frac{4\theta\beta}{1+4\theta\beta} \geq \frac{2\beta}{1+2\beta}$ when $\theta\geq\frac{1}{2}$. Matching the upper and lower bound on $\Delta^*_{N,M}$ by combining \eqref{Eq:minimax-lowerbound-polyI} with the result from Corollary \ref{coro:poly} when $\xi=\infty$ implies that 
$\Delta^*_{N,M} \asymp \left(N+M\right)^{\frac{-4\theta\beta}{1+4\theta\beta}}$ when $\theta>\frac{1}{2}$. Similarly for the case $\sup_{k}\norm{\phi_k}_{\infty} < \infty$, Theorem \ref{thm:minimax} yields that $R^*_{\Delta_{N,M}} > \delta$ if
$$(N+M)\Delta_{N,M}  \lesssim  \sqrt{\min\left\{\left(\Delta_{N,M}^{1/2\theta}\right)^{-1/\beta},\Delta_{N,M}^{-2}\right\}},$$
which implies that 
 \begin{equation}
\Delta^*_{N,M}\gtrsim \min\left\{\left(N+M\right)^{\frac{-4\theta\beta}{1+4\theta\beta}},\left(N+M\right)^{-1/2}\right\}. \label{Eq:minimax-lowerbound-polyII}    
\end{equation}
Then combining \eqref{Eq:minimax-lowerbound-polyII} with the result from Corollary \ref{coro:poly} when $\xi=\infty$ yields that $\Delta^*_{N,M} \asymp \left(N+M\right)^{\frac{-4\theta\beta}{1+4\theta\beta}}$ when $\theta > \frac{1}{4\beta}$.  

\subsection{Proof of Corollary~\ref{coro:exp-minimax}}
If $\lambda_i \asymp e^{-\tau i}$, then Theorem \ref{thm:minimax} yields that $R^*_{\Delta_{N,M}} > \delta$, if $$(N+M)\Delta_{N,M}  \lesssim  \sqrt{\log\left(\Delta_{N,M}^{-1}\right)},$$ which is implied if 
$$\Delta_{N,M} \lesssim \frac{(\log (N+M))^b}{N+M},$$
for any $b<\frac{1}{2}$ and $N+M$ large enough. 
Furthermore if $\sup_{k}\norm{\phi_k}_{\infty} < \infty$, Theorem \ref{thm:minimax} yields that
$$(N+M)\Delta_{N,M}  \lesssim  \sqrt{\min \left\{\log\left(\Delta_{N,M}^{-1}\right), \Delta_{N,M}^{-2}\right\}},$$ which is implied if 
$\Delta_{N,M} \lesssim \min\left\{\frac{(\log (N+M))^b}{N+M},(N+M)^{-1/2}\right\}= \frac{(\log (N+M))^b}{N+M},$ for any $b<\frac{1}{2}$ and $N+M$ large enough. Thus for both cases, we can deduce that $\Delta^*_{N,M} \gtrsim \frac{(\log (N+M))^b}{N+M},$ for any $b<\frac{1}{2}$, then by taking supremum over $b<\frac{1}{2}$ yields that
\begin{equation}
   \Delta^*_{N,M} \gtrsim \frac{\sqrt{(\log (N+M))}}{N+M}.  \label{Eq:minimax-lowerbound-exp}
\end{equation}
 Matching the upper and lower bound on $\Delta^*_{N,M}$ by combining \eqref{Eq:minimax-lowerbound-exp} with the result from Corollary \ref{coro:exp} when $\xi=\infty$ yields the desired result.

\subsection{Proof of Theorem~\ref{thm: computation}}
Define $$S_z : \h \to \R^s,\ \ f \to \frac{1}{\sqrt{s}}(f(Z_1),\cdot \cdot \cdot,f(Z_s))^\top$$ so that $$S_z^* : \R^s \to \h,\ \ \alpha \to \frac{1}{\sqrt{s}}\sum_{i=1}^{s} \alpha_iK(\cdot,Z_i).$$ It can be shown that $\hat{\Sigma}_{PQ}=S_z^* \Tilde{\hh}_s S_z$ \cite[Proposition C.1]{kpca-nystrom}. Also, it can be shown that if $(\hat{\lambda}_i, \hat{\alpha_i})_i$ is the eigensystem of $\frac{1}{s} \Tilde{\hh_s}^{1/2}K_s\Tilde{\hh_s}^{1/2}$ where $K_s:=[K(Z_i,Z_j)]_{i,j \in [s]}$, $\hh_s= \Id_s -\frac{1}{s}\one_s \one_s^\top$, and $\Tilde{\hh}_s=\frac{s}{s-1}\hh_s$, then $(\hat{\lambda}_i, \hat{\phi_i})_i$ is the eigensystem of $\hat{\Sigma}_{PQ}$, where \begin{equation}\hat{\phi}_i = \frac{1}{\sqrt{\hat{\lambda}_i}}S_z^* \Tilde{\hh_s}^{1/2} \hat{\alpha_i}.\label{Eq:eigfn}\end{equation} We refer the reader to \citet[Proposition 1]{kpca} for details.
Using \eqref{Eq:eigfn} in the definition of $\gSh$, we have 
\begin{align*}
    \gSh &= g_{\lambda}(0)\Id + S_z^*\Tilde{\hh}_{s}^{1/2}\left[\sum_{i}\left( \frac{g_{\lambda}(\hat{\lambda}_i)-g_{\lambda}(0)}{\hat{\lambda}_i}\right)\hat{\alpha}_i\hat{\alpha}_i^{\top}\right]\Tilde{\hh}_{s}^{1/2}S_z \\
    & = g_{\lambda}(0)\Id + S_z^*\Tilde{\hh}_{s}^{1/2} G \Tilde{\hh}_{s}^{1/2} S_z.
\end{align*}
Define $\one_n : =(1,\stackrel{n}{\ldots},1)^{\top}$, and let $\one_n^i$ be a vector of zeros with only the $i^{th}$ entry equal one. Also we define $S_x$ and $S_y$ similar to that of $S_z$ based on samples $(X_i)^n_{i=1}$ and $(Y_i)^m_{i=1}$, respectively. Based on \citet[Proposition C.1]{kpca-nystrom}, it can be shown that $K_s = sS_zS_z^*$, $K_n:=nS_xS_x^*$, $K_m=mS_{y}S_{y}^*$,  $K_{ns}=\sqrt{ns}S_xS_z^*$,  
$K_{ms}=\sqrt{ms}S_{y}S_z^*$, and $K_{mn}=\sqrt{mn}S_{y}S_x^*$. 

Based on these observations, we have
\begin{align*}
    \circled{\small{1}} & = \inner{\gSh\sum_{i=1}^nK(\cdot,X_i)}{\sum_{j=1}^nK(\cdot,X_j)}_{\h}=n\inner{\gSh S_x^*\one_n}{S_x^*\one_n}_{\h}\\
    &= n\inner{S_x\gSh S_x^*\one_n}{\one_n}_{2}\\
     &= n\inner{g_{\lambda}(0)\frac{1}{n}K_n+\frac{1}{ns}K_{ns}\hgh K_{ns}^\top\one_n}{\one_n}_{2} \\
    & = \one_n^\top\left(g_{\lambda}(0)K_n+\frac{1}{s}K_{ns}\Tilde{\hh}^{1/2}_{s}G\Tilde{\hh}^{1/2}_{s}K_{ns}^\top\right)\one_n,
\end{align*}
\begin{align*}
    \circled{\small{2}} & = \sum_{i=1}^n\inner{\gSh K(\cdot,X_i)}{K(\cdot,X_i)}_{\h}=n\sum_{i=1}^{n}\inner{\gSh S_x^*\one_n^i}{S_x^*\one_n^i}_{\h}\\ 
    & = n\sum_{i=1}^{n}\inner{S_x\gSh S_x^*\one_n^i}{\one_n^i}_{2}\\
    &=n\sum_{i=1}^{n}\inner{g_{\lambda}(0)\frac{1}{n}K_n+\frac{1}{ns}K_{ns}\hgh K_{ns}^\top\one_n^i}{\one_n^i}_{2}\\
    & = \text{Tr}\left(g_{\lambda}(0)K_n+\frac{1}{s}K_{ns}\Tilde{\hh}^{1/2}_{s}G\Tilde{\hh}^{1/2}_{s}K_{ns}^\top\right),
\end{align*}
\begin{align*}
    \circled{\small{3}}&= \inner{\gSh\sum_{i=1}^mK(\cdot,Y_i)}{\sum_{j=1}^mK(\cdot,Y_j)}_{\h}\\
    &=\one_m^{\top}\left(g_{\lambda}(0)K_m+\frac{1}{s}K_{ms}\Tilde{\hh}^{1/2}_{s}G\Tilde{\hh}^{1/2}_{s}K_{ms}^\top\right)\one_m,
\end{align*}
\begin{align*}
    \circled{\small{4}}& = \sum_{i=1}^m\inner{\gSh K(\cdot,Y_i)}{K(\cdot,Y_i)}_{\h}\\
    &=\text{Tr}\left(g_{\lambda}(0)K_m+\frac{1}{s}K_{ms}\Tilde{\hh}^{1/2}_{s}G\Tilde{\hh}^{1/2}_{s}K_{ms}^\top\right),
\end{align*}
and
\begin{align*}
    \circled{\small{5}} & = \inner{\gSh\sum_{i=1}^n K(\cdot,X_i)}{\sum_{i=1}^m K(\cdot,Y_i)}_{\h}\\
    &=\sqrt{nm}\inner{\gSh S_x^*\one_n}{S_y^*\one_m}_{\h}\\ 
    & =\sqrt{nm} \inner{S_y\gSh S_x^*\one_n}{\one_m}_{2}\\
    &= \sqrt{nm}\inner{g_{\lambda}(0)\frac{1}{\sqrt{nm}}K_{mn}+\frac{1}{s\sqrt{nm}}K_{ms}\hgh K_{ns}^\top\one_n}{\one_m}_{2} \\
    & = \one_m^\top\left(g_{\lambda}(0)K_{mn}+\frac{1}{s}K_{ms}\Tilde{\hh}^{1/2}_{s}G\Tilde{\hh}^{1/2}_{s}K_{ns}^\top\right)\one_n.
\end{align*}

\subsection{Proof of Theorem \ref{thm: Type1 error}}
Since $\E(\stat | (Z_i)_{i=1}^{s} )=0$, an application of Chebyshev's inequality via Lemma \ref{Lemma: bounding expectations} yields,  
\begin{equation*} 
        P_{H_0}\left\{|\stat| \geq \frac{\sqrt{6}\Cs\norm{\M}_{\op}^{2}\Ntl}{\sqrt{\delta}}\left(\frac{1}{n}+\frac{1}{m}\right) \Big| (Z_i)_{i=1}^{s}\right\} \leq \delta.
\end{equation*} 
By defining $$\gamma_1 : = \frac{2\sqrt{6}\Cs\Ntl}{\sqrt{\delta}}\left(\frac{1}{n}+\frac{1}{m}\right),$$ $$\gamma_2 : = \frac{\sqrt{6}\Cs\norm{\M}_{\op}^{2}\Ntl}{\sqrt{\delta}}\left(\frac{1}{n}+\frac{1}{m}\right),$$ we obtain
\begin{align*}
  P_{H_0}\{\stat \leq \gamma_1\} & \geq P_{H_0}\{ \{\stat \leq \gamma_2\} \ \cap \{\gamma_2 \leq \gamma_1\}\} \\
  & \geq 1- P_{H_0}\{\stat \geq \gamma_2\} - P_{H_0}\{\gamma_2 \geq \gamma_1\} 
   \stackrel{(*)}{\geq} 1-3\delta,
\end{align*}
where $(*)$ follows using
\begin{align*}
P_{H_0}\{\stat \geq \gamma_2\} \leq P_{H_0}\{|\stat| \geq \gamma_2\} = \E_{\PQ^s}\left[P_{H_0}\{|\stat| \geq \gamma_2| (Z_i)_{i=1}^{s}\}\right] \leq \delta,
\end{align*}
and 
\begin{align*}
    P_{H_0}\{\gamma_2 \geq \gamma_1\} = P_{H_0}\{\norm{\M}_{\op}^2 \geq 2\} \stackrel{(\dag)}{\leq} 2\delta, 
\end{align*}
where $(\dag)$ follows from \citep[Lemma B.2\emph{(ii)}]{kpca}, under the condition that $\frac{140\K}{s}\log \frac{16\K s}{\delta} \leq \lambda \leq \norm{\Sigma_{PQ}}_{\op}$.  When $C:=\sup_i\norm{\phi_i}_{\infty} < \infty$, using Lemma \ref{lem: bound M operator}, we can obtain an improved condition on $\lambda$ satisfying $136C^2\Nol\log\frac{8\Nol}{\delta} \leq s$ and $\lambda \leq \norm{\Sigma_{PQ}}_{\op}$. The desired result then follows by setting $\delta = \frac{\alpha}{3}.$

\subsection{Proof of Theorem \ref{thm:Type II}}
Let $\M = \SLh^{-1/2}\SL^{1/2}$, and $$\gamma_1 =\frac{1}{\sqrt{\delta}} \left(\frac{\sqrt{\Cl} \norm{\U}_{\Lp}+\Ntl}{n+m}+\frac{\Cl^{1/4}\norm{\U}_{\Lp}^{3/2}+\norm{\U}_{\Lp}}{\sqrt{n+m}}\right),$$
where $\Cl$ is defined in Lemma \ref{lemma: bound hs and op}. Then Lemma \ref{Lemma: bounding expectations} implies $$\tilde{C}\norm{\M}_{\op}^2\gamma_1\geq\sqrt{\frac{\text{Var}(\stat|(Z_i)_{i=1}^s)}{\delta}}$$ for some constant $\Tilde{C}>0$. By Lemma \ref{thm: general Type2 error}, if \begin{equation}P\left\{\gamma \geq \zeta - \tilde{C}\norm{\M}_{\op}^2\gamma_1\right\} \leq \delta,\label{Eq:alter}\end{equation} for any $(P,Q) \in \PP$, then we obtain $P\{\hat{\eta}_\lambda \geq \gamma\} \geq 1-2\delta.$ The result follows by taking the infimum over $(P,Q) \in \PP$. Therefore, it remains to verify \eqref{Eq:alter}, which we do below. Define $c_2 :=B_3C_4 \Cs^{-1}$. Consider
\begin{align*}
    &P_{H_1}\{\gamma \leq \zeta - \tilde{C}\norm{\M}_{\op}^2\gamma_1\} \\
    &\stackrel{(\ddag)}{\geq} P_{H_1}\left\{\gamma \leq c_2\norm{\M^{-1}}^{-2}_{\op} \norm{u}_{\Lp}^2 - \tilde{C}\norm{\M}_{\op}^2\gamma_1\right\} \\
    & = P_{H_1}\left\{\frac{\norm{\M^{-1}}^{2}_{\op}\gamma+\tilde{C}\gamma_1\norm{\M^{-1}}^{2}_{\op}\norm{\M}_{\op}^2}{c_2 \norm{u}_{\Lp}^2 }\leq 1\right\} \\
    &\stackrel{(*)}{\geq} P_{H_1}\left\{\frac{\norm{\M^{-1}}^{2}_{\op}}{3} + \frac{\norm{\M^{-1}}^{2}_{\op}\norm{\M}_{\op}^2}{6}\leq 1\right\} \\
    & \geq P_{H_1}\left\{\left\{\norm{\M^{-1}}^{2}_{\op} \leq \frac{3}{2}\right\} \ \cap \left\{\norm{\M}^{2}_{\op} \leq 2\right\}\right\}\\
    & \geq 1-P_{H_1}\left\{\norm{\M^{-1}}^{2}_{\op} \geq \frac{3}{2}\right\} - P_{H_1}\left\{\norm{\M}^{2}_{\op} \geq 2\right\} \\
    & \stackrel{(\dag)}{\geq} 1-\delta,
\end{align*}
where $(\ddag)$ follows by using $\zeta \geq  c_2 \norm{\M^{-1}}^{-2}_{\op} \norm{u}_{\Lp}^2$, which is obtained by combining Lemma \ref{lemma: denominator lower bound} with Lemma \ref{lemma: bounds for eta} under the assumptions $u \in \text{\range}(\T^{\theta})$, and \begin{equation}\norm{\U}_{\Lp}^2 \geq  \frac{4C_3}{3B_3} \norm{\T}_{\op}^{2\max(\theta-\xi,0)}\lambda^{2 \Tilde{\theta}} \norm{\T^{-\theta}\U}_{\Lp}^2.\label{Eq:lower}\end{equation}Note that $u \in \text{\range}(\T^{\theta})$ is guaranteed since $(P,Q) \in \PP$ and 
\begin{equation} 
\norm{u}_{\Lp}^2 \geq c_4\lambda^{2 \Tilde{\theta}}\label{Eq:verify-1}  
\end{equation} guarantees \eqref{Eq:lower}
since $\norm{\T}_{\op}=\norm{\Sigma_{PQ}}_{\op}\leq 2\kappa$ and $$c_1:=\sup_{(P,Q) \in \PP} \norm{\T^{-\theta}u}_{\Lp}<\infty,$$ where 
$c_4=\frac{4c_1^{2}C_3(2\kappa)^{2\max(\theta-\xi,0)}}{3B_3}.$ $(*)$ follows when
\begin{equation}
 \norm{u}_{\Lp}^2 \geq \frac{3\gamma}{c_2}   \label{Eq:verify-2}
\end{equation}
and 
\begin{equation}
    \norm{u}_{\Lp}^2 \geq \frac{6\tilde{C}\gamma_1}{c_2},\label{Eq:verify-3}
\end{equation}
and $(\dag)$ follows from \citep[Lemma B.2\emph{(ii)}]{kpca}, under the condition that
\begin{equation}
\frac{140\K}{s}\log \frac{64\K s}{\delta} \leq \lambda \leq \norm{\Sigma_{PQ}}_{\op}.   \label{Eq:verify-4}
\end{equation}
When $C:=\sup_i\norm{\phi_i}_{\infty} < \infty$, $(\dag)$ follows from Lemma \ref{lem: bound M operator} by replacing \eqref{Eq:verify-4} with 
\begin{equation}
136C^2\Nol\log\frac{32\Nol}{\delta} \leq s, \qquad \lambda \leq \norm{\Sigma_{PQ}}_{\op}.\label{Eq:verify-5}
\end{equation}
Below, we will show that \eqref{Eq:verify-2}--\eqref{Eq:verify-5} are satisfied.
Using $\norm{u}^2_{\Lp} \geq \Delta_{N,M}$, it is easy to see that \eqref{Eq:verify-1} is implied when $\lambda=(c_4^{-1}\Delta_{N,M})^{1/2\tilde{\theta}}$. Using $(n+m)=(1-d_1)N+(1-d_2)M \geq (1-d_2)(N+M)$, where in the last inequality we used $d_2\geq d_1$ since $s=d_1N=d_2M$ and $M\leq N$, and applying Lemma \ref{lem:mn_bound}, we can verify that \eqref{Eq:verify-2} is implied if $\Delta_{N,M} \geq \frac{r_1\Ntl}{\sqrt{\alpha}(N+M)}$ for some constant $r_1>0$. It can be also verified that \eqref{Eq:verify-3} is implied if $\Delta_{N,M} \geq \frac{r_2\Cl}{\delta^2(N+M)^2}$ and $\Delta_{N,M}\geq\frac{r_3\Ntl}{\delta(N+M)}$ for some constants $r_2,r_3>0$. Using $s=d_1N \geq \frac{d_1(N+M)}{2}$, $(N+M) \geq \frac{32 \K d_1}{\delta}$ and $\norm{\Sigma_{PQ}}_{\op} \geq (c_4^{-1}\Delta_{N,M})^{\frac{1}{2\Tilde{\theta}}}$, it can be seen that \eqref{Eq:verify-4} is implied when $\Delta_{N,M} \geq r_4(\frac{280\K}{d_1})^{2\tilde{\theta}}\left(\frac{N+M}{\log(N+M)}\right)^{-2\tilde{\theta}}$ for some constant $r_4>0$. On the other hand, when $C:=\sup_i\norm{\phi_i}_{\infty} < \infty$, using $s\geq \frac{d_1(N+M)}{2}$, $(N+M)\geq \max\{\frac{32}{\delta},e^{d_1/272C^2}\}$, it can be verified that \eqref{Eq:verify-5} is implied when $\Nol \leq \frac{d_1(N+M)}{544C\log (N+M)}.$
\subsection{Proof of Corollary \ref{coro:poly}}
When $\lambda_i \lesssim i^{-\beta}$, it follows from (\citealt[Lemma B.9]{kpca}) that $$\Ntl \leq \norm{\SgL\Sigma_{PQ}\SgL}_{\op}^{1/2}\mathcal{N}^{1/2}_{1}(\lambda) \lesssim \lambda^{-1/2\beta}.$$ Using this bound  in the conditions mentioned in Theorem~\ref{thm:Type II}, ensures that these conditions on the separation boundary hold if 
\begin{align*}
\Delta_{N,M} &\gtrsim 
\max\left\{\left(\frac{N+M}{\alpha^{-1/2}+\delta^{-1}}\right)^{-\frac{4\Tilde{\theta}\beta}{4\Tilde{\theta}\beta+1}}, (\delta(N+M))^{-\frac{8\Tilde{\theta}\beta}{4\Tilde{\theta}\beta+2\beta+1}},d_4^{2\tilde{\theta}}\left(\frac{N+M}{\log(N+M)}\right)^{-2\tilde{\theta}} \right\},
\end{align*}
where $d_4>0$ is a constant. The above condition is 
implied if 
$$\Delta_{N,M} =
\left\{
	\begin{array}{ll}
		c(\alpha,\delta,\theta)\left(N+M\right)^{\frac{-4\tilde{\theta}\beta}{4\Tilde{\theta}\beta+1}},  &  \ \  \Tilde{\theta}> \frac{1}{2}-\frac{1}{4\beta} \\
		c(\alpha,\delta,\theta)\left(\frac{N+M}{\log(N+M)}\right)^{-2\tilde{\theta}}, & \ \  \Tilde{\theta} \le \frac{1}{2}-\frac{1}{4\beta}
	\end{array}
\right.,$$
where $c(\alpha,\delta,\theta)\gtrsim(\alpha^{-1/2}+\delta^{-2}+d_4^{2\tilde{\theta}})$ and we used that $\tilde{\theta}> \frac{1}{2}-\frac{1}{4\beta}\Rightarrow\frac{4\tilde{\theta}\beta}{4\tilde{\theta}\beta+1} <  \min\left\{2\tilde{\theta},\frac{8\tilde{\theta}\beta}{4\tilde{\theta}\beta+2\beta+1}\right\}$, $\tilde{\theta}\le \frac{1}{2}-\frac{1}{4\beta}\Rightarrow 2\tilde{\theta} \le  \min\left\{\frac{4\tilde{\theta}\beta}{4\tilde{\theta}\beta+1},\frac{8\tilde{\theta}\beta}{4\tilde{\theta}\beta+2\beta+1}\right\},$ and that $x^{-a}\geq \left(\frac{x}{\log x}\right)^{-b}$, when $a<b$ and $x$ is large enough, for any $a,b,x \in \R$. 

On the other hand when $C:=\sup_i\norm{\phi_i}_{\infty} < \infty$, we obtain the corresponding condition as 
\begin{align*}
\Delta_{N,M} &\gtrsim \max\left\{\left(\frac{N+M}{\alpha^{-1/2}+\delta^{-1}}\right)^{-\frac{4\Tilde{\theta}\beta}{4\Tilde{\theta}\beta+1}},(\delta(N+M))^{-\frac{4\Tilde{\theta}\beta}{2\Tilde{\theta}\beta+1}},d_5^{2\tilde{\theta}\beta}\left(\frac{N+M}{\log(N+M)}\right)^{-2\tilde{\theta}\beta} \right\},
\end{align*}
for some constant $d_5>0$, which in turn is implied for $N+M$ large enough, if 
$$\Delta_{N,M} =
\left\{
	\begin{array}{ll}
		c(\alpha,\delta,\theta,\beta)\left(N+M\right)^{\frac{-4\tilde{\theta}\beta}{4\Tilde{\theta}\beta+1}},  &  \ \  \Tilde{\theta}> \frac{1}{4\beta} \\
		c(\alpha,\delta,\theta,\beta)\left(\frac{N+M}{\log(N+M)}\right)^{-2\tilde{\theta}
		\beta}, & \ \ \Tilde{\theta} \le \frac{1}{4\beta}
	\end{array}
\right.,$$
where $c(\alpha,\delta,\theta,\beta)\gtrsim(\alpha^{-1/2}+\delta^{-2}+d_5^{2\tilde{\theta}\beta}).$

\subsection{Proof of Corollary \ref{coro:exp}}
When $\lambda_i \lesssim e^{-\tau i}$, it follows from (\citealt[Lemma B.9]{kpca}) that $$\Ntl \leq \norm{\SgL\Sigma_{PQ}\SgL}_{\op}^{1/2}\mathcal{N}^{1/2}_{1}(\lambda) \lesssim \sqrt{\log\frac{1}{\lambda}}.$$ Thus substituting this in the conditions from Theorem 2 and assuming that $$(N+M)\geq \max\{e^2,\alpha^{-1/2}+\delta^{-1}\},$$ we can write the separation boundary as 
\begin{align*}
\Delta_{N,M} &\gtrsim 
\max\left\{ \left(\frac{\sqrt{2\tilde{\theta}}(\alpha^{-1/2}+\delta^{-1})^{-1}(N+M)}{\sqrt{\log(N+M)}}\right)^{-1},\right.\\
&\qquad\qquad\left.\left(\frac{\delta(N+M)}{\sqrt{\log(N+M)}}\right)^{-\frac{4\tilde{\theta}}{2\tilde{\theta}+1}}, \left(\frac{d_4^{-1}(N+M)}{\log(N+M)}\right)^{-2\tilde{\theta}} \right\},
\end{align*}
which is implied if 
$$\Delta_{N,M} =
\left\{
	\begin{array}{ll}
		c(\alpha,\delta, \theta)\frac{\sqrt{\log(N+M)}}{N+M}, &  \ \ \Tilde{\theta}> \frac{1}{2} \\
		c(\alpha,\delta,\theta)\left(\frac{\log(N+M)}{N+M}\right)^{2\tilde{\theta}}, & \ \ \Tilde{\theta} \le \frac{1}{2}
	\end{array}
\right.$$
for large enough $N+M$, 
where $c(\alpha,\delta,\theta)\gtrsim \max\left\{\sqrt{\frac{1}{2\tilde{\theta}}},1\right\}(\alpha^{-1/2}+\delta^{-2}+d_4^{2\tilde{\theta}})$ and we used that $\tilde{\theta}> \frac{1}{2}$ implies $1 \leq  \min\left\{2\tilde{\theta},\frac{4\tilde{\theta}}{2\tilde{\theta}+1}\right\}$ and that $\tilde{\theta}\le \frac{1}{2}$ implies $2\tilde{\theta}\leq  \min\left\{ 1,\frac{4\tilde{\theta}}{2\tilde{\theta}+1}\right\}.$

On the other hand when $C:=\sup_i\norm{\phi_i}_{\infty} < \infty$, we obtain
\begin{align*}
\Delta_{N,M} &\gtrsim 
\max\left\{ \sqrt{\frac{1}{2\tilde{\theta}}}\left(\frac{(\alpha^{-1/2}+\delta^{-1})^{-1}(N+M)}{\sqrt{\log(N+M)}}\right)^{-1},\right.\\
&\qquad\qquad\left.\frac{1}{2\tilde{\theta}}\left(\frac{\delta(N+M)}{\sqrt{\log(N+M)}}\right)^{-2}, e^{\frac{-2\tilde{\theta}d_5(N+M)}{\log (N+M)}} \right\}
\end{align*}
for some constant $d_5>0$. We can deduce that the condition is reduced to 
$$\Delta_{N,M} = c(\alpha,\delta,\theta)\frac{\sqrt{\log(N+M)}}{N+M},$$
where $c(\alpha,\delta,\theta) \gtrsim \max\left\{\sqrt{\frac{1}{2\tilde{\theta}}},\frac{1}{2\tilde{\theta}},1\right\}(\alpha^{-1/2}+\delta^{-2})$, and we used $e^{\frac{-ax}{\log x}}\leq (ax\sqrt{\log x})^{-1}$ when $x$ is large enough, for $a,x>0$. 

\subsection{Proof of Theorem \ref{thm: permutations typeI}}
Under $H_0$, we have $\stat(X,Y,Z) \stackrel{d}{=} \stat(X^{\pi},Y^{\pi},Z)$ for any $\pi \in \Pi_{n+m}$, i.e., $\stat \stackrel{d}{=} \hat{\eta}^{\pi}_{\lambda}.$ 
Thus, given samples $(X_i)_{i=1}^n$, $(Y_j)_{j=1}^m$ and $(Z_i)_{i=1}^s$, we have 
\begin{align*}
    1-\alpha \leq \frac{1}{D}\sum_{\pi \in \Pi_{n+m}} \II(\hat{\eta}^{\pi}_{\lambda} \leq \qqe).
\end{align*}
Taking expectations on both sides of the above inequality with respect to the samples yields
\begin{align*}
    1-\alpha & \leq \frac{1}{D}\sum_{\pi \in \Pi_{n+m}} \E\II(\hat{\eta}^{\pi}_{\lambda} \leq \qqe)
     = \E\II(\stat \leq \qqe) = P_{H_0}\{\stat \leq \qqe \}. 
\end{align*}
Therefore,
\begin{align*}
    P_{H_0}\{\stat \leq \hat{q}_{1-w\alpha}^{B,\lambda}\} &\geq P_{H_0}\left\{\{\stat \leq q_{1-w\alpha-\Tilde{\alpha}}^{\lambda}\} \cap \{\hat{q}_{1-w\alpha}^{B,\lambda} \geq q_{1-w\alpha-\Tilde{\alpha}}^{\lambda}\}\right\} \\
    & \geq 1-P_{H_0}\{\stat \geq q_{1-w\alpha-\Tilde{\alpha}}^{\lambda}\} - P_{H_0}\{\hat{q}_{1-w\alpha}^{B,\lambda} \leq q_{1-w\alpha-\Tilde{\alpha}}^{\lambda}\} \\
    & \stackrel{(*)}{\geq} 1-w\alpha-\Tilde{\alpha}-(1-w-\Tilde{w})\alpha,
\end{align*}
where we applied Lemma~\ref{lemma:DKW for quantile} in $(*)$, and the result follows by choosing $\Tilde{\alpha} = \Tilde{w}\alpha$.


\subsection{Proof of Theorem \ref{thm: permutations typeII}}
First, we show that for any $(P,Q) \in \PP$, the following holds under the conditions of Theorem \ref{thm: permutations typeII}:  
\begin{equation}P_{H_1}\left\{\stat \geq q_{1-\alpha}^{\lambda} \right\} \geq 1-4\delta. \label{Eq:perm1}\end{equation}
To this end, let $\M = \SLh^{-1/2}\SL^{1/2}$ , $$\gamma_1 =\frac{1}{\sqrt{\delta}} \left(\frac{\sqrt{\Cl} \norm{\U}_{\Lp}+\Ntl}{n+m}+\frac{\Cl^{1/4}\norm{\U}_{\Lp}^{3/2}+\norm{\U}_{\Lp}}{\sqrt{n+m}}\right),$$
where $\Cl$ as defined in Lemma \ref{lemma: bound hs and op}. Then Lemma \ref{Lemma: bounding expectations} implies $$\tilde{C}\norm{\M}_{\op}^2\gamma_1\geq\sqrt{\frac{\text{Var}(\stat|(Z_i)_{i=1}^s)}{\delta}}$$ for some constant $\tilde{C}>0$. By Lemma \ref{thm: general Type2 error}, if
\begin{equation}
P_{H_1}\left\{\qqe \geq \zeta - \tilde{C}\norm{\M}_{\op}^2\gamma_1\right\} \leq 2\delta, \label{Eq:cond}
\end{equation}
then we obtain \eqref{Eq:perm1}. Therefore, it remains to verify \eqref{Eq:cond} which we do below.  
Define \begin{align*}\gamma &= \frac{\norm{\M}_{\op}^2\log\frac{1}{\alpha}}{\sqrt{\delta}(n+m)}\left(\sqrt{\Cl}\norm{u}_{\Lp}+\Ntl+\Cl^{1/4}\norm{u}^{3/2}_{\Lp}+\norm{u}_{\Lp}\right)\\
&\qquad\qquad+\frac{\zeta\log\frac{1}{\alpha}}{\sqrt{\delta}(n+m)},\end{align*}
and $$\gamma_2 = \frac{\log\frac{1}{\alpha}}{\sqrt{\delta}(n+m)}\left(\sqrt{\Cl}\norm{u}_{\Lp}+\Ntl+\Cl^{1/4}\norm{u}^{3/2}_{\Lp}+\norm{u}_{\Lp}\right).$$ Thus we have $$\gamma = \norm{\M}_{\op}^2 \gamma_2 + \frac{\zeta\log\frac{1}{\alpha}}{\sqrt{\delta}(n+m)}.$$
Then it follows from Lemma \ref{lemma:bound quantile} that there exists a constant $C_5>0$ such that  
\begin{align*}
P_{H_1}\{\qqe \geq C_5\gamma\} \leq \delta.
\end{align*}
Let $c_2 =B_3C_4 \Cs^{-1}$. Then we have
\begin{align*}
    &P_{H_1}\{\qqe \leq \zeta - \norm{\M}_{\op}^2\tilde{C}\gamma_1\} \\
    &\geq P_{H_1}\left\{\{C_5\gamma \leq \zeta - \norm{\M}_{\op}^2\tilde{C}\gamma_1\} \cap \{\qqe \leq C_5\gamma\}\right\} \\
    & = P_{H_1}\left\{\left\{\norm{\M}_{\op}^2 C_5\gamma_2 + \frac{C_5\zeta\log\frac{1}{\alpha}}{\sqrt{\delta}(n+m)} \leq \zeta - \norm{\M}_{\op}^2\tilde{C}\gamma_1\right\}\right.\\
    &\qquad\qquad\qquad\left.\cap \{\qqe \leq C_5\gamma\}\right\}\\
    &\geq  1- P_{H_1}\left\{\norm{\M}_{\op}^2(\tilde{C}\gamma_1+C_5\gamma_2)\geq \zeta\left(1-\frac{C_5\log\frac{1}{\alpha}}{\sqrt{\delta}(n+m)}\right)\right\}\\
    &\qquad\qquad\qquad- P_{H_1}\{\qqe \geq C_5\gamma\} \\
    &\stackrel{(*)} {\geq}  1- P_{H_1}\left\{\frac{\norm{\M}^2_{\op}\norm{\M^{-1}}^{2}_{\op}(\tilde{C}\gamma_1+C_5\gamma_2)}{c_2 \norm{u}_{\Lp}^2 }\geq \frac{1}{2}\right\} - \delta\\
    &= P_{H_1}\left\{\frac{\norm{\M}^2_{\op}\norm{\M^{-1}}^{2}_{\op}(\tilde{C}\gamma_1+C_5\gamma_2)}{c_2 \norm{u}_{\Lp}^2 }\leq \frac{1}{2}\right\} -\delta \\
    \end{align*}
    \begin{align*}
    & \stackrel{(\dag)}{\geq}P_{H_1}\left\{\norm{\M}^2_{\op}\norm{\M^{-1}}^{2}_{\op}\le 3\right\}-\delta\\
    & \geq P_{H_1}\left\{\left\{\norm{\M^{-1}}^{2}_{\op} \leq \frac{3}{2}\right\}  \cap \left\{\norm{\M}^{2}_{\op} \leq 2\right\}\right\}-\delta\\
    & \geq 1-P_{H_1}\left\{\norm{\M^{-1}}^{2}_{\op} \geq \frac{3}{2}\right\} - P_{H_1}\left\{\norm{\M}^{2}_{\op} \geq 2\right\}-\delta \\
    & \stackrel{(\ddag)}{\geq} 1-2\delta,
\end{align*}
where in $(*)$ we assume 
$(n+m) \geq \frac{2C_5\log\frac{2}{\alpha}}{\sqrt{\delta}}$, then it follows by using $$\zeta \geq  c_2 \norm{\M^{-1}}^{-2}_{\op} \norm{u}_{\Lp}^2,$$ which is obtained by combining Lemma \ref{lemma: denominator lower bound} with Lemma \ref{lemma: bounds for eta} under the assumptions of $u \in \text{\range}(\T^{\theta})$, and \eqref{Eq:lower}. Note that $u \in \text{\range}(\T^{\theta})$ is guaranteed since $(P,Q) \in \PP$ and \eqref{Eq:verify-1} guarantees \eqref{Eq:lower} as discussed in the proof of Theorem \ref{Eq:type-2}. $(\dag)$ follows when
\begin{equation} 
 \norm{u}_{\Lp}^2 \geq \frac{6(\tilde{C}\gamma_1+C_5\gamma_2)}{c_2}. \label{Eq:perm-verify-1}   
\end{equation}
$(\ddag)$ follows from \cite[Lemma B.2\emph{(ii)}]{kpca}, under the condition \eqref{Eq:verify-4}. When $C:=\sup_i\norm{\phi_i}_{\infty} < \infty$, $(\ddag)$ follows from Lemma \ref{lem: bound M operator} by replacing \eqref{Eq:verify-4} with \eqref{Eq:verify-5}.

As in the proof of Theorem~\ref{thm:Type II}, it can be shown that \eqref{Eq:lower}, \eqref{Eq:verify-4} and \eqref{Eq:verify-5} are satisfied under the assumptions made in the statement of Theorem~\ref{thm: permutations typeII}. 
It can also be verified that \eqref{Eq:perm-verify-1} is implied if $\Delta_{N,M} \geq \frac{r_1\Cl(\log(1/\alpha))^2}{\delta^2(N+M)^2}$ and $\Delta_{N,M}\geq\frac{r_2\Ntl\log(1/\alpha)}{\delta(N+M)}$ for some constants $r_1,r_2>0$. Finally, we have
\begin{align*}
    P_{H_1}\{\stat \geq \hat{q}_{1-w\alpha}^{B,\lambda}\} &\geq P_{H_1}\left\{\{\stat \geq q_{1-w\alpha+\Tilde{\alpha}}^{\lambda}\} \cap \{\hat{q}_{1-w\alpha}^{B,\lambda} \leq q_{1-w\alpha+\Tilde{\alpha}}^{\lambda}\}\right\} \\
    & \geq 1-P_{H_1}\left\{\stat \leq q_{1-w\alpha+\Tilde{\alpha}}^{\lambda}\right\} - P_{H_1}\left\{\hat{q}_{1-w\alpha}^{B,\lambda} > q_{1-w\alpha+\Tilde{\alpha}}^{\lambda}\right\} \\
    & \stackrel{(*)}{\geq} 1-4\delta-\delta = 1-5\delta,
\end{align*}
 where in $(*)$ we use \eqref{Eq:perm1} and Lemma  \ref{lemma:DKW for quantile} by setting  $\Tilde{\alpha}=\Tilde{w}\alpha$, for $0<\Tilde{w}<w <1$. Then, the desired result follows by taking infimum over $(P,Q) \in \PP$.

\subsection{Proof of Corollary \ref{coro:poly:perm}}
The proof is similar to that of Corollary~\ref{coro:poly}. Since $\lambda_i \lesssim i^{-\beta}$, we have $\Ntl 
\lesssim \lambda^{-1/2\beta}.$ By using this bound in the conditions of Theorem \ref{thm: permutations typeII}, we that the conditions on $\Delta_{N,M}$ hold if 
\begin{align}
\Delta_{N,M} &\gtrsim  \max\left\{\left(\frac{\delta(N+M)}{\log(1/\alpha)}\right)^{-\frac{4\Tilde{\theta}\beta}{4\Tilde{\theta}\beta+1}}, \left(\frac{\delta(N+M)}{\log(1/\alpha)}\right)^{-\frac{8\Tilde{\theta}\beta}{4\Tilde{\theta}\beta+2\beta+1}},\right.\label{Eq:coropoly1}\\ &\qquad\qquad\qquad\left.\left(\frac{d_4^{-1}(N+M)}{\log(N+M)}\right)^{-2\tilde{\theta}} \right\},\nonumber \end{align}
where $d_4>0$ is a constant. By exactly using the same arguments as in the proof of Corollary~\ref{coro:poly}, it is easy to verify that the above condition on $\Delta_{N,M}$ is implied if 
$$\Delta_{N,M} =
\left\{
	\begin{array}{ll}
		c(\alpha,\delta,\theta)\left(N+M\right)^{\frac{-4\tilde{\theta}\beta}{4\Tilde{\theta}\beta+1}},  &\ \ \Tilde{\theta}> \frac{1}{2}-\frac{1}{4\beta} \\
		c(\alpha,\delta,\theta)\left(\frac{N+M}{\log(N+M)}\right)^{-2\tilde{\theta}}, & \ \ \Tilde{\theta} \le \frac{1}{2}-\frac{1}{4\beta}
	\end{array}
\right.,$$
where $c(\alpha,\delta,\theta)\gtrsim\max\{\delta^{-2}(\log 1/\alpha)^2,d_4^{2\tilde{\theta}}\}$.

On the other hand when  $C:=\sup_i\norm{\phi_i}_{\infty} < \infty$, we obtain the corresponding condition as
\begin{align}\Delta_{N,M} &\gtrsim \max\left\{\left(\frac{\delta(N+M)}{\log(1/\alpha)}\right)^{-\frac{4\Tilde{\theta}\beta}{4\Tilde{\theta}\beta+1}},\left(\frac{\delta(N+M)}{\log(1/\alpha)}\right)^{-\frac{4\Tilde{\theta}\beta}{2\Tilde{\theta}\beta+1}},\right.\label{Eq:coropoly2}\\
&\qquad\qquad\qquad \left.d_5^{2\tilde{\theta}\beta}\left(\frac{N+M}{\log(N+M)}\right)^{-2\tilde{\theta}\beta} \right\}, \nonumber\end{align}
for some $d_5>0$, which in turn is implied for $N+M$ large enough, if 
$$\Delta_{N,M} =
\left\{
	\begin{array}{ll}
		c(\alpha,\delta,\theta,\beta)\left(N+M\right)^{\frac{-4\tilde{\theta}\beta}{4\Tilde{\theta}\beta+1}},  &  \ \  \Tilde{\theta}> \frac{1}{4\beta} \\
		c(\alpha,\delta,\theta,\beta)\left(\frac{N+M}{\log(N+M)}\right)^{-2\tilde{\theta}
		\beta}, & \ \  \Tilde{\theta} \le \frac{1}{4\beta}
	\end{array}
\right.,$$
where $c(\alpha,\delta,\theta,\beta)\gtrsim \max\{\delta^{-2}(\log 1/\alpha)^2,d_5^{2\tilde{\theta}\beta}\}.$

\subsection{Proof of Corollary \ref{coro:exp:perm}}
The proof is similar to that of Corollary~\ref{coro:exp}. When $\lambda_i \lesssim e^{-\tau i}$, we have $\Ntl \lesssim \sqrt{\log\frac{1}{\lambda}}$. Thus substituting this in the conditions from Theorem \ref{thm: permutations typeII} and assuming that  $(N+M)\geq \max\{e^2,\delta^{-1}(\log 1/\alpha)\}$, we can write the separation boundary as 
\begin{eqnarray}\Delta_{N,M} &{}\gtrsim {}&\max\left\{ \left(\frac{\sqrt{2\tilde{\theta}}\delta(N+M)}{\log(1/\alpha)\sqrt{\log(N+M)}}\right)^{-1},\right.\nonumber\\
&{}{}&\qquad\left.\left(\frac{\delta(N+M)\log(1/\alpha)^{-1}}{\sqrt{\log(N+M)}}\right)^{-\frac{4\tilde{\theta}}{2\tilde{\theta}+1}},\left(\frac{d_6^{-1}(N+M)}{\log(N+M)}\right)^{-2\tilde{\theta}} \right\}, \label{Eq:coroexp1}\end{eqnarray}
where $d_6>0$ is a constant. This condition in turn is implied if 
$$\Delta_{N,M} =
\left\{
	\begin{array}{ll}
		c(\alpha,\delta, \theta)\frac{\sqrt{\log(N+M)}}{N+M}, &  \ \ \Tilde{\theta}> \frac{1}{2} \\
		c(\alpha,\delta,\theta)\left(\frac{\log(N+M)}{N+M}\right)^{2\tilde{\theta}}, & \ \ \Tilde{\theta} \le \frac{1}{2}
	\end{array}
\right.$$
for large enough $N+M$, where $$c(\alpha,\delta,\theta) \gtrsim \max\left\{\sqrt{\frac{1}{2\tilde{\theta}}},1\right\}\max\{\delta^{-2}(\log 1/\alpha)^2,d_5^{2\tilde{\theta}}\},$$ and we used that $\tilde{\theta}\geq \frac{1}{2}$ implies that $1 \leq  \min\left\{2\tilde{\theta},\frac{4\tilde{\theta}}{2\tilde{\theta}+1}\right\}$ and that $\tilde{\theta}< \frac{1}{2}$ implies that $2\tilde{\theta}\leq  \min\left\{ 1,\frac{4\tilde{\theta}}{2\tilde{\theta}+1}\right\}.$
On the other hand when $C:=\sup_i\norm{\phi_i}_{\infty} < \infty$, we obtain
\begin{align*}
    \Delta_{N,M} &\gtrsim \max\left\{\left(\frac{\sqrt{2\tilde{\theta}}\delta(N+M)}{\log(1/\alpha)\sqrt{\log(N+M)}}\right)^{-1},\right.
    \end{align*}
    
\begin{align*}
    &\qquad\qquad\left.\frac{1}{2\tilde{\theta}}\left(\frac{\delta(N+M)}{\log(1/\alpha)\sqrt{\log(N+M)}}\right)^{-2}, e^{\frac{-2\tilde{\theta}d_7(N+M)}{\log (N+M)}} \right\}
\end{align*}    
for some constant $d_7>0$.  We can deduce that the condition is reduced to
$$\Delta_{N,M} = c(\alpha,\delta,\theta)\frac{\sqrt{\log(N+M)}}{N+M},$$
where $c(\alpha,\delta,\theta) \gtrsim \max\left\{\sqrt{\frac{1}{2\tilde{\theta}}},\frac{1}{2\tilde{\theta}},1\right\}(\delta^{-2}(\log 1/\alpha)^2)$, and we used $e^{\frac{-ax}{\log x}}\leq (ax\sqrt{\log x})^{-1}$ when $x$ is large enough, for $a,x>0$.

\subsection{Proof of Theorem~\ref{thm:perm adp typeI}}
The proof follows from Lemma~\ref{lemma:adaptation} and Theorem~\ref{thm: permutations typeI} by using $\frac{\alpha}{\cd}$ in the place of $\alpha$.

\subsection{Proof Theorem \ref{thm: perm adp typeII}}
Using $\hat{q}_{1-\frac{w\alpha}{\cd}}^{B,\lambda}$ as the threshold, the same steps as in the proof of Theorem \ref{thm: permutations typeII} will follow, with the only difference being $\alpha$ replaced by $\frac{\alpha}{\cd}$. Of course, this leads to an extra factor of $\log\cd$ in the expression of $\gamma_2$ in condition \eqref{Eq:perm-verify-1}, which will show up in the expression for the separation boundary (i.e., there will a factor of $\log\frac{\cd}{\alpha}$ instead of $\log\frac{1}{\alpha}$). Observe that for all cases of Theorem \ref{thm: perm adp typeII}, we have $\cd = 1+\log_2\frac{\lambda_U}{\lambda_L}\lesssim \log(N+M).$

For the case of $\lambda_i \lesssim i^{-\beta}$, we can deduce from the proof of Corollary~\ref{coro:poly:perm} (see~\eqref{Eq:coropoly1}) that when $\lambda = d_3^{-1/2\tilde{\theta}} \Delta_{N,M}^{1/2\Tilde{\theta}}$ for some $d_3>0$, then 
\begin{align*}
    P_{H_1}\left\{\stat \geq \hat{q}_{1-\frac{w\alpha}{\cd}}^{B,\lambda} \right\} \geq 1-5\delta,
\end{align*}
and the condition on the separation boundary becomes
\begin{align*}
\Delta_{N,M} &\gtrsim \max\left\{\left(\frac{\delta(N+M)}{\log\left(\frac{\log(N+M)}{\alpha}\right)}\right)^{-\frac{4\Tilde{\theta}\beta}{4\Tilde{\theta}\beta+1}}, \left(\frac{\delta(N+M)}{\log\left(\frac{\log(N+M)}{\alpha}\right)}\right)^{-\frac{8\Tilde{\theta}\beta}{4\Tilde{\theta}\beta+2\beta+1}},\right.
\end{align*}
\begin{align*}
&\qquad\qquad\qquad\left.\left(\frac{d_5^{-1}(N+M)}{\log(N+M)}\right)^{-2\tilde{\theta}} \right\},
\end{align*}
which in turn is implied if
$$\Delta_{N,M} = c(\alpha,\delta,\theta)\max\left\{\left(\frac{N+M}{\log\log(N+M)}\right)^{-\frac{4\Tilde{\theta}\beta}{4\Tilde{\theta}\beta+1}}, \left(\frac{N+M}{\log(N+M)}\right)^{-2\tilde{\theta}} \right\},$$
where $c(\alpha,\delta,\theta)\gtrsim\max\{\delta^{-2}(\log 1/\alpha)^2,d_4^{2\tilde{\theta}}\}$ for some $d_4>0$ and we used that $\log\frac{x}{\alpha} \leq \log\frac{1}{\alpha}\log x$ for large enough $x \in \R$.

Note that the optimal choice of $\lambda$ is given by \begin{align*}\lambda=\lambda^*&:=d_3^{-1/2\tilde{\theta}}c(\alpha,\delta,\theta)^{1/2\tilde{\theta}} \max\left\{\left(\frac{N+M}{\log\log(N+M)}\right)^{-\frac{2\beta}{4\Tilde{\theta}\beta+1}},\left(\frac{N+M}{\log(N+M)}\right)^{-1} \right\}.\end{align*} Observe that the constant term $d_3^{-1/2\tilde{\theta}}c(\alpha,\delta,\theta)^{1/2\tilde{\theta}}$ can be expressed as $r_1^{1/2\tilde{\theta}}$ for some constant $r_1$ that depends only on $\delta$ and $\alpha$. If $r_1\leq 1$, we can bound $r_1^{1/2\tilde{\theta}}$ as $r_1^{1/2\theta_l}\leq r_1^{1/2\tilde{\theta}}\leq r_1^{1/2\xi}$, and as $r_1^{1/2\xi}\leq r_1^{1/2\tilde{\theta}}\leq r_1^{1/2\theta_l}$ when $r>1 $. Therefore, for any $\theta$ and $\beta$, the optimal lambda can be bounded as $r_2\left(\frac{N+M}{\log(N+M)} \right)^{-1} \leq \lambda \leq r_3\left(\frac{N+M}{\log(N+M)} \right)^\frac{-2}{4\Tilde{\xi}+1}$, where $r_2,r_3$ are constants that depend only on $\delta$, $\alpha$, $\xi$ and $\theta_l$, and $\tilde{\xi}=\max\{\xi,1/4\}$. 

$\blacklozenge$ Define $v^* := \sup\{x \in \Lambda: x \leq \lambda^*\}$. From the definition of $\Lambda$, it is easy to see that $\lambda_L\leq \lambda^* \leq \lambda_U$ and $\frac{\lambda^*}{2}\leq v^* \leq \lambda^*$. Thus $v^*\in\Lambda$ is an optimal choice of $\lambda$ that will yield to the same form of the separation boundary up to constants. Therefore, by Lemma~\ref{lemma:adaptation}, for any $\theta$ and any $(P,Q)$ in $\PP$, we have 
\begin{align*}
    P_{H_1}\left\{\bigcup_{\lambda \in \Lambda}\stat \geq \hat{q}_{1-\frac{w\alpha}{\cd}}^{B,\lambda} \right\} \geq 1-5\delta.
\end{align*}
Thus the desired result holds by taking the infimum over $(P,Q) \in \PP$ and $\theta$. $\spadesuit$

When $\lambda_i \lesssim i^{-\beta}$ and $C:=\sup_i\norm{\phi_i}_{\infty} < \infty$, then using \eqref{Eq:coropoly2}, the conditions on the separation boundary becomes
\begin{align*}
\Delta_{N,M} &= c(\alpha,\delta,\theta,\beta)\max\left\{\left(\frac{N+M}{\log\log(N+M)}\right)^{-\frac{4\Tilde{\theta}\beta}{4\Tilde{\theta}\beta+1}},\left(\frac{N+M}{\log(N+M)}\right)^{-2\tilde{\theta}\beta} \right\},
\end{align*}
where $c(\alpha,\delta,\theta,\beta)\gtrsim \max\{\delta^{-2}(\log 1/\alpha)^2,d_5^{2\tilde{\theta}\beta}\}$. This yields the optimal $\lambda$ to be 
\begin{align*}
\lambda^*&:=d_3^{-1/2\tilde{\theta}}c(\alpha,\delta,\theta,\beta)^{1/2\tilde{\theta}} \max\left\{\left(\frac{N+M}{\log\log(N+M)}\right)^{-\frac{2\beta}{4\Tilde{\theta}\beta+1}},\left(\frac{N+M}{\log(N+M)}\right)^{-\beta} \right\}.
\end{align*}
Using the similar argument as in the previous case, we can deduce that for any $\theta$ and $\beta$, we have $r_4\left(\frac{N+M}{\log(N+M)} \right)^{-\beta_u} \leq \lambda^* \leq r_5\left(\frac{N+M}{\log(N+M)} \right)^\frac{-2}{4\Tilde{\xi}+1}$, where $r_4,r_5$ are constants that depend only on $\delta$, $\alpha$, $\xi$, $\theta_l$, and $\beta_u$. The claim therefore follows by using the argument mentioned between $\blacklozenge$ and $\spadesuit$.

For the case $\lambda_i \lesssim e^{-\tau i}$, $\tau>0$, the condition from \eqref{Eq:coroexp1} becomes 
\begin{align*}
\Delta_{N,M} &= c(\alpha,\delta,\theta)\max\left\{ \left(\frac{N+M}{\sqrt{\log(N+M)}\log\log(N+M)}\right)^{-1},\left(\frac{N+M}{\log(N+M)}\right)^{-2\tilde{\theta}} \right\},
\end{align*}
where $c(\alpha,\delta,\theta) \gtrsim \max\left\{\sqrt{\frac{1}{2\tilde{\theta}}},1\right\}\max\left\{\delta^{-2}(\log 1/\alpha)^2,d_4^{2\tilde{\theta}}\right\}.$ Thus 
\begin{align*}
\lambda^* &= d_3^{-1/2\tilde{\theta}}c(\alpha,\delta,\theta)^{1/2\tilde{\theta}}\max\left\{ \left(\frac{N+M}{\sqrt{\log(N+M)}(\log\log(N+M))}\right)^{-1/2\tilde{\theta}},\right.\\
&\qquad\qquad\qquad\left.\left(\frac{N+M}{\log(N+M)}\right)^{-1} \right\},
\end{align*}
which can be bounded as $r_6\left(\frac{N+M}{\log(N+M)} \right)^{-1}  \leq \lambda \leq r_7\left(\frac{N+M}{\log(N+M)} \right)^{-1/2\xi}$ for any $\theta$, where $r_6,r_7$ are constants that depend only on $\delta$, $\alpha$, $\xi$ and $\theta_l.$ Furthermore  when $C:=\sup_i\norm{\phi_i}_{\infty} < \infty$, the condition on the separation boundary becomes 
$$\Delta_{N,M} = c(\alpha,\delta,\theta)\frac{\sqrt{\log(N+M)}\log\log(N+M)}{N+M},$$
where $c(\alpha,\delta,\theta) \gtrsim \max\left\{\sqrt{\frac{1}{2\tilde{\theta}}},\frac{1}{2\tilde{\theta}},1\right\}\delta^{-2}(\log 1/\alpha)^2.$ Thus $$\lambda^* = d_3^{-1/2\tilde{\theta}}c(\alpha,\delta,\theta)^{1/2\tilde{\theta}} \left(\frac{N+M}{\sqrt{\log(N+M)}\log\log(N+M)}\right)^{-1/2\tilde{\theta}},$$ which can be bounded as 
\begin{align*}
&r_8\left(\frac{N+M}{\sqrt{\log(N+M)}\log\log(N+M)} \right)^{-1/2\theta_l}  \leq \lambda^* \\
&\qquad\qquad\leq r_9\left(\frac{N+M}{\sqrt{\log(N+M)}\log\log(N+M)} \right)^{-1/2\xi}
\end{align*}
for any $\theta$, where $r_8,r_9$ are constants that depend only on $\delta$, $\alpha$, $\xi$ and $\theta_l.$ The claim, therefore, follows by using the same argument as mentioned in the polynomial decay case.

 \section*{Acknowledgments}
The authors thank the reviewers for their valuable comments and constructive feedback, which helped to significantly improve the paper. OH and BKS are partially supported by National Science Foundation (NSF) CAREER
award DMS--1945396. BL is supported by NSF grant DMS--2210775.

\bibliographystyle{plainnat} 
\bibliography{references} 

\begin{thebibliography}{51}
\providecommand{\natexlab}[1]{#1}
\providecommand{\url}[1]{\texttt{#1}}
\expandafter\ifx\csname urlstyle\endcsname\relax
  \providecommand{\doi}[1]{doi: #1}\else
  \providecommand{\doi}{doi: \begingroup \urlstyle{rm}\Url}\fi

\bibitem[Adams and Fournier(2003)]{Adams}
R.~A. Adams and J.~J.~F. Fournier.
\newblock \emph{Sobolev Spaces}.
\newblock Academic Press, 2003.

\bibitem[Albert et~al.(2022)Albert, Laurent, Marrel, and Meynaoui]{Albert}
M.~Albert, B.~Laurent, A.~Marrel, and A.~Meynaoui.
\newblock {Adaptive test of independence based on HSIC measures}.
\newblock \emph{The Annals of Statistics}, 50\penalty0 (2):\penalty0 858 -- 879, 2022.

\bibitem[Aronszajn(1950)]{Aronszajn}
N.~Aronszajn.
\newblock Theory of reproducing kernels.
\newblock \emph{Trans. Amer. Math. Soc.}, pages 68:337--404, 1950.

\bibitem[Balasubramanian et~al.(2021)Balasubramanian, Li, and Yuan]{Krishna}
K.~Balasubramanian, T.~Li, and M.~Yuan.
\newblock On the optimality of kernel-embedding based goodness-of-fit tests.
\newblock \emph{Journal of Machine Learning Research}, 22\penalty0 (1):\penalty0 1--45, 2021.

\bibitem[Bauer et~al.(2007)Bauer, Pereverzev, and Rosasco]{BAUER200752}
F.~Bauer, S.~Pereverzev, and L.~Rosasco.
\newblock On regularization algorithms in learning theory.
\newblock \emph{Journal of Complexity}, 23\penalty0 (1):\penalty0 52--72, 2007.

\bibitem[Burnashev(1979)]{Burnashev}
M.~V. Burnashev.
\newblock {On the minimax detection of an inaccurately known signal in a white {G}aussian noise background}.
\newblock \emph{Theory of Probability \& Its Applications}, 24\penalty0 (1):\penalty0 107--119, 1979.

\bibitem[Caponnetto and Vito(2007)]{Caponnetto-07}
A.~Caponnetto and E.~De Vito.
\newblock Optimal rates for regularized least-squares algorithm.
\newblock \emph{Foundations of Computational Mathematics}, 7:\penalty0 331--368, 2007.

\bibitem[Cucker and Zhou(2007)]{Cucker}
F.~Cucker and D.~X. Zhou.
\newblock \emph{Learning Theory: An Approximation Theory Viewpoint}.
\newblock Cambridge University Press, Cambridge, UK, 2007.

\bibitem[Dinculeanu(2000)]{Dincu}
N.~Dinculeanu.
\newblock \emph{Vector Integration and Stochastic Integration in Banach Spaces}.
\newblock John-Wiley \& Sons, Inc., 2000.

\bibitem[Drineas and Mahoney(2005)]{Drineas-05}
P.~Drineas and M.~W. Mahoney.
\newblock On the {N}ystr{\"{o}}m method for approximating a {G}ram matrix for improved kernel-based learning.
\newblock \emph{Journal of Machine Learning Research}, 6:\penalty0 2153--2175, December 2005.

\bibitem[Dvoretzky et~al.(1956)Dvoretzky, Kiefer, and Wolfowitz]{DKW}
A.~Dvoretzky, J.~Kiefer, and J.~Wolfowitz.
\newblock Asymptotic minimax character of the sample distribution function and of the classical multinomial estimator.
\newblock \emph{The Annals of Mathematical Statistics}, 27\penalty0 (3):\penalty0 642--669, 1956.

\bibitem[Engl et~al.(1996)Engl, Hanke, and Neubauer]{Engl.et.al}
H.~W. Engl, M.~Hanke, and A.~Neubauer.
\newblock \emph{Regularization of Inverse Problems}.
\newblock Kluwer Academic Publishers, Dordrecht, The Netherlands, 1996.

\bibitem[Fasano and Franceschini(1987)]{Fasano}
G.~Fasano and A.~Franceschini.
\newblock {A multidimensional version of the Kolmogorov–Smirnov test}.
\newblock \emph{Monthly Notices of the Royal Astronomical Society}, 225\penalty0 (1):\penalty0 155--170, 03 1987.
\newblock ISSN 0035-8711.

\bibitem[G{{\"o}}nen and Alpaydin(2011)]{gonen11a}
M.~G{{\"o}}nen and E.~Alpaydin.
\newblock Multiple kernel learning algorithms.
\newblock \emph{Journal of Machine Learning Research}, 12\penalty0 (64):\penalty0 2211--2268, 2011.

\bibitem[Gretton et~al.(2006)Gretton, Borgwardt, Rasch, Sch{\"{o}}lkopf, and Smola]{NipsGretton}
A.~Gretton, K.~Borgwardt, M.~Rasch, B.~Sch{\"{o}}lkopf, and A.~Smola.
\newblock A kernel method for the two-sample problem.
\newblock In B.~Sch\"{o}lkopf, J.~Platt, and T.~Hoffman, editors, \emph{Advances in Neural Information Processing Systems}, volume~19, pages 513--520. MIT Press, 2006.

\bibitem[Gretton et~al.(2009)Gretton, Fukumizu, Harchaoui, and Sriperumbudur]{FastKernel}
A.~Gretton, K.~Fukumizu, Z.~Harchaoui, and B.~K. Sriperumbudur.
\newblock A fast, consistent kernel two-sample test.
\newblock In Y.~Bengio, D.~Schuurmans, J.~Lafferty, C.~Williams, and A.~Culotta, editors, \emph{Advances in Neural Information Processing Systems}, volume~22. Curran Associates, Inc., 2009.

\bibitem[Gretton et~al.(2012)Gretton, Borgwardt, Rasch, Sch{{\"o}}lkopf, and Smola]{gretton12a}
A.~Gretton, K.~M. Borgwardt, M.~J. Rasch, B.~Sch{{\"o}}lkopf, and A.~Smola.
\newblock A kernel two-sample test.
\newblock \emph{Journal of Machine Learning Research}, 13\penalty0 (25):\penalty0 723--773, 2012.

\bibitem[Harchaoui et~al.(2007)Harchaoui, Bach, and Moulines]{Harchaoui}
Z.~Harchaoui, F.~R. Bach, and E.~Moulines.
\newblock Testing for homogeneity with kernel fisher discriminant analysis.
\newblock In J.~Platt, D.~Koller, Y.~Singer, and S.~Roweis, editors, \emph{Advances in Neural Information Processing Systems}, volume~20. Curran Associates, Inc., 2007.

\bibitem[Hoeffding(1992)]{Hoeffding}
W.~Hoeffding.
\newblock A class of statistics with asymptotically normal distribution.
\newblock \emph{In Breakthroughs in Statistics}, pages 308--334, 1992.

\bibitem[Ingster(2000)]{Ingster2000}
Y~Ingster.
\newblock Adaptive chi-square tests.
\newblock \emph{Journal of Mathematical Sciences}, 99:\penalty0 1110--1119, 04 2000.

\bibitem[Ingster(1987)]{Ingester1}
Y.~I. Ingster.
\newblock Minimax testing of nonparametric hypotheses on a distribution density in the ${L}_p$ metrics.
\newblock \emph{Theory of Probability \& Its Applications}, 31\penalty0 (2):\penalty0 333--337, 1987.

\bibitem[Ingster(1993)]{Ingester2}
Y.~I. Ingster.
\newblock Asymptotically minimax hypothesis testing for nonparametric alternatives i, ii, iii.
\newblock \emph{Mathematical Methods of Statistics}, 2\penalty0 (2):\penalty0 85--114, 1993.

\bibitem[Kim et~al.(2022)Kim, Balakrishnan, and Wasserman]{permutations}
I.~Kim, S.~Balakrishnan, and L.~Wasserman.
\newblock {Minimax optimality of permutation tests}.
\newblock \emph{The Annals of Statistics}, 50\penalty0 (1):\penalty0 225--251, 2022.

\bibitem[Le~Cam(1986)]{LeCam}
L.~Le~Cam.
\newblock \emph{Asymptotic Methods In Statistical Decision Theory}.
\newblock Springer, 1986.

\bibitem[LeCun et~al.(2010)LeCun, Cortes, and Burges]{Mnist}
Y.~LeCun, C.~Cortes, and C.~Burges.
\newblock {MNIST} handwritten digit database. {AT} \&{T} {L}abs, 2010.

\bibitem[Lehmann and Romano(2006)]{lehmann}
E.~L. Lehmann and J.~P. Romano.
\newblock \emph{Testing Statistical Hypotheses}.
\newblock Springer Science \& Business Media, 2006.

\bibitem[Li and Yuan(2019)]{MingYuan}
T.~Li and M.~Yuan.
\newblock On the optimality of {G}aussian kernel based nonparametric tests against smooth alternatives.
\newblock 2019.
\newblock https://arxiv.org/pdf/1909.03302.pdf.

\bibitem[Massart(1990)]{Massart}
P.~Massart.
\newblock {The tight constant in the Dvoretzky-Kiefer-Wolfowitz Inequality}.
\newblock \emph{The Annals of Probability}, 18\penalty0 (3):\penalty0 1269--1283, 1990.

\bibitem[Mendelson and Neeman(2010)]{Mendelson-10}
S.~Mendelson and J.~Neeman.
\newblock Regularization in kernel learning.
\newblock \emph{The Annals of Statistics}, 38\penalty0 (1):\penalty0 526--565, 2010.

\bibitem[Minh et~al.(2006)Minh, Niyogi, and Yao]{unibound}
H.~Q. Minh, P.~Niyogi, and Y.~Yao.
\newblock Mercer's theorem, feature maps, and smoothing.
\newblock In G{\'a}bor Lugosi and Hans~Ulrich Simon, editors, \emph{Learning Theory}, pages 154--168, Berlin, 2006. Springer.

\bibitem[Muandet et~al.(2017)Muandet, Fukumizu, Sriperumbudur, and Schölkopf]{rkhs}
K.~Muandet, K.~Fukumizu, B.~Sriperumbudur, and B.~Schölkopf.
\newblock Kernel mean embedding of distributions: A review and beyond.
\newblock \emph{Foundations and Trends® in Machine Learning}, 10\penalty0 (1-2):\penalty0 1--141, 2017.

\bibitem[Pesarin and Salmaso(2010)]{Pesarin}
F.~Pesarin and L.~Salmaso.
\newblock \emph{Permutation Tests for Complex Data: Theory, Applications and Software}.
\newblock John Wiley \& Sons, 2010.

\bibitem[Puritz et~al.(2022)Puritz, Ness-Cohn, and Braun]{KS}
C.~Puritz, E.~Ness-Cohn, and R.~Braun.
\newblock fasano.franceschini.test: An implementation of a multidimensional ks test in r, 2022.

\bibitem[Rahimi and Recht(2008)]{Rahimi-08a}
A.~Rahimi and B.~Recht.
\newblock Random features for large-scale kernel machines.
\newblock In J.~C. Platt, D.~Koller, Y.~Singer, and S.~T. Roweis, editors, \emph{Advances in Neural Information Processing Systems 20}, pages 1177--1184. Curran Associates, Inc., 2008.

\bibitem[Reed and Simon(1980)]{Reed}
M.~Reed and B.~Simon.
\newblock \emph{Methods of Modern Mathematical Physics: Functional Analysis I}.
\newblock Academic Press, New York, 1980.

\bibitem[Romano and Wolf(2005)]{Romano}
J.~P. Romano and M.~Wolf.
\newblock Exact and approximate stepdown methods for multiple hypothesis testing.
\newblock \emph{Journal of the American Statistical Association}, 100\penalty0 (469):\penalty0 94--108, 2005.

\bibitem[Schrab et~al.(2021)Schrab, Kim, Albert, Laurent, Guedj, and Gretton]{MMDagg}
A.~Schrab, I.~Kim, M.~Albert, B.~Laurent, B.~Guedj, and A.~Gretton.
\newblock {MMD} aggregated two-sample test.
\newblock 2021.
\newblock https://arxiv.org/pdf/2110.15073.pdf.

\bibitem[Simon-Gabriel and Sch{\"{o}}lkopf(2018)]{Carl}
C.~Simon-Gabriel and B.~Sch{\"{o}}lkopf.
\newblock Kernel distribution embeddings: Universal kernels, characteristic kernels and kernel metrics on distributions.
\newblock \emph{Journal of Machine Learning Research}, 19\penalty0 (44):\penalty0 1--29, 2018.

\bibitem[Smola et~al.(2007)Smola, Gretton, Song, and Sch{\"{o}}lkopf]{Smola}
A.~J. Smola, A.~Gretton, L.~Song, and B.~Sch{\"{o}}lkopf.
\newblock A {H}ilbert space embedding for distributions.
\newblock In Marcus Hutter, Rocco~A. Servedio, and Eiji Takimoto, editors, \emph{Algorithmic Learning Theory}, pages 13--31. Springer-Verlag, Berlin, Germany, 2007.

\bibitem[Sriperumbudur(2016)]{bernouli2016}
B.~K. Sriperumbudur.
\newblock {On the optimal estimation of probability measures in weak and strong topologies}.
\newblock \emph{Bernoulli}, 22\penalty0 (3):\penalty0 1839 -- 1893, 2016.

\bibitem[Sriperumbudur and Sterge(2022)]{kpca}
B.~K. Sriperumbudur and N.~Sterge.
\newblock Approximate kernel {PCA} using random features: {C}omputational vs. statistical trade-off.
\newblock \emph{The Annals of Statistics}, 50\penalty0 (5):\penalty0 2713--2736, 2022.

\bibitem[Sriperumbudur et~al.(2009)Sriperumbudur, Fukumizu, Gretton, Lanckriet, and Sch{\"{o}}lkopf]{classifiability}
B.~K. Sriperumbudur, K.~Fukumizu, A.~Gretton, G.~R.~G. Lanckriet, and B.~Sch{\"{o}}lkopf.
\newblock Kernel choice and classifiability for {RKHS} embeddings of probability distributions.
\newblock In Y.~Bengio, D.~Schuurmans, J.~Lafferty, C.~K.~I. Williams, and A.~Culotta, editors, \emph{Advances in Neural Information Processing Systems 22}, pages 1750--1758, Cambridge, MA, 2009. MIT Press.

\bibitem[Sriperumbudur et~al.(2010)Sriperumbudur, Gretton, Fukumizu, Sch{\"{o}}lkopf, and Lanckriet]{JMLR_metrics}
B.~K. Sriperumbudur, A.~Gretton, K.~Fukumizu, B.~Sch{\"{o}}lkopf, and G.~R.~G. Lanckriet.
\newblock Hilbert space embeddings and metrics on probability measures.
\newblock \emph{Journal of Machine Learning Research}, 11:\penalty0 1517--1561, 2010.

\bibitem[Sriperumbudur et~al.(2011)Sriperumbudur, Fukumizu, and Lanckriet]{JMLR_universal}
B.~K. Sriperumbudur, K.~Fukumizu, and G.~R. Lanckriet.
\newblock Universality, characteristic kernels and {RKHS} embedding of measures.
\newblock \emph{Journal of Machine Learning Research}, 12:\penalty0 2389--2410, 2011.

\bibitem[Steinwart and Christmann(2008)]{svm}
I.~Steinwart and A.~Christmann.
\newblock \emph{Support Vector Machines.}
\newblock Springer, New York, 2008.

\bibitem[Steinwart and Scovel(2012)]{Steinwart2012MercersTO}
I.~Steinwart and C.~Scovel.
\newblock Mercer's theorem on general domains: On the interaction between measures, kernels, and {RKHS}s.
\newblock \emph{Constructive Approximation}, 35:\penalty0 363--417, 2012.

\bibitem[Steinwart et~al.(2006)Steinwart, Hush, and Scovel]{Steinwart2006AnED}
I.~Steinwart, D.~R. Hush, and C.~Scovel.
\newblock An explicit description of the reproducing kernel hilbert spaces of gaussian rbf kernels.
\newblock \emph{IEEE Transactions on Information Theory}, 52:\penalty0 4635--4643, 2006.

\bibitem[Sterge and Sriperumbudur(2022)]{kpca-nystrom}
N.~Sterge and B.~K. Sriperumbudur.
\newblock Statistical optimality and computational efficiency of {N}ystr\"{o}m kernel {PCA}.
\newblock \emph{Journal of Machine Learning Research}, 23\penalty0 (337):\penalty0 1--32, 2022.

\bibitem[Szekely and Rizzo(2004)]{Energy}
G~Szekely and M~Rizzo.
\newblock Testing for equal distributions in high dimension.
\newblock \emph{InterStat}, 5, 11 2004.

\bibitem[Williams and Seeger(2001)]{Williams-01}
C.K.I. Williams and M.~Seeger.
\newblock Using the {N}ystr{\"{o}}m method to speed up kernel machines.
\newblock In V.~Tresp T.~K.~Leen, T. G.~Diettrich, editor, \emph{Advances in Neural Information Processing Systems 13}, pages 682--688, Cambridge, MA, 2001. MIT {P}ress.

\bibitem[Yang et~al.(2017)Yang, Pilanci, and Wainwright]{Yang-17}
Y.~Yang, M.~Pilanci, and M.~J. Wainwright.
\newblock Randomized sketches for kernels: {F}ast and optimal nonparametric regression.
\newblock \emph{Annals of Statistics}, 45\penalty0 (3):\penalty0 991--1023, 2017.

\end{thebibliography}
\setcounter{section}{0}
\renewcommand\thesection{\Alph{section}}
\newtheorem{thmm}{Theorem}[section]
\section{Technical results}
In this section, we collect technical results used to prove the main results of the paper. Unless specified otherwise, the notation used in this section matches that of the main paper.
\begin{appxlem} \label{thm: general Type2 error}
Let $\gamma$ be a function of a random variable $Y$, and define $\zeta=\E[X|Y]$. Suppose for all $\delta>0$, $$P\left\{\zeta\ge \gamma(Y)+\sqrt{\frac{\emph{Var}(X|Y)}{\delta}}\right\}\geq 1-\delta.$$ Then $$P\{X\geq \gamma(Y)\}\geq 1-2\delta.$$
\end{appxlem}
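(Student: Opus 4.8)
The plan is to condition on $Y$ and combine a one-sided Chebyshev (Cantelli) bound with the high-probability lower bound on $\zeta$ that is assumed in the hypothesis. First I would fix a realization of $Y$ and apply Chebyshev's inequality to $X$ conditionally on $Y$: since $\E[X\mid Y]=\zeta$, for any $t>0$ we have
\begin{equation*}
P\left\{X < \zeta - t \;\middle|\; Y\right\} \leq \frac{\mathrm{Var}(X\mid Y)}{t^2}.
\end{equation*}
Choosing $t=\sqrt{\mathrm{Var}(X\mid Y)/\delta}$ gives $P\{X < \zeta - t \mid Y\} \leq \delta$, i.e. $P\{X \geq \zeta - \sqrt{\mathrm{Var}(X\mid Y)/\delta}\mid Y\}\geq 1-\delta$. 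Taking expectations over $Y$, this yields the unconditional statement
\begin{equation*}
P\left\{X \geq \zeta - \sqrt{\tfrac{\mathrm{Var}(X\mid Y)}{\delta}}\right\}\geq 1-\delta.
\end{equation*}

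Next I would introduce the event $E := \{\zeta \geq \gamma(Y) + \sqrt{\mathrm{Var}(X\mid Y)/\delta}\}$, which by hypothesis satisfies $P(E)\geq 1-\delta$. On $E$ we have $\zeta - \sqrt{\mathrm{Var}(X\mid Y)/\delta} \geq \gamma(Y)$, so the event $\{X \geq \zeta - \sqrt{\mathrm{Var}(X\mid Y)/\delta}\}\cap E$ is contained in $\{X\geq\gamma(Y)\}$. Therefore
\begin{equation*}
P\{X\geq\gamma(Y)\} \geq P\left(\left\{X \geq \zeta - \sqrt{\tfrac{\mathrm{Var}(X\mid Y)}{\delta}}\right\}\cap E\right) \geq 1 - \delta - \delta = 1-2\delta,
\end{equation*}
where the last step is a union bound on the complements of the two events, each of probability at most $\delta$. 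This completes the argument.

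There is no real obstacle here — the statement is essentially a two-line "condition, Chebyshev, union bound" lemma. The only points requiring a little care are: (i) making sure the conditional Chebyshev bound is applied with the correct choice of threshold so that the deviation term matches exactly the quantity appearing in the hypothesis; and (ii) handling the measurability/integration step when passing from the conditional bound to the unconditional one (this is routine since $\gamma(Y)$, $\zeta$, and $\mathrm{Var}(X\mid Y)$ are all $\sigma(Y)$-measurable). One should also note the degenerate case $\mathrm{Var}(X\mid Y)=0$, where the conditional bound is trivially an equality/certainty, so nothing breaks. I would present the proof in the order above: conditional Chebyshev, integrate out $Y$, then intersect with the hypothesized event and finish with a union bound.
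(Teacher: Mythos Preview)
Your proof is correct and follows essentially the same approach as the paper: define $\gamma_1=\sqrt{\mathrm{Var}(X\mid Y)/\delta}$, use (conditional) Chebyshev to get $P\{X\ge \zeta-\gamma_1\}\ge 1-\delta$, intersect with the hypothesized event $\{\zeta-\gamma_1\ge \gamma(Y)\}$, and finish with a union bound. The only difference is that you spell out the conditioning-then-integrating step explicitly, whereas the paper applies Chebyshev directly to the unconditional probability with the $Y$-measurable threshold $\gamma_1$.
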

\begin{proof}
Define $\gamma_1=\sqrt{\frac{\text{Var}(X|Y)}{\delta}}$. Consider
\begin{align*}
P\{X \geq \gamma(Y)\} & \geq P\{ \{X \geq \zeta- \gamma_1\} \ \cap \{\gamma(Y) \leq \zeta- \gamma_1\}  \} \\
& \geq 1- P\{X \leq \zeta-\gamma_1\} - P\{\gamma(Y) \geq \zeta- \gamma_1\} \\
&\geq 1- P\{|X-\zeta| \geq \gamma_1\} - P\{\gamma(Y) \geq \zeta- \gamma_1\} \\
& \geq 1-2\delta,
\end{align*}
where in the last step we invoked Chebyshev's inequality: 
$P\{|X-\zeta| \geq \gamma_1\} \leq \delta.$
\end{proof}

\begin{appxlem}\label{lemma:format of statistic}
Define $a(x)=g_{\lambda}^{1/2}(\hat{\Sigma}_{PQ})(\kk(\cdot,x)-\mu)$ where $\mu$ is an arbitrary function in $\h$. Then $\stat$ defined in \eqref{stat_def} can be written as
\begin{eqnarray*}
    \stat&{}={}& \frac{1}{n(n-1)} \sum_{i\neq j} \inner{ a(X_i)}{a(X_j)}_{\h}  + \frac{1}{m(m-1)}\sum_{i\neq j} \inner{ a(Y_i)}{a(Y_j)}_{\h}\\
    &&\qquad\qquad- \frac{2}{nm}\sum_{i,j} \inner{a(X_i)}{a(Y_j)}_{\h}.
\end{eqnarray*}
\end{appxlem}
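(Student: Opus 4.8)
The plan is to prove Lemma~\ref{lemma:format of statistic} by a direct algebraic expansion, exploiting the key fact that the random operator $\gShh$ is self-adjoint, so that $\gShh$ can be ``moved across'' inner products. The starting point is the definition of $\stat$ in \eqref{stat_def} together with the kernel $h$, which is
$$h(X_i,X_j,Y_{i'},Y_{j'}) = \inner{\gShh A(X_i,Y_{i'})}{\gShh A(X_j,Y_{j'})}_{\h},$$
where $A(x,y) = K(\cdot,x)-K(\cdot,y)$. Using self-adjointness of $\gShh$ (it is a function of the self-adjoint operator $\hat\Sigma_{PQ}$ via functional calculus, cf.~\eqref{Eq:glambda}), one has $\gShh A(X_i,Y_{i'}) = \gShh K(\cdot,X_i) - \gShh K(\cdot,Y_{i'})$, and expanding the inner product bilinearly gives four groups of terms.

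First I would write, for fixed $\mu \in \h$, $a(x) := \gShh(K(\cdot,x)-\mu)$, and observe that $\gShh A(x,y) = a(x)-a(y)$, since the $\mu$ contributions cancel. Hence
$$h(X_i,X_j,Y_{i'},Y_{j'}) = \inner{a(X_i)-a(Y_{i'})}{a(X_j)-a(Y_{j'})}_{\h}.$$
Substituting this into \eqref{stat_def} and distributing the double sum $\frac{1}{n(n-1)}\frac{1}{m(m-1)}\sum_{i\ne j}\sum_{i'\ne j'}$ over the four resulting inner-product terms, I would carry out the summations over the free indices one at a time. The term $\inner{a(X_i)}{a(X_j)}_{\h}$ does not depend on $i',j'$, so summing over the $m(m-1)$ ordered pairs $(i',j')$ and dividing yields exactly $\frac{1}{n(n-1)}\sum_{i\ne j}\inner{a(X_i)}{a(X_j)}_{\h}$; symmetrically the term $\inner{a(Y_{i'})}{a(Y_{j'})}_{\h}$ gives $\frac{1}{m(m-1)}\sum_{i'\ne j'}\inner{a(Y_{i'})}{a(Y_{j'})}_{\h}$. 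The two cross terms $-\inner{a(X_i)}{a(Y_{j'})}_{\h}$ and $-\inner{a(Y_{i'})}{a(X_j)}_{\h}$ each, after summing over the index on which they do not depend (the $(n-1)$ values of $j$, resp. the $(n-1)$ values of $i$, and the $(m-1)$ values respectively), collapse to $-\frac{1}{nm}\sum_{i,j}\inner{a(X_i)}{a(Y_j)}_{\h}$; together they contribute $-\frac{2}{nm}\sum_{i,j}\inner{a(X_i)}{a(Y_j)}_{\h}$. Collecting the three pieces gives the claimed identity. Setting $g_\lambda(x)\equiv 1$ here (so $a(x)=K(\cdot,x)-\mu$, $\mu=\mu_P$) recovers the decomposition $(*)$ used at the start of the proof of Theorem~\ref{thm: MMD}.

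There is no real obstacle here; it is bookkeeping. The one point that needs a line of care is justifying that the cross terms, which \emph{a priori} are summed over $\sum_{i\ne j}\sum_{i'\ne j'}$, may be replaced by the full (unrestricted) sum $\sum_{i,j}$ over $n\times m$ pairs after the collapse: this is immediate because $X$ and $Y$ are indexed by disjoint index sets, so the constraints $i\ne j$ and $i'\ne j'$ (which couple only like-sample indices) impose no restriction between an $X$-index and a $Y$-index, and after summing out the unconstrained like-sample index each restricted sum becomes a plain sum over the remaining $X$-index and $Y$-index with the correct normalization $\tfrac{1}{nm}$. The remaining step is simply to keep track of the normalizing constants $\tfrac{1}{n(n-1)}$, $\tfrac{1}{m(m-1)}$, $\tfrac{1}{nm}$ through the index eliminations, which is routine.
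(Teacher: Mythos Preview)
Your proof is correct and is essentially the same elementary expansion as the paper's, just organized slightly differently: the paper starts from the three-sum form displayed just before Theorem~\ref{thm: computation} (the case $\mu=0$, stated there without proof) and then substitutes $\gShh K(\cdot,x)=a(x)+\gShh\mu$ to see the $\mu$-terms cancel across the three sums, whereas you start directly from \eqref{stat_def} and get the $\mu$-invariance for free via $a(x)-a(y)=\gShh A(x,y)$, at the cost of doing the index-collapsing bookkeeping explicitly. Either way the content is the same algebraic cancellation.
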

\begin{proof}
The proof follows by using $a(x)+\gShh\mu$ for $\gShh\kk(\cdot,x)$ in $\stat$ as shown below:
\begin{eqnarray*}
    \stat &{}={}& \frac{1}{n(n-1)}\sum_{i\neq j}\inner{ a(X_i)+\gShh\mu}{a(X_j)+\gShh\mu}_{\h} \\ 
    &&\,\,  + \frac{1}{m(m-1)}\sum_{i\neq j}\inner{ a(Y_i)+\gShh\mu}{a(Y_j)+\gShh\mu}_{\h} \\ 
    &&\quad- \frac{2}{nm}\sum_{i,j}\inner{a(X_i)+\gShh\mu}{a(Y_j)+\gShh\mu}_{\h}, 
\end{eqnarray*}
and noting that all the terms in expansion of the inner product cancel except for the terms of the form $\left \langle a(\cdot),a(\cdot)\right \rangle_\h$.
\end{proof}

\begin{appxlem} \label{lemma: Expectation squared}
Let $(G_i)_{i=1}^{n}$ and $(F_i)_{i=1}^{m}$ be independent sequences of zero-mean $\h$-valued random elements, and let f be an arbitrary function in $\h$. Then the following hold.
\begin{enumerate}[label=(\roman*)]
    \itemsep0em 
    \item $\E \left(\sum_{i,j}\left \langle G_i,F_j \right \rangle_{\h}\right)^2=\sum_{i,j}\E\left \langle G_i,F_j \right \rangle_{\h}^2;$
    \item $\E \left(\sum_{i\neq j}\left \langle G_i,G_j \right \rangle_{\h}\right)^2=2 \sum_{i \neq j}\E\left \langle G_i,G_j \right \rangle_{\h}^2;$
    \item $\E \left(\sum_{i}\left \langle G_i,f \right \rangle_{\h}\right)^2=\sum_{i}\E\left \langle G_i,f \right \rangle_{\h}^2.$
\end{enumerate}
\end{appxlem}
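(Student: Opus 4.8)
The statement to prove is Lemma A.3 (the one labeled \texttt{appxlem} about $\E$ of squared sums of inner products of zero-mean $\h$-valued random elements). This is an elementary second-moment computation, so the proof will be short and purely algebraic.

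\textbf{Overall approach.} For each of the three identities, the plan is to expand the square as a double sum over index pairs, interchange expectation and summation (justified by finiteness of second moments, which is implicit), and then kill the cross terms using independence together with the zero-mean hypothesis. Throughout I will repeatedly use the following basic fact: if $U$ and $V$ are independent $\h$-valued random elements with $\E V = 0$ (in the Bochner sense), then $\E \inner{U}{V}_\h = \inner{\E U}{\E V}_\h$ when both are independent of each other — more precisely, conditioning on $U$, $\E[\inner{U}{V}_\h \mid U] = \inner{U}{\E V}_\h = 0$, hence $\E \inner{U}{V}_\h = 0$; and similarly for products $\E[\inner{U}{V}_\h \inner{U'}{V'}_\h] = 0$ whenever one of the four factors is an $\h$-valued element independent of the other three with mean zero.

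\textbf{Step-by-step.} \emph{(i)} Write $\left(\sum_{i,j}\inner{G_i}{F_j}_\h\right)^2 = \sum_{i,j}\sum_{i',j'} \inner{G_i}{F_j}_\h \inner{G_{i'}}{F_{j'}}_\h$. When $i \neq i'$, condition on everything except $G_{i'}$: since the $G$'s are independent of each other and of the $F$'s and $\E G_{i'} = 0$, the term vanishes. Similarly when $j \neq j'$. Hence only the diagonal terms $i = i'$, $j = j'$ survive, giving $\sum_{i,j} \E \inner{G_i}{F_j}_\h^2$. \emph{(ii)} Write $\left(\sum_{i\neq j}\inner{G_i}{G_j}_\h\right)^2 = \sum_{i \neq j}\sum_{i' \neq j'} \E\inner{G_i}{G_j}_\h \inner{G_{i'}}{G_{j'}}_\h$. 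A term is nonzero only if every index appearing is matched (otherwise some $G_\ell$ appears exactly once and, being independent zero-mean, forces the expectation to zero). Given the constraints $i \neq j$ and $i' \neq j'$, the surviving configurations are exactly $(i',j') = (i,j)$ and $(i',j') = (j,i)$, each contributing $\E \inner{G_i}{G_j}_\h^2$ (using symmetry of the inner product). This yields $2\sum_{i\neq j} \E\inner{G_i}{G_j}_\h^2$. \emph{(iii)} Write $\left(\sum_i \inner{G_i}{f}_\h\right)^2 = \sum_{i,i'} \E \inner{G_i}{f}_\h \inner{G_{i'}}{f}_\h$; for $i \neq i'$ condition on $G_i$ and use $\E G_{i'} = 0$ to kill the term, leaving $\sum_i \E\inner{G_i}{f}_\h^2$.

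\textbf{Main obstacle.} There is essentially no deep obstacle here; the only point requiring a little care is the rigorous justification that expectation and the (finite) sums may be interchanged and that the conditioning argument is valid in the Bochner-integral setting — i.e. that $\E[\inner{U}{V}_\h \mid \mathcal{G}] = \inner{U}{\E[V \mid \mathcal{G}]}_\h$ when $U$ is $\mathcal{G}$-measurable and $V$ is integrable, which follows from linearity and continuity of the inner product together with the defining property of the Bochner conditional expectation. I would state this once as a remark and then apply it mechanically in all three parts. Since the sums are finite, no dominated-convergence issue arises, and the second-moment finiteness needed to make each $\E \inner{\cdot}{\cdot}_\h^2$ well-defined is part of the standing setup (the $\h$-valued elements arise from a bounded kernel).
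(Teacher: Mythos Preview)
Your proposal is correct and follows essentially the same approach as the paper: expand the square into a double sum over index pairs, then eliminate every cross term by identifying a factor that is independent of the rest and has mean zero. The only cosmetic difference is that you phrase the vanishing via conditioning, whereas the paper writes it out as $\inner{\E G_i}{\E(\cdots)}_\h = 0$; the substance is identical.
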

\begin{proof} \emph{(i)} can be shown as follows:
\begin{eqnarray*}
&& \E \left(\sum_{i,j}\left \langle G_i,F_j \right \rangle_{\h}\right)^2\\
&{}={}&\sum_{i,j,k,l} \E[\left \langle G_i,F_j \right \rangle_{\h} \left \langle G_k,F_l \right \rangle_{\h}]\\
&{}={}& \sum_{i,j}\E \left \langle G_i,F_j \right \rangle_{\h}^2 + \sum_j \sum_{i\neq k} \E \left[\left \langle G_i,F_j \right \rangle_{\h} \left \langle G_k,F_j \right \rangle_{\h}\right]\\
&&+ \sum_i \sum_{j\neq l} \E \left[\left \langle G_i,F_j \right \rangle_{\h} \left \langle G_i,F_l \right \rangle_{\h}\right]+ \sum_{j\neq l} \sum_{i\neq k} \E \left[\left \langle G_i,F_j \right \rangle_{\h} \left \langle G_k,F_l \right \rangle_{\h}\right] \\
&{}={}&  \sum_{i,j}\E \left \langle G_i,F_j \right \rangle_{\h}^2 + \sum_j \sum_{i\neq k} \left \langle \E(G_i),\E(F_j\left \langle G_k,F_j \right \rangle_{\h}) \right \rangle_{\h} \\
&&+ \sum_i \sum_{j\neq l}  \left \langle \E(G_i\left \langle G_i,F_l \right \rangle_{\h}),\E(F_j) \right \rangle_{\h} + \sum_{j\neq l} \sum_{i\neq k} \left \langle \E(G_i),\E(F_j\left \langle G_k,F_l \right \rangle_{\h}) \right \rangle_{\h} \\
&{}={}& \sum_{i,j}\E \left \langle G_i,F_j \right \rangle_{\h}^2.
\end{eqnarray*}
For \emph{(ii)}, we have
\begin{eqnarray*}
&{}{}&\E \left(\sum_{i\neq j}\left \langle G_i,G_j \right \rangle_{\h}\right)^2
= \sum_{i\neq j} \sum_{k\neq l} \E \left[\left \langle G_i,G_j \right \rangle_{\h} \left \langle G_k,G_l \right \rangle_{\h}\right] \\
&{}={}& \sum_{i\neq j} \E\left[\left \langle G_i,G_j \right \rangle_{\h} \left \langle G_i,G_j \right \rangle_{\h}\right]+\sum_{i\neq j} \E\left[\left \langle G_i,G_j \right \rangle_{\h} \left \langle G_j,G_i \right \rangle_{\h}\right] \\
&&\qquad\qquad+ \sum_{\substack{\{i,j\}\neq \{k,l\}\\{i\neq j}\\{k\neq l}}} \E\left[\left \langle G_i,G_j \right \rangle_{\h} \left \langle G_k,G_l \right \rangle_{\h}\right]. 
\end{eqnarray*}
Consider the last term and note that $\{i,j\} \neq \{k,l\}$ and $k \neq l$, implies that either $k \notin \{i,j\}$, or $l \notin \{i,j\}$. If $k \notin \{i,j\}$, then $$\E\left[\left \langle G_i,G_j \right \rangle_{\h} \left \langle G_k,G_l \right \rangle_{\h}\right] = \left \langle \E(G_k),\E(\left \langle G_i,G_j \right \rangle_{\h}G_l) \right \rangle_{\h} =0,$$ and the same result holds when $l \notin \{i,j\}$. Therefore we conclude that the third term equals zero and the result follows.\vspace{1mm}\\
\emph{(iii)} Note that
\begin{eqnarray*}
    \E \left(\sum_{i}\left \langle G_i,f \right \rangle_{\h}\right)^2  &{}={}& \sum_{i} \E \inner{G_i}{f}_{\h}^2+ \sum_{i\neq j} \E \inner{G_i}{f}_{\h} \inner{G_j}{f}_{\h} \\
    &{}={}& \sum_{i} \E \inner{G_i}{f}_{\h}^2+ \sum_{i\neq j} \inner{\E G_i}{f}_{\h} \inner{\E G_j}{f}_{\h}\\
    &{}={}& \sum_{i} \E \inner{G_i}{f}_{\h}^2
\end{eqnarray*}
and the result follows.
\end{proof}
\begin{appxlem} \label{lemma:bound U-statistic1}
Let $(X_i)_{i=1}^{n} \stackrel{i.i.d.}{\sim} Q$ with $n\geq 2$. Define $$I=\frac{1}{n(n-1)}\sum_{i\neq j}\inner{a(X_i)}{a(X_j)}_\h,$$ where $a(x)= \B\SgL(\kk(\cdot,x)-\mu)$ with $\mu = \int_{\X} \kk(\cdot,x)\,dQ(x)$ and $\B : \h \to \h$ is a bounded operator. Then the following hold.
\begin{enumerate}[label=(\roman*)]
\item $\E\inner{a(X_i)}{a(X_j)}_{\h}^2\leq \norm{\B}_{\op}^4\norm{\SgL\Sigma_Q\SgL}_{\hs}^2;$

\item $\E\left(I^2\right)\leq \frac{4}{n^2}\norm{\B}_{\op}^4\norm{\SgL\Sigma_Q\SgL}_{\hs}^2.$
\end{enumerate}
\end{appxlem}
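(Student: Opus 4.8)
The plan is to reduce both parts to direct moment computations using the tensor-product structure of the operators involved, exploiting the i.i.d. structure of $(X_i)$ and the fact that $a(x)$ has zero mean. First I would record the basic fact that $\E a(X_i) = \B\SgL(\mu - \mu) = 0$, so $(a(X_i))_{i=1}^n$ is a sequence of i.i.d. zero-mean $\h$-valued random elements, which is exactly the hypothesis needed to invoke Lemma~A.3(ii).

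For part \emph{(i)}, I would write $\E\inner{a(X_i)}{a(X_j)}_{\h}^2$ for $i \neq j$ by conditioning: since $X_i \perp X_j$,
\begin{align*}
\E\inner{a(X_i)}{a(X_j)}_{\h}^2 &= \E_{X_i}\E_{X_j}\inner{a(X_i)}{a(X_j)}_{\h}^2 = \E_{X_i}\inner{\Xi\, a(X_i)}{a(X_i)}_{\h},
\end{align*}
where $\Xi := \E_{X_j}\big(a(X_j)\htens a(X_j)\big) = \B\SgL\Sigma_Q\SgL\B^*$, using the definition of $\Sigma_Q = \int (\kk(\cdot,x)-\mu)\htens(\kk(\cdot,x)-\mu)\,dQ(x)$. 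Then $\E_{X_i}\inner{\Xi a(X_i)}{a(X_i)}_{\h} = \text{Tr}(\Xi^2) = \norm{\Xi}_{\hs}^2$ (using self-adjointness of $\Xi$ when $\B$ is, or more generally $\text{Tr}(\Xi \Xi^*)$; here $\Xi$ is self-adjoint since $\B\SgL\Sigma_Q\SgL\B^*$ is). Finally, submultiplicativity of the operator norm against the Hilbert–Schmidt norm gives $\norm{\B\SgL\Sigma_Q\SgL\B^*}_{\hs} \le \norm{\B}_{\op}\norm{\SgL\Sigma_Q\SgL}_{\hs}\norm{\B^*}_{\op} = \norm{\B}_{\op}^2\norm{\SgL\Sigma_Q\SgL}_{\hs}$, hence $\norm{\Xi}_{\hs}^2 \le \norm{\B}_{\op}^4\norm{\SgL\Sigma_Q\SgL}_{\hs}^2$, which is \emph{(i)}.

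For part \emph{(ii)}, I would apply Lemma~A.3(ii) to the zero-mean i.i.d. sequence $(a(X_i))_i$:
\begin{align*}
\E(I^2) = \frac{1}{n^2(n-1)^2}\E\Big(\sum_{i\neq j}\inner{a(X_i)}{a(X_j)}_{\h}\Big)^2 = \frac{2}{n^2(n-1)^2}\sum_{i\neq j}\E\inner{a(X_i)}{a(X_j)}_{\h}^2,
\end{align*}
and then bound each of the $n(n-1)$ summands by part \emph{(i)} to get $\E(I^2) \le \frac{2n(n-1)}{n^2(n-1)^2}\norm{\B}_{\op}^4\norm{\SgL\Sigma_Q\SgL}_{\hs}^2 = \frac{2}{n(n-1)}\norm{\B}_{\op}^4\norm{\SgL\Sigma_Q\SgL}_{\hs}^2$. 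Since $n \ge 2$ gives $\frac{2}{n(n-1)} \le \frac{4}{n^2}$ (as $n(n-1) \ge n^2/2$ for $n\ge 2$), the claimed bound follows. I do not anticipate a genuine obstacle here; the only mild subtlety is confirming that $a(x)$ is well-defined as an element of $\h$ (boundedness of $\B$ and of $\SgL$, plus $\sup_x K(x,x)\le\kappa$ guaranteeing $\kk(\cdot,x)-\mu \in \h$) and that all the tensor/trace manipulations are justified because $\SgL\Sigma_Q\SgL$ is trace-class, which it is since $\Sigma_Q$ is trace-class and $\SgL = \gShalf$... — more precisely since $\gl$ applied to a trace-class operator composed appropriately remains Hilbert–Schmidt; this should be quoted from the standing assumptions on $\gl$ and the boundedness of $K$.
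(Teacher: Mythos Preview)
Your proposal is correct and follows essentially the same route as the paper: both arguments reduce part \emph{(i)} to the identity $\E\inner{a(X_i)}{a(X_j)}_{\h}^2=\norm{\B\SgL\Sigma_Q\SgL\B^*}_{\hs}^2$ (the paper via $\inner{a}{b}_\h^2=\inner{a\htens a}{b\htens b}_{\hs}$, you via conditioning and $\text{Tr}(\Xi^2)$, which are the same computation) and then bound by submultiplicativity; part \emph{(ii)} is handled identically in both by invoking Lemma~A.3(ii) and the inequality $n(n-1)\ge n^2/2$. One small cleanup: $\Xi=\B\SgL\Sigma_Q\SgL\B^*$ is self-adjoint for \emph{any} bounded $\B$, not just self-adjoint $\B$, so your parenthetical hedge is unnecessary; and your closing aside confuses $\SgL=(\Sigma_{PQ}+\lambda\Id)^{-1/2}$ with $g_\lambda^{-1/2}(\Sigma_{PQ})$---the relevant fact is simply that $\SgL$ is bounded and $\Sigma_Q$ is trace-class.
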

\begin{proof}
For \emph{(i)}, we have
\begin{eqnarray*}
    \E \inner{ a(X_i)}{a(X_j)}_{\h}^2  &{}={}&\E\inner{ a(X_i) \otimes_{\h}a(X_i)}{a(X_j) \otimes_{\h}a(X_j)}_{\hs} \\
    &{} ={}&\norm{\E \left[a(X_i)\htens a(X_i)\right]}_{\hs}^2 \\ 
    &{} ={}&\norm{\B\SgL \E[(\kk(\cdot,X_i)-\mu)\htens(\kk(\cdot,X_i)-\mu)]\SgL\B^*}^2_{\hs} \\ 
    &{}={}& \norm{\B\SgL \Sigma_Q\SgL\B^*}^2_{\hs} \\ 
    &{} \leq{}&\norm{\B}_{\op}^2\norm{\B^*}_{\op}^2\norm{\SgL \Sigma_Q\SgL}^2_{\hs} \\
    &{} \stackrel{(\ddagger)}{\leq}{}&\norm{\B}_{\op}^4\norm{\SgL \Sigma_Q\SgL}^2_{\hs},
\end{eqnarray*}
where we used the fact that $\norm{\B}_{\op}=\norm{\B^*}_{\op}$ in $(\ddagger)$. On the other hand, \emph{(ii)} can be written as
\begin{eqnarray*}
\E\left(I^2\right)&{}={}&\frac{1}{n^2(n-1)^2}\E\left(\sum_{i\neq j}\inner{ a(X_i)}{a(X_j)}_{\h}\right)^2\\
    &{}\stackrel{(*)}{=}{}& \frac{2}{n^2(n-1)^2}\sum_{i\neq j}\E \inner{ a(X_i)}{a(X_j)}_{\h}^2 \\  &{}\leq{}&\frac{4}{n^2}\norm{\B}_{\op}^4\norm{\SgL \Sigma_Q\SgL}^2_{\hs},
\end{eqnarray*}where $(*)$ follows from Lemma~\ref{lemma: Expectation squared}\emph{(ii)}
\end{proof}

\begin{appxlem} \label{lemma:bound U-statistic2}
Let $(X_i)_{i=1}^{n} \stackrel{i.i.d.}{\sim} Q$. Suppose $G \in \h$ is an arbitrary function and $\B : \h \to \h$ is a bounded operator. Define  $$I=\frac{2}{n}\sum^n_{i=1}\inner{ a(X_i)}{\B\SgL(G-\mu) }_\h,$$ where $a(x)= \B\SgL\left(\kk(\cdot,x)-\mu\right)$ and 
$\mu = \int_{\X} \kk(\cdot,x)\,dQ(x)$. Then
\begin{enumerate}[label=(\roman*)]
\item \begin{align*}&\E\inner{ a(X_i)}{\B\SgL(G-\mu) }_{\h}^2 \leq \norm{\B}_{\op}^4\norm{\SgL\Sigma_Q\SgL}_{\op}\\
&\qquad\qquad\qquad\qquad\qquad\qquad\qquad\times\norm{\SgL(G-\mu)}_{\h}^2;\end{align*}
\item $\E\left(I^2\right)\leq  \frac{4}{n}\norm{\B}_{\op}^4\norm{\SgL\Sigma_Q\SgL}_{\op}\norm{\SgL(G-\mu)}_{\h}^2.$
\end{enumerate}

\end{appxlem}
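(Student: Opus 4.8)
The plan is to reduce both items to the corresponding inner-product computations from Lemma~A.5, exploiting that here we are dealing with a sum of \emph{independent, zero-mean} $\h$-valued summands paired against a \emph{fixed} vector $\B\SgL(G-\mu)$. First I would set $G_i := a(X_i) = \B\SgL(K(\cdot,X_i)-\mu)$ and $f := \B\SgL(G-\mu)$; since $\mu=\E_Q K(\cdot,X_i)$, each $G_i$ is mean-zero and the $G_i$ are i.i.d. Then $I = \frac{2}{n}\sum_{i=1}^n \inner{G_i}{f}_\h$, so by Lemma~A.5\emph{(iii)} (applied with this $f$) we immediately get $\E(I^2) = \frac{4}{n^2}\sum_{i=1}^n \E\inner{G_i}{f}_\h^2 = \frac{4}{n}\,\E\inner{G_1}{f}_\h^2$, which is exactly the reduction of \emph{(ii)} to \emph{(i)}. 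So the only real content is \emph{(i)}.

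For \emph{(i)}, I would write $\inner{G_i}{f}_\h^2 = \inner{f\htens f}{G_i\htens G_i}_{\hs}$ (using $\inner{x\htens x}{y\htens y}_{\hs}=\inner{x}{y}_\h^2$) and take expectation to get $\E\inner{G_i}{f}_\h^2 = \inner{f}{\,\E[G_i\htens G_i]\,f}_\h$. Now $\E[G_i\htens G_i] = \B\SgL\,\E[(K(\cdot,X_i)-\mu)\htens(K(\cdot,X_i)-\mu)]\,\SgL\B^* = \B\SgL\Sigma_Q\SgL\B^*$, so $\E\inner{G_i}{f}_\h^2 = \inner{\B\SgL(G-\mu)}{\B\SgL\Sigma_Q\SgL\B^*\,\B\SgL(G-\mu)}_\h$. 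Bounding the operator norm of the middle operator by $\norm{\B}_{\op}^2\norm{\SgL\Sigma_Q\SgL}_{\op}\norm{\B^*}_{\op}^2 = \norm{\B}_{\op}^4\norm{\SgL\Sigma_Q\SgL}_{\op}$ (using $\norm{\B^*}_{\op}=\norm{\B}_{\op}$), and using Cauchy--Schwarz with $\inner{x}{Sx}_\h \le \norm{S}_{\op}\norm{x}_\h^2$ for the positive operator $S := \B\SgL\Sigma_Q\SgL\B^*$—or, more cleanly, just writing $\inner{f}{Sf}_\h \le \norm{S}_{\op}\norm{f}_\h^2$—yields $\E\inner{G_i}{f}_\h^2 \le \norm{\B}_{\op}^4\norm{\SgL\Sigma_Q\SgL}_{\op}\norm{\SgL(G-\mu)}_\h^2$, after absorbing two more factors of $\norm{\B}_{\op}$ via $\norm{\B\SgL(G-\mu)}_\h^2 \le \norm{\B}_{\op}^2\norm{\SgL(G-\mu)}_\h^2$. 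Tracking the powers of $\norm{\B}_{\op}$ carefully is the one place to be slightly careful: the statement has $\norm{\B}_{\op}^4$, which is what one gets from two factors in $S$ and two more from pulling $\B$ off $f$.

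I do not expect any genuine obstacle here; the lemma is a routine variance computation and the main ``difficulty'' is merely bookkeeping—making sure the mean-zero property is used correctly (so cross-terms $\inner{\E G_i}{f}\inner{\E G_j}{f}$ vanish, which is precisely the content of Lemma~A.5\emph{(iii)}), that the $\htens$-inner-product identity is invoked correctly, and that the constant ends up as $\norm{\B}_{\op}^4$ rather than $\norm{\B}_{\op}^2$. One minor subtlety worth noting is that the lemma implicitly assumes $n\ge1$ for \emph{(ii)} (no $n(n-1)$ appears, unlike Lemma~A.5), so no divide-by-zero issue arises, and the i.i.d. assumption is what lets me replace $\sum_i \E\inner{G_i}{f}^2$ by $n\,\E\inner{G_1}{f}^2$.
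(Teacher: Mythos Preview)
Your proposal is correct and follows essentially the same route as the paper: for \emph{(i)} you rewrite $\inner{G_i}{f}_\h^2$ via the tensor/Hilbert--Schmidt identity, take expectation to produce $\B\SgL\Sigma_Q\SgL\B^*$, and then bound the resulting quadratic form by operator norms; for \emph{(ii)} you invoke Lemma~\ref{lemma: Expectation squared}\emph{(iii)} exactly as the paper does. The only cosmetic difference is that the paper writes the bounding step as a trace inequality, $\text{Tr}[\SgL\Sigma_Q\SgL\B^*\B\,\SgL(G-\mu)\htens(G-\mu)\SgL\B^*\B]\le \norm{\SgL\Sigma_Q\SgL}_{\op}\norm{\B^*\B}_{\op}^2\,\text{Tr}[\SgL(G-\mu)\htens(G-\mu)\SgL]$, whereas you phrase it as $\inner{f}{Sf}_\h\le\norm{S}_{\op}\norm{f}_\h^2$ followed by $\norm{f}_\h^2\le\norm{\B}_{\op}^2\norm{\SgL(G-\mu)}_\h^2$; these are the same computation.

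One small bookkeeping slip to clean up when you write the final version: you first say the ``middle operator'' $S=\B\SgL\Sigma_Q\SgL\B^*$ has norm bounded by $\norm{\B}_{\op}^2\norm{\SgL\Sigma_Q\SgL}_{\op}\norm{\B^*}_{\op}^2=\norm{\B}_{\op}^4\norm{\SgL\Sigma_Q\SgL}_{\op}$, which overcounts (it is only $\norm{\B}_{\op}^2\norm{\SgL\Sigma_Q\SgL}_{\op}$), but your closing summary---two powers from $S$ and two from pulling $\B$ off $f$---is the correct accounting.
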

\begin{proof} 
We prove \emph{(i)} as
\begin{eqnarray*}
   &&\E\inner{a(X_i)}{\B\SgL(G-\mu) }_{\h}^2 \\
    \qquad&{} ={}&\E \inner{ a(X_i)\htens a(X_i)}{ \B\SgL(G-\mu)\htens(G-\mu)\SgL\B^*}_{\hs} \\
    &{} ={}& \inner{ \B\SgL\Sigma_Q\SgL\B^*}{ \B\SgL(G-\mu)\htens(G-\mu)\SgL\B^*}_{\hs} \\
    &{}={}& \text{Tr}\left[\SgL\Sigma_Q\SgL\B^*\B \SgL(G-\mu)\htens(G-\mu)\SgL\B^*\B\right] \\
    &{} \leq{}& \norm{\SgL\Sigma_Q\SgL}_{\op}\norm{\B^*\B}_{\op}^2\text{Tr}\left[\SgL(G-\mu)\htens(G-\mu)\SgL\right] \\ 
    &{}\leq{}& \norm{\SgL\Sigma_Q\SgL}_{\op}\norm{\B}_{\op}^4 \norm{\SgL(G-\mu)}_{\h}^2.
\end{eqnarray*}
\\
\emph{(ii)} From Lemma~\ref{lemma: Expectation squared}\emph{(iii)}, we have
\begin{eqnarray*}
\E\left(I^2\right) &{}={}&\frac{4}{n^2}\sum^n_{i=1}\E\inner{a(X_i)}{\B\SgL(G-\mu) }_{\h}^2,
\end{eqnarray*}
and the result follows from \emph{(i)}.
\end{proof}

\begin{appxlem} \label{lemma:bound U-statistic3}
Let $(X_i)_{i=1}^n \stackrel{i.i.d.}{\sim} Q , \ (Y_i)_{i=1}^m \stackrel{i.i.d.}{\sim} P$ and $\B: \h \to \h$ be a bounded operator. Define
$$I=\frac{2}{nm}\sum_{i,j}\inner{ a(X_i)}{b(Y_j)}_{\h},$$ where $a(x)= \B\SgL(\kk(\cdot,x)-\mu_Q)$, $b(x)= \B\SgL(\kk(\cdot,x)-\mu_P)$, $\mu_Q = \int_{\X} \kk(\cdot,x)\,dQ(x)$ and $\mu_P = \int_{\Y} \kk(\cdot,y)\,dP(y)$. Then
\begin{enumerate}[label=(\roman*)]
\item $\E \inner{ a(X_i)}{b(Y_j)}_{\h}^2 \leq \norm{\B}_{\op}^4\norm{\SgL\Sigma_{PQ}\SgL}_{\hs}^2 $;
\item $\E\left(I^2\right) \leq  \frac{4}{nm}\norm{\B}_{\op}^4\norm{\SgL\Sigma_{PQ}\SgL}_{\hs}^2.$
\end{enumerate}
\end{appxlem}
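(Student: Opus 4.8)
The plan is to follow the same template as the proofs of Lemmas~\ref{lemma:bound U-statistic1} and \ref{lemma:bound U-statistic2}, exploiting two structural facts: the centering built into $a$ and $b$ makes them mean-zero (under $Q$ and $P$ respectively), and the $X$-sample and $Y$-sample are independent. Granting these, part \emph{(ii)} is immediate: $(a(X_i))_{i=1}^n$ and $(b(Y_j))_{j=1}^m$ are independent zero-mean $\h$-valued sequences, so Lemma~\ref{lemma: Expectation squared}\emph{(i)} gives $\E\bigl(\sum_{i,j}\inner{a(X_i)}{b(Y_j)}_\h\bigr)^2=\sum_{i,j}\E\inner{a(X_i)}{b(Y_j)}_\h^2$; dividing by $n^2m^2$ and inserting the bound from part \emph{(i)} termwise produces the stated estimate with the prefactor $\tfrac{4}{nm}$. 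So all the content is in part \emph{(i)}.

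For part \emph{(i)}, first I would rewrite $\inner{a(X_i)}{b(Y_j)}_\h^2=\inner{a(X_i)\htens a(X_i)}{b(Y_j)\htens b(Y_j)}_{\hs}$, and then use independence of $X_i$ and $Y_j$ together with $\E_Q[a(X)\htens a(X)]=\B\SgL\Sigma_Q\SgL\B^*$ and $\E_P[b(Y)\htens b(Y)]=\B\SgL\Sigma_P\SgL\B^*$ to obtain
\[
\E\inner{a(X_i)}{b(Y_j)}_\h^2=\inner{\B\SgL\Sigma_Q\SgL\B^*}{\B\SgL\Sigma_P\SgL\B^*}_{\hs},
\]
noting that both arguments of this inner product are positive operators.

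The remaining and only slightly delicate step is to pass from $\Sigma_P$ and $\Sigma_Q$ individually to the pooled operator $\Sigma_{PQ}$ without wasting constants. Here I would use the identity $\Sigma_P+\Sigma_Q=2\Sigma_{PQ}-\tfrac12(\mu_P-\mu_Q)\htens(\mu_P-\mu_Q)$, which follows from \eqref{Eq:cov-pop} and $R=\tfrac12(P+Q)$, so that $\Sigma_P+\Sigma_Q\preceq 2\Sigma_{PQ}$ and hence, conjugating by $\B\SgL$, $\B\SgL(\Sigma_P+\Sigma_Q)\SgL\B^*\preceq 2\,\B\SgL\Sigma_{PQ}\SgL\B^*$. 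Combining this with three elementary facts --- (a) for positive Hilbert--Schmidt operators $A,B$, $\inner{A}{B}_{\hs}\le\tfrac14\norm{A+B}_{\hs}^2$ (expand $\norm{A+B}_{\hs}^2$ and use $\norm{A}_{\hs}^2+\norm{B}_{\hs}^2\ge2\inner{A}{B}_{\hs}\ge0$); (b) $0\preceq A\preceq C$ implies $\norm{A}_{\hs}\le\norm{C}_{\hs}$; and (c) $\norm{\B C\B^*}_{\hs}\le\norm{\B}_{\op}^2\norm{C}_{\hs}$ --- yields
\[
\inner{\B\SgL\Sigma_Q\SgL\B^*}{\B\SgL\Sigma_P\SgL\B^*}_{\hs}\le\tfrac14\norm{\B\SgL(\Sigma_P+\Sigma_Q)\SgL\B^*}_{\hs}^2\le\norm{\B}_{\op}^4\norm{\SgL\Sigma_{PQ}\SgL}_{\hs}^2,
\]
which is exactly the claim.

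This lemma is largely routine bookkeeping parallel to Lemmas~\ref{lemma:bound U-statistic1}--\ref{lemma:bound U-statistic2}, so there is no real obstacle; the one non-mechanical choice is the inequality in step (a), which is what allows the constant to be exactly $1$. A direct Cauchy--Schwarz on $\inner{\cdot}{\cdot}_{\hs}$ instead would give the weaker $\norm{\SgL\Sigma_P\SgL}_{\hs}\norm{\SgL\Sigma_Q\SgL}_{\hs}$, and then only $4\norm{\B}_{\op}^4\norm{\SgL\Sigma_{PQ}\SgL}_{\hs}^2$, so some care with that step is the only thing worth flagging.
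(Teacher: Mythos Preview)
Your proof is correct. Part \emph{(ii)} matches the paper verbatim (Lemma~\ref{lemma: Expectation squared}\emph{(i)} termwise). For part \emph{(i)} both you and the paper arrive at
\[
\E\inner{a(X_i)}{b(Y_j)}_\h^2=\inner{\B\SgL\Sigma_Q\SgL\B^*}{\B\SgL\Sigma_P\SgL\B^*}_{\hs},
\]
but then take genuinely different routes to bound this by $\norm{\B}_{\op}^4\norm{\SgL\Sigma_{PQ}\SgL}_{\hs}^2$. The paper introduces the signed operator $\A=\int(K(\cdot,x)-\mu_R)^{\otimes 2}u(x)\,dR(x)$, verifies $\Sigma_P\preccurlyeq\Sigma_{PQ}+\A$ and $\Sigma_Q\preccurlyeq\Sigma_{PQ}-\A$, and then uses the algebraic identity $\inner{C-D}{C+D}_{\hs}=\norm{C}_{\hs}^2-\norm{D}_{\hs}^2$ to land directly on $\norm{\B\SgL\Sigma_{PQ}\SgL\B^*}_{\hs}^2-\norm{\B\SgL\A\SgL\B^*}_{\hs}^2$. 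Your route avoids $\A$ altogether: the AM--GM step $\inner{A}{B}_{\hs}\le\tfrac14\norm{A+B}_{\hs}^2$ plus $\Sigma_P+\Sigma_Q\preccurlyeq 2\Sigma_{PQ}$ gets the same constant with less machinery. What the paper's decomposition buys is reusability---the same $\A$ reappears in Lemma~\ref{lemma: bound hs and op}---whereas your argument is more self-contained and needs only the identity in Remark~\ref{rem:zaid}. Both are clean; your observation that a naive Cauchy--Schwarz would lose a factor~$4$ is exactly the point, and your AM--GM fix is a nice alternative to the paper's difference-of-squares trick.
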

\begin{proof}
\emph{(i)}
Define $\A := \int_{\X} (K(\cdot,x)-\mu_R) \htens (K(\cdot,x)-\mu_R) \ u(x) \, d\PQ(x) $, where $u=\frac{dP}{dR}-1$ with $R=\frac{P+Q}{2}$. Then it can be verified that  $\Sigma_P=\Sigma_{PQ}+\A-(\mu_R-\mu_P)\htens(\mu_R-\mu_P)$, $\Sigma_Q=\Sigma_{PQ}-\A-(\mu_Q-\mu_R)\htens(\mu_Q-\mu_R)$. Thus $\Sigma_P \preccurlyeq \Sigma_{PQ} + \A$ and $\Sigma_Q \preccurlyeq  \Sigma_{PQ}- \A$. Therefore we have
\begin{eqnarray*} 
    \E \inner{ a(X_i)}{b(Y_j)}_{\h}^2 
    &{} ={}& \E \inner{ a(X_i)\htens a(X_i)}{b(Y_j) \htens b(Y_j)}_{\hs} \\
    &{} ={}&  \inner{ \B\SgL\Sigma_Q\SgL\B^*}{\B\SgL\Sigma_P\SgL\B^*}_{\hs} \\
    &{} \leq{}& \inner{ \B\SgL(\Sigma_{PQ}-\A)\SgL\B^*}{\B\SgL(\Sigma_{PQ}+\A)\SgL\B^*}_{\hs} \\
    &{} ={}& \norm{\B\SgL\Sigma_{PQ}\SgL\B^*}_{\hs}^2-\norm{\B\SgL\A\SgL\B^*}_{\hs}^2 \\
    &{} \leq {}& \norm{\B}_{\op}^4\norm{\SgL\Sigma_{PQ}\SgL}_{\hs}^2.
\end{eqnarray*}
\\
\emph{(ii)} follows by noting that
\begin{align*}
    \E\left(I^2\right) \stackrel{(\dag)}{\leq} \frac{4}{n^2m^2}\sum_{i,j} \E\inner{ a(X_i)}{b(Y_j)}_{\h}^2,
\end{align*}
where $(\dag)$ follows from Lemma~\ref{lemma: Expectation squared}\emph{(i)}.
\end{proof}

\begin{appxlem} \label{lemma: bounds for eta}
Let $u=\frac{dP}{d\PQ}-1  \in \Lp$ and $\eta= \norm{\gSL(\mu_Q-\mu_P)}_{\h}^{2}$, where $g_{\lambda}$ satisfies $(A_1)$--$(A_4)$. Then
$$\eta \leq 4C_1 \norm{\U}_{\Lp}^{2}.$$ 
Furthermore, if $u \in \emph{\range}(\T^{\theta})$, $\theta>0$ and $$\norm{\U}_{\Lp}^2 \geq  \frac{4C_3}{3B_3} \norm{\T}_{\opl}^{2\max(\theta-\xi,0)}\lambda^{2 \Tilde{\theta}} \norm{\T^{-\theta}\U}_{\Lp}^2,$$  where $\Tilde{\theta}= \min(\theta,\xi)$, then, $$\eta \geq B_3 \norm{\U}_{\Lp}^2.$$
\end{appxlem}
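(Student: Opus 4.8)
The plan is to diagonalise $\eta$ against the spectral decomposition of $\T$ and then compare the resulting nonnegative series termwise with $\norm{\U}_{\Lp}^{2}=\sum_{i}\langle\U,\Tilde{\phi_i}\rangle_{\Lp}^{2}$. First I would note that $\eta=\norm{\gSL(\mu_Q-\mu_P)}_{\h}^{2}=\norm{\gSL(\mu_P-\mu_Q)}_{\h}^{2}=\eta_{\lambda}(P,Q)$, so by \eqref{eq:cov-rep} and Lemma~\ref{lemma: bounds for g}(i) one has $\eta=4\inner{\T\gl(\T)\U}{\U}_{\Lp}$; inserting $\T=\sum_{i}\lambda_i\Tilde{\phi_i}\ltens\Tilde{\phi_i}$ and the definition \eqref{Eq:glambda} gives $\T\gl(\T)=\sum_{i}\lambda_i\gl(\lambda_i)\,\Tilde{\phi_i}\ltens\Tilde{\phi_i}$, because the $\gl(0)$-term in \eqref{Eq:glambda} is multiplied by $\T$, which vanishes on the orthogonal complement of $\overline{\range(\T)}$. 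Hence, with no assumption on $\U$ beyond $\U\in\Lp$,
$$\eta=4\sum_{i}\lambda_i\gl(\lambda_i)\,\langle\U,\Tilde{\phi_i}\rangle_{\Lp}^{2}.$$
Since $\T$ is trace class with $\text{Tr}(\T)=\text{Tr}(\Sigma_{PQ})\le\K$, every $\lambda_i$ lies in $(0,\K]\subset\Gamma$, so $(A_1)$ yields $\lambda_i\gl(\lambda_i)\le C_1$ for all $i$, and therefore $\eta\le 4C_1\sum_{i}\langle\U,\Tilde{\phi_i}\rangle_{\Lp}^{2}\le 4C_1\norm{\U}_{\Lp}^{2}$ by Bessel's inequality, which is the first claim.

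For the lower bound I would use $\U\in\range(\T^{\theta})$ to write $\U=\T^{\theta}v$ with $v:=\T^{-\theta}\U\in\Lp$, so that $\langle\U,\Tilde{\phi_i}\rangle_{\Lp}=\lambda_i^{\theta}\langle v,\Tilde{\phi_i}\rangle_{\Lp}$ and $\U$ lies in the closed span of $\{\Tilde{\phi_i}\}$, whence $\norm{\U}_{\Lp}^{2}=\sum_i\langle\U,\Tilde{\phi_i}\rangle_{\Lp}^{2}$. Then I split the index set into $\mathcal{I}_1:=\{i:\lambda_i\gl(\lambda_i)\ge B_3\}$ and its complement $\mathcal{I}_2$. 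On $\mathcal{I}_1$ one has $\lambda_i\gl(\lambda_i)\langle\U,\Tilde{\phi_i}\rangle_{\Lp}^{2}\ge B_3\langle\U,\Tilde{\phi_i}\rangle_{\Lp}^{2}$ termwise. On $\mathcal{I}_2$, since $\theta>0$ gives $\Tilde{\theta}=\min(\theta,\xi)\in(0,\xi]$, I apply $(A_3)$ with $\varphi=\Tilde{\theta}$: from $\lambda_i\gl(\lambda_i)<B_3$ we get $(B_3-\lambda_i\gl(\lambda_i))\lambda_i^{2\Tilde{\theta}}\le C_3\lambda^{2\Tilde{\theta}}$, and writing $\langle\U,\Tilde{\phi_i}\rangle_{\Lp}^{2}=\lambda_i^{2\Tilde{\theta}}\lambda_i^{2(\theta-\Tilde{\theta})}\langle v,\Tilde{\phi_i}\rangle_{\Lp}^{2}$ together with $\lambda_i^{2(\theta-\Tilde{\theta})}\le\norm{\T}_{\opl}^{2\max(\theta-\xi,0)}$ (the exponent being $0$ when $\theta\le\xi$) gives
$$(B_3-\lambda_i\gl(\lambda_i))\langle\U,\Tilde{\phi_i}\rangle_{\Lp}^{2}\le C_3\,\norm{\T}_{\opl}^{2\max(\theta-\xi,0)}\lambda^{2\Tilde{\theta}}\langle v,\Tilde{\phi_i}\rangle_{\Lp}^{2}.$$
Summing over all $i$ and using $\sum_{i\in\mathcal{I}_2}\langle v,\Tilde{\phi_i}\rangle_{\Lp}^{2}\le\norm{v}_{\Lp}^{2}$ yields $\frac{1}{4}\eta\ge B_3\norm{\U}_{\Lp}^{2}-C_3\norm{\T}_{\opl}^{2\max(\theta-\xi,0)}\lambda^{2\Tilde{\theta}}\norm{\T^{-\theta}\U}_{\Lp}^{2}$. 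Finally, the hypothesis $\norm{\U}_{\Lp}^{2}\ge\frac{4C_3}{3B_3}\norm{\T}_{\opl}^{2\max(\theta-\xi,0)}\lambda^{2\Tilde{\theta}}\norm{\T^{-\theta}\U}_{\Lp}^{2}$ bounds the subtracted quantity by $\frac{3}{4}B_3\norm{\U}_{\Lp}^{2}$, so $\eta\ge 4B_3\norm{\U}_{\Lp}^{2}-3B_3\norm{\U}_{\Lp}^{2}=B_3\norm{\U}_{\Lp}^{2}$.

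The main thing to get right is the bookkeeping around the index split and the use of $(A_3)$: one must check that $\varphi=\Tilde{\theta}$ is an admissible qualification exponent, that the $\gl(0)$-term genuinely drops from $\T\gl(\T)$ so that no contribution from $\overline{\range(\T)}^{\perp}$ survives, and that the regimes $\theta\le\xi$ and $\theta>\xi$ are handled uniformly by the factor $\norm{\T}_{\opl}^{2\max(\theta-\xi,0)}$ together with $\lambda_i\le\norm{\T}_{\opl}\le\K$. Beyond that, the argument is a routine termwise comparison of convergent nonnegative series, so I do not anticipate any serious technical obstruction.
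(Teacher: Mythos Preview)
Your proof is correct and follows essentially the same approach as the paper's: both reduce to the spectral representation $\eta=4\sum_i\lambda_i\gl(\lambda_i)\langle\U,\Tilde{\phi_i}\rangle_{\Lp}^2$, compare termwise with $B_3\norm{\U}_{\Lp}^2$, and control the deficit on $\{i:\lambda_i\gl(\lambda_i)<B_3\}$ via $(A_3)$ at $\varphi=\Tilde{\theta}$, arriving at the identical inequality $\eta\ge 4B_3\norm{\U}_{\Lp}^2-4C_3\norm{\T}_{\opl}^{2\max(\theta-\xi,0)}\lambda^{2\Tilde{\theta}}\norm{\T^{-\theta}\U}_{\Lp}^2$. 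The only cosmetic difference is that the paper reaches this via the polarization-type identity $B_3\eta=\norm{\T\gl(\T)\U}_{\Lp}^2+4B_3^2\norm{\U}_{\Lp}^2-\norm{\T\gl(\T)\U-2B_3\U}_{\Lp}^2$ before expanding in eigenfunctions, whereas your direct index split $\mathcal{I}_1\cup\mathcal{I}_2$ is slightly more streamlined.
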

\begin{proof}
Note that \begin{equation*}
\begin{split}
    \eta & =\norm{\gSL(\mu_P-\mu_Q)}_{\h}^2 = \inner{g_{\lambda}(\Sigma_{PQ})(\mu_P-\mu_Q)}{\mu_P-\mu_Q}_{\h} \\ & =4 \inner{g_{\lambda}(\Sigma_{PQ})\id^*\U}{\id^*\U}_{\h} =4\inner{\id \gS \id^* \U}{\U}_{\Lp}\\
    &\stackrel{(*)}{=}4\inner{\T\gT \U}{\U}_{\Lp},
\end{split}
\end{equation*}
where we used Lemma~\ref{lemma: bounds for g}\emph{(i)} in $(*)$. The upper bound therefore follows by noting that
$$\eta=4\inner{\T\gT \U}{\U}_{\Lp}  \leq 4\norm{\T\gT}_{\opl}\norm{\U}_{\Lp}^2 
      \leq 4C_1 \norm{\U}_{\Lp}^2.$$ 
For the lower bound, consider
\begin{align*}
    B_3\eta & = 4B_3\inner{\T\gT \U}{\U}_{\Lp}\\
    &= \norm{\T\gT \U}_{\Lp}^2+4B_3^2\norm{\U}_{\Lp}^2-\norm{\T\gT \U-2B_3\U}_{\Lp}^2.
\end{align*}
Since $\U \in \range(\T^{\theta})$, there exists $f\in \Lp$ such that $\U= \T^{\theta}f$. Therefore, we have 
$$\norm{\T\gT \U}_{\Lp}^2 = \sum_{i} \lambda_i^{2\theta+2}\gl^{2}(\lambda_i) \inner{f}{\Tilde{\phi}_i}_{\Lp}^2,$$ and $$\norm{\T\gT \U-2B_3 u}_{\Lp}^2 = \sum_{i} \lambda_i^{2\theta}(\lambda_i\gl(\lambda_i)-2B_3)^2 \inner{f}{\Tilde{\phi}_i}_{\Lp}^2,$$ where $(\lambda_i,\tilde{\phi}_i)_i$ are the eigenvalues and eigenfunctions of $\T$. Using these expressions we have 
\begin{align*}
    \norm{\T\gT \U}_{\Lp}^2-\norm{\T\gT \U-2B_3\U}_{\Lp}^2
    = \sum_{i} 4B_3 \lambda_i^{2\theta}(\lambda_i\gl(\lambda_i)-B_3)\inner{f}{\Tilde{\phi}_i}_{\Lp}^2.
\end{align*}
Thus 
\begin{align*}
    B_3\eta & = 4B_3^2\norm{u}_{\Lp}^2+\sum_{i} 4B_3 \lambda_i^{2\theta}\left(\lambda_i\gl(\lambda_i)-B_3\right)\inner{f}{\Tilde{\phi}_i}_{\Lp}^2 \\
    & \geq 4B_3^2\norm{u}_{\Lp}^2 - \sum_{\{i:\lambda_{i}\gl(\lambda_i)< B_3\}} 4B_3 \lambda_i^{2\theta}\left(B_3-\lambda_i\gl(\lambda_i)\right)\inner{f}{\Tilde{\phi}_i}_{\Lp}^2.
\end{align*}
When $\theta \leq \xi$, by Assumption $(A_3)$, we have
\begin{equation*}
    \sup_{\{i:\lambda_{i}\gl(\lambda_i)< B_3\}} \lambda_i^{2\theta}\left(B_3-\lambda_i\gl(\lambda_i)\right) \leq C_3 \lambda^{2\theta}. 
\end{equation*}
On the other hand, for $\theta > \xi$,
\begin{align*}
    &\sup_{\{i:\lambda_{i}\gl(\lambda_i)< B_3\}} \lambda_i^{2\theta}\left(B_3-\lambda_i\gl(\lambda_i)\right)\\
    &\le \sup_{\{i:\lambda_{i}\gl(\lambda_i)< B_3\}} \lambda_i^{2\theta-2\xi}\sup_{\{i:\lambda_{i}\gl(\lambda_i)< B_3\}}\lambda_i^{2\xi}\left(B_3-\lambda_i\gl(\lambda_i)\right)\\ &\stackrel{(*)}{\le} C_3\norm{\T}^{2\theta-2\xi}_{\opl} \lambda^{2\xi},
\end{align*}
where $(*)$ follows by Assumption $(A_3)$. Therefore we can conclude that
\begin{align*}
    \eta \geq 4B_3\norm{u}^2_{\Lp}-4C_3\norm{\T}_{\opl}^{2\max(\theta-\xi,0)}\lambda^{2 \Tilde{\theta}} \norm{\T^{-\theta}\U}_{\Lp}^2 \stackrel{(\dag)}{\geq} B_3\norm{u}^2_{\Lp},
\end{align*}
where we used $\norm{\U}_{\Lp}^2 \geq  \frac{4C_3}{3B_3} \norm{\T}_{\opl}^{2\max(\theta-\xi,0)}\lambda^{2 \Tilde{\theta}} \norm{\T^{-\theta}\U}_{\Lp}^2$ in $(\dag)$. 
%
\end{proof}

\begin{appxlem} \label{lemma: bounds for g}
Let $g_{\lambda}$ satisfies $(A_1)$--$(A_4)$. Then the following hold.
\begin{enumerate}[label=(\roman*)]
    \item $\id \gS \id^*= \T\gT=\gT\T$;
    \item $\norm{\gSL\SL^{1/2}}_{\op} \leq (C_1+C_2)^{1/2}$;
    \item $\norm{\gSLh\SLh^{1/2}}_{\op} \leq (C_1+C_2)^{1/2}$;
    \item $\norm{\SgL\gShalf}_{\op} \leq C_4^{-1/2}$;
    \item $\norm{\SLh^{-1/2}\igSLh}_{\op} \leq C_4^{-1/2}$.
\end{enumerate}
\end{appxlem}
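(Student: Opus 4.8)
The plan is to prove all five items by functional calculus, diagonalizing the relevant self-adjoint compact operators and reducing each operator-norm bound to a scalar supremum over the spectrum contained in $\Gamma = [0,\kappa]$, where we can invoke the assumptions $(A_1)$--$(A_4)$ directly.

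For \emph{(i)}, I would start from the identity $\T = \id\id^*$ and $\Sigma_{PQ} = \id^*\id$, and recall the general fact that for any bounded operator $\id$ and any function $g$ that is continuous on the spectrum (or, more carefully here, defined via the series \eqref{Eq:glambda} with the extra $g_\lambda(0)$-projection term), one has $\id\, g(\id^*\id)\id^* = g(\id\id^*)\,\id\id^*$ when restricted appropriately. More concretely: using the singular value decomposition of $\id$, write $\id = \sum_i \sqrt{\lambda_i}\,\tilde\phi_i \otimes_{\h}\phi_i^{\h}$ (with $\phi_i^{\h}$ the $\h$-side singular vectors, which coincide with the eigenfunctions of $\Sigma_{PQ}$). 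Then $g_\lambda(\Sigma_{PQ})$ acts as $g_\lambda(\lambda_i)$ on $\phi_i^{\h}$ and as $g_\lambda(0)$ on the orthogonal complement of $\overline{\range(\Sigma_{PQ})}$; since $\id$ annihilates that complement (its range is spanned by the $\tilde\phi_i$, and its domain-orthogonal-complement is killed), the $g_\lambda(0)$-term drops out after sandwiching with $\id$ and $\id^*$. One is left with $\id g_\lambda(\Sigma_{PQ})\id^* = \sum_i \lambda_i g_\lambda(\lambda_i)\,\tilde\phi_i\otimes_{\Lp}\tilde\phi_i = \T g_\lambda(\T) = g_\lambda(\T)\T$, the last equality being commutativity of functions of the same self-adjoint operator. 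This is essentially the computation already used to justify $(\dagger)$ in \eqref{eq:cov-rep}, so I would cite that line and the spectral representation.

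For \emph{(ii)}--\emph{(v)}, each reduces to a scalar bound. For \emph{(ii)}: $\norm{\gSL \SL^{1/2}}_{\op}^2 = \norm{g_\lambda(\Sigma_{PQ})(\Sigma_{PQ}+\lambda\Id)}_{\op} = \sup_{x\in\Gamma\cup\{0\}} g_\lambda(x)(x+\lambda) = \sup_x \big(x g_\lambda(x) + \lambda g_\lambda(x)\big) \le C_1 + C_2$ by $(A_1)$ and $(A_2)$ (the value at $x=0$ is $\lambda g_\lambda(0)\le C_2$, also covered). Taking square roots gives the claim. Item \emph{(iii)} is identical with $\hat\Sigma_{PQ}$ in place of $\Sigma_{PQ}$ — the bound depends only on the spectrum lying in $\Gamma$, and since $\hat\Sigma_{PQ}$ is a $U$-statistic average of operators $\frac12(K(\cdot,Z_i)-K(\cdot,Z_j))\otimes(\cdots)$ each of operator norm at most $2\kappa$, one checks $\norm{\hat\Sigma_{PQ}}_{\op}\le 2\kappa$ so its eigenvalues lie in $[0,2\kappa]$; I should be slightly careful that $\Gamma=[0,\kappa]$ in the statement of $(A_1)$--$(A_4)$ versus the true bound $2\kappa$, but the assumptions are stated to hold on $[0,\kappa]$ and the constants are dimensionless in the relevant scaling — I would either enlarge $\Gamma$ to $[0,2\kappa]$ throughout or note that the assumptions are meant for the whole relevant spectrum. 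For \emph{(iv)}: $\norm{\SgL g_\lambda^{-1/2}(\Sigma_{PQ})}_{\op}^2 = \sup_{x}\frac{1}{(x+\lambda)g_\lambda(x)} = \big(\inf_x g_\lambda(x)(x+\lambda)\big)^{-1}\le C_4^{-1}$ by $(A_4)$; note $(A_4)$ guarantees $g_\lambda(0)>0$ so $g_\lambda^{-1/2}$ is well-defined, which is exactly the point of including the $g_\lambda(0)$-projection in \eqref{Eq:glambda}. Item \emph{(v)} is the same scalar bound with $\hat\Sigma_{PQ}$ replacing $\Sigma_{PQ}$: $\norm{\hat\Sigma_{PQ,\lambda}^{-1/2}g_\lambda^{-1/2}(\hat\Sigma_{PQ})}_{\op}^2 = \sup_x\big((x+\lambda)g_\lambda(x)\big)^{-1}\le C_4^{-1}$.

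The only genuine subtlety — and the step I expect to be the main obstacle to write cleanly rather than conceptually hard — is \emph{(i)}: carefully handling the extra $g_\lambda(0)$-projection term in the definition \eqref{Eq:glambda} when passing between $g_\lambda(\Sigma_{PQ})$ (acting on $\h$) and $g_\lambda(\T)$ (acting on $\Lp$), since $\h$ and $\Lp$ have different "leftover" subspaces (the kernel of $\Sigma_{PQ}$ versus the orthogonal complement of the span of the $\tilde\phi_i$ in $\Lp$, which includes the constants). The resolution is that $\id$ maps $\ker(\Sigma_{PQ})$ to zero and $\id^*$ maps $\big(\overline{\range\T}\big)^\perp$ to zero, so sandwiching kills both projection terms and only the "diagonal" parts $\sum_i \lambda_i g_\lambda(\lambda_i)\,\tilde\phi_i\otimes_{\Lp}\tilde\phi_i$ survive; this matches $\T g_\lambda(\T)$ because $\T$ itself has no mass on $(\overline{\range\T})^\perp$, so the $g_\lambda(0)$-term in $g_\lambda(\T)$ is annihilated by the factor $\T$. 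I would spell this out in two or three lines using the explicit singular system. Everything else is a one-line scalar estimate.
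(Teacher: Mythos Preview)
Your proposal is correct and matches the paper's approach essentially line for line: spectral diagonalization via the shared singular system of $\id$, reducing each of \emph{(ii)}--\emph{(v)} to the scalar bound $\sup_x g_\lambda(x)(x+\lambda)\le C_1+C_2$ or $\inf_x g_\lambda(x)(x+\lambda)\ge C_4$, and for \emph{(i)} tracking the $g_\lambda(0)$-projection term through the sandwich $\id(\cdot)\id^*$ and observing it vanishes because $(\Id-\sum_i\psi_i\otimes_\h\psi_i)\id^*=0$ (equivalently, $\id\sum_i\psi_i\otimes_\h\psi_i\,\id^*=\T$). The paper carries out exactly this computation and then verifies $\T g_\lambda(\T)=g_\lambda(\T)\T$ by a separate one-line spectral expansion.

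Your side remark about $\Gamma$ for $\hat\Sigma_{PQ}$ is a fair observation that the paper does not address: writing $\hat\Sigma_{PQ}=\frac{1}{s-1}\sum_i(K(\cdot,Z_i)-\hat\mu)\otimes_\h(K(\cdot,Z_i)-\hat\mu)$ gives $\norm{\hat\Sigma_{PQ}}_{\op}\le \frac{s}{s-1}\kappa$, not $\kappa$. The paper simply says the proof of \emph{(iii)} and \emph{(v)} is ``exactly same'' with $\hat\Sigma_{PQ}$ in place of $\Sigma_{PQ}$. This is harmless for the concrete regularizers used (Tikhonov, Showalter), where $(A_1)$--$(A_4)$ hold on all of $[0,\infty)$, but you are right that strictly speaking one should either enlarge $\Gamma$ or absorb the factor $\frac{s}{s-1}$ into the constants.
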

\begin{proof}
Let $(\lambda_i,\Tilde{\phi_i})_i$ be the eigenvalues and eigenfunctions of $\T.$ Since $\T=\id\id^*$ and $\Sigma_{PQ}=\id^*\id$, we have $\id\id^*\Tilde{\phi}_i=\lambda_i\Tilde{\phi}_i$ which implies $\id^*\id\left(\id^*\Tilde{\phi}_i\right)=\lambda_i\left(\id^*\Tilde{\phi}_i\right)$, i.e., $\Sigma_{PQ}\psi_i=\lambda_i\psi_i$, where $\psi_i:=\id^*\Tilde{\phi}_i/\sqrt{\lambda_i}$. Note that $(\psi_i)$, which are the eigenfunctions of $\Sigma_{PQ}$, form an orthonormal system in $\h$. Define $R:=\frac{P+Q}{2}$.
\vspace{1mm}\\
\emph{(i)} Using the above, we have
\begin{align*}
    &\id \gS \id^* \\& = \id \sum_{i} g_{\lambda}(\lambda_i)\left(\frac{\id^*\Tilde{\phi}_i}{\sqrt{\lambda_i}} \htens \frac{\id^*\Tilde{\phi}_i}{\sqrt{\lambda_i}}\right) \id^* + \gl(0)\left(\T-\id\sum_{i}\left(\frac{\id^*\Tilde{\phi}_i}{\sqrt{\lambda_i}} \htens \frac{\id^*\Tilde{\phi}_i}{\sqrt{\lambda_i}}\right)\id^*\right)\\
    &= \sum_{i}  \lambda_i^{-1} g_{\lambda}(\lambda_i) \id \id^* \left(\Tilde{\phi}_i \ltens \Tilde{\phi}_i\right) \id \id^* +\gl(0)\left(\T-\sum_{i}\lambda_i^{-1}\id \id^* \left(\Tilde{\phi}_i \ltens \Tilde{\phi}_i\right) \id \id^*\right) \\ 
    & \stackrel{(*)}{=} \sum_{i} g_{\lambda}(\lambda_i) \left(\Tilde{\phi}_i \ltens \Tilde{\phi}_i\right) \T+\gl(0)\left(\Id-\sum_{i}\left(\Tilde{\phi}_i \ltens \Tilde{\phi}_i\right)\right) \T \\
    &= \gT \T,
\end{align*}
where $(*)$ follows using $\T (\Tilde{\phi}_i \ltens \Tilde{\phi}_i) = \lambda_i \Tilde{\phi}_i \ltens \Tilde{\phi}_i$. On the other hand,
\begin{align*}
    \gT \T &\stackrel{(\dag)}{=} \sum_{i} g_{\lambda}(\lambda_i) \left(\Tilde{\phi}_i \ltens \Tilde{\phi}_i\right) \sum_{j} \lambda_j \left(\Tilde{\phi}_j \ltens \Tilde{\phi}_j\right)\\
    &= \sum_{i}\sum_{j} g_{\lambda}(\lambda_i) \lambda_j \inner{\Tilde{\phi}_i}{\Tilde{\phi}_j}_{\Lp} \left(\Tilde{\phi}_i \ltens \Tilde{\phi}_j\right) \\
    & = \sum_{i} g_{\lambda}(\lambda_i) \lambda_i \left(\Tilde{\phi}_i \ltens \Tilde{\phi}_i\right)  = \sum_{i} g_{\lambda}(\lambda_i) \T \left(\Tilde{\phi}_i \ltens \Tilde{\phi}_i\right) \\ & = \T \sum_{i \geq 1} g_{\lambda}(\lambda_i) \left(\Tilde{\phi}_i \ltens \Tilde{\phi}_i\right) = \T \gT,
\end{align*}
where $(\dag)$ follows using $\left(I-\sum_{i}\left(\Tilde{\phi}_i \ltens \Tilde{\phi}_i\right)\right)\T=0.$

\emph{(ii)} 
\begin{equation*}
\begin{split}
    \norm{\gSL\SL^{1/2}}_{\op}& = \norm{\gSL\SL\ \gSL}_{\op}^{1/2} \\
    &= \sup_{i}\left| g_{\lambda}(\lambda_i)(\lambda_i+\lambda) \right|^{1/2} \stackrel{(\dag)}{\leq} (C_1+C_2)^{1/2},
\end{split}
\end{equation*}
where $(\dag)$ follows from Assumptions $(A_1)$ and $(A_2)$.\vspace{1mm} \\
\emph{(iii)} The proof is exactly same as that of \emph{(ii)} but with $\Sigma_{PQ}$ being replaced by $\hat{\Sigma}_{PQ}$.\vspace{1mm}\\
\emph{(iv)} 
\begin{equation*}
    \begin{split}
        &\norm{\SgL\gShalf}_{\op}\\& =\norm{\SgL g^{-1}_\lambda(\Sigma_{PQ})\SgL}_{\op}^{1/2} = \sup_i\left|(\lambda+\lambda_i)g_{\lambda}(\lambda_i)\right|^{-1/2} \\ & \leq \left|\inf_i (\lambda+\lambda_i)g_{\lambda}(\lambda_i)\right|^{-1/2} \stackrel{(\ddag)}{\leq} C_4^{-1/2}, 
    \end{split}
\end{equation*}
where $(\ddag)$ follows from Assumption $(A_4)$.\vspace{1mm}\\
\emph{(v)} The proof is exactly same as that of \emph{(iv)} but with $\Sigma_{PQ}$ being replaced by $\hat{\Sigma}_{PQ}$.
\end{proof}
\begin{appxlem} \label{lemma: bound hs and op}
Define $u := \frac{dP}{d\PQ}-1 \in L^2(R)$, $\Nol := \emph{Tr}(\SgL\Sigma_{PQ} \SgL)$, and 
$\Ntl := \norm{\SgL\Sigma_{PQ}\SgL}_{\hs},$ where $R=\frac{P+Q}{2}$. Then the following hold:\vspace{-2mm}
\begin{enumerate}[label=(\roman*)]
    \item $\norm{\SgL\Sigma_V\SgL}_{\hs}^2 \leq 4 \Cl \norm{u}_{\Lp}^2  + 2\Ntlsq;$\vspace{-1mm}
    \item 
    $\norm{\SgL\Sigma_V\SgL}_{\op} \leq 2 \sqrt{\Cl} \norm{u}_{\Lp} + 1,$
    \end{enumerate}\vspace{-1mm}
where $V$ can be either $P$ or $Q$, and $$\Cl =
\left\{
	\begin{array}{ll}
		\Nol \sup_{i} \norm{\phi_i}^2_{\infty},   &  \ \ \sup_i \norm{\phi_i}^2_{\infty} < \infty \\
		\frac{2\Ntl}{\lambda} \sup_x \norm{K(\cdot,x)}_{\h}^2,  & \ \  \text{otherwise}
	\end{array}
\right..$$    
\end{appxlem}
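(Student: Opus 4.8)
The plan is to control $\norm{\SgL\Sigma_V\SgL}$ by comparison with the pooled covariance $\Sigma_{PQ}$, whose corresponding norms are exactly the effective-dimension quantities $\Ntl$ and $\Nol$. First I would invoke the operator identities already derived in the proof of Lemma~\ref{lemma:bound U-statistic3}: with $R=\frac{P+Q}{2}$, $u=\frac{dP}{dR}-1$, and $\A:=\int_{\X}(K(\cdot,x)-\mu_R)\htens(K(\cdot,x)-\mu_R)\,u(x)\,dR(x)$, one has $\Sigma_P=\Sigma_{PQ}+\A-(\mu_R-\mu_P)\htens(\mu_R-\mu_P)$ and $\Sigma_Q=\Sigma_{PQ}-\A-(\mu_Q-\mu_R)\htens(\mu_Q-\mu_R)$. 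Dropping the negative rank-one terms yields $0\preccurlyeq\Sigma_V\preccurlyeq\Sigma_{PQ}\pm\A$ for $V\in\{P,Q\}$, with the upper bound itself positive. Since $X\mapsto\SgL X\SgL$ is order-preserving on positive operators, and both the Hilbert--Schmidt and operator norms are monotone along positive operator inequalities, this gives $\norm{\SgL\Sigma_V\SgL}_{\hs}\le\Ntl+\norm{\SgL\A\SgL}_{\hs}$ and, using $\norm{\SgL\Sigma_{PQ}\SgL}_{\op}=\norm{\Sigma_{PQ}(\Sigma_{PQ}+\lambda\Id)^{-1}}_{\op}\le1$, $\norm{\SgL\Sigma_V\SgL}_{\op}\le1+\norm{\SgL\A\SgL}_{\op}$. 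Thus everything reduces to the single bound $\norm{\SgL\A\SgL}_{\hs}^2\lesssim\Cl\norm{u}_{\Lp}^2$ (the operator norm is then handled by $\norm{\cdot}_{\op}\le\norm{\cdot}_{\hs}$).

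To bound $\norm{\SgL\A\SgL}_{\hs}$ I would write $\A=\id^*M_u\id$, where $M_u$ is multiplication by $u$ on $\Lp$ (bounded, since $\norm{u}_{\infty}\le1$ as $dP\le 2\,dR$) and $\id$ is the centered inclusion operator, so $\SgL\A\SgL=(\SgL\id^*)M_u(\id\SgL)$. Both $\SgL\id^*$ and $\id\SgL$ are contractions because $\SgL\id^*\id\SgL=\Sigma_{PQ}(\Sigma_{PQ}+\lambda\Id)^{-1}\preccurlyeq\Id$, whence $\norm{\SgL\A\SgL}_{\hs}\le\norm{M_u\id\SgL}_{\hs}$. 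Using the standard identity $\id(\Sigma_{PQ}+\lambda\Id)^{-1}\id^*=\T(\T+\lambda\Id)^{-1}$ together with cyclicity of the trace,
\[
\norm{M_u\id\SgL}_{\hs}^2=\mathrm{Tr}\bigl(M_{u^2}\,\T(\T+\lambda\Id)^{-1}\bigr)=\int_{\X}u^2(x)\,\rho_{\lambda}(x)\,dR(x),\qquad\rho_{\lambda}(x):=\sum_{i}\frac{\lambda_i}{\lambda_i+\lambda}\tilde\phi_i(x)^2 ,
\]
the integrand being the regularized leverage at $x$; hence $\norm{\SgL\A\SgL}_{\hs}^2\le\norm{\rho_{\lambda}}_{\infty}\norm{u}_{\Lp}^2$, and it remains to estimate $\norm{\rho_{\lambda}}_{\infty}$.

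This is where the dichotomy in the definition of $\Cl$ appears. When $C:=\sup_i\norm{\phi_i}_{\infty}<\infty$ I would bound $\rho_{\lambda}(x)\le\bigl(\sup_i\norm{\tilde\phi_i}_{\infty}^2\bigr)\sum_i\frac{\lambda_i}{\lambda_i+\lambda}\lesssim C^2\Nol$, using $\norm{\tilde\phi_i}_{\infty}\le2\norm{\phi_i}_{\infty}$ and $\sum_i\frac{\lambda_i}{\lambda_i+\lambda}=\mathrm{Tr}\bigl(\Sigma_{PQ}(\Sigma_{PQ}+\lambda\Id)^{-1}\bigr)=\Nol$, which is the first form of $\Cl$. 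Otherwise I would use $\rho_{\lambda}(x)\le\lambda^{-1}\sum_i\lambda_i\tilde\phi_i(x)^2\le\lambda^{-1}\norm{K(\cdot,x)-\mu_R}_{\h}^2\lesssim\lambda^{-1}\sup_x\norm{K(\cdot,x)}_{\h}^2$, and combine with $\Ntl\gtrsim1$ (valid in the regime $\lambda\le\norm{\Sigma_{PQ}}_{\op}$ in which the lemma is applied, since $\Ntl\ge\tfrac{\lambda_1}{\lambda_1+\lambda}$) to match the second form $\Cl=\tfrac{2\Ntl}{\lambda}\sup_x\norm{K(\cdot,x)}_{\h}^2$. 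Finally, part~(i) follows by expanding $(\Ntl+\norm{\SgL\A\SgL}_{\hs})^2$ and applying $2\Ntl\norm{\SgL\A\SgL}_{\hs}\le\Ntl^2+\norm{\SgL\A\SgL}_{\hs}^2$, and part~(ii) from $\norm{\SgL\A\SgL}_{\op}\le\norm{\SgL\A\SgL}_{\hs}\le\sqrt{2\Cl}\,\norm{u}_{\Lp}\le2\sqrt{\Cl}\,\norm{u}_{\Lp}$. The main obstacle is the $\A$-term: choosing the factorization $\SgL\A\SgL=(\SgL\id^*)M_u(\id\SgL)$ so the Hilbert--Schmidt norm collapses to the leverage integral $\int u^2\rho_{\lambda}\,dR$, and then bounding $\norm{\rho_{\lambda}}_{\infty}$ uniformly in both regimes; the remaining work is just tracking absolute constants.
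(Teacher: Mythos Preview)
Your approach is correct and, in Case~1, actually lands on the same intermediate quantity as the paper: your leverage integral $\int u^2\rho_\lambda\,dR=\sum_i\frac{\lambda_i}{\lambda_i+\lambda}\norm{\tilde\phi_i u}_{\Lp}^2$ is exactly what the paper obtains after expanding $\norm{\SgL\A\SgL}_{\hs}^2$ as the double sum $\sum_{i,j}\frac{\lambda_i}{\lambda_i+\lambda}\frac{\lambda_j}{\lambda_j+\lambda}\bigl[\int\bar\phi_i\bar\phi_j u\,dR\bigr]^2$ and dropping one of the two factors $\frac{\lambda_j}{\lambda_j+\lambda}\le 1$. Your factorization $\A=\id^*M_u\id$ together with the contraction bound $\norm{\SgL\id^*}_{\op}\le 1$ is a cleaner packaging of that same step. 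The substantive divergence is in Case~2: the paper does not pass through $\norm{\rho_\lambda}_\infty$ at all, but instead writes $\norm{\SgL\A\SgL}_{\hs}^2$ as the bilinear integral $\int\!\!\int\langle\SgL a(x)\!\otimes\! a(x)\SgL,\SgL a(y)\!\otimes\! a(y)\SgL\rangle_{\hs}u(x)u(y)\,dR\,dR$, applies Cauchy--Schwarz in $u$, and uses nonnegativity of the kernel to peel off a $\sup$, which produces $\Ntl\cdot\lambda^{-1}\sup_x\norm{K(\cdot,x)-\mu_R}_\h^2$ directly---the factor $\Ntl$ emerges from $\int\!\!\int\langle\cdots\rangle_{\hs}dR\,dR=\Ntlsq$. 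This proves the lemma exactly as stated, whereas your route yields only $\lambda^{-1}\sup_x\norm{K(\cdot,x)}_\h^2$ and needs the side condition $\Ntl\gtrsim 1$ (i.e., $\lambda\le\norm{\Sigma_{PQ}}_{\op}$) to match the displayed $\Cl$. You correctly note this condition holds in every application, so the gap is cosmetic; your bound is in fact sharper when $\Ntl>1$.
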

\begin{proof}
Let $(\lambda_i,\Tilde{\phi_i})_i$ be the eigenvalues and eigenfunctions of $\T.$ Since $\T=\id\id^*$ and $\Sigma_{PQ}=\id^*\id$, we have $\id\id^*\Tilde{\phi}_i=\lambda_i\Tilde{\phi}_i$ which implies $\id^*\id\left(\id^*\Tilde{\phi}_i\right)=\lambda_i\left(\id^*\Tilde{\phi}_i\right)$, i.e., $\Sigma_{PQ}\psi_i=\lambda_i\psi_i$, where $\psi_i:=\id^*\Tilde{\phi}_i/\sqrt{\lambda_i}$. Note that $(\psi_i)_i$ form an orthonormal system in $\h$. Define $\phi_i=\frac{\id^*\Tilde{\phi_i}}{\lambda_i}$, thus $\psi_i=\sqrt{\lambda_i}\phi_i.$ Let $\bar{\phi_i} =\phi_i-\E_\PQ\phi_i$. Then $\id\bar{\phi_i}=\id\phi_i=\frac{\T\tilde{\phi}_i}{\lambda_i}=\Tilde{\phi_i}$.
\vspace{2mm}\\
\emph{(i)}
Define $\A := \int_{\X} (K(\cdot,x)-\mu_R) \htens (K(\cdot,x)-\mu_R) \ u(x) \, d\PQ(x) $. Then it can be verified that  $\Sigma_P=\Sigma_{PQ}+\A-(\mu_P-\mu_R)\htens(\mu_P-\mu_R)$, $\Sigma_Q=\Sigma_{PQ}-\A-(\mu_Q-\mu_R)\htens(\mu_Q-\mu_R)$. Thus $\Sigma_P \preccurlyeq \Sigma_{PQ} + \A$ and $\Sigma_Q \preccurlyeq  \Sigma_{PQ}- \A$. Therefore we have,
\begin{eqnarray*}
\norm{\SgL\Sigma_V\SgL}_{\hs}^2 
&{}\leq{}&  2\norm{\SgL \Sigma_{PQ} \SgL}_{\hs}^2+ 2 \norm{\SgL \A \SgL}_{\hs}^2 \\
&{}={}& 2\Ntlsq + 2\norm{\SgL \A \SgL}_{\hs}^2.
\end{eqnarray*}
Next we bound the second term in the above inequality in the two cases for $\Cl$.

\emph{Case 1:} $\sup_{i} \norm{\phi_i}_{\infty}^2 < \infty$. Define $a(x) =K(\cdot,x)-\mu_R$. Then we have, 
\begin{align*}
    &\norm{\SgL \A \SgL}_{\hs}^2 = \text{Tr}\left(\SL^{-1} \A\SL^{-1} \A\right) \\
    &= \int_{\X} \int_{\X} \text{Tr}\left[\SL^{-1}a(x)\htens a(x) \SL^{-1} a(y)\htens a(y) \right] u(x) u(y) \, d\PQ(y)\,d\PQ(x)\\
    &\stackrel{(*)}{=} \sum_{i,j} \left(\frac{\lambda_i}{\lambda_i+\lambda}\right) \left(\frac{\lambda_j}{\lambda_j+\lambda}\right) \left[\int_{\X}\bar{\phi_i}(x)\bar{\phi_j}(x) u(x) \, d\PQ(x)\right]^2 \\
    & \leq \sum_{i,j} \frac{\lambda_i}{\lambda_i+\lambda} \inner{\bar{\phi_j}}{\bar{\phi_i}u}_{\Lp}^2 = \sum_{i} \frac{\lambda_i}{\lambda_i+\lambda} \sum_{j}\inner{\id{\bar{\phi}_j}}{\bar{\phi_i}u}_{\Lp}^2 \\
    & = \sum_{i} \frac{\lambda_i}{\lambda_i+\lambda} \sum_{j}\inner{\Tilde{\phi_j}}{\bar{\phi_i}u}_{\Lp}^2 \leq \sum_{i} \frac{\lambda_i}{\lambda_i+\lambda} \norm{\bar{\phi_i}u}_{\Lp}^2 \\
    & \leq 2 \Nol \sup_i \norm{\phi_i}_{\infty}^2 \norm{u}_{\Lp}^2,
\end{align*}
where $(*)$ follows from
\begin{align*}
    &\text{Tr}\left[\SL^{-1}a(x)\htens a(x) \SL^{-1} a(y)\htens a(y) \right] \\
    &= \inner{a(y)}{\SL^{-1}a(x)}_{\h}\inner{a(x)}{\SL^{-1}a(y)}_{\h} = \inner{a(y)}{\SL^{-1}a(x)}_{\h}^2 \\
    &= \left\langle \sum_i \frac{1}{\lambda_i+\lambda} \inner{a(x)}{\sqrt{\lambda_i}\phi_i}_{\h} \sqrt{\lambda_i}\phi_i\right.\\
    &\qquad\qquad \left. +\frac{1}{\lambda}\left(a(x)-\sum_{i}\inner{a(x)}{\sqrt{\lambda_i}\phi_i}_{\h}\sqrt{\lambda_i}\phi_i\right),a(y)\right\rangle_{\h}^2 \\
    & \stackrel{(\dag)}{=} \left[\sum_i \frac{\lambda_i}{\lambda_i+\lambda}\bar{\phi_i}(x)\bar{\phi_i}(y)+\frac{1}{\lambda}\left(\inner{a(x)}{a(y)}_{\h}-\sum_i\lambda_i\bar{\phi_i}(x)\bar{\phi_i}(y)\right) \right]^2\\
    &\stackrel{(\ddag)}{=}\left[\sum_i \frac{\lambda_i}{\lambda_i+\lambda}\bar{\phi_i}(x)\bar{\phi_i}(y)\right]^2,
\end{align*}
where $(\dag)$ follows from $\inner{K(\cdot,x)-\mu_R}{\sqrt{\lambda_i}\phi_i}_{\h}= \sqrt{\lambda_i}\bar{\phi_i}(x),$ and in $(\ddag)$ we used $$\inner{a(x)}{a(y)}_{\h}=\sum_i\lambda_i\bar{\phi_i}(x)\bar{\phi_i}(y),$$ which is proved below. Consider
\begin{align*}
&\inner{a(x)}{a(y)}_{\h} \\
&= \inner{K(\cdot,x)-\mu_R}{K(\cdot,y)-\mu_R}_{\h} \\
&=K(x,y)-\E_{R}K(x,Y)-\E_{R}K(X,y)-\E_{R\times R}K(X,Y) := \bar{K}(x,y).
\end{align*}
Furthermore, from the definition of $\T$, we can equivalently it write as $\T: \Lp \to \Lp$, $f \mapsto \int \bar{K}(\cdot,x)f(x)\,d\PQ(x)$. Thus by Mercer's theorem (see \citealt[Lemma 2.6]{Steinwart2012MercersTO}), we obtain $\bar{K}(x,y)=\sum_i\lambda_i\bar{\phi_i}(x)\bar{\phi_i}(y)$.

\emph{Case 2:} Suppose $\sup_{i} \norm{\phi_i}_{\infty}^2$ is not finite. From the calculations in \emph{Case 1}, we have
\begin{align*}
    & \norm{\SgL\A\SgL}_{\hs}^2 = \int_{\X} \int_{\X} \inner{a(x)}{\SL^{-1}a(y)}_{\h}^2 u(x)u(y) \,d\PQ(x) \,d\PQ(y) \\
    &= \int_{\X} \int_{\X} \inner{\SgL a(x)\htens a(x) \SgL}{\SgL a(y) \htens a(y) \SgL}_{\hs}\\
    &\qquad\qquad\qquad \times u(x)u(y)\,d\PQ(x) \,d\PQ(y) \\
    &\leq \left(\int_{\X} \int_{\X} \inner{\SgL a(x)\htens a(x) \SgL}{\SgL a(y) \htens a(y) \SgL}_{\hs}^2 \right.\\
    &\qquad\qquad\times\,d\PQ(x) \,d\PQ(y) \Big)^{1/2}\left(\int_{\X}\int_{\X}u(x)^2u(y)^2 \,d\PQ(x) d\PQ(y)\right)^{1/2}\\
    &\stackrel{(*)}{\leq} \left(\int_{\X} \int_{\X} \inner{\SgL a(x)\htens a(x) \SgL}{\SgL a(y) \htens a(y) \SgL}_{\hs}\right.\\
    &\qquad\qquad\qquad\times\,d\PQ(x) \,d\PQ(y) \Big)^{1/2}\\
    &\qquad\times \sup_{x,y} \inner{\SgL a(x)\htens a(x) \SgL}{\SgL a(y) \htens a(y) \SgL}^{1/2}_{\hs}\\
&\qquad\qquad\times\left(\int_{\X}\int_{\X}u(x)^2u(y)^2 \,d\PQ(x) d\PQ(y)\right)^{1/2}\\
    & \leq \norm{\SgL\Sigma_{PQ}\SgL}_{\hs} \norm{\SL^{-1}}_{\op} \sup_{x}  \norm{a(x)}^2_{\h} \norm{u}_{\Lp}^2 \\
& \leq \frac{4\Ntl}{\lambda} \sup_x \norm{K(\cdot,x)}_{\h}^2 \norm{u}_{\Lp}^2,
    \end{align*}    
where we used \begin{align*}&\inner{\SgL a(x)\htens a(x) \SgL}{\SgL a(y) \htens a(y) \SgL}_{\hs}\\ &= \text{Tr}\left[\SL^{-1}a(x)\htens a(x) \SL^{-1} a(y)\htens a(y) \right]\\
&= \inner{a(y)}{\SL^{-1}a(x)}_{\h}\inner{a(x)}{\SL^{-1}a(y)}_{\h} = \inner{a(y)}{\SL^{-1}a(x)}_{\h}^2 \geq 0\end{align*}
in $(*)$.\vspace{2mm}

\noindent \emph{(ii)} Note that
\begin{align*}
     \norm{\SgL\Sigma_{V}\SgL}_{\op} 
     & \leq \norm{\SgL\Sigma_{PQ}\SgL}_{\op} + \norm{\SgL\A\SgL}_{\op} \\
    & \leq 1 + \norm{\SgL\A\SgL}_{\hs}.
\end{align*}
The result therefore follows by using the bounds in part $(i)$.
\end{proof}


\begin{appxlem}\label{lemma:intermediate}
Let $A:H\rightarrow H$ and $B:H\rightarrow H$ be bounded operators on a Hilbert space, $H$ such that $AB^{-1}$ is bounded. Then $\Vert B\Vert_{\EuScript{L}^\infty(H)}\ge \Vert AB^{-1}\Vert^{-1}_{\EuScript{L}^\infty(H)}\Vert A\Vert_{\EuScript{L}^\infty(H)}$. Also for any $f\in H$, $$\Vert Bf\Vert_H\ge \Vert AB^{-1}\Vert^{-1}_{\EuScript{L}^\infty(H)}\Vert Af\Vert_H.$$ 
\end{appxlem}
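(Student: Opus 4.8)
The plan is to prove both inequalities as immediate consequences of submultiplicativity of the operator norm together with the boundedness of $A B^{-1}$. The key observation is the factorization $A = (AB^{-1})B$, which holds whenever $B$ is invertible; if $B$ is not invertible then $B^{-1}$ is not even defined, so we implicitly work under the standing hypothesis that $AB^{-1}$ is a bounded (everywhere-defined) operator, which forces $B$ to have dense range and a bounded inverse on that range. (In the applications in the paper, $B$ is always of the form $\Sigma^{1/2}_{PQ,\lambda}$ or $\hat\Sigma^{1/2}_{PQ,\lambda}$, which are genuinely invertible since $\lambda>0$, so this is not an issue.)

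First I would write, for the operator-norm statement, $\|A\|_{\EuScript{L}^\infty(H)} = \|(AB^{-1})B\|_{\EuScript{L}^\infty(H)} \le \|AB^{-1}\|_{\EuScript{L}^\infty(H)}\,\|B\|_{\EuScript{L}^\infty(H)}$, using submultiplicativity of the operator norm. Rearranging this inequality (dividing by $\|AB^{-1}\|_{\EuScript{L}^\infty(H)}$, which is finite and nonzero — nonzero because $A = (AB^{-1})B$ would otherwise be zero, and one may assume $A\ne 0$ or note the inequality is trivial if $A=0$) gives $\|B\|_{\EuScript{L}^\infty(H)} \ge \|AB^{-1}\|_{\EuScript{L}^\infty(H)}^{-1}\|A\|_{\EuScript{L}^\infty(H)}$, which is exactly the first claim.

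Next, for the pointwise statement, I would fix $f\in H$ and apply the same factorization at the level of vectors: $\|Af\|_H = \|(AB^{-1})(Bf)\|_H \le \|AB^{-1}\|_{\EuScript{L}^\infty(H)}\,\|Bf\|_H$, where the inequality is the definition of the operator norm applied to the vector $Bf$. Rearranging once more yields $\|Bf\|_H \ge \|AB^{-1}\|_{\EuScript{L}^\infty(H)}^{-1}\|Af\|_H$, completing the proof.

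I do not expect any genuine obstacle here — the statement is essentially a bookkeeping lemma packaging the identity $A=(AB^{-1})B$. The only point requiring a word of care is the hypothesis: one should note that ``$AB^{-1}$ bounded'' is to be read as $B$ being invertible with $AB^{-1}$ extending to a bounded operator on all of $H$, so that the factorization $A=(AB^{-1})B$ is valid as an identity of bounded operators; this is the step where a careless reading could go wrong, but in every invocation of this lemma in the paper $B$ is a strictly positive operator of the form $(\Sigma+\lambda I)^{1/2}$ and hence boundedly invertible, so the hypothesis is automatically met.
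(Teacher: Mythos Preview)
Your proof is correct and follows exactly the same approach as the paper: factor $A=(AB^{-1})B$, apply submultiplicativity of the operator norm (resp.\ the definition of operator norm on the vector $Bf$), and rearrange. The paper's proof is a two-line version of what you wrote, omitting your parenthetical remarks about invertibility of $B$.
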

\begin{proof}
The result follows by noting that $$\Vert A\Vert_{\EuScript{L}^\infty(H)}=\Vert AB^{-1}B\Vert_{\EuScript{L}^\infty(H)}\le \Vert AB^{-1}\Vert_{\EuScript{L}^\infty(H)}\Vert B\Vert_{\EuScript{L}^\infty(H)}$$ and $\Vert Af\Vert_H=\Vert AB^{-1}Bf\Vert_H\le \Vert AB^{-1}\Vert_{\EuScript{L}^\infty(H)}\Vert Bf\Vert_H$.
\end{proof}
\begin{appxlem} \label{lemma: denominator lower bound}
Let $\zeta = \norm{\gSLh(\mu_Q-\mu_P)}_{\h}^2$. Then $$\zeta \geq C_4 \Cs^{-1}\norm{\M^{-1}}^{-2}_{\op} \eta,$$
where $\eta= \norm{\gSL(\mu_Q-\mu_P)}_{\h}^{2}$, and $\M = \SLh^{-1/2}\SL^{1/2}$.
\end{appxlem}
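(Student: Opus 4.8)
The plan is to chain together the operator bounds collected in Lemma~\ref{lemma: bounds for g} with the spectral calculus definition of $\eta$ and $\zeta$. The key observation is that both $\eta$ and $\zeta$ are squared norms of the form $\Vert g_\lambda^{1/2}(\cdot)(\mu_Q-\mu_P)\Vert_\h^2$, the only difference being whether the covariance operator is the population $\Sigma_{PQ}$ or the empirical $\hat\Sigma_{PQ}$. So the first step is to insert the identity $\Id = \SLh^{1/2}\SLh^{-1/2}$ appropriately: write
$$\eta = \norm{\gSL(\mu_Q-\mu_P)}_\h^2 = \norm{\gSL\SL^{1/2}\,\SL^{-1/2}(\mu_Q-\mu_P)}_\h^2 \le (C_1+C_2)\norm{\SL^{-1/2}(\mu_Q-\mu_P)}_\h^2,$$
using Lemma~\ref{lemma: bounds for g}\emph{(ii)}.

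Next I would handle the factor $\norm{\SL^{-1/2}(\mu_Q-\mu_P)}_\h^2$. Writing $\SL^{-1/2} = (\SL^{-1/2}\SLh^{1/2})\SLh^{-1/2}$ and noting that $\SL^{-1/2}\SLh^{1/2} = \M^{-1}$ in the notation of the lemma, this is at most $\norm{\M^{-1}}_{\op}^2\norm{\SLh^{-1/2}(\mu_Q-\mu_P)}_\h^2$. Then I relate $\norm{\SLh^{-1/2}(\mu_Q-\mu_P)}_\h^2$ back to $\zeta$ by inserting $\Id = \igSLh\gSLh$ and applying Lemma~\ref{lemma: bounds for g}\emph{(v)}, which gives $\norm{\SLh^{-1/2}\igSLh}_{\op}\le C_4^{-1/2}$, hence $\norm{\SLh^{-1/2}(\mu_Q-\mu_P)}_\h^2 \le C_4^{-1}\norm{\gSLh(\mu_Q-\mu_P)}_\h^2 = C_4^{-1}\zeta$. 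Concretely, $\SLh^{-1/2}(\mu_Q-\mu_P) = \SLh^{-1/2}\igSLh\,\gSLh(\mu_Q-\mu_P)$, so the chain reads $\norm{\SLh^{-1/2}(\mu_Q-\mu_P)}_\h \le C_4^{-1/2}\norm{\gSLh(\mu_Q-\mu_P)}_\h$. Combining the three inequalities yields $\eta \le (C_1+C_2)\,\norm{\M^{-1}}_{\op}^2\,C_4^{-1}\,\zeta$, which rearranges to the claimed bound $\zeta \ge C_4(C_1+C_2)^{-1}\norm{\M^{-1}}_{\op}^{-2}\eta$.

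Since the argument is essentially a bookkeeping exercise in inserting identities and applying the already-established operator-norm inequalities, there is no real obstacle; the only care needed is in tracking which of the two covariance operators appears where, and confirming that $\M^{-1} = \SL^{-1/2}\SLh^{1/2}$ is indeed the operator whose norm shows up (so that the bound is stated in terms of $\norm{\M^{-1}}_{\op}$ rather than $\norm{\M}_{\op}$). One should also note implicitly that $\gSLh$ is invertible, which is guaranteed by $(A_4)$ since it forces $g_\lambda(0)\ne 0$ — this is exactly what makes Lemma~\ref{lemma: bounds for g}\emph{(v)} applicable. Alternatively, one could present the whole thing in one line using Lemma~\ref{lemma:intermediate} with $A = \gSL$, $B = \gSLh\,\M$ (or a suitable rescaling), but the step-by-step insertion of identities is cleaner and less error-prone.
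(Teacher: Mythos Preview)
Your proof is correct and follows essentially the same route as the paper: the paper applies Lemma~\ref{lemma:intermediate} three times to lower bound $\zeta$ step by step, whereas you insert identities and upper bound $\eta$ step by step --- these are the same chain of inequalities read in opposite directions, invoking exactly the same operator-norm bounds from Lemma~\ref{lemma: bounds for g}\emph{(ii)} and \emph{(v)} and the identity $\M^{-1}=\SL^{-1/2}\SLh^{1/2}$.
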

\begin{proof}
Repeated application of Lemma~\ref{lemma:intermediate} in conjunction with assumption $(A_4)$ yields
\begin{align*}
    \zeta &
     \geq \norm{\SgLh \igSLh}_{\op}^{-2}\norm{\SLh^{-1/2}(\mu_Q-\mu_P)}_{\h}^2 \\ &\geq C_4 \norm{\M^{-1}}^{-2}_{\op} \norm{\SgL(\mu_Q-\mu_P)}_{\h}^2 \\
    &\geq C_4 \norm{\M^{-1}}^{-2}_{\op} \norm{\gSL\SL^{1/2}}^{-2}_{\h} \eta\\
    &\geq C_4 \Cs^{-1}\norm{\M^{-1}}^{-2}_{\op} \eta  ,     
\end{align*}
where the last inequality follows from Lemma \ref{lemma: bounds for g}\emph{(ii)}.
\end{proof}
\begin{appxlem} \label{Lemma: bounding expectations}
Let $\zeta = \norm{\gSLh(\mu_P-\mu_Q)}_{\h}^2$, $\M = \SLh^{-1/2}\SL^{1/2}$, and $m \leq n \leq Dm$ for some constant $D\geq1$. Then
\begin{align*}
    &\E \left[ (\stat-\zeta)^2 |(Z_i)_{i=1}^{s}\right ] \\
    & \quad \le \tilde{C} \norm{\M}_{\op}^4 \left(\frac{\Cl \norm{\U}_{\Lp}^2+\Ntlsq}{(n+m)^2}+\frac{\sqrt{\Cl}\norm{\U}_{\Lp}^3+\norm{\U}_{\Lp}^2}{n+m}\right),
\end{align*}
where $\Cl$ is defined in Lemma~\ref{lemma: bound hs and op} and $\tilde{C}$ is a constant that depends only on $C_1$, $C_2$ and $D$. 
Furthermore, if $P=Q$, then
\begin{align*}
    &\E[\stat^2|(Z_i)_{i=1}^{s}]\leq 6\Cs^2\norm{\M}_{\op}^4\Ntlsq\left(\frac{1}{n^2}+\frac{1}{m^2}\right).
\end{align*}

\end{appxlem}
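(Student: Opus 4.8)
The plan is to analyze the conditional second moment $\E[(\stat - \zeta)^2 \mid (Z_i)_{i=1}^s]$ by first recognizing that, conditioned on $(Z_i)_{i=1}^s$, the operator $\gShh$ is fixed, so $\stat$ is a genuine two-sample $U$-statistic with kernel $h(X_i,X_j,Y_{i'},Y_{j'}) = \inner{\gShh A(X_i,Y_{i'})}{\gShh A(X_j,Y_{j'})}_{\h}$. I would apply Lemma~\ref{lemma:format of statistic} with $g_\lambda^{1/2}(\hat{\Sigma}_{PQ})$ and the centering $\mu := \mu_P$ (or some convenient element of $\h$) to rewrite $\stat$ in the symmetric form $\frac{1}{n(n-1)}\sum_{i\neq j}\inner{a(X_i)}{a(X_j)}_\h + \frac{1}{m(m-1)}\sum_{i\neq j}\inner{a(Y_i)}{a(Y_j)}_\h - \frac{2}{nm}\sum_{i,j}\inner{a(X_i)}{a(Y_j)}_\h$, where $a(x) = \gShh(K(\cdot,x)-\mu)$. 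Then, mimicking the decomposition used in the proof of Theorem~\ref{thm: MMD}, I would write $\stat - \zeta$ (where $\zeta = \E[\stat\mid(Z_i)] = \norm{\gShh(\mu_P-\mu_Q)}_\h^2$) as a sum of five U-statistic-type terms: two ``degenerate'' pieces $\frac{1}{n(n-1)}\sum_{i\neq j}\inner{\bar a(X_i)}{\bar a(X_j)}$ and the analogue for $Y$, two linear pieces of the form $\frac{2}{n}\sum_i \inner{\bar a(X_i)}{\gShh(\mu_Q-\mu_P)}$ and its $Y$-counterpart, and one cross term $\frac{2}{nm}\sum_{i,j}\inner{\bar a(X_i)}{\bar a(Y_j)}$, where $\bar a(x) = \gShh(K(\cdot,x)-\mu_P)$ and similarly for $Y$ centered at $\mu_Q$.

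Next I would bound the conditional second moment of each of these five terms using Lemmas~\ref{lemma:bound U-statistic1}, \ref{lemma:bound U-statistic2}, and \ref{lemma:bound U-statistic3}, taking $\B = \SLh^{1/2}\gShh$ so that $\gShh = \B\,\SgLh$ and $\norm{\B}_{\op} = \norm{\SLh^{1/2}\gShh}_{\op} \le (C_1+C_2)^{1/2}$ by Lemma~\ref{lemma: bounds for g}(iii) — this is the mechanism that produces the factors $\Cs$. Concretely, Lemma~\ref{lemma:bound U-statistic1}(ii) controls the two degenerate quadratic terms by $\frac{4}{n^2}\norm{\B}_{\op}^4\norm{\SgLh\Sigma_P\SgLh}_{\hs}^2$ (and similarly for $Q$), Lemma~\ref{lemma:bound U-statistic2}(ii) controls the two linear terms by $\frac{4}{n}\norm{\B}_{\op}^4\norm{\SgLh\Sigma_P\SgLh}_{\op}\norm{\SgLh(\mu_Q-\mu_P)}_\h^2$, and Lemma~\ref{lemma:bound U-statistic3}(ii) controls the cross term by $\frac{4}{nm}\norm{\B}_{\op}^4\norm{\SgLh\Sigma_{PQ}\SgLh}_{\hs}^2$. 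I would then convert the ``hatted'' quantities $\norm{\SgLh\Sigma_V\SgLh}_{\hs}$, $\norm{\SgLh\Sigma_V\SgLh}_{\op}$, $\norm{\SgLh(\mu_Q-\mu_P)}_\h$ back to population quantities: using the operator $\M = \SLh^{-1/2}\SL^{1/2}$ and the elementary inequality $\norm{\SgLh\Sigma_V\SgLh} = \norm{\M\SgL\Sigma_V\SgL\M^*} \le \norm{\M}_{\op}^2\norm{\SgL\Sigma_V\SgL}$ for both the Hilbert–Schmidt and operator norms, together with $\norm{\SgLh(\mu_P-\mu_Q)}_\h^2 = \eta_\lambda \cdot (\text{something}) \le \norm{\M}_{\op}^2 \norm{\SgL(\mu_P-\mu_Q)}_\h^2$ and $\norm{\SgL(\mu_P-\mu_Q)}_\h^2 = \eta_\lambda \le 4C_1\norm{u}_{\Lp}^2$ by Lemma~\ref{lemma: bounds for eta}. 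The bounds $\norm{\SgL\Sigma_V\SgL}_{\hs}^2 \le 4\Cl\norm{u}_{\Lp}^2 + 2\Ntlsq$ and $\norm{\SgL\Sigma_V\SgL}_{\op} \le 2\sqrt{\Cl}\norm{u}_{\Lp} + 1$ come directly from Lemma~\ref{lemma: bound hs and op}(i),(ii). Finally I would use $m \le n \le Dm$ (so that $n \asymp m \asymp n+m$) and the triangle inequality $(\sum a_k)^2 \le 5\sum a_k^2$ to collect the five pieces, giving a bound of the form $\tilde C \norm{\M}_{\op}^4\big(\frac{\Cl\norm{u}_{\Lp}^2 + \Ntlsq}{(n+m)^2} + \frac{\sqrt{\Cl}\norm{u}_{\Lp}^3 + \norm{u}_{\Lp}^2}{n+m}\big)$, where the $\sqrt{\Cl}\norm{u}_{\Lp}^3$ term arises from the linear pieces (product of $\sqrt{\Cl}\norm{u}_{\Lp}$ from the operator norm and $\norm{u}_{\Lp}^2$ from $\norm{\SgL(\mu_P-\mu_Q)}_\h^2$) and the lone $\norm{u}_{\Lp}^2$ from the ``$1$'' in the operator-norm bound.

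For the $P=Q$ case, the argument simplifies drastically: then $\mu_P = \mu_Q$, so $\zeta = 0$ and all linear terms and the cross term have conditional expectation zero and are mutually uncorrelated with the quadratic terms (by the same vanishing-conditional-mean arguments used in the proof of Theorem~\ref{thm: MMD}, since $\bar a(X)$ is centered). Hence $\E[\stat^2 \mid (Z_i)] = \E[\circled{1}^2] + \E[\circled{2}^2] + \E[\circled{4}^2]$ where $\circled{1},\circled{2}$ are the two quadratic terms and $\circled{4}$ the cross term. Applying Lemma~\ref{lemma:bound U-statistic1}(ii) and Lemma~\ref{lemma:bound U-statistic3}(ii) with $\Sigma_P = \Sigma_Q = \Sigma_{PQ}$ and $\norm{\B}_{\op}^4 \le \Cs^2$, together with $\norm{\SgLh\Sigma_{PQ}\SgLh}_{\hs} \le \norm{\M}_{\op}^2\Ntl$ and $nm \ge \frac{1}{2}(\frac{1}{n^2}+\frac{1}{m^2})^{-1}$-type bookkeeping, yields the claimed $6\Cs^2\norm{\M}_{\op}^4\Ntlsq(\frac{1}{n^2}+\frac{1}{m^2})$.

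The main obstacle I anticipate is not any single step but the bookkeeping in the general ($P\ne Q$) case: carefully tracking which centering ($\mu_P$ for the $X$-terms, $\mu_Q$ for the $Y$-terms) to use so that the cross terms vanish in expectation, correctly expanding $\stat - \zeta$ into exactly the five terms with the right constants, and making sure the conversion from $\hat\Sigma$-quantities to $\Sigma$-quantities via $\M$ is applied consistently (including to $\Ntl$, which is defined with the population $\Sigma_{PQ}$, so it needs no conversion but the $\hat{}$-version does). A secondary subtlety is that $\gShh$ depends on $(Z_i)_{i=1}^s$ which is independent of the $X$'s and $Y$'s, so all the $\E[\cdot\mid(Z_i)]$ computations treat $\B$ as a fixed (though random) bounded operator — this independence is exactly what makes the conditional U-statistic lemmas applicable and must be invoked explicitly. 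Everything else is a routine application of the already-established lemmas.
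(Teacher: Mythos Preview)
Your proposal is correct and follows essentially the same approach as the paper: decompose $\stat-\zeta$ into the five terms $\circled{1}$--$\circled{5}$ via Lemma~\ref{lemma:format of statistic}, bound each using Lemmas~\ref{lemma:bound U-statistic1}--\ref{lemma:bound U-statistic3} together with Lemmas~\ref{lemma: bounds for g}, \ref{lemma: bounds for eta}, \ref{lemma: bound hs and op}, and then collect using $m\le n\le Dm$. The only difference is cosmetic: the paper takes $\B=\gShh\SL^{1/2}$ (population $\SL^{1/2}$), so Lemmas~\ref{lemma:bound U-statistic1}--\ref{lemma:bound U-statistic3} apply verbatim and the factor $\norm{\M}_{\op}$ is absorbed directly into $\norm{\B}_{\op}\le\Cs^{1/2}\norm{\M}_{\op}$, whereas your choice $\B=\gShh\SLh^{1/2}$ requires applying those lemmas with $\SgLh$ in place of $\SgL$ (which works, since their proofs use nothing about $\SgL$ beyond boundedness) and then converting the resulting $\SgLh$-quantities via $\M$ at the end; the paper's route is slightly cleaner but the content is identical.
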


\begin{proof}
Define $a(x)= \B\SgL(\kk(\cdot,x)-\mu_P)$, and $b(x)=\B\SgL(\kk(\cdot,x)-\mu_Q)$, where $\B= \gSLh \SL^{1/2}$. Then we have 
\begin{align*}
    \stat& \stackrel{(*)}{=} \frac{1}{n(n-1)} \sum_{i\neq j} \left \langle a(X_i),a(X_j)\right \rangle_{\h}  + \frac{1}{m(m-1)}\sum_{i\neq j} \left \langle a(Y_i),a(Y_j)\right \rangle_{\h} \\ 
   &\qquad- \frac{2}{nm}\sum_{i,j} \left \langle a(X_i),a(Y_j)\right \rangle_{\h} \\
   & \stackrel{(\dag)}{=} \frac{1}{n(n-1)} \sum_{i\neq j} \left \langle a(X_i),a(X_j)\right \rangle_{\h}+ \frac{1}{m(m-1)}\sum_{i \neq j} \inner{b(Y_i)}{b(Y_j)}_{\h}  \\   
   & \qquad  +\frac{2}{m}\sum_{i=1}^{m} \inner{b(Y_i)}{\B\SgL(\mu_Q-\mu_P)}_{\h}+ \zeta - \frac{2}{nm}\sum_{i,j} \inner{ a(X_i)}{b(Y_j)}_{\h} \\ 
   & \qquad- \frac{2}{n}\sum_{i=1}^{n} \inner{a(X_i)}{\B\SgL(\mu_Q-\mu_P)}_{\h},
\end{align*}
where $(*)$ follows from Lemma \ref{lemma:format of statistic} and $(\dag)$ follows by writing  $a(Y)=b(Y) + \B\SgL(\mu_Q-\mu_P)$ in the last two terms.
Thus we have 
\begin{multline*}
    \stat-\zeta = \underbrace{\frac{1}{n(n-1)} \sum_{i\neq j} \left \langle a(X_i),a(X_j)\right \rangle_{\h}}_{\circled{\footnotesize{1}}} + \underbrace{ \frac{1}{m(m-1)}\sum_{i \neq j} \inner{b(Y_i)}{b(Y_j)}_{\h}}_{\circled{\footnotesize{2}}}  \\   +\underbrace{\frac{2}{m}\sum_{i=1}^{m} \inner{b(Y_i)}{\B\SgL(\mu_Q-\mu_P)}_{\h}}_{\circled{\footnotesize{3}}} - \underbrace{\frac{2}{nm}\sum_{i,j} \left \langle a(X_i),b(Y_j)\right \rangle_{\h}}_{\circled{\footnotesize{4}}} \\ 
    - \underbrace{\frac{2}{n}\sum_{i=1}^{n} \inner{a(X_i)}{\B\SgL(\mu_Q-\mu_P)}_{\h}}_{\circled{\footnotesize{5}}}.
\end{multline*}
Furthermore using Lemma \ref{lemma: bounds for g}\emph{(iii)},
\begin{eqnarray*}
    \norm{\B}_{\op} &{}={}&\norm{\B^*}_{\op}=\norm{\gSLh\SL^{1/2}}_{\op}\\
    &{}\leq{}& \norm{\gSLh\SLh^{1/2}}_{\op}\norm{\SLh^{-1/2}\SL^{1/2}}_{\op} \\ 
    &{} \leq{}& \Cs^{1/2} \norm{\M}_{\op}.
\end{eqnarray*}
Next we bound the terms $\circled{\footnotesize{1}}$--$\circled{\footnotesize{5}}$ using Lemmas \ref{lemma:bound U-statistic1}, \ref{lemma:bound U-statistic2}, \ref{lemma:bound U-statistic3} and \ref{lemma: bound hs and op}. 
It follows from Lemmas~\ref{lemma:bound U-statistic1}\emph{(ii)} and \ref{lemma: bound hs and op}\emph{(i)} that
\begin{align*}
        \E\left(\circled{\footnotesize{1}}^2 |(Z_i)_{i=1}^{s}\right) & \leq \frac{4}{n^2}\|\B\|_{\op}^4\left\|\SgL\Sigma_P\SgL\right\|_{\hs}^2 \\
        &\leq \frac{4}{n^2}\Cs^2\norm{\M}_{\op}^4 \left(4\Cl \norm{\U}_{\Lp}^{2}+2 \Ntlsq\right),
\end{align*}
and
\begin{align*}
        \E\left(\circled{\footnotesize{2}}^2 |(Z_i)_{i=1}^{s}\right)
        & \leq \frac{4}{m^2}\|\B\|_{\op}^4\left\|\SgL\Sigma_Q\SgL\right\|_{\hs}^2 \\
        &\leq \frac{4}{m^2}\Cs^2\norm{\M}_{\op}^4 \left(4\Cl \norm{\U}_{\Lp}^{2}+2 \Ntlsq\right).
\end{align*}
Using Lemma~\ref{lemma:bound U-statistic2}\emph{(ii)} and \ref{lemma: bound hs and op}\emph{(ii)}, we obtain
\begin{align*}
        &\E\left(\circled{\footnotesize{3}}^2 |(Z_i)_{i=1}^{s}\right)\\ & \leq \frac{4}{m}\left\|\SgL\Sigma_{Q}\SgL\right\|_{\op}\|\B\|_{\op}^4\left\|\SgL(\mu_P-\mu_Q)\right\|_{\h}^2 \\ & \leq \frac{4}{m}\Cs^2\norm{\M}_{\op}^4\left(1+2\sqrt{\Cl}\norm{u}_{\Lp}\right)\left\|\SgL(\mu_P-\mu_Q)\right\|_{\h}^2 \\
        & \stackrel{(*)}{\leq} \frac{16}{m}\Cs^2\norm{\M}_{\op}^4 \left(1+2\sqrt{\Cl}\norm{u}_{\Lp}\right)\norm{\U}_{\Lp}^2,
\end{align*}
and
\begin{align*}
     &\E\left(\circled{\footnotesize{5}}^2 |(Z_i)_{i=1}^{s} \right)\\ &\leq \frac{4}{n} \left\|\SgL\Sigma_P\SgL\right\|_{\op}  \|\B\|_{\op}^4\left\|\SgL(\mu_Q-\mu_P)\right\|_{\h}^2 \\
     &  \leq \frac{4}{n}   \Cs^2 \norm{\M}_{\op}^4(1+2\sqrt{\Cl}\norm{\U}_{\Lp}) \left\|\SgL(\mu_Q-\mu_P)\right\|_{\h}^2  \\
     & \stackrel{(*)}{\leq} \frac{16}{n}\Cs^2\norm{\M}_{\op}^4 \left(1+2\sqrt{\Cl}\norm{u}_{\Lp}\right)\norm{\U}_{\Lp}^2,
\end{align*}
where $(*)$ follows from using $g_\lambda(x)=(x+\lambda)^{-1}$ with $C_1=1$ in Lemma \ref{lemma: bounds for eta}. For term \circled{\footnotesize{4}}, using Lemma \ref{lemma:bound U-statistic3} yields,
\begin{align*}
     \E\left(\circled{\footnotesize{4}}^2 |(Z_i)_{i=1}^{s} \right) \leq  \frac{4}{nm}\|\B\|_{\op}^4\Ntlsq 
     \leq \frac{4}{nm}\Cs^2\norm{\M}_{\op}^4\Ntlsq. 
\end{align*}
Combining these bounds with the fact that $\sqrt{ab} \leq \frac{a}{2}+\frac{b}{2}$, and that $(\sum_{i=1}^k a_k)^2 \leq k \sum_{i=1}^k a_k^2$ for any $a,b,a_k \in \R$, $k \in \N$ yields that 
\begin{align*}
    &\E[ (\stat-\zeta)^2 |(Z_i)_{i=1}^{s} ] \\
    & \lesssim \norm{\M}_{\op}^4 \left(\Cl \norm{\U}_{\Lp}^2+\Ntlsq\right)(n^{-2}+m^{-2})\nonumber\\
&\qquad\qquad+\norm{\M}_{\op}^4\left(\sqrt{\Cl}\norm{\U}_{\Lp}^3+\norm{\U}_{\Lp}^2\right)(n^{-1}+m^{-1}) \\
    &\stackrel{(*)}{\lesssim} \norm{\M}_{\op}^4 \left(\frac{\Cl \norm{\U}_{\Lp}^2+\Ntlsq}{(n+m)^2}+\frac{\sqrt{\Cl}\norm{\U}_{\Lp}^3+\norm{\U}_{\Lp}^2}{n+m}\right),
\end{align*}
where $(*)$ follows by using Lemma \ref{lem:mn_bound}.

When $P=Q$, and using the same Lemmas as above, we have 
\begin{align*}
        \E\left(\circled{\footnotesize{1}}^2 |(Z_i)_{i=1}^{s}\right)&\leq \frac{4}{n^2}\Cs^2\norm{\M}_{\op}^4\Ntlsq,\\
        \E\left(\circled{\footnotesize{2}}^2 |(Z_i)_{i=1}^{s}\right)
        &\leq \frac{4}{m^2}\Cs^2\norm{\M}_{\op}^4\Ntlsq,\\
        \E\left(\circled{\footnotesize{4}}^2 |(Z_i)_{i=1}^{s}\right) &\leq \frac{4}{nm}\Cs^2\norm{\M}_{\op}^4 \Ntlsq,
\end{align*}
and $\circled{\footnotesize{3}} = \circled{\footnotesize{5}}= 0$.
Therefore, 
\begin{align*}
    \E[\stat^2 | (Z_i)_{i=1}^{s}] &= \E \left[\left(\circled{\footnotesize{1}} + \circled{\footnotesize{2}} + \circled{\footnotesize{4}}\right)^2 | (Z_i)_{i=1}^{s}\right]\\
    & \stackrel{(*)}{\leq} \E\left(\circled{\footnotesize{1}}^2 + \circled{\footnotesize{2}}^2 + \circled{\footnotesize{4}}^2|(Z_i)_{i=1}^{s} \right) \\ & \stackrel{(\dag)}{\leq} \Cs^2\norm{\M}_{\op}^4\Ntlsq\left(\frac{6}{m^2}+\frac{6}{n^2}\right),
\end{align*}
where $(*)$ follows by noting that $\E\left(\circled{\footnotesize{1}}\cdot\circled{\footnotesize{2}}\right)=\E\left(\circled{\footnotesize{1}}\cdot\circled{\footnotesize{4}}\right)=\E\left(\circled{\footnotesize{2}}\cdot\circled{\footnotesize{4}}\right) = 0$ under the assumption $P=Q$, and $(\dag)$ follows using  $\sqrt{ab} \leq \frac{a}{2}+\frac{b}{2}.$
\end{proof}

\begin{appxlem} \label{lem:mn_bound}
For any $n,m \in (0,\infty)$, if $m \leq n \leq Dm$ for some $D\geq1$, then for any $a>0$ $$\frac{1}{m^a}+\frac{1}{n^a} \leq \frac{2^a(D^a+1)}{(m+n)^a}.$$
\end{appxlem}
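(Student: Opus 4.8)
\textbf{Proof proposal for Lemma~\ref{lem:mn_bound}.}

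The plan is to reduce the two-term sum to a single term by bounding each reciprocal from above using only the relation $m\le n\le Dm$. First I would observe that since $m\le n$, we have $m+n\le 2n$, hence $n\ge \tfrac{m+n}{2}$, which gives $\tfrac{1}{n^a}\le \tfrac{2^a}{(m+n)^a}$ for any $a>0$ (raising a positive inequality to the power $a>0$ preserves its direction). Next, for the term $\tfrac{1}{m^a}$ I would use $n\le Dm$, so $m\ge \tfrac{n}{D}\ge \tfrac{1}{D}\cdot\tfrac{m+n}{2}$ again because $n\ge\tfrac{m+n}{2}$; this yields $m\ge \tfrac{m+n}{2D}$ and therefore $\tfrac{1}{m^a}\le \tfrac{(2D)^a}{(m+n)^a}=\tfrac{2^aD^a}{(m+n)^a}$.

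Adding the two bounds gives
\begin{align*}
\frac{1}{m^a}+\frac{1}{n^a}\le \frac{2^aD^a}{(m+n)^a}+\frac{2^a}{(m+n)^a}=\frac{2^a(D^a+1)}{(m+n)^a},
\end{align*}
which is exactly the claimed inequality. All steps are elementary monotonicity facts about the map $t\mapsto t^a$ on $(0,\infty)$ for $a>0$, so there is no real obstacle here; the only point requiring a moment's care is making sure the inequality $n\ge\tfrac{m+n}{2}$ (equivalently $m\le n$) is invoked correctly in both places, and that raising to the power $a$ is done on quantities that are positive throughout, which holds since $m,n\in(0,\infty)$.

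I would present this as a three-line computation immediately after the statement, with no auxiliary lemmas needed. Since the bound is used elsewhere in the paper (e.g.\ in the variance bounds of Lemma~\ref{Lemma: bounding expectations} and throughout the proofs of Theorems~\ref{thm: MMD}, \ref{thm:Type II}, and \ref{thm: permutations typeII}) only with small fixed values of $a$ such as $a=1$ or $a=2$, the generality in $a$ costs nothing extra and the constant $2^a(D^a+1)$ is harmless in the asymptotic statements.
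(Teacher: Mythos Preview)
Your proof is correct and essentially identical to the paper's: both use $m\ge n/D$ and $n\ge (m+n)/2$, with the paper first combining $\frac{1}{m^a}+\frac{1}{n^a}\le\frac{D^a+1}{n^a}$ and then applying $n\ge\tfrac{m+n}{2}$, while you bound the two terms separately before adding. The constants and the logic coincide exactly.
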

\begin{proof}
Observe that using $m \leq n \leq Dm$ yields
$    \frac{1}{m^a}+\frac{1}{n^a} \leq \frac{D^a+1}{n^a} = \frac{2^a(D^a+1)}{(2n)^a} \leq \frac{2^a(D^a+1)}{(m+n)^a}.$
\end{proof}

\begin{appxlem}\label{lemma:DKW for quantile}
  Define $q_{1-\alpha}^{\lambda}:= \inf\{q \in \R: F_{\lambda}(q) \geq 1-\alpha\},$ where
$$F_{\lambda}(x):= \frac{1}{D}\sum_{\pi \in \Pi_{n+m}}\II(\hat{\eta}^{\pi}_{\lambda} \leq x),$$ is the permutation distribution function. Let $(\pi^i)_{i=1}^B$ be $B$ randomly selected permutations from $\Pi_{n+m}$ and define
$$\hat{F}^{B}_{\lambda}(x):= \frac{1}{B}\sum_{i=1}^{B}\II(\hat{\eta}^{^i}_{\lambda} \leq x),$$ where $\hat{\eta}^{i}_{\lambda}:=\stat(X^{\pi^i},Y^{\pi^i},Z)$ is the statistic based on the permuted samples. Define $$\hat{q}_{1-\alpha}^{B,\lambda}:= \inf\{q \in \R: \hat{F}^B_{\lambda}(q) \geq 1-\alpha\}.$$ Then, for any $\alpha>0,\,\Tilde{\alpha}>0, \ \delta>0$, if $B\geq \frac{1}{2\Tilde{\alpha}^2}\log2\delta^{-1}$, the following hold:
\begin{enumerate}[label=(\roman*)]
    \item $P_{\pi}(\hat{q}_{1-\alpha}^{B,\lambda} \geq q_{1-\alpha-\Tilde{\alpha}}^{\lambda}) \geq 1-\delta$;
    \item $P_{\pi}(\hat{q}_{1-\alpha}^{B,\lambda} \leq q_{1-\alpha+\Tilde{\alpha}}^{\lambda}) \geq 1-\delta.$ 
\end{enumerate}
\end{appxlem}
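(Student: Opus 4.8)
\textbf{Proof proposal for Lemma~\ref{lemma:DKW for quantile}.}

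The plan is to deduce both claims from the Dvoretzky--Kiefer--Wolfowitz (DKW) inequality \citep{DKW} applied conditionally on the original samples $(X_i)_{i=1}^n$, $(Y_j)_{j=1}^m$, $(Z_i)_{i=1}^s$. Once we condition on these samples, the exact permutation distribution function $F_\lambda$ is a fixed (deterministic) cumulative distribution function, and the empirical version $\hat F^B_\lambda$ is the empirical CDF of $B$ i.i.d.\ draws $\hat\eta^1_\lambda,\dots,\hat\eta^B_\lambda$ obtained by sampling permutations uniformly from $\Pi_{n+m}$ (with replacement). Hence DKW gives, for every $t>0$,
$$P_\pi\left\{\sup_{x\in\R}\left|\hat F^B_\lambda(x)-F_\lambda(x)\right|\ge t\right\}\le 2e^{-2Bt^2}.$$
Choosing $t=\tilde\alpha$ and using the hypothesis $B\ge \frac{1}{2\tilde\alpha^2}\log(2\delta^{-1})$ makes the right-hand side at most $\delta$, so with probability at least $1-\delta$ we have $\sup_x|\hat F^B_\lambda(x)-F_\lambda(x)|\le\tilde\alpha$.

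Next I would convert this uniform closeness of the distribution functions into the two quantile inequalities, which is a standard monotonicity argument. For \emph{(i)}: on the event $\{\sup_x|\hat F^B_\lambda-F_\lambda|\le\tilde\alpha\}$, for any $x<q^\lambda_{1-\alpha-\tilde\alpha}$ we have, by definition of the quantile, $F_\lambda(x)<1-\alpha-\tilde\alpha$, hence $\hat F^B_\lambda(x)\le F_\lambda(x)+\tilde\alpha<1-\alpha$; thus no such $x$ can satisfy $\hat F^B_\lambda(x)\ge 1-\alpha$, which forces $\hat q^{B,\lambda}_{1-\alpha}\ge q^\lambda_{1-\alpha-\tilde\alpha}$. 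For \emph{(ii)}: on the same event, $\hat F^B_\lambda\big(q^\lambda_{1-\alpha+\tilde\alpha}\big)\ge F_\lambda\big(q^\lambda_{1-\alpha+\tilde\alpha}\big)-\tilde\alpha\ge (1-\alpha+\tilde\alpha)-\tilde\alpha=1-\alpha$, where the bound $F_\lambda(q^\lambda_{1-\alpha+\tilde\alpha})\ge 1-\alpha+\tilde\alpha$ uses right-continuity of $F_\lambda$ together with the definition of the quantile as an infimum; hence $\hat q^{B,\lambda}_{1-\alpha}\le q^\lambda_{1-\alpha+\tilde\alpha}$. Taking complements gives the stated probability bounds.

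The only mild subtlety—and the point I would be most careful about—is justifying the application of DKW: the draws $\hat\eta^1_\lambda,\dots,\hat\eta^B_\lambda$ are i.i.d.\ conditionally on the data because the permutations $\pi^1,\dots,\pi^B$ are drawn i.i.d.\ uniformly from $\Pi_{n+m}$, so each $\hat\eta^i_\lambda$ is a measurable function of an independent uniform permutation applied to the fixed pooled sample; thus $\hat F^B_\lambda$ is genuinely the empirical CDF of an i.i.d.\ sample from the law with CDF $F_\lambda$, and DKW applies verbatim. After obtaining the conditional bounds one can, if desired, take expectations over the data to get the unconditional statements, but since the bounds hold uniformly over the conditioning they already hold unconditionally. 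I expect no serious obstacle beyond bookkeeping with the right-continuity/infimum conventions for the quantiles.
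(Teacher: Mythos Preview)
Your proposal is correct and follows essentially the same approach as the paper: apply DKW conditionally on the data to obtain $\sup_x|\hat F^B_\lambda-F_\lambda|\le\tilde\alpha$ with probability at least $1-\delta$, and then translate this uniform CDF bound into the two quantile inequalities. The paper carries out the quantile step by directly manipulating the infimum in the definition (replacing $\hat F^B_\lambda$ by $F_\lambda\pm\tilde\alpha$ inside the set whose infimum is taken), whereas you argue pointwise using $F_\lambda(x)<1-\alpha-\tilde\alpha$ for $x<q^\lambda_{1-\alpha-\tilde\alpha}$ and right-continuity of $F_\lambda$ at $q^\lambda_{1-\alpha+\tilde\alpha}$; these are two phrasings of the same monotonicity argument.
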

\begin{proof}
We first use the Dvoretzky-Kiefer-Wolfowitz (see \citealt{DKW}, \citealt{Massart}) inequality to get uniform error bound for the empirical permutation distribution function, and then use it to obtain bounds on the empirical quantiles. Let $$\A:= \left\{\sup_{x \in R} |\hat{F}^{B}_{\lambda}(x)-F_{\lambda}(x)| \leq \sqrt{\frac{1}{2B}\log(2 \delta^{-1})}\right\}.$$
Then DKW inequality yields
$P_{\pi}(\A) \geq 1-\delta$. Now assuming $\A$ holds, we have 
\begin{align*}
    \qqh &= \inf\{ q \in \R : \hat{F}^B_{\lambda}(q) \geq 1-\alpha\} \\
    & \geq \inf\left\{q \in \R : F_{\lambda}(q)+\sqrt{\frac{1}{2B}\log(2 \delta^{-1})} \geq 1-\alpha\right\} \\
    & = \inf \left\{q \in \R : F_{\lambda}(q) \geq 1-\alpha-\sqrt{\frac{1}{2B}\log(2 \delta^{-1})}\right\}.
\end{align*}
Furthermore, we have 
\begin{align*}
    \qqh &= \inf\{ q \in \R : \hat{F}^B_{\lambda}(q) \geq 1-\alpha\} \\
    & \leq \inf\left\{q \in \R : F_{\lambda}(q)-\sqrt{\frac{1}{2B}\log(2 \delta^{-1})} \geq 1-\alpha\right\} \\
    & = \inf \left\{q \in \R : F_{\lambda}(q) \geq 1-\alpha+\sqrt{\frac{1}{2B}\log(2 \delta^{-1})}\right\}.
\end{align*}
Thus \emph{(i)} and \emph{(ii)} hold if $\sqrt{\frac{1}{2B}\log(2 \delta^{-1})} \leq \Tilde{\alpha}$, which is equivalent to the condition
$B\geq \frac{1}{2\Tilde{\alpha}^2}\log(2 \delta^{-1}).$
\end{proof}

\begin{appxlem} \label{lemma:bound quantile}
For $0<\alpha\leq e^{-1}$, $\delta>0$ and $m \leq n \leq Dm$, there exists a constant $ C_5>0$ such that
\begin{align*}
    P_{H_1}(\qqe \leq C_5\gamma ) \geq 1-\delta ,
\end{align*}
where \begin{align*}\gamma &= \frac{\norm{\M}_{\op}^2\log\frac{1}{\alpha}}{\sqrt{\delta}(n+m)}\left(\sqrt{\Cl}\norm{u}_{\Lp}+\Ntl+\Cl^{1/4}\norm{u}^{3/2}_{\Lp}+\norm{u}_{\Lp}\right)\\
&\qquad\qquad\qquad+\frac{\zeta\log\frac{1}{\alpha}}{\sqrt{\delta}(n+m)},\end{align*} $\zeta = \norm{\gSLh(\mu_Q-\mu_P)}_{\h}^2$, and $\Cl$ is defined in Lemma~\ref{lemma: bound hs and op}.
\end{appxlem}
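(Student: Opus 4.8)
\textbf{Proof proposal for Lemma~\ref{lemma:bound quantile}.}

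The plan is to control the permutation quantile $\qqe$ by exploiting the exchangeability of the pooled sample under the permutation law, and then applying a conditional variance bound together with a moment-to-quantile inequality. First I would recall that conditionally on the data, $\qqe$ is a deterministic quantile of the discrete distribution $F_\lambda$, namely the $(1-\alpha)$-quantile of $\{\hat\eta^\pi_\lambda:\pi\in\Pi_{n+m}\}$. The standard device (see e.g.\ the argument in the proof of Theorem~\ref{thm: permutations typeI}) is that under the permutation measure $P_\pi$ on $\Pi_{n+m}$, one has for any $t>0$ that $F_\lambda$ places mass at most $\alpha$ above $q^\lambda_{1-\alpha}$, so a Markov/Chebyshev-type bound on the permuted statistic's moments, uniformly over the data, yields a deterministic upper bound on $\qqe$ in terms of $\E_\pi[\hat\eta^\pi_\lambda]$ and $\mathrm{Var}_\pi(\hat\eta^\pi_\lambda)$ (or a higher moment to get the $\log\frac1\alpha$ dependence). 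Concretely, I would use the inequality $\qqe \le (\text{mean}) + \alpha^{-1/2}\sqrt{\text{variance}}$ at the crudest level, but to obtain the stated $\log\frac1\alpha$ factor rather than $\alpha^{-1/2}$, I would instead bound a suitable moment generating function or use the Bernstein-type / hypercontractivity structure of $\hat\eta^\pi_\lambda$ as a (conditionally) degenerate-plus-linear $U$-statistic in the permuted indices, giving $\qqe \lesssim \E_\pi[\hat\eta^\pi_\lambda] + \log\tfrac1\alpha \cdot (\text{std.\ dev.\ type terms})$.

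Second, I would estimate the permutation mean and the dispersion terms. The key observation is that permuting the labels of the pooled sample $(U_i)_{i=1}^{n+m}$ while keeping $(Z_i)_{i=1}^s$ fixed makes $\hat\eta^\pi_\lambda$, conditionally on $(Z_i)$ and the multiset $\{U_i\}$, a two-sample $U$-statistic in exchangeable draws from the pooled empirical measure. Its conditional mean is (up to lower-order corrections) proportional to the empirical analogue of $\|\gSLh(\hat\mu_{\text{pool},1}-\hat\mu_{\text{pool},2})\|^2$, which under $H_1$ is close to $\zeta$ plus fluctuation terms; this is where the $\zeta\log\frac1\alpha/((n+m)\sqrt\delta)$ summand in $\gamma$ originates. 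For the variance/higher-moment terms, I would invoke exactly the same operator-norm bookkeeping as in Lemma~\ref{Lemma: bounding expectations}: writing $\B=\gSLh\SL^{1/2}$, using $\norm{\B}_{\op}\le\Cs^{1/2}\norm{\M}_{\op}$ from Lemma~\ref{lemma: bounds for g}(iii), and bounding Hilbert--Schmidt and operator norms of $\SgL\Sigma_V\SgL$ via Lemma~\ref{lemma: bound hs and op} in terms of $\Cl$, $\Ntl$ and $\norm u_{\Lp}$. This reproduces precisely the collection of terms $\sqrt{\Cl}\norm u_{\Lp}+\Ntl+\Cl^{1/4}\norm u^{3/2}_{\Lp}+\norm u_{\Lp}$, each divided by $(n+m)$ after using Lemma~\ref{lem:mn_bound} to convert $n^{-1}+m^{-1}$ and $n^{-2}+m^{-2}$ into powers of $(n+m)^{-1}$.

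Third, I would assemble the pieces: the deterministic (conditional-on-data) quantile bound gives $\qqe \le C_5'\big(\E_\pi[\hat\eta^\pi_\lambda] + \log\tfrac1\alpha\,\sqrt{\mathrm{Var}_\pi(\hat\eta^\pi_\lambda)}\big)$ plus remainder terms, and then a single application of Chebyshev's inequality over the randomness of the data (with the $\sqrt\delta$ absorbing the probability level, exactly as in Lemma~\ref{thm: general Type2 error}) upgrades the data-dependent mean and variance estimates to the stated high-probability bound $P_{H_1}(\qqe\le C_5\gamma)\ge1-\delta$. The restriction $\alpha\le e^{-1}$ enters only to ensure $\log\frac1\alpha\ge1$ so that the $\log\frac1\alpha$ factor dominates the constant terms and the various error contributions can be merged into a single constant $C_5$.

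The main obstacle I anticipate is the first step done \emph{sharply}: getting the $\log\frac1\alpha$ (rather than $\alpha^{-1/2}$) dependence out of the permutation quantile requires more than Chebyshev --- it needs exponential-type control of the upper tail of $\hat\eta^\pi_\lambda$ under $P_\pi$, which in turn relies on the Bernstein-type inequality for operator-norm of self-adjoint Hilbert--Schmidt operator-valued $U$-statistics cited in the paper, applied conditionally with the ``population'' replaced by the pooled empirical measure. One must be careful that the degenerate (quadratic) part and the linear part of $\hat\eta^\pi_\lambda$ concentrate at the right rates, and that the $\zeta$-dependent term (present only under $H_1$) is handled without circularity, since $\zeta$ is itself random; I would deal with this by keeping $\zeta$ explicit in $\gamma$ rather than bounding it further at this stage, deferring its control to the calling proof of Theorem~\ref{thm: permutations typeII}.
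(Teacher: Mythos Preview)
Your overall architecture --- bound the permutation quantile by a data-dependent quantity carrying a $\log\tfrac1\alpha$ factor, then control that quantity in probability via the same operator bookkeeping as Lemma~\ref{Lemma: bounding expectations}, then apply a Markov/Chebyshev step over the data to produce the $1/\sqrt{\delta}$ --- matches the paper exactly. Where you diverge is in the first step, and that divergence is substantive.

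The paper does \emph{not} derive the $\log\tfrac1\alpha$ dependence via Bernstein-type exponential concentration of $\hat\eta^\pi_\lambda$ under $P_\pi$, and in particular does not invoke the operator-valued Bernstein inequality here (that tool is used elsewhere, for $\norm{\M}_{\op}$). Instead it imports a ready-made deterministic bound from \citet[Equation~59]{permutations}: conditionally on the data and on $(Z_i)_{i=1}^s$, one has $\qqe \le C_6\, I \log\tfrac1\alpha$ \emph{almost surely}, where
\[
I^2 \;=\; \frac{1}{m^2(m-1)^2}\Bigg(\sum_{i\neq j}\langle a(X_i),a(X_j)\rangle_\h^2 + \sum_{i\neq j}\langle a(Y_i),a(Y_j)\rangle_\h^2 + 2\sum_{i,j}\langle a(X_i),a(Y_j)\rangle_\h^2\Bigg),
\]
with $a(x)=\B\SgL(K(\cdot,x)-\mu_P)$ and $\B=\gSLh\SL^{1/2}$. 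This is a known inequality for permutation quantiles of two-sample degenerate $U$-statistics; citing it removes precisely the obstacle you flag as the hardest part. From there the paper simply writes $a(Y)=b(Y)+\B\SgL(\mu_Q-\mu_P)$, expands the squares, bounds $\E[I^2\mid(Z_i)]$ term-by-term using Lemmas~\ref{lemma:bound U-statistic1}--\ref{lemma:bound U-statistic3} and~\ref{lemma: bound hs and op} (exactly as you outline), and finishes with Markov's inequality on $I^2$.

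Two smaller corrections. First, the $\zeta$ contribution does not arise from the permutation mean $\E_\pi[\hat\eta^\pi_\lambda]$ --- under random relabelling that mean is essentially zero, since the two permuted groups are drawn from the same pooled empirical measure. The $\zeta^2/(m(m-1))$ term appears when you expand $\langle a(Y_i),a(Y_j)\rangle^2$ using $a(Y)=b(Y)+\B\SgL(\mu_Q-\mu_P)$; it is a size term in $I^2$, not a centering term. Second, the final probabilistic step is Markov on $I^2$ (second moment of $I$), not Chebyshev on a centered quantity; this is why $\gamma$ scales with $1/\sqrt{\delta}$ rather than $1/\delta$.
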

\begin{proof}
Let $\B=\gSLh\SL^{1/2}$, $a(x)= \B\SgL(\kk(\cdot,x)-\mu_P)$ , and $$b(x)= \B\SgL(\kk(\cdot,x)-\mu_Q).$$ By \cite[Equation 59]{permutations}, we can conclude that given the samples $(Z_i)_{i=1}^s$ there exists a constant $C_6>0$ such that
\begin{equation*}
    \qqe \leq C_6I \log \frac{1}{\alpha},
\end{equation*}
almost surely, where
\begin{align*}
I^2 &:= \frac{1}{m^2(m-1)^2}\sum_{i\neq j}\inner{a(X_i)}{a(X_j)}^2_{\h} +\frac{1}{m^2(m-1)^2}\sum_{i\neq  j}\inner{a(Y_i)}{a(Y_j)}^2_{\h}\\
& \qquad +\frac{2}{m^2(m-1)^2}\sum_{i, j}\inner{a(X_i)}{a(Y_j)}^2_{\h}.
\end{align*}
We bound $I^2$ as 
\begin{align*}
I^2  &\stackrel{(*)}{\lesssim} \frac{1}{m^2(m-1)^2}\sum_{i\neq j}\inner{a(X_i)}{a(X_j)}^2_{\h} + \frac{1}{m^2(m-1)^2}\sum_{i\neq j}\inner{b(Y_i)}{b(Y_j)}^2_{\h} \\
& \qquad + \frac{1}{m^2(m-1)}\sum_{i=1}^{m} \inner{b(Y_i)}{\B\SgL(\mu_Q-\mu_P)}^2_{\h} +\frac{\zeta^2}{m(m-1)} \\ 
& \qquad  + \frac{1}{m^2(m-1)}\sum_{i=1}^{m} \inner{a(X_i)}{\B\SgL(\mu_Q-\mu_P)}^2_{\h}\\
&\qquad +\frac{1}{m^2(m-1)^2}\sum_{i,j}^{m} \inner{a(X_i)}{b(Y_j)}^2_{\h},
\end{align*}
where $(*)$ follows by writing $a(Y)=b(Y) + \B\SgL(\mu_Q-\mu_P)$ then using  $(\sum_{i=1}^k a_k)^2 \leq k \sum_{i=1}^k a_k^2,$ for any $a_k \in \R$, $k \in \N$. Then following the procedure similar to that in the proof of Lemma \ref{Lemma: bounding expectations}, we can bound the expectation of each term using Lemma \ref{lemma:bound U-statistic1}, \ref{lemma:bound U-statistic2},  \ref{lemma:bound U-statistic3}, \ref{lemma: bounds for eta}, and \ref{lemma: bound hs and op} resulting in
\begin{align*}
& \E(I^2 |(Z_i)_{i=1}^s) \\
&\lesssim \frac{\norm{\M}_{\op}^4}{m^2}\left(\Cl\norm{u}_{\Lp}^2+\Ntlsq+\sqrt{\Cl}\norm{u}^{3}_{\Lp}+\norm{u}_{\Lp}^2\right)+\frac{\zeta^2}{m^2} \\
& \lesssim \frac{\norm{\M}_{\op}^4}{(n+m)^2}\left(\Cl\norm{u}_{\Lp}^2+\Ntlsq+\sqrt{\Cl}\norm{u}^{3}_{\Lp}+\norm{u}_{\Lp}^2\right)
+\frac{\zeta^2}{(n+m)^2},
\end{align*}
where in the last inequality we used Lemma \ref{lem:mn_bound}.
 Thus using $ \qqe \leq C_6I \log \frac{1}{\alpha}$ and Markov's inequality, we obtain the desired result.
\end{proof}

\begin{appxlem} \label{lemma:adaptation}
Let $f$ be a function of a random variable $X$ and some (deterministic) parameter $\lambda\in \Lambda$, where $\Lambda$ has finite cardinality $|\Lambda|$.
Let $\gamma(\alpha,\lambda)$ be any function of $\lambda$ and $\alpha>0$. If for all $\lambda \in \Lambda$ and $\alpha>0$, $P\{f(X,\lambda) \geq \gamma(\alpha,\lambda)\} \leq \alpha$, then $$P\left\{\bigcup_{\lambda \in \Lambda}f(X,\lambda) \geq \gamma\left(\frac{\alpha}{|\Lambda|}, \lambda\right) \right\} \leq \alpha.$$
Furthermore, if $P\{f(X,\lambda^{*})\} \geq \gamma(\alpha,\lambda^*)\} \geq \delta$ for some $\lambda^* \in \Lambda$ and $\delta>0$, then
$$P\left\{\bigcup_{\lambda \in \Lambda}f(X,\lambda) \geq \gamma(\alpha,\lambda)\right\} \geq \delta.$$
\end{appxlem}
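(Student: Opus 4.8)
\textbf{Proof proposal for Lemma~\ref{lemma:adaptation}.}

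The plan is to establish the two claims separately, each by a short union-bound / monotonicity argument. For the first claim (level control of the aggregated test), I would start by observing that the event $\bigcup_{\lambda \in \Lambda}\{f(X,\lambda) \geq \gamma(\alpha/|\Lambda|,\lambda)\}$ is contained in the union, over $\lambda\in\Lambda$, of the individual rejection events $\{f(X,\lambda) \geq \gamma(\alpha/|\Lambda|,\lambda)\}$. Applying the union bound (Boole's inequality) gives
\begin{align*}
P\left\{\bigcup_{\lambda \in \Lambda}f(X,\lambda) \geq \gamma\!\left(\tfrac{\alpha}{|\Lambda|},\lambda\right)\right\}
&\leq \sum_{\lambda\in\Lambda} P\left\{f(X,\lambda) \geq \gamma\!\left(\tfrac{\alpha}{|\Lambda|},\lambda\right)\right\}.
\end{align*}
By hypothesis, applied with $\alpha$ replaced by $\alpha/|\Lambda|$, each summand is at most $\alpha/|\Lambda|$, and since there are $|\Lambda|$ terms, the sum is at most $\alpha$. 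This finishes the first part.

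For the second claim (power lower bound), the key point is that the aggregated rejection event contains the single-$\lambda^*$ rejection event: since $\lambda^*\in\Lambda$, we have the inclusion
$$\{f(X,\lambda^*)\geq \gamma(\alpha,\lambda^*)\}\ \subseteq\ \bigcup_{\lambda\in\Lambda}\{f(X,\lambda)\geq \gamma(\alpha,\lambda)\}.$$
Taking probabilities and using monotonicity of $P$ together with the hypothesis $P\{f(X,\lambda^*)\geq \gamma(\alpha,\lambda^*)\}\geq\delta$ yields the stated bound. I would note that the statement as written uses $\gamma(\alpha,\lambda)$ (not $\gamma(\alpha/|\Lambda|,\lambda)$) in the second part; in the application to Theorem~\ref{thm:perm adp typeII} one instead invokes this with $\alpha$ replaced throughout by $\alpha/|\Lambda|$, which is why the two parts fit together, but within the lemma itself no rescaling is needed for the power direction since the inclusion is purely set-theoretic.

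There is essentially no technical obstacle here: both parts are immediate consequences of Boole's inequality and set monotonicity, and the only thing to be careful about is bookkeeping of where the factor $1/|\Lambda|$ enters — the level statement requires dividing the per-test level by $|\Lambda|$, whereas the power statement is level-free in the sense that it holds for whatever $\alpha$ is fed in. The finiteness of $|\Lambda|$ is used only to make the union bound a finite sum (so the conclusion is non-vacuous); no measurability or independence assumptions on the family $\{f(X,\lambda)\}_{\lambda\in\Lambda}$ are needed, which is precisely why this lemma is applicable to the highly dependent permutation statistics $\hat\eta_\lambda$ across different $\lambda$.
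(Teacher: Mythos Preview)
Your proposal is correct and matches the paper's proof essentially line for line: the paper applies the union bound $P(A\cup B)\le P(A)+P(B)$ for the first claim and the set inclusion $\{f(X,\lambda^*)\ge\gamma(\alpha,\lambda^*)\}\subseteq\bigcup_{\lambda\in\Lambda}\{f(X,\lambda)\ge\gamma(\alpha,\lambda)\}$ for the second. Your additional remarks on where the $1/|\Lambda|$ factor enters and on the absence of independence assumptions are accurate and go slightly beyond what the paper writes, but the core argument is identical.
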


\begin{proof}
The proof follows directly from the fact that for any sets $A$ and $B$, $P(A \cup B) \leq P(A)+P(B)$:
\begin{align*}
    P\left\{\bigcup_{\lambda \in \Lambda}f(X,\lambda) \geq \gamma\left(\frac{\alpha}{|\Lambda|}, \lambda\right) \right\} & \leq \sum_{\lambda \in \Lambda} P\left\{f(X,\lambda) \geq \gamma\left(\frac{\alpha}{|\Lambda|}, \lambda\right) \right\} \\
    & \leq \sum_{\lambda \in \Lambda}\frac{\alpha}{|\Lambda|} = \alpha.
\end{align*}
For the second part, we have 
\begin{align*}
   P\left\{\bigcup_{\lambda \in \Lambda}f(X,\lambda) \geq \gamma(\alpha,\lambda)\right\} \geq  P\left\{f(X,\lambda^{*})\geq \gamma(\alpha,\lambda^*)\right\},
\end{align*}
and the result follows.
\end{proof}

\begin{appxlem} \label{lem: bound M operator}
Let $H$ be an RKHS with reproducing kernel k that is defined on a separable topological space, $\Y$. Define 
$$\Sigma=\frac{1}{2}\int_{\Y}\int_{\Y}(s(x)-s(y))\Htens(s(x)-s(y)) \, dR(x)dR(y),$$
where $s(x):=k(\cdot,x)$. Let $(\psi_i)_{i}$ be orthonormal eigenfunctions of $\Sigma$ with corresponding eigenvalues $(\lambda_i)_i$ that satisfy $C:= \sup_i\norm{\frac{\psi_i}{\sqrt{\lambda_i}}}_{\infty} < \infty$. Given $(Y_i)_{i=1}^r \stackrel{i.i.d.}{\sim} R$ with $r\geq 2$, define 
$$\hat{\Sigma}:=\frac{1}{2r(r-1)}\sum_{i\neq j}^{r} (s(Y_i)-s(Y_j)) \Htens (s(Y_i)-s(Y_j)).$$
Then for any $0\leq \delta \leq \frac{1}{2}$, $r\ge 136C^2\Nol\log\frac{8\Nol}{\delta} $ and $\lambda \leq \norm{\Sigma}_{\opH}$ where $\Sigma_{\lambda}:=\Sigma+\lambda\Id$ and $\Nol := \emph{Tr}(\Sigma_{\lambda}^{-1/2}\Sigma\Sigma_{\lambda}^{-1/2})$, the following hold:
\begin{enumerate}[label=(\roman*)]
    \item $P^r\left\{(Y_i)_{i=1}^r : \norm{\Sigma_{\lambda}^{-1/2}(\hat{\Sigma}-\Sigma)\Sigma_{\lambda}^{-1/2}}_{\opH}\leq \frac{1}{2}\right\} \geq 1-2\delta$;
    \item $P^r\left\{(Y_i)_{i=1}^r : \sqrt{\frac{2}{3}}\leq \norm{\Sigma_{\lambda}^{1/2}(\hat{\Sigma}+\lambda \Id)^{-1/2}}_{\opH} \leq \sqrt{2} \right\} \geq 1-2\delta$;
    \item $P^r\left\{(Y_i)_{i=1}^r :  \norm{\Sigma_{\lambda}^{-1/2}(\hat{\Sigma}+\lambda \Id)^{1/2}}_{\opH} \leq \sqrt{\frac{3}{2}} \right\} \geq 1-2\delta$.
\end{enumerate}
\end{appxlem}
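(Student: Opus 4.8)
\textbf{Proof proposal for Lemma~\ref{lem: bound M operator}.}

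The plan is to apply a Bernstein-type concentration inequality for self-adjoint Hilbert–Schmidt operator-valued U-statistics, exactly the kind cited from \citet{kpca}. The object of interest is the centered U-statistic $\Sigma_\lambda^{-1/2}(\hat\Sigma-\Sigma)\Sigma_\lambda^{-1/2}$, which after conditioning is a degenerate two-sample-free one-sample U-statistic of order $2$ in the i.i.d.\ sample $(Y_i)_{i=1}^r$. First I would rewrite $\hat\Sigma$ using the symmetric kernel $\Theta(x,y):=\tfrac12(s(x)-s(y))\Htens(s(x)-s(y))$, so that $\hat\Sigma=\binom{r}{2}^{-1}\sum_{i<j}\Theta(Y_i,Y_j)$ and $\E\Theta(Y_1,Y_2)=\Sigma$. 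Conjugating by $\Sigma_\lambda^{-1/2}$ gives a U-statistic with summands $\tilde\Theta(x,y):=\Sigma_\lambda^{-1/2}\Theta(x,y)\Sigma_\lambda^{-1/2}$, and the task reduces to bounding the operator norm of its centered version.

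The key steps, in order. (1) \emph{Boundedness of the summands.} Using $\|s(x)-s(y)\|_H\le 2\sqrt{\kappa}$ one gets $\|\tilde\Theta(x,y)\|_{\opH}\le 2\kappa/\lambda$; but the sharper, dimension-free control comes from the uniform-boundedness hypothesis $C=\sup_i\|\psi_i/\sqrt{\lambda_i}\|_\infty<\infty$. Expanding $\langle s(x)-s(y),\Sigma_\lambda^{-1/2}(\cdot)\rangle$ in the eigenbasis $(\psi_i)$ shows $\|\Sigma_\lambda^{-1/2}(s(x)-s(y))\|_H^2 = \sum_i \frac{(\psi_i(x)-\psi_i(y))^2}{\lambda_i+\lambda}\le \sum_i \frac{\lambda_i}{\lambda_i+\lambda}\cdot\frac{(\psi_i(x)-\psi_i(y))^2}{\lambda_i}\lesssim C^2\mathcal N_1(\lambda)$, giving a rank-one bound on each $\tilde\Theta(x,y)$ of order $C^2\mathcal N_1(\lambda)$. (2) \emph{Variance proxy.} The second moment $\E[\tilde\Theta(Y_1,Y_2)^2]$ in operator (or Hilbert–Schmidt) norm is controlled by $\|\Sigma_\lambda^{-1/2}\Sigma\Sigma_\lambda^{-1/2}\|_{\opH}\le 1$ times $\mathcal N_1(\lambda)$, again after expanding in eigenfunctions and using that the ``variance operator'' is dominated by $\Sigma_\lambda^{-1/2}\Sigma\Sigma_\lambda^{-1/2}\otimes(\text{trace term})$. (3) \emph{Apply the matrix/operator Bernstein inequality for U-statistics} (e.g.\ via the Hoeffding decomposition into a leading linear term plus a degenerate quadratic remainder, bounding each with the Bernstein inequality from \citet{kpca}), yielding
\[
P^r\Big\{\|\Sigma_\lambda^{-1/2}(\hat\Sigma-\Sigma)\Sigma_\lambda^{-1/2}\|_{\opH}> t\Big\}\le 8\mathcal N_1(\lambda)\exp\!\Big(-\frac{c\,r\,t^2}{\mathcal N_1(\lambda)(1+t)}\Big)
\]
for suitable constant $c$; choosing $t=\tfrac12$ and requiring $r\ge 136C^2\mathcal N_1(\lambda)\log\frac{8\mathcal N_1(\lambda)}{\delta}$ makes the right side $\le 2\delta$, which is part~(i). (4) \emph{Deduce (ii) and (iii) from (i) by operator perturbation.} On the event of (i), $\hat\Sigma+\lambda\Id = \Sigma_\lambda^{1/2}(\Id + E)\Sigma_\lambda^{1/2}$ with $\|E\|_{\opH}\le\tfrac12$, so $\tfrac12\Id\preccurlyeq \Sigma_\lambda^{-1/2}(\hat\Sigma+\lambda\Id)\Sigma_\lambda^{-1/2}\preccurlyeq \tfrac32\Id$; taking square roots and using $\sqrt{1/2}\ge\sqrt{2/3}\cdot\sqrt{3/4}$-type bookkeeping gives the stated two-sided bounds $\sqrt{2/3}\le\|\Sigma_\lambda^{1/2}(\hat\Sigma+\lambda\Id)^{-1/2}\|_{\opH}\le\sqrt2$ and $\|\Sigma_\lambda^{-1/2}(\hat\Sigma+\lambda\Id)^{1/2}\|_{\opH}\le\sqrt{3/2}$, all on the same probability-$(1-2\delta)$ event. (Since this is the operator square-root, I would invoke operator monotonicity of $x\mapsto\sqrt x$ to pass from the Loewner bounds on the operator to norm bounds on its square root.)

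The main obstacle I anticipate is step (1)–(2): getting the \emph{right} boundedness and variance constants so that the final sample-size condition reads cleanly as $r\ge 136C^2\mathcal N_1(\lambda)\log\frac{8\mathcal N_1(\lambda)}{\delta}$ rather than with extra $\kappa$ or $1/\lambda$ factors. This requires carefully exploiting $C=\sup_i\|\psi_i/\sqrt{\lambda_i}\|_\infty<\infty$ to replace the crude $1/\lambda$ operator bound on $\tilde\Theta$ by the much better $C^2\mathcal N_1(\lambda)$ bound, and then tracking the interplay between the "intrinsic dimension" $\mathcal N_1(\lambda)=\mathrm{Tr}(\Sigma_\lambda^{-1/2}\Sigma\Sigma_\lambda^{-1/2})$ appearing in both the variance and the log-cardinality (dimension) factor of the operator Bernstein bound; this is exactly the computation underlying \citet[Lemma B.2]{kpca}, and I would essentially port that argument, with the condition $\lambda\le\|\Sigma\|_{\opH}$ used to ensure $\mathcal N_1(\lambda)\ge 1/2$ (so constants like $8\mathcal N_1(\lambda)$ and the $136$ absorb the slack). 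Steps (3) and (4) are then routine.
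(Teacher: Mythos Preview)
Your proposal is correct and essentially mirrors the paper's argument: the paper likewise bounds $\sup_{x,y}\|\Sigma_\lambda^{-1/2}(s(x)-s(y))\|_H^2\le 4C^2\Nol$ via the eigenexpansion and the hypothesis $C<\infty$, controls the variance of the H\'ajek projection $\zeta(x):=\E_Y[\tilde\Theta(x,Y)]$ in Loewner order by $2C^2\Nol\cdot\Sigma_\lambda^{-1/2}\Sigma\Sigma_\lambda^{-1/2}$, invokes the intrinsic-dimension Bernstein inequality for operator-valued U-statistics from \citet[Theorem~D.3(ii)]{kpca} directly (rather than an explicit Hoeffding split), and then deduces (ii)--(iii) from (i) via the identity $\|\Sigma_\lambda^{1/2}(\hat\Sigma+\lambda\Id)^{-1/2}\|_{\opH}^2=\|(\Id-B_r)^{-1}\|_{\opH}$ with $B_r:=\Sigma_\lambda^{-1/2}(\Sigma-\hat\Sigma)\Sigma_\lambda^{-1/2}$. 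The only step you gloss over is the Mercer-type identity $\langle s(x)-s(y),s(x)-s(y)\rangle_H=\sum_i(\psi_i(x)-\psi_i(y))^2$, which the paper proves separately and which is needed so that $s(x)-s(y)$ has no component in $\mathrm{Ker}(\Sigma)$ and your eigenexpansion of $\|\Sigma_\lambda^{-1/2}(s(x)-s(y))\|_H^2$ is exact rather than merely a lower bound.
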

\begin{proof}
\emph{(i)} Define $A(x,y):=\frac{1}{\sqrt{2}}(s(x)-s(y))$, $U(x,y) := \Sigma_{\lambda}^{-1/2} A(x,y)$, $Z(x,y)= U(x,y) \Htens U(x,y)$. Then
$$\Sigma_{\lambda}^{-1/2}(\hat{\Sigma}-\Sigma)\Sigma_{\lambda}^{-1/2} = \frac{1}{r(r-1)}\sum_{i\neq j}Z(X_i,Y_j)-\E(Z(X,Y)). $$
Also, 
\begin{align}
  \sup_{x,y}\norm{Z(x,y)}_{\hsH} &= \sup_{x,y} \norm{U(x,y)}_{H}^2 = \frac{1}{2} \sup_{x,y}\norm{\Sigma_{\lambda}^{-1/2}(s(x)-s(y))}_{H}^2\nonumber\\
  &= \frac{1}{2} \sup_{x,y} \inner{s(x)-s(y)}{\Sigma_{\lambda}^{-1}(s(x)-s(y))}_{H} \nonumber\\
   & \stackrel{(*)}{=}\frac{1}{2}\sup_{x,y}\sum_{i} \frac{\lambda_i}{\lambda_i+\lambda}\left(\frac{\psi_i(x)-\psi_i(y)}{\sqrt{\lambda_i}}\right)^2 
  \leq 2C^2 \Nol,\label{Eq:Z}
\end{align}
where in $(*)$ we used that $\inner{s(x)-s(y)}{s(x)-s(y)}_H=\sum_{i}(\psi_i(x)-\psi_i(y))^2$ which is proved below. To this end, define $a(x)=s(x)-\mu_R$ so that
\begin{align*}
\inner{a(x)}{a(y)}_{H} &= \inner{k(\cdot,x)-\mu_R}{k(\cdot,y)-\mu_R}_{H} \\
&=k(x,y)-\E_{R}k(x,Y)-\E_{R}k(X,y)-\E_{R\times R}k(X,Y) := \bar{k}(x,y).
\end{align*}
Therefore,
\begin{align*}
    \inner{s(x)-s(y)}{s(x)-s(y)}_{H}&=\inner{a(x)-a(y)}{a(x)-a(y)}_{H}\\
    &=\bar{k}(x,x)-2\bar{k}(x,y)+\bar{k}(y,y).
\end{align*}
Following the same argument as in proof of Lemma \ref{lemma: bound hs and op}\emph{(i)}, we obtain 
$$\bar{k}(x,y)=\sum_i\bar{\psi_i}(x)\bar{\psi_i}(y),$$ where $\bar{\psi_i}=\psi_i-\E_R\psi_i$, yielding
\begin{align*}
    \sum_{i}(\psi_i(x)-\psi_i(y))^2=\sum_{i}(\bar{\psi}_i(x)-\bar{\psi}_i(y))^2 = \bar{k}(x,x)-2\bar{k}(x,y)+\bar{k}(y,y).
\end{align*}
Define $\zeta(x) := \E_{Y}[Z(x,Y)]$. Then 
\begin{align*}
    \sup_{x} \norm{\zeta(x)}_{\opH} &\leq \sup_{x,y}\norm{U(x,y)\Htens U(x,y)}_{\opH}
    =\sup_{x,y}\norm{U(x,y)}_{H}^2 \leq 2C\Nol,
\end{align*}
where the last inequality follows from \eqref{Eq:Z}. Furthermore, we have
\begin{align*}
    &\E (\zeta(X)-\Sigma)^2 \preccurlyeq  \E \zeta^2(X) = \E \left(\Sigma_{\lambda}^{-1/2} \E_{Y}[A(X,Y)\Htens A(X,Y)]\Sigma_{\lambda}^{-1/2}\right)^2 \\
    & = \E  \left(\Sigma_{\lambda}^{-1/2} \E_{Y}[A(X,Y)\Htens A(X,Y)] \Sigma_{\lambda}^{-1} \E_{Y}[A(X,Y)\Htens A(X,Y)] \Sigma_{\lambda}^{-1/2} \right) \\
    & \preccurlyeq  \sup_{x} \norm{\zeta(x)}_{\opH} \E  \left(\Sigma_{\lambda}^{-1/2} \E_{Y}[A(X,Y)\Htens A(X,Y)] \Sigma_{\lambda}^{-1/2}\right)  \\
    & \preccurlyeq 2C\Nol \Sigma_{\lambda}^{-1/2}\Sigma\Sigma_{\lambda}^{-1/2} := S.
\end{align*}
Note that $\norm{S}_{\opH} \leq 2C\Nol := \sigma^2$ and $$d := \frac{\text{Tr}(S)}{\norm{S}_{\opH}} \le \frac{\Nol(\norm{\Sigma}_{\opH}+\lambda)}{\norm{\Sigma}_{\opH}} \stackrel{(*)}{\leq} 2 \Nol,$$ where $(*)$ follows by using $\lambda \leq \norm{\Sigma}_{\opH}$. Using Theorem D.3\emph{(ii)} from \citet{kpca} , we get 
\begin{align*}
    &P^r\left\{(Y_i)^r_{i=1}:\norm{\Sigma_{\lambda}^{-1/2}(\hat{\Sigma}-\Sigma)\Sigma_{\lambda}^{-1/2}}_{\opH}\leq \frac{4C\beta\Nol}{r}+\sqrt{\frac{24C\beta\Nol}{r}}\right.\\
    &\qquad\qquad\qquad\left.+\frac{16C\Nol\log\frac{3}{\delta}}{r}\right\}\geq 1-2\delta,
\end{align*} 
where $\beta:=\frac{2}{3}\log\frac{4d}{\delta}$. Then using $\Nol \geq \frac{\norm{\Sigma}_{\opH}}{\lambda+\norm{\Sigma}_{\opH}} \geq \frac{1}{2} > \frac{3}{8}$, it can be verified that
$\frac{4C\beta\Nol}{r}+\sqrt{\frac{24C\beta\Nol}{r}}+\frac{16C\Nol\log\frac{3}{\delta}}{r} \leq \frac{32}{3}w+\sqrt{8w}$, where $w:=\frac{2C\Nol\log\frac{8\Nol}{\delta}}{r}$. Note that $w \leq \frac{1}{68}$ implies $\frac{32}{3}w+\sqrt{8w} \leq \frac{1}{2}.$ This means, if $r\ge 136C^2\Nol\log\frac{8\Nol}{\delta}$ and $\lambda \leq \norm{\Sigma}_{\opH}$, we get 
$$P^r\left\{(Y_i)_{i=1}^r : \norm{\Sigma_{\lambda}^{-1/2}(\hat{\Sigma}-\Sigma)\Sigma_{\lambda}^{-1/2}}_{\opH}\leq \frac{1}{2}\right\} \geq 1-2\delta.$$
\\
\emph{(ii)} By defining $B_r := \Sigma_{\lambda}^{-1/2}(\Sigma-\hat{\Sigma})\Sigma_{\lambda}^{-1/2}$, we have 
\begin{align*}
    \norm{\Sigma_{\lambda}^{1/2}(\hat{\Sigma}+\lambda\Id)^{-1/2}}_{\opH} &= \norm{(\hat{\Sigma}+\lambda\Id)^{-1/2}\Sigma_{\lambda}(\hat{\Sigma}+\lambda\Id)^{-1/2}}_{\opH}^{1/2} \\
    &= \norm{\Sigma_{\lambda}^{1/2}(\hat{\Sigma}+\lambda\Id)^{-1}\Sigma_{\lambda}^{1/2}}_{\opH}^{1/2} \\
    &= \norm{(\Id-B_r)^{-1}}_{\opH}^{1/2} \leq (1-\norm{B_r}_{\opH})^{-1/2},
\end{align*}
where the last inequality holds whenever $\norm{B_r}_{\opH} < 1$. Similarly,
$$ \norm{\Sigma_{\lambda}^{1/2}(\hat{\Sigma}+\lambda\Id)^{-1/2}}_{\opH} = \norm{(\Id+(-B_r))^{-1}}_{\opH}^{1/2} \geq (1+\norm{B_r}_{\opH})^{-1/2}.$$
The result therefore follows from \emph{(i)}.\\

\noindent \emph{(iii)} Since
$$\norm{\Sigma_{\lambda}^{-1/2}(\hat{\Sigma}+\lambda\Id)^{1/2}}_{\opH} = \norm{\Id-B_r}_{\opH}^{1/2} \leq (1+\norm{B_r}_{\opH})^{1/2},$$
the result follows from \emph{(i)}.
\end{proof}

\begin{appxlem} \label{Lem: distance}
For probability measures $P$ and $Q$,   $$d(P,Q)=\sqrt{\chi^2\left(P||\frac{P+Q}{2}\right)}$$ is a metric. Futhermore $H^2(P,Q) \leq d^2(P,Q) \leq 2H^2(P,Q)$, where $H(P,Q)$ denotes the Hellinger distance between $P$ and $Q$.
\end{appxlem}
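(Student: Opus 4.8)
The plan is to reduce both assertions to pointwise inequalities for the integrands, after passing to densities with respect to a common dominating measure. Fix probability measures $P,Q$ and a $\sigma$-finite $\mu$ with $P+Q\ll\mu$, and set $p=dP/d\mu$, $q=dQ/d\mu$. First I would record that, since $\tfrac{P+Q}{2}\gg P$,
$$\chi^2(P\Vert\tfrac{P+Q}{2})=\int\frac{\left(p-\tfrac{p+q}{2}\right)^2}{\tfrac{p+q}{2}}\,d\mu=\frac12\int\frac{(p-q)^2}{p+q}\,d\mu=:d^2(P,Q),$$
where the integrand is read as $0$ on $\{p+q=0\}$; this is exactly the quantity $\underline{\rho}^2(P,Q)$ of the introduction. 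The right-hand side is unchanged if $(P,Q)$ is replaced by $(Q,P)$, so $d$ is symmetric, and it is independent of the choice of $\mu$ by the usual homogeneity argument for $f$-divergences (indeed $d^2(P,Q)=\int\phi(dQ/dP)\,dP$ with $\phi(r)=\tfrac{(1-r)^2}{2(1+r)}$).

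The only nontrivial part of the metric claim is the triangle inequality, which I would obtain from an isometric Hilbert-space embedding of the pointwise map $\rho(a,b):=|a-b|/\sqrt{a+b}$ (with $\rho(0,0):=0$) on $[0,\infty)$. The crux is the observation that $K(a,b):=\frac{4ab}{a+b}$ (with $K(0,\cdot):=0$) is a positive definite kernel on $[0,\infty)$: for $a,b>0$ one has $\frac{1}{a+b}=\int_0^\infty e^{-at}e^{-bt}\,dt$, whence for any reals $c_1,\dots,c_n$ and points $a_1,\dots,a_n\ge 0$,
$$\sum_{i,j}c_ic_jK(a_i,a_j)=4\int_0^\infty\Big(\sum_i c_ia_ie^{-a_it}\Big)^2dt\ge 0.$$
Hence there is a Hilbert space $\mathcal{G}$ and a map $\iota:[0,\infty)\to\mathcal{G}$ with $\langle\iota(a),\iota(b)\rangle_{\mathcal{G}}=K(a,b)$, so that $\|\iota(a)-\iota(b)\|_{\mathcal{G}}^2=K(a,a)+K(b,b)-2K(a,b)=2a+2b-\frac{8ab}{a+b}=\frac{2(a-b)^2}{a+b}=2\rho^2(a,b)$. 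Consequently $d^2(P,Q)=\tfrac14\int\|\iota(p(x))-\iota(q(x))\|_{\mathcal{G}}^2\,d\mu(x)=\tfrac14\|\iota\circ p-\iota\circ q\|_{L^2(\mu;\mathcal{G})}^2$, and since $\|\iota\circ p\|_{L^2(\mu;\mathcal{G})}^2=\int K(p,p)\,d\mu=2<\infty$, the assignment $P\mapsto\iota\circ(dP/d\mu)$ takes values in $L^2(\mu;\mathcal{G})$ and $d(P,Q)=\tfrac12\|\iota\circ p-\iota\circ q\|_{L^2(\mu;\mathcal{G})}$. For three measures $P,Q,S$, choosing a common dominating $\mu$, symmetry, non-negativity and the triangle inequality are inherited from the norm of $L^2(\mu;\mathcal{G})$, while $d(P,Q)=0$ forces $\rho(p,q)=0$ $\mu$-a.e., i.e.\ $p=q$ $\mu$-a.e., i.e.\ $P=Q$; so $d$ is a metric.

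For the comparison with the Hellinger distance $H^2(P,Q)=\tfrac12\int(\sqrt p-\sqrt q)^2\,d\mu$, I would use $(p-q)^2=(\sqrt p-\sqrt q)^2(\sqrt p+\sqrt q)^2$ to write $\frac{(p-q)^2}{p+q}=(\sqrt p-\sqrt q)^2\cdot\frac{(\sqrt p+\sqrt q)^2}{p+q}$ on $\{p+q>0\}$, together with $\frac{(\sqrt p+\sqrt q)^2}{p+q}=1+\frac{2\sqrt{pq}}{p+q}\in[1,2]$ by the inequality $0\le 2\sqrt{pq}\le p+q$. This yields the pointwise sandwich $(\sqrt p-\sqrt q)^2\le\frac{(p-q)^2}{p+q}\le 2(\sqrt p-\sqrt q)^2$ (and $0=0$ on $\{p+q=0\}$); integrating and multiplying by $\tfrac12$ gives $H^2(P,Q)\le d^2(P,Q)\le 2H^2(P,Q)$.

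The main obstacle is the triangle inequality for $d$; everything else is elementary. The key step is recognizing that $\frac{4ab}{a+b}$ is positive definite, which turns the triangle inequality into Minkowski's inequality in $L^2(\mu;\mathcal{G})$. An alternative to the explicit embedding is to invoke Schoenberg's theorem (that $\rho^2$ being of negative type makes $\rho$ a metric) together with the passage from a pointwise metric to its $L^2$-aggregate via Minkowski; the explicit embedding above is self-contained and also makes well-definedness and the separation of points transparent.
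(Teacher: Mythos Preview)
Your proof is correct. The comparison with the Hellinger distance is handled exactly as in the paper, but your argument for the triangle inequality is genuinely different. The paper proves a pointwise inequality
\[
\frac{dP-dQ}{\sqrt{d(P+Q)}}\le\frac{dP-dZ}{\sqrt{d(P+Z)}}+\frac{dZ-dQ}{\sqrt{d(Z+Q)}}
\]
by writing $d(P+Q)=\alpha\,d(P+Z)+(1-\alpha)\,d(Z+Q)$ with $\alpha=\frac{dP-dZ}{dP-dQ}$ and appealing to convexity of $t\mapsto t^{-1/2}$, then squares and applies Cauchy--Schwarz to pass to the integral. Your route instead observes that $K(a,b)=\frac{4ab}{a+b}=\langle 2ae^{-a\cdot},2be^{-b\cdot}\rangle_{L^2(0,\infty)}$ is positive definite on $[0,\infty)$, yielding an isometric embedding $P\mapsto \iota\circ(dP/d\mu)$ into $L^2(\mu;\mathcal{G})$ with $d(P,Q)=\tfrac12\|\iota\circ p-\iota\circ q\|$, so the triangle inequality is simply Minkowski's.

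The paper's computation is more hands-on but requires some care with the signs and domain of $\alpha$ (which need not lie in $[0,1]$, so the ``convexity'' step is delicate); your embedding argument sidesteps these issues entirely, and as a bonus shows that $d$ is not merely a metric but a Hilbertian one (arising from a norm), which is a strictly stronger conclusion. Both proofs are short; yours is arguably more robust and conceptually informative.
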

\begin{proof}
Observe that $d^2(P,Q)= \chi^2\left(P||\frac{P+Q}{2}\right) = \frac{1}{2}\int\frac{(dP-dQ)^2}{d(P+Q)}$. Thus it is obvious that $d(P,P)=0$, $d(P,Q)=d(Q,P)$ and $d(P,Q)>0$ if $P \neq Q$. Hence, it remains just to check the triangular inequality. For that matter, we will first show that
\begin{equation}
    \frac{dP-dQ}{\sqrt{d(P+Q)}}\leq \frac{dP-dZ}{\sqrt{d(P+Z)}}+\frac{dZ-dQ}{\sqrt{d(Z+Q)}},\label{Eq:tri}
\end{equation}
where $Z$ is a probability measure. Defining $\alpha=\frac{dP-dZ}{dP-dQ}$, note that $d(P+Q)=\alpha\cdot d(P+Z)+ (1-\alpha)\cdot `d(Z+Q)$. Therefore, using the convexity of the function $\frac{1}{\sqrt{x}}$ over $[0,\infty)$ yields 
\begin{align*}
    \frac{dP-dQ}{\sqrt{d(P+Q)}} &\leq (dP-dQ)\left(\frac{\alpha}{\sqrt{d(P+Z)}}+\frac{1-\alpha}{\sqrt{d(Z+Q)}}\right)\\
    &= \frac{dP-dZ}{\sqrt{d(P+Z)}}+\frac{dZ-dQ}{\sqrt{d(Z+Q)}}.
\end{align*}
Then by squaring \eqref{Eq:tri} and applying Cauchy-Schwartz inequality, we get 
\begin{align*}
    &\frac{1}{2}\int\frac{(dP-dQ)^2}{d(P+Q)}\\ &\leq \frac{1}{2}\int\frac{(dP-dZ)^2}{d(P+Z)}+\frac{1}{2}\int\frac{(dZ-dQ)^2}{d(Z+Q)}+\int\frac{(dP-dZ)(dZ-dQ)}{\sqrt{d(P+Z)}\sqrt{d(Z+Q)}} \\
    & \leq \frac{1}{2}\int\frac{(dP-dZ)^2}{d(P+Z)}+ \frac{1}{2}\int\frac{(dZ-dQ)^2}{d(Z+Q)}+ \left(\int \frac{(dP-dZ)^2}{d(P+Z)}\right)^{1/2}\left(\int\frac{(dZ-dQ)^2}{d(Z+Q)}\right)^{1/2} \\ & =
    \left(d(P,Z)+d(Z,Q)\right)^2,
\end{align*}
which is equivalent to $d(P,Q) \leq d(P,Z)+d(Z,Q)$.
For the relation with Hellinger distance, observe that $H^2(P,Q) = \frac{1}{2}\int (\sqrt{dP}-\sqrt{dQ})^2$, and $d^2(P,Q) = \frac{1}{2}\int (\sqrt{dP}-\sqrt{dQ})^2 \frac{(\sqrt{dP}+\sqrt{dQ})^2}{d(P+Q)}$. Since $d(P+Q)\leq (\sqrt{dP}+\sqrt{dQ})^2\leq 2 d(P+Q)$, the result follows.
\end{proof}

\begin{appxlem} \label{lem:MMD bounds}
Let $u=\frac{dP}{dR}-1 \in \emph{\range}(\T^{\theta})$ and $D_{\mathrm{MMD}}^2=\norm{\mu_P-\mu_Q}_{\h}^2$. Then 
$$D_{\mathrm{MMD}}^2 \geq 4\norm{u}_{\Lp}^{\frac{2\theta+1}{\theta}}  \norm{\T^{-\theta}u}_{\Lp}^{\frac{-1}{\theta}}.$$
\end{appxlem}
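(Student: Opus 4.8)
The plan is to express the three quantities in the inequality in terms of the Fourier coefficients $c_i := \inner{u}{\Tilde{\phi_i}}_{\Lp}$ of $u$ in the eigenbasis of $\T$, and then to obtain the bound from a single application of Hölder's inequality with suitably chosen exponents. First I would observe that since $u\in\range(\T^\theta)$ and $\T^\theta=\sum_i\lambda_i^\theta(\Tilde{\phi_i}\ltens\Tilde{\phi_i})$ (taking $g_\lambda(x)=x^\theta$, so $g_\lambda(0)=0$), the element $u$ lies in $\overline{\text{span}\{\Tilde{\phi_i}:i\in I\}}$; hence Parseval gives $\norm{u}_{\Lp}^2=\sum_i c_i^2$. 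Writing $u=\T^\theta f$ with $f\in\Lp$ (and $f$ taken in $\overline{\text{span}\{\Tilde{\phi_i}\}}$, which makes $\T^{-\theta}u$ well defined) yields $c_i=\lambda_i^\theta\inner{f}{\Tilde{\phi_i}}_{\Lp}$, whence $\norm{\T^{-\theta}u}_{\Lp}^2=\norm{f}_{\Lp}^2=\sum_i\lambda_i^{-2\theta}c_i^2$. Finally, \eqref{Eq:equiv} gives $D_{\mathrm{MMD}}^2=4\inner{\T u}{u}_{\Lp}=4\sum_i\lambda_i c_i^2$.

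Set $a_i:=c_i^2\ge0$, $A:=\sum_i a_i=\norm{u}_{\Lp}^2$, $B:=\sum_i\lambda_i a_i=\tfrac14 D_{\mathrm{MMD}}^2$, and $C:=\sum_i\lambda_i^{-2\theta}a_i=\norm{\T^{-\theta}u}_{\Lp}^2$. Since $\norm{u}_{\Lp}^{(2\theta+1)/\theta}=A^{(2\theta+1)/(2\theta)}$ and $\norm{\T^{-\theta}u}_{\Lp}^{-1/\theta}=C^{-1/(2\theta)}$, the claim is equivalent to $B\ge A^{(2\theta+1)/(2\theta)}C^{-1/(2\theta)}$. The key step is to interpolate $a_i$ between $\lambda_i a_i$ and $\lambda_i^{-2\theta}a_i$: choosing $s=\tfrac{2\theta}{2\theta+1}$ and $t=\tfrac{1}{2\theta+1}$ (so that $s+t=1$ and $s-2\theta t=0$), one has the pointwise identity $a_i=(\lambda_i a_i)^s(\lambda_i^{-2\theta}a_i)^t$, and Hölder's inequality with conjugate exponents $1/s$ and $1/t$ gives $A=\sum_i a_i\le\big(\sum_i\lambda_i a_i\big)^s\big(\sum_i\lambda_i^{-2\theta}a_i\big)^t=B^sC^t$. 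Raising both sides to the power $1/s=\tfrac{2\theta+1}{2\theta}$ and noting that $t/s=\tfrac{1}{2\theta}$ yields $A^{(2\theta+1)/(2\theta)}\le B\,C^{1/(2\theta)}$, i.e. $B\ge A^{(2\theta+1)/(2\theta)}C^{-1/(2\theta)}$, which is exactly the assertion after multiplying by $4$ and translating back via the identities above. The degenerate case $u=0$ is trivial, and when $u\ne0$ all three sums are positive and finite because $u\in\range(\T^\theta)$.

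I do not expect a genuine obstacle; the only point needing care is the bookkeeping of exponents, namely checking that the interpolation weights $s,t$ are precisely the pair that both sums to $1$ and annihilates the powers of $\lambda_i$, so that the resulting Hölder bound produces the exponent $\tfrac{2\theta+1}{\theta}$ on $\norm{u}_{\Lp}$ and $-\tfrac1\theta$ on $\norm{\T^{-\theta}u}_{\Lp}$ rather than some other pair.
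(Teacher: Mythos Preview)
Your proposal is correct and is essentially the same argument as the paper's: both interpolate $\norm{u}_{\Lp}^2$ between $\norm{\T^{1/2}u}_{\Lp}^2$ and $\norm{\T^{-\theta}u}_{\Lp}^2$ via a single application of H\"older's inequality with exponents $\tfrac{2\theta+1}{2\theta}$ and $2\theta+1$, the only cosmetic difference being that the paper parametrizes the sums by the coefficients $\inner{f}{\Tilde{\phi_i}}_{\Lp}$ of $f=\T^{-\theta}u$ whereas you use $c_i=\inner{u}{\Tilde{\phi_i}}_{\Lp}$.
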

\begin{proof}
Since $u \in \range(\T^\theta)$, then $u=\T^{\theta}f$ for some $f \in \Lp$. Thus,
\begin{align*}
  \norm{u}^2_{\Lp}&=\sum_{i}\lambda_i^{2\theta}\inner{f}{\tilde{\phi_i}}_{\Lp}^2  =\sum_{i}\lambda_i^{2\theta}\inner{f}{\tilde{\phi_i}}_{\Lp}^{\frac{4\theta}{2\theta+1}}\inner{f}{\tilde{\phi_i}}_{\Lp}^{\frac{2}{2\theta+1}} \\ & \stackrel{(*)}{\leq} \left(\sum_{i}\lambda_i^{2\theta+1}\inner{f}{\tilde{\phi_i}}_{\Lp}^2\right)^{\frac{2\theta}{2\theta+1}}\left(\sum_{i}\inner{f}{\tilde{\phi_i}}_{\Lp}^2\right)^{\frac{1}{2\theta+1}} \\ & = \norm{\T^{1/2}u}_{\Lp}^{\frac{4\theta}{2\theta+1}} \norm{\T^{-\theta}u}_{\Lp}^{\frac{2}{2\theta+1}},
\end{align*}
where $(*)$ holds by Holder's inequality. The desired result follows by noting that $D_{\mathrm{MMD}}^2 = 4 \norm{\T^{1/2}u}^2_{\Lp}.$
\end{proof}

\begin{appxlem}\label{lem:cnst} Suppose $(A_1)$ and $(A_3)$ hold. Then
$\lim_{\lambda\rightarrow 0}xg_\lambda(x)\asymp 1$ for all $x\in\Gamma$.
\end{appxlem}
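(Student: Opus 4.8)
The plan is to unpack the two assumptions $(A_1)$ and $(A_3)$ and show they force $x g_\lambda(x)$ to be bounded above and below by positive constants, uniformly in $x \in \Gamma$, as $\lambda \to 0$. Recall $(A_1)$ already gives the upper bound: $\sup_{x \in \Gamma} |x g_\lambda(x)| \le C_1$ for all $\lambda > 0$, so $x g_\lambda(x) \le C_1$ needs nothing further. The content of the lemma is therefore the lower bound, namely that there is a constant $c > 0$ (independent of $x$) such that $x g_\lambda(x) \ge c$ for all $x \in \Gamma$ once $\lambda$ is small enough.

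For the lower bound I would argue by contradiction using $(A_3)$. Fix $x \in \Gamma$. If $x g_\lambda(x) \ge B_3$, we are immediately done with $c = B_3$. Otherwise $x g_\lambda(x) < B_3$, and then $(A_3)$ applies: $(B_3 - x g_\lambda(x)) x^{2\varphi} \le C_3 \lambda^{2\varphi}$, which rearranges to
\begin{equation*}
x g_\lambda(x) \ge B_3 - C_3 \left(\frac{\lambda}{x}\right)^{2\varphi}.
\end{equation*}
This is useful when $x$ is not too small relative to $\lambda$: if $x \ge \left(\frac{2 C_3}{B_3}\right)^{1/2\varphi} \lambda =: c_0 \lambda$, then $C_3 (\lambda/x)^{2\varphi} \le B_3/2$, giving $x g_\lambda(x) \ge B_3/2$. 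So on the region $x \ge c_0 \lambda$ the lower bound holds with constant $B_3/2$. It remains to handle the region $0 \le x < c_0 \lambda$. Here I would invoke $(A_1)$ again, but applied cleverly: $(A_3)$ together with $(A_1)$ is known (this is essentially the first sentence of the Remark following $(A_4)$, and presumably stated precisely elsewhere) to imply $x g_\lambda(x) \asymp 1$ as $\lambda \to 0$; the mechanism is that on small $x$ the regularizer must behave like $g_\lambda(x) \approx g_\lambda(0)$ and one uses $(A_3)$ at a comparable scale. Concretely, for $x < c_0\lambda$ one can compare $x g_\lambda(x)$ with the value at a point $x' \asymp \lambda$ where the bound $B_3/2$ already holds, using monotonicity properties of typical regularizers; alternatively, since the claim is only the asymptotic statement $\lim_{\lambda \to 0} x g_\lambda(x) \asymp 1$, one notes that for $x < c_0 \lambda$ we have $x \to 0$ as $\lambda \to 0$ and the relevant quantity is controlled through $g_\lambda(0) = \lim_{x \to 0} g_\lambda(x)$, which $(A_3)$ (via the $x = 0$ or $x \to 0$ limit of its constraint set) pins down.

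I expect the main obstacle to be the small-$x$ regime $0 \le x < c_0\lambda$, precisely because $(A_3)$ only controls the gap $B_3 - x g_\lambda(x)$ weighted by $x^{2\varphi}$, which degenerates as $x \to 0$. The clean way around this is to observe that $(A_3)$ must in fact hold with the constant $B_3$ itself being attained in the limit: taking $x \to 0$ in $(A_3)$ forces nothing directly, but combined with $(A_1)$ (which bounds $x g_\lambda(x)$ above) and the defining property $\lim_{\lambda \to 0} x g_\lambda(x) \asymp 1$ that the paper assumes of any admissible $g_\lambda$, one concludes $0 < B_3 \le \liminf_{\lambda \to 0} x g_\lambda(x)$ uniformly. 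So the proof structure is: (1) upper bound is immediate from $(A_1)$; (2) lower bound on $\{x \ge c_0 \lambda\}$ is a direct rearrangement of $(A_3)$; (3) lower bound on $\{x < c_0\lambda\}$ follows by a limiting/monotonicity argument tying $x g_\lambda(x)$ to its value at scale $\lambda$, or directly from the standing assumption on $g_\lambda$. Putting (1)--(3) together yields $c \le x g_\lambda(x) \le C_1$ for all $x \in \Gamma$ and all sufficiently small $\lambda$, i.e. $\lim_{\lambda \to 0} x g_\lambda(x) \asymp 1$, as claimed.
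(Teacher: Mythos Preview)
Your core idea---rearranging $(A_3)$ to obtain $x g_\lambda(x) \ge B_3 - C_3(\lambda/x)^{2\varphi}$---is exactly right and is essentially what the paper does. However, you have misread the statement as requiring a bound that is \emph{uniform} in $x \in \Gamma$ for each small $\lambda$. The claim is pointwise: for each \emph{fixed} $x \in \Gamma$ (implicitly $x>0$), $\lim_{\lambda \to 0} x g_\lambda(x)$ is trapped between positive constants. With $x$ fixed, your step (2) is already the entire proof of the lower bound: as $\lambda \to 0$ we eventually have $x \ge c_0\lambda$, so $x g_\lambda(x) \ge B_3/2$; the upper bound is $(A_1)$. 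The paper makes this explicit by simply sending $\lambda \to 0$ in $(A_3)$ for fixed $x$, concluding $x g_\lambda(x) \to B_3$ on the set where $x g_\lambda(x) < B_3$, and invoking $(A_1)$ on the complement.

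Your step (3)---the regime $0 \le x < c_0\lambda$---is therefore irrelevant to the lemma, and your attempts to handle it are either circular (invoking the ``standing assumption'' $x g_\lambda(x) \asymp 1$, which is the conclusion you are proving) or unavailable (``monotonicity properties of typical regularizers'' is not among $(A_1)$--$(A_4)$). Indeed, a uniform-in-$x$ version cannot hold on all of $\Gamma=[0,\kappa]$, since $x g_\lambda(x)=0$ at $x=0$; this is a further sign that the intended reading is pointwise. Drop step (3) and the argument is complete and matches the paper.
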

\begin{proof}
$(A_3)$ states that $$\sup_{\{x\in \Gamma: x\gl(x)<B_3\}} |B_3-x\gl(x)|x^{2\varphi} \leq C_3 \lambda^{2\varphi},$$ where  $\Gamma : = [0,\K]$, $\varphi \in (0,\xi]$, $C_3, B_3$ are positive constants all independent of $\lambda>0$. So by taking limit as $\lambda \to 0$ on both sides of the above inequality, we get $$0 \leq \lim_{\lambda \to 0}\sup_{\{x\in \Gamma: x\gl(x)<B_3\}} |B_3-x\gl(x)|x^{2\varphi} \leq 0,$$ which yields that $$\lim_{\lambda \to 0}\sup_{\{x\in \Gamma: x\gl(x)<B_3\}} |B_3-x\gl(x)|x^{2\varphi} = 0.$$ This implies  $\lim_{\lambda \to 0}\sup_{\{x\in \Gamma: x\gl(x)<B_3\}} |B_3-x\gl(x)| = 0$, which implies that  $$\lim_{\lambda \to 0} x\gl(x) = B_3$$ for all $\{x\in \Gamma: x\gl(x)<B_3\}$. For $\{x \in \Gamma: x\gl(x)>B_3\}$ the result is trivial since $(A_1)$ dictates that $x\gl(x) < C_1$.
\end{proof}
\end{document}